\documentclass[times]{amsart}
\usepackage{amsmath}
\usepackage{amsmath}
\usepackage{amssymb}
\usepackage{amscd}
\usepackage{mathrsfs}
\usepackage[all]{xy}
\usepackage[dvipdfmx]{graphicx}
\usepackage{setspace}
\doublespacing

\def\mathcal{\mathscr}
\everymath{\displaystyle}
\setlength{\topmargin}{0in}
\setlength{\oddsidemargin}{0in}
\setlength{\evensidemargin}{0in}
\setlength{\textwidth}{6.3in}
\setlength{\textheight}{9.15in}
\setlength{\footskip}{0.25in}
\setlength{\parskip}{3mm}

\newtheorem{thm}{Theorem}[section]
\newtheorem{lem}[thm]{Lemma}
\newtheorem{cor}[thm]{Corollary}
\newtheorem{prop}[thm]{Proposition}

\theoremstyle{definition}

\newtheorem{rem}[thm]{Remark}

\newtheorem{defn}[thm]{Definition}

\newtheorem{ex}[thm]{Example}

\newcommand{\mca}[1]{{\mathcal{#1}}}
\newcommand{\fbp}[2]{ \,_{#1}\!\!\times_{#2}}

\def\Z{{\mathbb Z}}
\def\C{{\mathbb C}}
\def\H{{\mathbb H}}
\def\R{{\mathbb R}}

\def\bL{\bar{\mca{L}}}
\def\batvil{\mca{BV}}

\def\dim{\text{\rm dim}\,}

\def\dR{\text{\rm dR}}

\def\E{\mca{E}}
\def\ep{\varepsilon} 
\def\endo{\text{\rm End}}

\def\ger{\mca{G}}

\def\HF{\text{\rm HF}}

\def\Hoch{\text{\rm Hoch}}
\def\Hom{\text{\rm Hom}}

\def\id{\text{\rm id}}
\def\Image{\text{\rm Im}}

\def\vlim{\varinjlim}

\def\Ker{\text{\rm Ker}}

\def\L{\mca{L}}

\def\len{\text{\rm len}}

\def\nm{\text{\rm nm}}

\def\ph{\varphi}
\def\plim{\varprojlim}
\def\pt{\text{\rm pt}}
\def\pr{\text{\rm pr}}

\def\reg{\text{\rm reg}}

\def\shift{\mathfrak{s}}

\def\sm{\text{\rm sm}}

\def\sym{{\mathbb S}}
\def\supp{\text{\rm supp}\,}

\begin{document}
\pagestyle{plain}
\thispagestyle{plain}

\title[A chain level Batalin-Vilkovisky structure in string topology via de Rham chains]
{A chain level Batalin-Vilkovisky structure in string topology via de Rham chains}

\author[Kei Irie]{Kei Irie} 
\address{Research Institute for Mathematical Sciences, Kyoto University,
Kyoto 606-8502, Japan}
\email{iriek@kurims.kyoto-u.ac.jp}

\subjclass[2010]{55P50, 57N75, 18D50} 

\begin{abstract} 
The aim of this paper is to define a chain level refinement of the Batalin-Vilkovisky (BV) algebra structure on the homology of the free loop space of a closed, oriented $C^\infty$-manifold. 
For this purpose, we define a (nonsymmetric) cyclic dg operad which consists of ``de Rham chains'' of free loops with marked points. 
A notion of de Rham chains, which is a certain hybrid of the notions of singular chains and differential forms, is a key ingredient in our construction. 
Combined with a generalization of cyclic Deligne's conjecture, 
this dg operad produces a chain model of the free loop space which admits an action of a chain model of the framed little disks operad, 
recovering the string topology BV algebra structure on the homology level. 
\end{abstract}

\maketitle

\section{Introduction} 

Let us begin with the following facts: 

\begin{enumerate} 
\item[(a):] 
For any differential graded algebra $A$, 
the Hochschild cohomology $H^*(A, A)$ has a Gerstenhaber algebra structure. 
\item[(b):] 
Let $M$ be a closed, oriented $d$-dimensional $C^\infty$-manifold, 
and let $\L M:= C^\infty(S^1, M)$ be the free loop space. 
Then, 
$\H_*(\L M):= H_{*+d}(\L M)$ 
has a Batalin-Vilkovisky (in particular, Gerstenhaber) algebra structure. 
\item[(c):] 
Let $\mca{A}_M$ denote the differential graded algebra of differential forms on $M$. 
There exists a linear map $\H_*(\L M : \R) \to H^*(\mca{A}_M, \mca{A}_M)$ defined by iterated integrals of differential forms, 
which preserves the Gerstenhaber structures.
\end{enumerate} 
The fact (a) is originally due to Gerstenhaber \cite{Gersten_63}. 
The fact (b) is due to Chas-Sullivan \cite{ChSu_99}, which is the very first paper on string topology. 
The fact (c) relates the geometric construction in (b) to the algebraic construction in (a). 
It seems that (c) is also well-known to specialists (see Remark \ref{150205_2}). 

(a)--(c) concern algebraic structures on the homology level, 
and one can expect that these structures lift to chain level structures which have more information. 
A toy model of this idea is the following well-known fact: 
the cup product on (singular) cohomology can be defined at the chain level, and 
(part of) extra information is used to define the classical Massey products; 
see \cite{LodVal_12} Section 9.4.5. 

For (a), what is called Deligne's conjecture claims that a certain chain model of the little disks operad acts on the Hochschild cochain complex. 
Various affirmative solutions to this conjecture and its variations are known; 
see 
\cite{MSS_02} Part I Section 1.19, 
\cite{LodVal_12} Section 13.3.15, 
and the references therein. 
The aim of this paper is to propose a chain level algebraic structure which lifts (b) (the Batalin-Vilkovisky (BV) algebra structure in string topology), 
and compare it with a solution to Deligne's conjecture via a chain map which is a chain level lift of (c). 

Our construction is based on generalizations of Deligne's conjecture in operadic contexts. 
More specifically, we use results in a recent paper \cite{Ward_14}, 
which we describe here very briefly. 
For any (nonsymmetric) dg operad $\mca{O} = (\mca{O}(k))_{k \ge 0}$ and a Maurer-Cartan element $\zeta = (\zeta_k)_{k \ge 2}$ of $\mca{O}$, one can assign a chain complex $(\tilde{\mca{O}}, \partial_\zeta)$ 
(here we use our notation in Section 2.5, which is different from the notation in \cite{Ward_14}). 
Then Theorem A in \cite{Ward_14} asserts that the chain complex $\tilde{\mca{O}}$ admits an action of a certain chain model of the little disks operad. 
Moreover, if $\mca{O}$ has a unital cyclic structure and $\zeta$ is cyclically invariant, 
Theorem B in \cite{Ward_14} asserts that this action extends to an action of  a certain chain model of the framed little disks operad 
(strictly speaking, in Theorem B one has to consider a quasi-isomorphic subcomplex $\tilde{\mca{O}}^\nm \subset \tilde{\mca{O}}$ which consists of ``normalized chains''). 
Actually, in this paper we consider only the simple case that $\zeta_k=0$ for every $k \ne 2$. 
A Maurer-Cartan element satisfying this condition is equivalent to a ``multiplication'' of the operad
(see Definition \ref{150624_4}). 

Let us briefly describe our main result.
For any closed oriented $C^\infty$-manifold $M$, 
we define a nonsymmetric cyclic dg operad $\mca{O}_M$ with a multiplication and a unit. 
Applying Theorem B in \cite{Ward_14}, 
the associated chain complex $\widetilde{\mca{O}_M}^\nm$ admits an action of a chain model of the framed little disks operad. 
In particular, the homology 
$H_*(\widetilde{\mca{O}_M}^\nm) \cong H_*(\widetilde{\mca{O}_M})$ has a BV algebra structure. 
Then we show that there exists an isomorphism $H_*(\widetilde{\mca{O}_M}) \cong \H_*(\mca{L}M:\R)$ of BV algebras. 

The chain complexes $\widetilde{\mca{O}_M}^\nm \subset \widetilde{\mca{O}_M}$ are chain models of the free loop space $\mca{L}M$. 
To show that these chain models have nontrivial chain level information, we establish the following properties: 
\begin{itemize} 
\item There exists a chain map $\widetilde{\mca{O}_M}_* \to C^*(\mca{A}_M, \mca{A}_M)$
which preserves actions of the little disks operad and recovers the map 
$\H_*(\mca{L}M) \to H^*(\mca{A}_M, \mca{A}_M)$ in (c) on the homology level. 
\item 
Algebraic structures on $\widetilde{\mca{O}_M}$ induce 
$A_\infty$ and $L_\infty$ algebra structures on $\H_*(\L M)$. 
In particular, the $A_\infty$ structure on $\H_*(\L M)$ recovers the classical Massey products on $\H_*(M)$. 
\item 
When $M$ has a Riemannian metric, 
$\widetilde{\mca{O}_M}$ and $\widetilde{\mca{O}_M}^\nm$ are equipped with filtrations which come from lengths of loops. 
\end{itemize} 

There may be several different ways to work out chain level structures in string topology, based on choices of chain models of the free loop space.
The singular chain complex has a lot of geometric information, however it has transversality trouble. 
Namely, string topology operations (e.g. the loop product) are defined only for chains transversal to each other. 
The Hochschild cochain complex of differential forms (used e.g. in \cite{Mer_04}) avoids this trouble. 
However it seems that this chain complex is not always a correct chain model of the free loop space (see Remark \ref{160617_1}). 
Also, this chain model loses some geometric information, such as lengths of loops. 
Our chain model is intermediate between these two, 
and a key ingredient in our construction is a notion of ``de Rham chains'', 
which is a certain hybrid of the notions of singular chains and differential forms. 

This paper is organized as follows. 
Section 2 explains preliminary materials on operads, string topology and Deligne's conjecture; 
this section contains no new results. 
Section 3 states our main results in a rigorous form, and discusses previous related work and potential applications to symplectic topology. 
The rest of this paper (Sections 4--8) is devoted to proofs of these results, 
and the plan of the proofs is explained in the last part of Section 3. 

\section{Preliminaries}

The aim of this section is to explain several preliminary materials, fixing various notation and signs. 
We recall basic definitions and facts about operads (Section 2.1), Gerstenhaber structure on Hochschild cohomology (Section 2.2), string topology (Section 2.3) and iterated integrals (Section 2.4).
In Section 2.5, we discuss operadic Deligne's conjecture, mainly following \cite{Ward_14}. 

\subsection{Preliminaries from operads} 

\subsubsection{Operads} 
First we briefly recall the notion of (nonsymmetric) operads. The main aim is to fix conventions, 
and we refer to \cite{MSS_02} Part II Section 1.2 for details. 

Let $\mca{C}$ be any symmetric monoidal category with a multiplication $\times$ and a unit $1_\mca{C}$. 
A \textit{nonsymmetric operad} $\mca{P}$ in $\mca{C}$ consists of the following data:
\begin{itemize}
\item An object $\mca{P}(n)$ for every integer $n \ge 0$. 
\item A morphism $\circ_i: \mca{P}(n) \times \mca{P}(m) \to \mca{P}(n+m-1)$ for every $1 \le i \le n$ and $m \ge 0$. 
These morphisms are called \textit{(partial) compositions}. 
\item A morphism $1_\mca{P}: 1_\mca{C} \to \mca{P}(1)$ called a \textit{unit} of $\mca{P}$. 
\end{itemize} 
We require that compositions satisfy associativity, 
and $1_{\mca{P}}$ is a two-sided unit for compositions. 
When $\mca{P}(n)$ admits a right action of the symmetric group $\sym_n$ ($\sym_0$ is the trivial group) for each $n \ge 0$, 
such that these actions are compatible with compositions, $\mca{P}$ is called an \textit{operad} in $\mca{C}$. 

For any (nonsymmetric) operads $\mca{P}$ and $\mca{Q}$, a morphism of (nonsymmetric) operads $\ph:\mca{P} \to \mca{Q}$
is a sequence of morphisms $(\ph(n): \mca{P}(n) \to \mca{Q}(n))_{n \ge 0}$ which preserves the above structures. 
When $\ph(n)$ are monic for all $n \ge 0$, we say that $\mca{P}$ is a suboperad of $\mca{Q}$. 

\subsubsection{Graded and dg operads} 
Throughout this paper, all vector spaces are defined over $\R$. 
A graded vector space $V$ is a sequence $(V_n)_{n \in \Z}$ of vector spaces. 
A differential graded (or dg) vector space (or chain complex)  is a pair $(V, \partial)$ of a graded vector space $V$ and $\partial: V_* \to V_{*-1}$ satisfying $\partial^2=0$. 
We may consider any graded vector space as a dg vector space with $\partial=0$. 
One can define a symmetric monoidal structure on the category of dg vector spaces as follows: 
\begin{align*}
(V \otimes W)_n&:= \bigoplus_{i+j=n} V_i \otimes W_j, \\
\partial(v \otimes w)&:= \partial v \otimes w + (-1)^{|v|} v \otimes \partial w, \\
V \otimes W \to W \otimes V&; v \otimes w \mapsto (-1)^{|v||w|} w \otimes v.
\end{align*} 
The unit $\bar{\R}$ is defined by $\bar{\R}_*:= \begin{cases} \R \cdot  [1]_0 &(*=0) \\ 0 &(* \ne 0) \end{cases}$
and $\partial=0$. 
In this paper, we mainly work in the categories of graded and dg vector spaces. 
Operads in these categories are called \textit{graded operads} and \textit{dg operads}, respectively. 

For any dg vector spaces $V$ and $W$, $\Hom(V,W)$ has the structure of a dg vector space: 
\[
\Hom(V,W)_n: =\prod_{k \in \Z} \Hom (V_k, W_{k+n}), \qquad
(\partial f)(v):= \partial(f(v)) - (-1)^{|f|} f(\partial v).
\]
For any dg vector space $V$, 
$\endo(V):= (\Hom(V^{\otimes n}, V))_{n \ge 0}$ has the structure of a dg operad defined as follows 
($f \in \Hom(V^{\otimes n}, V)$, $g \in \Hom(V^{\otimes m}, V)$, and $\sigma \in \sym_n$): 
\begin{align*}
(f \circ_i g)(v_1 \otimes \cdots \otimes v_{m+n-1})&:=(-1)^{|g|(|v_1|+\cdots + |v_{i-1}|)} f(v_1 \otimes \cdots \otimes g(v_i \otimes \cdots \otimes v_{i+m-1})\otimes \cdots \otimes v_{m+n-1}), \\
1_{\endo(V)} &:= \id_{V}, \\
(f^\sigma)(v_1 \otimes \cdots \otimes v_n) &:= \biggl( \prod_{\substack{i<j \\ \sigma(i)>\sigma(j)}} (-1)^{|v_i||v_j|}  \biggr) \cdot  f(v_{\sigma^{-1}(1)} \otimes \cdots \otimes v_{\sigma^{-1}(n)}). 
\end{align*} 
This dg operad is called the \textit{endomorphism operad} of $V$. 

For any dg operad $\mca{P}$, a dg $\mca{P}$-algebra is a chain complex $V$ with a
morphism $\mca{P} \to \endo(V)$ of dg operads. 
For each $n \ge 0$ we have a chain map 
\[
\mca{P}(n) \otimes V^{\otimes n} \to V; \quad x \otimes v_1 \otimes \cdots \otimes v_n \mapsto x \cdot (v_1 \otimes \cdots \otimes v_n).
\]
For any dg $\mca{P}$-algebras $V$ and $W$, 
a chain map $\ph:V \to W$ is called a morphism of dg $\mca{P}$-algebras if 
$\ph(x \cdot (v_1 \otimes \cdots \otimes v_n)) = x \cdot (\ph(v_1) \otimes \cdots \otimes \ph(v_n))$ for any 
$x \in \mca{P}(n)$ and $v_1, \ldots, v_n \in V$.  
For any graded operad $\mca{P}$, the notions of graded $\mca{P}$-algebras and their morphisms are defined in a similar way. 

For any dg operad $\mca{P}=(\mca{P}(n))_{n \ge 0}$, 
a \textit{dg ideal} of $\mca{P}$ is a sequence $\mca{Q}=(\mca{Q}(n))_{n \ge 0}$
such that the following conditions hold: 
\begin{itemize} 
\item For every $n \ge 0$, $\mca{Q}(n)$ is a chain subcomplex of $\mca{P}(n)$, which is preserved by the $\sym_n$-action on $\mca{P}(n)$. 
\item For any $x \in \mca{P}(n)$, $y \in \mca{P}(m)$ and $1 \le i \le n$, 
\[
x \in \mca{Q}(n)  \quad \text{or} \quad y \in \mca{Q}(m) \implies x \circ_i y \in \mca{Q}(n+m-1). 
\]
\end{itemize} 
For any dg ideal $\mca{Q} \subset \mca{P}$, the quotient 
$\mca{P}/\mca{Q}:= (\mca{P}(n)/\mca{Q}(n))_{n \ge 0}$ has a natural structure of a dg operad, 
and there exists a natural morphism of dg operads $\mca{P} \to \mca{P}/\mca{Q}$. 
For any graded operad $\mca{P}$, the notions of its graded ideals and associated quotient graded operads are defined in the obvious way 
(see \cite{LodVal_12} Section 5.2.14).

\subsubsection{Gerstenhaber and Batalin-Vilkovisky (BV) operads}

The Gerstenhaber and BV operads are graded operads, which play central roles in this paper. 
We recall definitions of these operads using generators and relations, 
partially following \cite{LodVal_12} Sections 13.3.12 and 13.7.4. 
 
The \textit{Gerstenhaber operad} $\ger$ is generated by
$a \in \ger(2)_0$ and $b \in \ger(2)_1$ with the following relations: 
\begin{enumerate} 
\item[(a):] $a^{(12)} = a, \quad a \circ_ 1 a = a \circ_2 a$. 
\item[(b):] $b^{(12)} = b, \quad b \circ_1 b + (b \circ_1 b)^{(123)} + (b \circ_1 b)^{(321)} = 0$. 
\item[(ab):] $b \circ_1 a = a \circ_2 b  + (a \circ_1 b)^{(23)}$. 
\end{enumerate}
More precisely, we consider the free operad $E$ (see \cite{LodVal_12} Section 5.5)
generated by $a$ and $b$, 
and a graded ideal $R \subset E$ generated by relations (a), (b) and (ab). 
Then, we define $\ger:= E/R$. 
We set $\ger(0):= 0$. 

For any graded $\ger$-algebra $V$, we define operations $\bullet$ and $\{\, , \,\}$ on $V$ as 
\[
v \bullet  w:= a \cdot(v \otimes w), \qquad 
\{v, w\} := (-1)^{|v|} b \cdot (v \otimes w). 
\]
Then, $(V, \bullet)$ is a graded commutative, associative algebra, and 
$(V, \{ \, , \,\})$ is a graded Lie algebra (with grading shifted by $1$). 
The triple $(V, \bullet, \{\, , \,\})$ is called a \textit{Gerstenhaber algebra}. 

The \textit{BV operad} $\batvil$ is generated by 
$a \in \batvil(2)_0$, $b \in \batvil(2)_1$ and $\Delta \in \batvil(1)_1$ 
with the relations (a), (b), (ab) and 
\[
\Delta \circ_1 \Delta = 0, \qquad 
b = \Delta \circ_1 a - a \circ_1 \Delta - a \circ_2 \Delta. 
\]
We set $\batvil(0):=0$. 
Obviously, there exists a natural morphism of graded operads $\ger \to \batvil$. 

For any graded $\batvil$-algebra $V$, 
we define an operation $\Delta$ on $V$ by  
$\Delta(v) := \Delta \cdot v$. 
The triple $(V, \bullet, \Delta)$ is called a \textit{BV algebra}. 
The bracket $\{\, , \,\}$ is recovered by the formula 
\[
\{v,w\} = (-1)^{|v|} \Delta(v \bullet w) - (-1)^{|v|} \Delta v \bullet w - v \bullet \Delta w.
\]

For any integer $r \ge 1$, 
let $f\mca{D}(r)$ be the set of tuples $(D_1, \ldots, D_r, z_1, \ldots, z_r)$ such that 
\begin{itemize}
\item For each $1 \le i \le r$, 
$D_i$ is a closed disk of positive radius contained in $\{ z \in \C \mid |z| \le 1\}$. 
We denote its center by $p_i$, 
and $z_i$ is a point on $\partial D_i$. 
\item $D_1, \ldots, D_r$ are disjoint. 
\end{itemize} 
The set $f\mca{D}(r)$ has a natural topology. 
Let $\mca{D}(r)$ denote the subspace of $f\mca{D}(r)$ which consists of $(D_1, \ldots, D_r, z_1, \ldots, z_r)$ such that 
$z_i - p_i \in \R_{>0}$ for every $1 \le i \le r$. 
We define $f\mca{D}(0)=\mca{D}(0)$ to be the empty set. 
Then $f\mca{D}=(f\mca{D}(r))_{r \ge 0}$ has a natural structure of a topological operad, 
and $\mca{D}=(\mca{D}(r))_{r \ge 0}$ is a suboperad of it. 
The operad $\mca{D}$ (resp. $f\mca{D}$) is called the \textit{little disks} (resp. \textit{framed little disks}) operad. 
There are isomorphisms of graded operads 
$H_*(\mca{D}) \cong \ger$ (\cite{Cohen_76}) and 
$H_*(f\mca{D}) \cong \batvil$ (\cite{Getzler_94}),
which are compatible with the inclusion maps
(recall that we are working with $\R$ coefficients). 

\subsection{Gerstenhaber structure on Hochschild cohomology}

A \textit{differential graded associative algebra} is a dg vector space $A$ 
with a degree $0$ product $A \otimes A \to A$
which is associative and satisfies the Leibniz rule. 
We also assume that it has a unit $1_A \in A_0$. 

\begin{rem} 
We abbreviate the term ``differential graded associative algebra'' as ``dga algebra''. 
Here the letter ``a'' stands for ``associative'', not for algebra (see \cite{LodVal_12} pp. 29).
\end{rem} 

Let $A$ be any dga algebra. 
A dg $A\,$-bimodule is a dg vector space $M$ 
with degree $0$ left and right $A\,$-actions 
$A \otimes M \to M$ and $M \otimes A \to M$, 
which satisfy the Leibniz rule and associativity. 

For every $k \ge 1$ and $i=0, \ldots, k$, we define a chain map 
$\delta_{k,i}: \Hom_*(A^{\otimes k-1}, M) \to \Hom_*(A^{\otimes k}, M)$ by 
\[
\delta_{k,i}(f)(a_1 \otimes \cdots \otimes a_k):= 
\begin{cases}
(-1)^{|a_1||f|} a_1 \cdot f(a_2 \otimes \cdots \otimes a_k) &(i=0), \\
f(a_1 \otimes \cdots \otimes a_i a_{i+1} \otimes \cdots \otimes a_k) &(1 \le i \le k-1), \\
f(a_1 \otimes \cdots \otimes a_{k-1}) \cdot a_k &(i=k). 
\end{cases}
\]
We set $\delta_k: \Hom_*(A^{\otimes k-1}, M) \to \Hom_*(A^{\otimes k}, M)$ by 
$\delta_k(f) := (-1)^{|f|+k-1} \sum_{i=0}^k  (-1)^i \delta_{k,i}(f)$, 
and define the Hochschild cochain complex 
$C^*(A, M):= \biggl(\prod_{k=0}^\infty \Hom_{*+k}(A^{\otimes k}, M), \partial_\Hoch \biggr)$ by 
$\partial_\Hoch(f_k)_{k \ge 0}: = (\partial f_k)_{k \ge 0}  + (\delta_k(f_{k-1}))_{k \ge 1}$.
Notice that $\partial_\Hoch$ \textit{decreases} the degree by $1$. 
The cohomology of this complex is denoted by $H^*(A,M)$, and called the \textit{Hochschild cohomology}.

Notice that $A$ itself has a natural structure of a dg $A\,$-bimodule. 
$C^*(A, A)$ has natural dga and dg Lie algebra structures, 
with operations $\circ$ and $\{\, , \,\}$ defined below. 
Signs in these formulas follow from Theorem \ref{160420_1}. 

The product $\circ$ is defined by 
\begin{align*} 
(f \circ g)_k(a_1 \otimes \cdots \otimes a_k) &:= \sum_{l+m=k} (-1)^\dagger f_l(a_1 \otimes \cdots \otimes a_l) g_m(a_{l+1} \otimes \cdots \otimes a_k), \\
\dagger&:= l |g| + (|g|+m) (|a_1| + \cdots + |a_l|). 
\end{align*} 
The bracket $\{ \, , \,\}$ is defined by 
\[
\{f, g\}: = f*g - (-1)^{ (|f|+1)(|g|+1)} g*f , 
\]
where $*$ is defined by   
\begin{align*} 
&(f*g)_k (a_1 \otimes \cdots \otimes a_k):= \\
&\qquad \sum_{\substack{l+m=k+1 \\ 1 \le i \le l}}
(-1)^\dagger f_l(a_1 \otimes \cdots \otimes a_{i-1} \otimes g_m(a_i \otimes \cdots \otimes a_{i+m-1}) \otimes a_{i+m} \otimes \cdots \otimes a_k),  \\ 
&\dagger := (i-1)(m-1) + (|g|+ m) (|a_1| + \cdots + |a_{i-1}| + l-1). 
\end{align*} 
The operations $\circ$ and $\{ \, , \,\}$ induce a Gerstenhaber structure on $H^*(A, A)$. 
This result is originally due to Gerstenhaber \cite{Gersten_63}. 

\subsection{BV structure in string topology} 

Throughout this paper, we set $S^1:=\R/\Z$. 
Let $M$ be a closed, oriented $C^\infty$-manifold of dimension $d$. 
The free loop space $\L M:= C^\infty(S^1, M)$ is equipped with the $C^\infty$-topology. 
We often abbreviate $\L M$ as $\L$. 
Also, we often use the notation $\H_*(\, \cdot \,):= H_{*+d}(\, \cdot \,)$. 
In \cite{ChSu_99}, Chas-Sullivan introduced the \textit{loop product} on $\H_*(\L)= H_{*+d}(\L)$. 
Let us briefly recall its definition. 

Let us consider the evaluation map  $e: \mca{L} \to M; \gamma \mapsto \gamma(0)$, 
and the fiber product 
\[
\mca{L} \fbp{e}{e} \mca{L}:= \{(\gamma, \gamma') \in \mca{L}^{\times 2} \mid \gamma(0) = \gamma'(0)\}.
\]
Let $U$ be a tubular neighborhood of $\L \fbp{e}{e} \L \subset \L^{\times 2}$, 
and 
$H_*(U, U \setminus \L \fbp{e}{e} \L) \cong H_{*-d}(\L \fbp{e}{e} \L)$ be the Thom isomorphism. 
The Gysin map $H_*(\L^{\times 2}) \to H_{*-d}(\L \fbp{e}{e} \L)$ is defined as the composition of the following maps:
\[
H_*(\L^{\times 2}) \to H_*(\L^{\times 2}, \L^{\times 2} \setminus \L \fbp{e}{e} \L) \cong H_*(U, U \setminus \L \fbp{e}{e} \L) \cong H_{*-d}(\L \fbp{e}{e}  \L). 
\]

Let $c: \L \fbp{e}{e}  \L \to \L$ denote the concatenation map. 
Precisely, it is defined as follows (see the remark on pp. 780 \cite{CJ_02}). 
Let us take an  increasing $C^\infty\,$-function $\nu: [0,1] \to [0,1]$ such that 
$\nu(t)=t$ and $\nu^{(m)}(t)=0\, (\forall m \ge 1)$ ($\nu^{(m)}$ denotes the $m$-th derivative) for any $t \in \{0, 1/2, 1\}$. 
Then, 
$c: \L \fbp{e}{e}  \L \to \L$ is defined by 
\[
c(\gamma_1, \gamma_2)(t):= \begin{cases} 
                                       \gamma_1(2\nu(t)) &(0 \le t \le 1/2), \\
                                       \gamma_2(2\nu(t)-1) &(1/2 \le t \le 1).
                                      \end{cases}
\]
It is easy to see that $H_*(c): H_*(\L \fbp{e}{e}  \L) \to H_*(\L)$ does not depend on the choices of $\nu$. 

The loop product $\bullet: \H_*(\L)^{\otimes 2} \to \H_*(\L)$ is defined as the composition of the following three maps. 
The first map is the cross product and the second map is the Gysin map. 
\[
\xymatrix{
\H_*(\L)^{\otimes 2} \ar[r]^-{\times} & H_{*+2d}(\L^{\times 2})  \ar[r] & \H_*(\L \fbp{e}{e} \L) \ar[r]^-{\H_*(c)}  & \H_*(\L).
}
\]
Let us consider the map $i_M: M \to \mca{L}M$
which takes each $p \in M$ to the constant loop at $p$. 
Let $\cap: \H_*(M)^{\otimes 2} \to \H_*(M)$ denote the intersection product. Then, 
\[
\H (i_M)(x) \bullet \H_*(i_M)(y) = \H_*(i_M)(x \cap y) \qquad(\forall x, y \in \H_*(M)). 
\]

On the other hand, $\L$ admits a natural $S^1$-action 
$r: S^1 \times \L \to \L$, which is defined by 
$r(t, \gamma)(\theta):=\gamma(\theta-t)$. 
We define $\Delta: \H_*(\L) \to \H_{*+1}(\L)$ by 
$\Delta(x) := H_*(r)([S^1] \times x)$, where
$[S^1]  \in H_1(S^1)$ is represented by the singular chain 
$\Delta^1 \to S^1; \, t \mapsto [t]$ (see Section 2.4 for our definition of $\Delta^1$). 

\begin{thm}[\cite{ChSu_99}, \cite{CJ_02}, \cite{Chat_05}]\label{thm:Ch-Su}
For any closed, oriented $C^\infty$-manifold $M$ of dimension $d$, 
the triple 
$(\H_*(\L M), \bullet, \Delta)$ is a BV algebra. 
\end{thm}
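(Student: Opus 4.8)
The plan is to exhibit $(\H_*(\L),\bullet,\Delta)$ as a $\batvil$-algebra, i.e.\ to verify the defining relations of the operad $\batvil$ for the generators $a=\bullet$, $\Delta$, and $b=\Delta\circ_1 a-a\circ_1\Delta-a\circ_2\Delta$. Conceptually the cleanest route is operadic: I would realize $\H_*(\L)$ as an algebra over $H_*(f\mca{D})$, where $f\mca{D}$ is the framed little disks operad, most economically through Voronov's cacti operad $\cacti$ -- homotopy equivalent to $f\mca{D}$ -- whose action on $\L$ is assembled from the Thom collapse maps onto the fiber products $\L\fbp{e}{e}\cdots\fbp{e}{e}\L\hookrightarrow\L^{\times n}$ followed by concatenation-and-reparametrization prescribed by the cactus; here $\L\fbp{e}{e}\cdots\fbp{e}{e}\L$ is the pullback of the diagonal of $M^{\times n}$ along the submersion $e^{\times n}$, hence a submanifold of codimension $(n-1)d$ whose normal bundle is pulled back from $TM^{\oplus(n-1)}$ and is oriented by the orientation of $M$. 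One then invokes Getzler's isomorphism $H_*(f\mca{D})\cong\batvil$ (compatible with the little disks suboperad and $H_*(\mca{D})\cong\ger$), after which it only remains to match the operations induced by $a$, $b$, $\Delta$ with the loop product, the loop bracket, and the operator $\Delta$. If instead one verifies the $\batvil$-relations directly, I would split them into the ``soft'' relations and the two ``hard'' ones discussed below.

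First I would dispose of the soft relations directly. Graded commutativity and associativity of $\bullet$, and the fact that $e_M:=\H_*(i_M)([M])$ is a two-sided unit, follow from the standard formal properties of the Gysin map -- naturality, compatibility with the cross product, and the behaviour of Thom classes under the evident identifications of normal bundles: for associativity both iterated products equal the composite of the triple cross product $\H_*(\L)^{\otimes3}\to H_{*+3d}(\L^{\times3})$, a triple Gysin map, and the triple concatenation $\H_*(\L\fbp{e}{e}\L\fbp{e}{e}\L)\to\H_*(\L)$; and commutativity follows from the interchange isomorphism of the cross product together with the homotopy between $c$ and $c\circ\tau$ ($\tau$ the factor-swap) supplied by a half-turn of the concatenated loop, the Koszul and orientation signs conspiring correctly on the $d$-shifted grading $\H_*$. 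For $\Delta^2=0$: since $r$ is an action of the group $S^1$ one has $r\circ(\id_{S^1}\times r)=r\circ(m\times\id_{\L})$ with $m\colon S^1\times S^1\to S^1$ the multiplication, hence $\Delta^2(x)=r_*\big(m_*([S^1]\times[S^1])\times x\big)=0$ because $m_*([S^1]\times[S^1])\in H_2(S^1)=0$. Finally $r\circ(\id_{S^1}\times i_M)=i_M\circ\pr_M$, so $\Delta(e_M)=-(i_M)_*\big((\pr_M)_*([S^1]\times[M])\big)=0$ (as $[S^1]$ has positive degree), and an entirely similar factorization through a projection forgetting the $S^1$-parameter gives $\{e_M,-\}=0$.

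The substance of the proof is the remaining two relations: (b), Jacobi for $\{\,,\,\}$, and (ab), the Poisson/biderivation identity relating $\{\,,\,\}$ and $\bullet$. Here I would first establish the geometric ``loop bracket'' formula identifying the class $(-1)^{|v|}\Delta(v\bullet w)-(-1)^{|v|}\Delta v\bullet w-v\bullet\Delta w$ with a single Gysin pushforward over an $S^1$-family of meeting configurations of the two loops -- the free $S^1$-parameter inserting a rotation at the meeting point -- followed by concatenation; expanding the definitions of $\Delta$ and $\bullet$ and using homotopies that realize $c$ and $r$, the three terms above recombine on homology into this one family. Granting this, (ab) follows by comparing the two ways of feeding such a rotated meeting loop into a triple concatenation, both equal to a single Gysin-and-concatenate over $S^1\times(\L\fbp{e}{e}\L\fbp{e}{e}\L)$; and (b) follows by analysing the iterated bracket over $S^1\times S^1\times(\L\fbp{e}{e}\L\fbp{e}{e}\L)$, where the two rotation parameters and the three lobes may be permuted and the cyclic sum of the three Jacobi terms is killed by an explicit degeneration of an $S^1\times S^1$-family -- this is precisely the mechanism by which $H_*(f\mca{D}(3))$ realizes the Jacobi relation. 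The graded antisymmetry $b^{(12)}=b$ is the swap symmetry of the same picture.

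The main obstacle is making all of this rigorous at the chain level: although the fiber products $\L\fbp{e}{e}\cdots\fbp{e}{e}\L$ are well behaved and carry oriented normal bundles and tubular neighborhoods, one must choose these tubular neighborhoods, the Thom collapse maps, and the various homotopies coherently enough that the homotopies needed for (ab) and especially (b) assemble correctly, all the while keeping track of orientations and Koszul signs. This is exactly the point at which \cite{ChSu_99} works with transverse chains, \cite{CJ_02} sidesteps the difficulty by passing to the ring spectrum $\L M^{-TM}$ with its $S^1$-action, and \cite{Chat_05} packages it as an honest operad action; any of these arguments yields the theorem, and it also follows a posteriori from statement (b') proved later in this paper.
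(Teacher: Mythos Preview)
The paper does not give its own proof of this theorem: it is stated as a known background result, attributed to \cite{ChSu_99}, \cite{CJ_02}, \cite{Chat_05}, and immediately followed by the remark that ``this result is the starting point of string topology.'' There is therefore nothing in the paper to compare your argument against.

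That said, your sketch is a faithful summary of the approaches in the cited literature: the direct chain-level verification of the BV relations via transverse intersections is the route of \cite{ChSu_99}, the operadic argument through cacti and the homotopy equivalence with $f\mca{D}$ is essentially \cite{Chat_05} (building on Voronov), and the spectrum-level detour is \cite{CJ_02}. Your final observation is also correct and worth emphasizing: the paper does reprove the theorem \emph{a posteriori}, since Theorem~\ref{150628_1}(iii)(b) exhibits $\H_*(\mca{L}M)$ as isomorphic, as a BV algebra, to $H_*(C^{\mca{L}M})$, whose BV structure comes from the $f\tilde{\Lambda}$-action and $H_*(f\tilde{\Lambda})\cong\batvil$. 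So in the logic of the present paper the Chas--Sullivan theorem is a corollary rather than an input, though the paper chooses to cite it up front for expository reasons.
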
 

This result is the starting point of string topology. 
The bracket $\{\, , \,\}$ of this BV structure is called the \textit{loop bracket}. 

\subsection{Iterated integrals of differential forms} 
There is a relation between 
the Gerstenhaber structures on loop space homology (Theorem \ref{thm:Ch-Su}) and 
Hochschild cohomology (Section 2.2). 
We explain this relation via iterated integrals of differential forms, 
the theory of which originates in \cite{KTChen_77}. 

To discuss iterated integrals of differential forms, 
it is convenient to work with $C^\infty$-singular chains on $\L M$. 
Let us define the $k$-dimensional simplex $\Delta^k$ by 
\[
\Delta^k:= \begin{cases} \R^0&(k=0), \\ 
\{(t_1, \ldots, t_k) \in \R^k \mid 0 \le t_1 \le \cdots \le t_k \le 1\}. &(k \ge 1). \end{cases}
\]

A map $\sigma: \Delta^k \to \L M $ is said to be of class $C^\infty$, if 
there exists an open neighborhood $U$ of $\Delta^k \subset \R^k$ 
and a map $\bar{\sigma}: U \to \L M$, such that 
$\bar{\sigma}|_{\Delta^k}=\sigma$, and 
$U \times S^1 \to M; (u,\theta) \mapsto \bar{\sigma}(u)(\theta)$ is of class $C^\infty$. 
Let $C^\sm_k(\L M)$ denote the $\R$-vector space generated by all $C^\infty$-maps $\Delta^k \to \L M$. 
It is easy to see that any $C^\infty$-map $\sigma: \Delta^k \to \L M$ is continuous
with respect to the $C^\infty$-topology on $\L M$. 
Therefore, $C^\sm_*(\L M)$ is a subcomplex of the singular chain complex of $\L M$
(see Section 4.7 for our convention for the boundary operator of the singular chain complex). 
In Section 6, we show that this inclusion map is a quasi-isomorphism (Theorem \ref{150219_1}). 
Therefore, $H^\sm_*(\L M):= H_*(C^\sm(\L M)) \cong H_*(\L M)$. 

For any $j \in \Z$, let us define 
\[
\mca{A}^j(M):= \begin{cases} \text{$\R$-vector space of  $C^\infty$ $j$-forms on $M$} &(0 \le j \le d=\dim M), \\ 0 &(\text{otherwise}). \end{cases}
\]
Then, $(\mca{A}^{-*}(M), d, \wedge)$ is a dga algebra, where 
$d$ denotes the exterior derivative, and $\wedge$ denotes the exterior product. 
We denote it by $\mca{A}_M$. 
We define a dg $\mca{A}_M\,$-bimodule structure on 
$\mca{A}_M^\vee[d]_*:=\Hom(\mca{A}^{*+d}(M), \R)$ 
as follows: 
\begin{align*}
(\partial \ph)(\alpha)&:= (-1)^{|\ph|+1} \ph(d\alpha), \\
(\alpha \cdot \ph)(\beta)&:= (-1)^{|\alpha||\ph|} \ph(\alpha \wedge \beta), \qquad
(\ph \cdot \alpha)(\beta):= \ph(\alpha \wedge \beta).
\end{align*} 
The morphism of $\mca{A}_M$-bimodules 
$\mca{A}_M \to \mca{A}_M^{\vee}[d \,]$
defined by 
$\alpha \mapsto (\beta \mapsto \int_M \alpha \wedge \beta)$ is a quasi-isomorphism
(this is an obvious consequence of Poincar\'{e} duality). 

For any $C^\infty$-map 
$\sigma: \Delta^l \to \mca{L}M$ and 
$i=0, \ldots, k$, 
we define $\sigma_{k,i}: \Delta^l \times \Delta^k \to M$ by
\[
\sigma_{k,i}(x, t_1, \ldots, t_k):= \begin{cases} \sigma(x)(0) &(i=0) \\ \sigma(x)(t_i) &(1 \le i \le k), \end{cases}
\]
and define $I_k(\sigma) \in \Hom(\mca{A}_M^{\otimes k}, \mca{A}_M^{\vee}[d])$ by 
\[
I_k(\sigma)(\eta_1 \otimes \cdots \otimes \eta_k)(\eta_0) :=  (-1)^{l(d+1) + (k+l)(k+l-1)/2} 
\int_{\Delta^l \times \Delta^k} \sigma_{k,1}^*\eta_1 \wedge \cdots \wedge \sigma_{k,k}^* \eta_k \wedge \sigma_{k,0}^*\eta_0. 
\]
It is easy to see that
\begin{equation}\label{141222_4}
I: C^\sm_{*+d}(\L M) \to C^*(\mca{A}_M, \mca{A}_M^{\vee}[d]); \quad \sigma \mapsto (I_k(\sigma))_{k \ge 0} 
\end{equation} 
is a chain map (signs are checked in Section 8.3). 
Taking homology, we obtain a map 
\begin{equation}\label{141218_01}
\H_*(\L M) \to H^*(\mca{A}_M, \mca{A}_M^\vee[d]) \cong H^*(\mca{A}_M, \mca{A}_M).
\end{equation} 
This map preserves the Gerstenhaber structures on 
$\H_*(\L M)$ and $H^*(\mca{A}_M, \mca{A}_M)$. 

\begin{rem}\label{150205_2}
The fact that (\ref{141218_01}) preserves the Gerstenhaber structures seems to be known; 
see \cite{Mer_04} for the product, and \cite{Fuk_06} Section 7 for the bracket. 
We can recover this fact as a consequence of Theorem \ref{160408_1}, 
see Remark \ref{160421_1} (ii). 
\end{rem} 

\begin{rem}\label{160617_1} 
It is a fundamental result in the theory of iterated integrals that 
the map (\ref{141218_01}) is an isomorphism if $M$ is simply-connected (\cite{KTChen_77}). 
It seems that (\ref{141218_01}) fails to be an isomorphism for arbitrary $M$, 
although the author is not aware of any specific example. 
\end{rem} 

\subsection{Operadic Deligne's conjecture} 

The original Deligne's conjecture concerns Hochschild cochains of associative algebras. 
Here we review generalizations in operadic contexts, which seem to originate in \cite{Kaufmann_07}. 
After some algebraic preliminaries, 
we recall these results in both cyclic and noncyclic versions, mainly following a recent paper \cite{Ward_14}. 
Finally we discuss sign conventions in \cite{Ward_14} and compare them to sign conventions
in the present paper. 

\subsubsection{Algebraic preliminaries} 

A double complex $C$ consists of a sequence $(C(k))_{k \ge 0}$ of chain complexes and 
anti-chain maps $\delta_k: C(k-1)_* \to C(k)_*$ (i.e. $\delta_k$ anti-commutes with differentials) 
for every $k \ge 1$, such that 
$\delta_{k+1} \circ \delta_k = 0$  for every $k \ge 1$. 
We denote $\delta_k$ by $\delta^C_k$ if necessary. 
For any double complexes $C$ and $D$, a morphism $\ph: C \to D$ is a sequence $\ph=(\ph(k))_{k \ge 0}$ such that, 
$\ph(k): C(k)_* \to D(k)_*$ is a chain map and 
$\delta^D_k \circ \ph(k-1) = \ph(k) \circ \delta^C_k$ for every $k \ge 1$. 

For any double complex $C$, 
we define the \textit{total complex} $(\tilde{C}, \tilde{\partial})$ by 
\[
\tilde{C}_*:= \prod_{k=0}^\infty C(k)_{*+k}, \qquad 
(\tilde{\partial} x)_k := \begin{cases} \partial x_0 &(k=0) \\ \partial x_k + \delta_k (x_{k-1}) &(k \ge 1) \end{cases} 
\]
where $(\tilde{\partial}x)_k$ denotes the factor of $\tilde{\partial} x$ in $C(k)$. 
A morphism $\ph: C \to D$ of double complexes induces 
a chain map $\tilde{\ph}: \tilde{C} \to \tilde{D}; \,(x_k)_{k \ge 0} \mapsto (\ph(k)(x_k))_{k \ge 0}$. 

Recall that a \textit{cosimplicial chain complex} $C$ consists of a family of chain complexes $(C(k))_{k \ge 0}$
with a family of chain maps 
\begin{align*} 
\delta_{k,i} &: C(k-1)_* \to C(k)_*  \quad  (0 \le i \le k) \\ 
\sigma_{k,i} &: C(k+1)_* \to C(k)_* \quad (0 \le i \le k)
\end{align*} 
satisfying the following relations: 
\begin{align*} 
\delta_{k+1, j} \circ \delta_{k, i} &= \delta_{k+1, i} \circ \delta_{k, j-1} \quad (i < j) \\
\sigma_{k-1, j} \circ \sigma_{k, i}  &= \sigma_{k-1, i} \circ \sigma_{k, j+1} \quad (i \le j) \\
\sigma_{k, j} \circ \delta_{k+1, i} &=
\begin{cases} 
\delta_{k, i} \circ \sigma_{k-1, j-1} &(i < j) \\ 
\id  &(i=j, j+1) \\
\delta_{k, i-1} \circ \sigma_{k-1, j} &(i > j+1). 
\end{cases}
\end{align*} 
For later use let us also recall the notion of \textit{cocyclic chain complexes}; 
a cocyclic chain complex is a cosimplicial chain complex $C$ with a family of chain maps 
\[
\tau_k: C(k)_* \to C(k)_* \qquad( k \ge 0)
\]
satisfying the following relations: 
\begin{align*} 
\tau_k^{k+1} &= \id \\ 
\tau_k \circ \delta_{k,i} &= \begin{cases} \delta_{k, k} &(i=0) \\ \delta_{k, i-1} \circ \tau_{k-1} &(1 \le i \le k) \end{cases} \\
\tau_k \circ \sigma_{k,i} &= \begin{cases} \sigma_{k,k} \circ \tau_{k+1}^2 &(i=0) \\ \sigma_{k,i-1} \circ \tau_{k+1} &(1 \le i \le k). \end{cases}
\end{align*} 

For any cosimplicial chain complex $C$ and each $k \ge 1$, 
let us define an anti-chain map $\delta_k: C(k-1)_* \to C(k)_*$ by 
$\delta_k(x):= (-1)^{|x| + k-1} \sum_{i=0}^k (-1)^i \delta_{k, i}(x)$. 
Then $C$ is a double complex. 
For every $k \ge 1$, we call $x \in C(k)$ \textit{normalized} if 
$\sigma_{k-1, i}(x)= 0$ for every $0 \le i \le k-1$. 
The set of normalized elements in $C(k)$ is denoted by $C^\nm(k)$. 
Namely, for every $k \ge 1$ we set 
\[
C^\nm(k) := \{ x \in C(k) \mid \sigma_{k-1, i}(x) = 0 \, (0 \le \forall i \le k-1) \}. 
\]
We also set $C^\nm(0):= C(0)$. 
It is easy to see that $\delta_k (C^\nm(k-1)) \subset C^\nm(k)$ for every $k \ge 1$, 
thus 
$\tilde{C}^\nm_*:= \prod_{k=0}^\infty C^\nm(k)_{*+k}$ is a subcomplex of $\tilde{C}_*$. 
Lemma \ref{160503_1} below seems to be well-known 
(see Proposition 1.5 in \cite{Loday_98} or Theorem 8.3.8 in \cite{Weibel_94}), 
nevertheless we sketch a proof of it. 

\begin{lem}\label{160503_1} 
The inclusion map $\tilde{C}^\nm_* \to \tilde{C}_*$ is a quasi-isomorphism.
\end{lem} 
\begin{proof} 
For any integers $m,k \ge 0$, we define 
\[
F^m C(k) := \{ x \in C(k) \mid \sigma_{k-1, i} (x)=0 \quad(0 \le \forall i \le \min \{m-1, k-1\}) \}. 
\]
For each $k \ge 0$, there holds 
\[
C(k) = F^0 C(k) \supset F^1C(k) \supset \cdots \supset F^k C(k) = F^{k+1}C(k)= \cdots = C^\nm(k). 
\]
It is easy to check that for every $k \ge 1$ and $m \ge 0$, there holds 
$\delta_k(F^m C(k-1)) \subset F^mC(k)$. 
For every $m \ge 0$, let us define $F^m \tilde{C}_*:= \prod_{k=0}^\infty F^m C(k)_{*+k}$. 
Then we obtain a decreasing sequence of chain complexes 
\[
\tilde{C} = F^0 \tilde{C} \supset F^1 \tilde{C} \supset F^2 \tilde{C} \supset \cdots 
\]
such that $\bigcap_{m \ge 0} F^m \tilde{C}:= \tilde{C}^\nm$. 

For each $m \ge 0$, 
let us define a map $K$ on $F^m \tilde{C}/ F^{m+1} \tilde{C} = \prod_{k=m+1}^\infty F^mC(k)/ F^{m+1}C(k)$ by 
\[ 
(Kx)_k:= (-1)^{|x|} \sigma_{k,m} (x_{k+1})  \qquad ( \forall k \ge m+1). 
\] 
Then a direct computation shows that 
$K\tilde{\partial} + \tilde{\partial} K = (-1)^m \id$. 
Therefore $F^m \tilde{C}/ F^{m+1}\tilde{C}$ is acyclic for every $m \ge 0$, 
and thus, $\tilde{C}/F^m\tilde{C}$ is acyclic for every $m \ge 0$. 
Since $\tilde{C}/F^{m+1} \tilde{C} \to \tilde{C}/F^m\tilde{C}$ is surjective for every $m$,
Theorem 3.5.8 in \cite{Weibel_94} 
shows that $H_*(\tilde{C}/\tilde{C}^\nm)=0$. 
\end{proof}

\subsubsection{Operadic  Deligne's conjecture: noncyclic version} 

Let us recall the notion of operads with multiplications, 
which seems to originate in \cite{GerstVoronov_95}. 

\begin{defn}\label{150624_4} 
Let $\mca{O} = (\mca{O}(k))_{k \ge 0}$ be a nonsymmetric dg operad. 
$\mu \in \mca{O}(2)_0$ is called a \textit{multiplication} of $\mca{O}$, if 
$\partial \mu = 0$ and $\mu \circ_1 \mu = \mu \circ_2 \mu$. 
$\ep \in \mca{O}(0)_0$ is called a \textit{unit} of $\mu$, if 
$\partial \ep = 0$ and $\mu \circ_1 \ep = \mu \circ_2 \ep = 1_\mca{O}$. 
\end{defn} 

Let $(\mca{O}, \mu, \ep)$ be a nonsymmetric dg operad with a multiplication $\mu$ and a unit $\ep$. 
Then $\mca{O}$ has a structure of a cosimplicial chain complex with operations 
\[
\delta_{k,i}: \mca{O}(k-1)_* \to \mca{O}(k)_* \,(0 \le i \le k), \qquad 
\sigma_{k,i}: \mca{O}(k+1)_* \to \mca{O}(k)_* \, (0 \le i \le k)
\]
defined by 
\[
\delta_{k,i} (x):= \begin{cases} \mu \circ_2 x &(i=0) \\  x \circ_i \mu &(1 \le i \le k-1) \\ \mu \circ_1 x &(i=k), \end{cases}  \qquad
\sigma_{k,i} (x):= x \circ_{i+1} \ep. 
\]
Then we obtain the total chain complex $(\tilde{\mca{O}}, \tilde{\partial})$ which is defined by 
\[ 
\tilde{\mca{O}}_*:= \prod_{k=0}^\infty \mca{O}(k)_{*+k},  \qquad 
(\tilde{\partial} x )_k:= \begin{cases} \partial x_0 &(k=0) \\ \partial x_k + (-1)^{|x|} \sum_{i=0}^k (-1)^i \delta_{k,i}(x_{k-1}) &(k \ge 1) \end{cases}
\] 
and a subcomplex $\tilde{\mca{O}}^\nm \subset \tilde{\mca{O}}$ 
such that the inclusion map is a quasi-isomorphism. 
Notice that the chain complex $\tilde{\mca{O}}$ can be defined even when the unit $\ep$ does not exist. 

\begin{ex}
For any dga algebra $A$, 
the endomorphism operad $\endo(A):= (\Hom(A^{\otimes k}, A))_{k \ge 0}$ 
has a multiplication $\mu \in \Hom_0(A^{\otimes 2}, A)$ and a unit $\ep \in \Hom_0(\bar{\R}, A)$, defined by 
$\mu(a_1 \otimes a_2):= a_1 a_2$ and 
$\ep(1):= 1_A$. 
In particular, $\endo(A)$ has the structure of a cosimplicial chain complex. 
The total complex $\widetilde{\endo(A)}$ is isomorphic to the Hochschild cochain complex $C^*(A,A)$. 
\end{ex} 

Now let us state a version of Deligne's conjecture for dg operads with multiplications. 
A dg operad $\mca{P}$ is called a \textit{chain model} of the little disks operad $\mca{D}$, 
if there exists a zig-zag of quasi-isomorphisms (of dg operads) 
connecting $\mca{P}$ and $C_*(\mca{D})$ (the dg operad consisting of singular chains of $\mca{D}$). 

\begin{thm}\label{160420_1} 
Let $\mca{O} = (\mca{O}(k))_{k \ge 0}$ be a nonsymmetric dg operad with a multiplication $\mu \in \mca{O}(2)_0$. 
\begin{enumerate} 
\item[(i):] 
The chain complex $\tilde{\mca{O}}$ has a dga algebra structure with a product $\bullet$ defined by 
\[
(x \bullet y)_k := \sum_{l+m= k} (-1)^{l |y|} (\mu \circ_1 x_l) \circ_{l+1} y_m. 
\]
\item[(ii):]
$\tilde{\mca{O}}$ has a dg pre-Lie algebra structure (with grading shifted by $1$) with a pre-Lie product $*$ 
and a Lie bracket $\{\, ,\,\}$ which are defined by 
\begin{align*} 
(x * y)_k &:= \sum_{\substack{l+m= k+1 \\ 1 \le i \le l}}  (-1)^{(i-1)(m-1) + (l-1)(|y|+m)} x_l \circ_i y_m, \\
\{x, y \}&:= x * y - (-1)^{(|x|-1)(|y|-1)} y*x. 
\end{align*} 
\item[(iii):] 
The operations $\bullet$ and $\{\, , \,\}$ define a Gerstenhaber algebra structure on $H_*(\tilde{\mca{O}})$. 
\item[(iv):] 
There exists a chain model $\mca{P}$ of the little disks operad $\mca{D}$ 
with an isomorphism $H_*(\mca{P}) \cong \ger$, 
such that $\tilde{\mca{O}}$ has a dg $\mca{P}$-algebra structure
which lifts the Gerstenhaber algebra structure on $H_*(\tilde{\mca{O}})$. 
\item[(v):] 
If $\mca{O}$ has a unit $\ep \in \mca{O}(0)$ of the product $\mu$, 
the action of $\mca{P}$ on $\tilde{\mca{O}}$ restricts to $\tilde{\mca{O}}^\nm$. 
\item[(vi):] 
Let $\mca{O}^1$, $\mca{O}^2$ be nonsymmetric dg operads with multiplications, 
and $\mca{O}^1 \to \mca{O}^2$ be a morphism of dg operads preserving multiplications. 
Then the induced chain map $\tilde{\mca{O}_1} \to \tilde{\mca{O}_2}$ is a map of dg $\mca{P}$-algebras. 
\end{enumerate} 
\end{thm} 
\begin{proof} 
These statements follow from Theorem A in \cite{Ward_14}, 
where the Maurer-Cartan element $\zeta = (\zeta_k)_{k \ge 2}$
is given by $\zeta_2= -\mu$ and $\zeta_k = 0 \,(k \ne 2)$.

More specifically, 
statements (i)--(iii) follow from Lemma 2.32 in \cite{Ward_14}. 
The signs in the formulas in (i), (ii) will be discussed in Section 2.5.4. 
Also notice that in (i) the product $\bullet$ is strictly associative
(thus defines a dga algebra structure on $\tilde{\mca{O}}$), 
since we assume $\zeta_k=0$ for every $k \ne 2$. 

The statement (iv) follows from Theorem 2.33 in \cite{Ward_14}, 
where the dg operad $\mca{P}$ in our statement is the 
``minimal operad'' defined in \cite{Ward_14} Section 2.4.  
Although \cite{Ward_14} assumes that $\mca{O}(0)=0$ (see the first part of \cite{Ward_14} Section 2), 
the action of the minimal operad (see \cite{Ward_14} Section 2.5) 
does not involve this assumption.  
The statements (v) and (vi) are straightforward from the definition of this operad action. 
\end{proof} 

\subsubsection{Operadic Deligne's conjecture: cyclic version} 

Let us recall the notion of cyclic dg operads. 

\begin{defn}\label{150723_1} 
Let $\mca{O}  = (\mca{O}(k))_{k \ge 0}$ be a nonsymmetric dg operad. 
A \textit{cyclic structure} on $\mca{O}$ is a sequence $(\tau_k)_{k \ge 0}$ with the following properties. 
\begin{itemize} 
\item For any $k \ge 0$, $\tau_k$ is a chain map on $\mca{O}(k)_*$ of degree $0$, satisfying $\tau_k^{k+1} = \id_{\mca{O}(k)}$. 
\item $1_\mca{O} \in \mca{O}(1)_0$ is cyclically invariant, i.e. $\tau_1(1_{\mca{O}}) = 1_{\mca{O}}$. 
\item For any $1 \le i \le k$, $l \ge 0$, $x \in \mca{O}(k)_*$ and $y \in \mca{O}(l)_*$, there holds
\[
\tau_{k+l-1}(x \circ_i y) = \begin{cases} \tau_k x \circ_{i-1} y &(i \ge 2) \\ (-1)^{|x||y|} \tau_l y \circ_l \tau_k x &(i=1, l \ge 1) \\ \tau_k^2x \circ_k y &(i=1, l=0). \end{cases} 
\]
A pair $(\mca{O}, (\tau_k)_{k \ge 0})$ is called a \textit{nonsymmetric cyclic dg operad}. 
\end{itemize} 
\end{defn} 

It is easy to check that if a nonsymmetric dg operad $\mca{O}$ has a cyclic structure, 
then it has a structure of a cocylic chain complex. 
Now let us state a version of Deligne's conjecture for \textit{cyclic} dg operads with multiplications. 

\begin{thm}\label{160420_3}
Let $\mca{O} = (\mca{O}(k))_{k \ge 0}$ be a nonsymmetric dg operad 
with a cyclic structure $(\tau_k)_{k \ge 0}$, 
a multiplication $\mu$ satisfying $\tau_2(\mu)=\mu$, 
and a unit $\ep$. 
\begin{enumerate} 
\item[(i):] 
$\tilde{\mca{O}}^\nm$ admits an anti-chain map $\Delta$ of degree $1$, defined by 
\[ 
(\Delta x)_k =  \sum_{i=1}^{k+1} (-1)^{|x|+k(i-1)+1}  (\tau^i_{k+1} x_{k+1}) \circ_{k+2-i} \ep. 
\]
\item[(ii):]
The operations 
$\Delta$ and $\bullet$ 
define a BV algebra structure on $H_*(\tilde{\mca{O}}^\nm) \cong H_*(\tilde{\mca{O}})$. 
\item[(iii):] 
There exists a chain model $f\mca{P}$ of the framed little disks operad 
and an isomorphism $H_*(f\mca{P}) \cong \batvil$, such that the following statements hold: 
\begin{itemize}
\item There exists an inclusion of dg operads $\mca{P} \to f\mca{P}$ such that the following diagram commutes: 
\[ 
\xymatrix{ 
H_*(\mca{P})  \ar[r] \ar[d]_-{\cong} & H_*(f\mca{P})  \ar[d]^-{\cong} \\ 
\ger  \ar[r] & \batvil. 
}
\] 
\item For any $\mca{O}$ which satisfies the assumption in this theorem, 
$\widetilde{\mca{O}}^\nm$ has a dg $f\mca{P}$-algebra structure which lifts the BV algebra structure 
on $H_*(\tilde{\mca{O}}^\nm) \cong H_*(\tilde{\mca{O}})$. 
\end{itemize} 
\end{enumerate} 
\end{thm} 
\begin{proof} 
The result follows from Theorem B (or Theorem 4.6) in \cite{Ward_14}. 
The operad $f\mca{P}$ in our statement is the operad $\mca{TS}_\infty$, which is introduced in \cite{Ward_12} Section 4. 
The inclusion of operads $\mca{P} \to f\mca{P}$ follows from Lemma 5.9 in \cite{Ward_12}. 
The sign for the operator $\Delta$ in the statement (i) will be discussed in Section 2.5.4.
\end{proof}

\subsubsection{Sign conventions} 

Here we briefly review sign conventions in \cite{Ward_14}, and explain where the signs in Theorems \ref{160420_1} and \ref{160420_3} come from. 
The author appreciates Benjamin Ward for patiently replying to several questions on sign conventions in \cite{Ward_14}. 
Nevertheless, only the author is responsible for the accuracy and correctness of the following explanations. 

First notice that we use homological grading convention (boundary operators \textit{decrease} grading by $1$), 
whereas \cite{Ward_14} uses cohomological grading convention (boundary operators \textit{increase} grading by $1$). 
In the following we review sign conventions in \cite{Ward_14} with all gradings reversed. 

For any graded vector space $V$, we write $\Sigma V$(resp. $\Sigma^{-1}V$) 
for the graded vector space shifted \textit{down} (resp. \textit{up}) 
$1$ degree from that of $V$. 
Namely, $(\Sigma V)_* = V_{*+1}$ and $(\Sigma^{-1}V)_* = V_{*-1}$. 
In particular, 
\[ 
(\Sigma \bar{\R})_* = \begin{cases} \R  [1]_{-1} &(*=-1) \\  0 &(* \ne -1)\end{cases} \qquad
(\Sigma^{-1} \bar{\R})_* = \begin{cases} \R [1]_1 &(*=1) \\ 0 &(* \ne 1). \end{cases} 
\] 
Notice that there are natural isomorphisms 
$\Sigma V \cong V \otimes \Sigma \bar{\R}$ and 
$\Sigma^{-1} V \cong V \otimes \Sigma^{-1} \bar{\R}$. 

It is obvious that 
\[ 
\endo (\Sigma^{-1}  \bar{\R}) (k)_* = \begin{cases} \R \cdot \iota_k  &(*=1-k) \\ 0 &(* \ne 1-k), \end{cases} \qquad 
\endo (\Sigma \bar{\R}) (k)_* = \begin{cases} \R \bar{\iota}_k &(*=k-1) \\ 0 &(* \ne k-1), \end{cases} 
\] 
where $\iota_k$ and $\bar{\iota}_k$ are defined by 
$\iota_k ([1]_1^{\otimes k}) = [1]_1$ and 
$\bar{\iota}_k ([1]_{-1}^{\otimes k}) = [1]_{-1}$. 
It is easy to check that 
\[
\iota_k \circ_i \iota_l = (-1)^{(i-1)(l-1)} \iota_{k+l-1} \qquad (1\le i \le k, \, l \ge 0). 
\] 
For any dg operad $\mca{O}$, we set 
$\shift \mca{O} := \mca{O} \otimes \endo(\Sigma^{-1} \bar{\R})$ and 
$\shift^{-1} \mca{O} := \mca{O} \otimes \endo(\Sigma \bar{\R})$.  
For every $k \ge 0$, there exists an isomorphism 
\begin{equation}\label{160503_2} 
\mca{O}(k)_* \cong \shift \mca{O}(k)_{*+1-k} ; \qquad x \mapsto \hat{x}:= x \otimes \iota_k.
\end{equation} 
For $x \in \mca{O}(k)$, $y \in \mca{O}(l)$ and $1 \le i \le k$, we obtain (see Remark 2.4 in \cite{Ward_14}): 
\begin{align*} 
\hat{x} \circ_i \hat{y} &= (x \otimes \iota_k) \circ_i (y \otimes \iota_l) = (-1)^{(k-1)|y|} (x \circ_i y) \otimes (\iota_k \circ_i \iota_l) \\
&=(-1)^{(i-1)(l-1) + (k-1)|y|} \cdot \widehat{x \circ_i y}. 
\end{align*} 

\textbf{pre-Lie product} 

As explained in \cite{Ward_14} Sections 2.1.1 and 2.1.3, 
the chain complex $\Sigma \prod_{k=0}^\infty \shift \mca{O}(k)$ 
has a structure of a dg pre-Lie algebra with grading shifted by $1$ (or odd dg pre-Lie algebra). 
On the other hand, 
$\tilde{\mca{O}}_* = \prod_{k=0}^\infty \mca{O}(k)_{*+k}$
is naturally identified with 
$\Sigma \prod_{k=0}^\infty \shift \mca{O}(k)$ 
by applying the isomorphism (\ref{160503_2}) to all $k \ge 0$. 
With this identification, 
$\tilde{\mca{O}}$ has an odd dg pre-Lie algebra structure, 
where the boundary operator is defined by $(\partial x)_k:= \partial x_k \, (\forall k \ge 0)$, 
and the pre-Lie product $*$
(notice that we use different notation from \cite{Ward_14}, where the pre-Lie product is denoted by $\circ$)
 is defined by: 
\begin{equation}\label{160503_3} 
(x * y)_k = \sum_{\substack{l+m= k+1 \\ 1 \le i \le l}}  (-1)^{(i-1)(m-1) + (l-1)(|y|+m)} x_l \circ_i y_m.
\end{equation} 
This formula coincides with the formula (2.1) in \cite{Ward_14}. 
Notice that the degree of $y_m \in \mca{O}(m)$ is $|y|+m$, since $\tilde{\mca{O}}_* = \prod_{m=0}^\infty \mca{O}(m)_{*+m}$. 
Moreover, the bracket $\{ \, , \,\}$ is defined by 
\[ 
\{x, y\} := x  * y - (-1)^{(|x|-1)(|y|-1)} y*x, 
\] 
see \cite{Ward_14} Section 2.1.1. 
These computations explain the signs  in Theorem \ref{160420_1} (ii). 

\textbf{Boundary operator $\tilde{\partial}$} 

Suppose that $\mca{O}$ has a multiplication $\mu \in \mca{O}(2)_0$. 
Then $\zeta = (\zeta_k)_{k \ge 2} \in \tilde{\mca{O}}_{-2}$ defined by $\zeta_k = \begin{cases} -\mu &(k=2) \\ 0 &(k \ne 2) \end{cases}$ 
is a Maurer-Cartan element of $\mca{O}$, 
i.e. it satisfies the equation $\partial \zeta + \zeta*\zeta=0$
(see Section 2.2 in \cite{Ward_14}). 
Then, $\Sigma \prod_{k=0}^\infty \shift \mca{O}(k)$ admits a boundary operator 
$\partial_\zeta$ which is defined by the following formula (see Lemma 2.20 in \cite{Ward_14}): 
\[
\partial_\zeta x := \partial x + \{ \zeta, x\} = \partial x + \zeta * x  - (-1)^{|x|+1} x * \zeta.
\]
A short computation using (\ref{160503_3}) implies 
\[ 
(\partial_\zeta x)_k = \partial x_k  + (-1)^{|x|} \biggl( \mu \circ_2 x_{k-1}  + \sum_{i=1}^{k-1}  (-1)^i x_{k-1} \circ_i \mu + (-1)^k  \mu \circ_1 x_{k-1} \biggr). 
\]
Hence the boundary operator $\partial_\zeta$ on $\Sigma \prod_{k=0}^\infty \shift \mca{O}(k)$ corresponds to the
boundary operator $\tilde{\partial}$ on $\tilde{\mca{O}}_* = \prod_{k=0}^\infty \mca{O}(k)_{*+k}$. 

\textbf{Product $\bullet$} 

\cite{Ward_14} defines a product $\cup$ on $\Sigma \prod_{k=0}^\infty \shift \mca{O}(k)$ by 
$x \cup y:= B_2(\zeta; x, y)$ (see Definition 2.31 in \cite{Ward_14}), 
where $B_2 \in \shift^{-1} B^0(3)$ is defined by 
$B_2:= B^0_2 \otimes \bar{\iota}_3$ (see Definition 2.13 in \cite{Ward_14}), 
and $B^0_2$ is a ``brace operation'' on $\prod_{k=0}^\infty \shift \mca{O}(k)$ (see \cite{Ward_14} Section 2.1.2). 
More explicitly 
\[
B^0_2(a; b,c) = \sum_{1 \le i<j \le k} (a \circ_i b) \circ_{j+l-1} c \qquad (a \in \shift \mca{O}(k), \, b \in \shift \mca{O}(l), \, c \in \shift \mca{O}(m))
\]
where $\circ$ denotes the composition in $\shift \mca{O}$. 
Notice that the $\cup$ product is strictly associative in our case, since $\zeta_k= 0$ for $k \ne 2$. 

Applying the isomorphism $\Sigma V \cong V \otimes \Sigma \bar{\R}$ for $V = \prod_{k=0}^\infty \shift \mca{O}(k)$, 
let us denote $x= \Sigma^{-1} x \otimes [1]_{-1}$, $y = \Sigma^{-1} y \otimes [1]_{-1}$, and $\zeta = \Sigma^{-1} \zeta \otimes [1]_{-1}$. 
Then we obtain 
\begin{align*} 
B_2(\zeta; x, y) &= (B^0_2 \otimes \bar{\iota}_3) ( \Sigma^{-1} \zeta \otimes [1]_{-1} ; \Sigma^{-1} x \otimes [1]_{-1} , \Sigma^{-1} y \otimes [1]_{-1} ) \\
&= (-1)^{|x|+1} B^0_2(\Sigma^{-1}\zeta ; \Sigma^{-1} x, \Sigma^{-1} y) \otimes \bar{\iota}_3 ([1]_{-1}^{\otimes 3}) \\
&= (-1)^{|x|+1} B^0_2(\Sigma^{-1}\zeta ; \Sigma^{-1} x, \Sigma^{-1} y) \otimes [1]_{-1}. 
\end{align*} 
The second equality follows from the Koszul sign rule with 
$|\Sigma^{-1} \zeta|=-1$, 
$|\Sigma^{-1} x|=|x|+1$, 
$|\Sigma^{-1} y|=|y|+1$ and
$|\bar{\iota}_3|=2$. 
Recalling that $\zeta_k = \begin{cases} -\mu &(k=2) \\ 0 &(k \ne 2) \end{cases}$
and using (\ref{160503_3}), we obtain 
\begin{align*} 
(x \cup  y)_k & = (-1)^{|x|+1} \sum_{l+m=k} (-1)^{0 \cdot (l-1) + 1 \cdot (|x|+l) + l \cdot (m-1) + l \cdot (|y|+m)}  (-\mu \circ_1 x_l) \circ_{l+1}  y_m \\
&= (-1)^{l |y|} \sum_{l+m=k} (\mu \circ_1 x_l) \circ_{l+1} y_m. 
\end{align*} 
Therefore, under the isomorphism $\Sigma \prod_{k=0}^\infty \shift \mca{O}(k) \cong \tilde{\mca{O}}$, 
the product $\cup$ corresponds to the product $\bullet$ 
in Theorem \ref{160420_1} (i). 

\textbf{Operator $\Delta$} 

When $\mca{O}$ has a cyclic structure and 
a unit $\ep \in \mca{O}(0)_0$, 
one can define an operator $\Delta$ of degree $1$ 
on $\Sigma \prod_{k=0}^\infty \shift \mca{O}(k)$ by the following formula 
(see the last paragraph in the proof of Lemma 4.5 in \cite{Ward_14}): 
\begin{equation}\label{160508_1} 
(\Delta x)_k  := (Ns_0) (x_{k+1}) = 
\biggl( \sum_{i=0}^k  t_k^i \biggr) (t_k s_1 t_{k+1}^{-1}) (x_{k+1}).
\end{equation} 
The operator $s_0$ is defined in the formula (3.1), 
and the operator $N$ is defined right after Corollary 3.3 in \cite{Ward_14}. 
Chain maps $t_k: \shift \mca{O}(k)_* \to \shift \mca{O}(k)_*$ and 
$t_{k+1}: \shift \mca{O}(k+1)_* \to \shift \mca{O}(k+1)_*$ are defined by 
\[ 
t_k (a \otimes \iota_k):= (-1)^k \tau_k a \otimes \iota_k, \qquad
t_{k+1} (a \otimes \iota_{k+1}) = (-1)^{k+1} \tau_{k+1} a \otimes \iota_{k+1}. 
\]
On the other hand, $s_1: \shift \mca{O}(k+1)_* \to \shift \mca{O}(k)_{*+1}$ is an anti-chain map which is defined by 
\[ 
s_1 (a \otimes \iota_{k+1}):= (-1)^{|a|+k} (a \otimes \iota_{k+1}) \circ_1 ( -\ep \otimes \iota_0) 
= (-1)^{|a|+k+1} (a \circ_1 \ep) \otimes \iota_k.
\] 
Notice that a unit for $\zeta$ is $-\ep$, since we set $\zeta_2:=-\mu$. 

Now the operator $\Delta$ in (\ref{160508_1}) 
(which is defined on $\Sigma \prod_{k=0}^\infty \shift \mca{O}(k)$)
corresponds to the following operator $\Delta$ defined on $\tilde{\mca{O}}$: 
\[
(\Delta x)_k = \sum_{i=0}^k (-1)^{|x|+ki+1} \tau_k^{i+1} ((\tau_{k+1}^{-1} x_{k+1}) \circ_1 \ep )= \sum_{i=1}^{k+1} (-1)^{|x|+k(i-1)+1} (\tau_{k+1}^i x_{k+1}) \circ_{k+2-i} \ep. 
\]
The sign $|x|+ki+1$ is a sum of a 
sign $k+1$ from $t_{k+1}^{-1}$, 
a sign $|x_{k+1}|+k+1 \equiv |x|\,(\mathrm{mod}\,2)$, and 
a sign $k(i+1)$ from $t_k^{i+1}$. 
This is exactly the same as the formula in Theorem \ref{160420_3} (i). 

\section{Results and discussions} 

In Section 3.1, we state our main result Theorem \ref{160408_1} and 
a supplementary result Proposition \ref{160420_2} about the length filtration. 
Section 3.2 discusses previous related work, and 
Section 3.3 discusses potential applications to symplectic topology. 
The rest of this paper (Sections 4--8) is devoted to proofs of the results stated in Section 3.1.
The plan of the proofs is explained in Section 3.4. 

\subsection{Summary of results} 

First let us recall some notation; 
for any $C^\infty$-manifold $M$ of dimension $d$, 
we set 
$\L M:= C^\infty(S^1, M)$ 
and 
$\H_*(\L M):= H_{*+d}(\L M: \R)$. 
$\mca{A}_M$ denotes the dga algebra of differential forms on $M$. 
The main result in this paper is a construction of a nonsymmetric cyclic dg operad $\mca{O}_M$ with a multiplication and a unit. 
By Theorem \ref{160420_3}, the associated chain complex $\widetilde{\mca{O}_M}^\nm$ has a dg $f\mca{P}$-algebra structure, 
which turns out to be a chain level refinement of the BV algebra structure on $\H_*(\mca{L}M)$. 
Let us spell out formal statements in Theorem \ref{160408_1} below. 

\begin{thm}\label{160408_1} 
For any closed, oriented $C^\infty$-manifold $M$, 
there exist the following data: 
\begin{enumerate} 
\item[(i):] 
A nonsymmetric cyclic dg operad $\mca{O}_M$ with a multiplication $\mu \in \mca{O}_M(2)_0$ and a unit $\ep \in \mca{O}_M(0)_0$. 
By Theorem \ref{160420_3}, 
$\widetilde{\mca{O}_M}^\nm$ has a dg $f\mca{P}$-algebra structure. 
In particular, 
$H_*(\widetilde{\mca{O}_M}^\nm) \cong H_*(\widetilde{\mca{O}_M})$ has a BV algebra structure. 
\item[(ii):] 
An isomorphism $\Phi: H_*(\widetilde{\mca{O}_M}) \cong \H_*(\mca{L}M)$ of BV algebras, 
where we consider the string topology BV algebra structure on $\H_*(\mca{L} M)$. 
\item[(iii):] 
A morphism $\mca{O}_M \to \endo(\mca{A}_M)$ of nonsymmetric dg operads preserving multiplications, such that 
the induced map on homology $H_*(\widetilde{\mca{O}_M} ) \to H^*(\mca{A}_M, \mca{A}_M)$ 
coincides with the map (\ref{141218_01}): $\H_*(\mca{L}M) \to H^*(\mca{A}_M, \mca{A}_M)$ 
via the isomorphism $H_*(\widetilde{\mca{O}_M}) \cong \H_*(\mca{L} M)$ in (ii). 
\item[(iv):] 
An injective  chain map 
$\iota_M: (\mca{A}_M)_* \to \widetilde{\mca{O}_M}_*$ such that 
\begin{equation}\label{150801_2} 
\iota_M(x) \bullet \iota_M(y) = \iota_M (x \wedge y), \qquad
\{\iota_M(x), \iota_M(y)\} = 0 \qquad(\forall x,y \in \mca{A}_M) 
\end{equation} 
where the operations $\bullet$ and $\{ \, , \,\}$ are defined in Theorem \ref{160420_1}. 
Moreover, the following diagram commutes
(recall that the map $i_M: M \to \mca{L}M$ 
takes each $p \in M$ to the constant loop at $p$): 
\begin{equation}\label{150801_3} 
\xymatrix{
\H_*(M)  \ar[r]^-{\H_*(i_M)}\ar[d]_{\cong}&  \H_*(\mca{L}M) \ar[d]^{\cong} \\
H^{-*}_\dR(M) \ar[r]_{H_*(\iota_M)} & H_*(\widetilde{\mca{O}_M}). 
}
\end{equation} 
\end{enumerate} 
\end{thm} 

\begin{rem}\label{160421_1} 
Several remarks on Theorem \ref{160408_1} are in order. 
\begin{itemize} 
\item[(i):] 
Theorem \ref{160408_1} (ii) implies that 
the dg $f\mca{P}$-algebra structure on $\widetilde{\mca{O}_M}^\nm$ is a chain level refinement of the string topology BV algebra structure on $\H_*(\mca{L}M)$. 
\item[(ii):] 
The morphism of operads $\mca{O}_M \to \endo(\mca{A}_M)$ induces a chain map 
$\widetilde{\mca{O}_M}_* \to C^*(\mca{A}_M, \mca{A}_M)$. 
By Theorem \ref{160420_1} (vi), 
this chain map is a morphism of dg $\mca{P}$-algebras. 
Since the induced map on homology coincides with the map 
(\ref{141218_01}): $\H_*(\mca{L} M) \to H^*(\mca{A}_M, \mca{A}_M)$, 
we recover the fact that the map (\ref{141218_01}) preserves the Gerstenhaber structures
(see Remark \ref{150205_2}). 
\item[(iii):] 
By Theorem \ref{160420_1} the chain complex $\widetilde{\mca{O}_M}$ has a dga algebra structure with a product $\bullet$, 
and the induced product on $H_*(\widetilde{\mca{O}_M})$ corresponds to the loop product on $\H_*(\mca{L}M)$. 
The homotopy transfer theorem (see \cite{LodVal_12} Section 10.3) shows that 
$\H_*(\mca{L}M)$ has an $A_\infty$ algebra structure $(\mu_k)_{k \ge 1}$ such that 
$\mu_1=0$, $\mu_2=\bullet$ and $(\H_*(\mca{L}M), (\mu_k)_{k \ge 1})$ is homotopy equivalent to the dga algebra $(\widetilde{\mca{O}_M}, \bullet)$. 
Moreover, Theorem \ref{160408_1}  (iv) implies that one may choose $(\mu_k)_{k \ge 1}$ so that 
$\mu_k(\H_*(M)^{\otimes k}) \subset \H_*(M)$ for every $k \ge 1$, 
and $(\H_*(M), (\mu_k)_{k \ge 1})$ is homotopy equivalent to the dga algebra $(\mca{A}_M, d, \wedge)$. 
In particular, $(\mu_k)_{k \ge 1}$ recovers the classical Massey products on $\H_*(M)$. 
\item[(iv):] 
By the same arguments, one can define an $L_\infty$ algebra structure $(l_k)_{k \ge 1}$ on $\H_*(\mca{L}M)$ such that 
$l_1=0$, $l_2 = \{ \, , \,\}$ and $(\H_*(\mca{L} M), (l_k)_{k \ge 1})$ is homotopy equivalent to the dg Lie algebra $(\widetilde{\mca{O}_M}, \{ \, , \,\})$. 
Moreover one may choose $(l_k)_{k \ge 1}$ so that $l_k = 0$ on $\H_*(M)^{\otimes k} \subset \H_*(\mca{L}M)^{\otimes k}$. 
\end{itemize} 
\end{rem} 

Remark \ref{160421_1} (i) says that $\widetilde{\mca{O}_M}^\nm$ is a chain model of $\mca{L}M$ which is suitable to define string topology operations on it. 
Compared to the Hochschild cochain complex $C^*(\mca{A}_M, \mca{A}_M)$, which is often used as a chain model of $\mca{L}M$, our chain model $\widetilde{\mca{O}_M}^\nm$ has the following features:
\begin{itemize}
\item The isomorphism $H_*(\widetilde{\mca{O}_M}^\nm) \cong \H_*(\mca{L}M)$ holds for an arbitrary closed oriented $C^\infty$-manifold $M$, 
whereas in most of the literature 
the map (\ref{141218_01}): $\H_*(\mca{L}M) \to H^*(\mca{A}_M, \mca{A}_M)$ is proved to be an isomorphism 
under the assumption that $M$ is simply-connected (see Remark \ref{160617_1}). 
\item It seems difficult to define an action of a chain model of $f\mca{D}$ on $C^*(\mca{A}_M, \mca{A}_M)$, since $\mca{A}_M$ is infinite-dimensional and the map 
$\mca{A}_M \to \mca{A}^\vee_M[d]$ (see Section 2.4) is not an isomorphism, 
whereas in most of the literature (e.g. \cite{Kaufmann_08}) 
this condition is required to prove a cyclic version of Deligne's conjecture. 
\end{itemize} 

Another  remarkable feature of our chain model $\widetilde{\mca{O}_M}^\nm$ is that, 
when $M$ has a Riemannian metric it is equipped with a \textit{length filtration} which is compatible with string topology operations. 
On the homology level, relations between string topology operations and the length filtration are studied in \cite{GoHi_09}. 
The length filtration also plays a role in comparison of 
the loop space homology and the Floer homology of cotangent bundles, see \cite{CiLa_09} Section 7. 

To state this property let us introduce some notation. 
For any $\gamma \in \L M$, let $\len(\gamma):= \int_{S^1} |\dot{\gamma}|$. 
For any $a \in (0, \infty]$, we define 
$\mca{L}^a M:= \{\gamma \in \mca{L}M \mid \len(\gamma) <a\}$. 
In particular, $\mca{L}^\infty M = \mca{L}M$. 
For an arbitrary chain complex $C$, 
a filtration (indexed by $(0, \infty]$) on $C$ is a family 
$(F^aC)_{a \in (0,\infty]}$ of subcomplexes of $C$ 
such that $a \le b \implies F^a C \subset F^bC$ and $F^\infty C= C$. 
For any $x \in C$, we set $|x|:= \inf\{a \mid x \in F^aC \}$. 

\begin{prop}\label{160420_2} 
Let $M$ be a closed, oriented $C^\infty$-manifold with a Riemannian metric. 
Then, for every $k \ge 0$, the chain complex $\mca{O}_M(k)$ is equipped with a filtration 
$(F^a \mca{O}_M(k))_{a \in (0,\infty]}$ 
such that the following properties hold. 
\begin{enumerate} 
\item[(i):] 
There holds 
\begin{align*} 
|x \circ_i y| &\le |x| + |y| \qquad(\forall x \in \mca{O}_M(k), \, \forall y \in \mca{O}_M(l), \, 1 \le \forall  i \le k), \\ 
|\tau_k x| &= |x| \qquad\qquad (\forall x \in \mca{O}_M(k)), \\ 
|\mu|&= |\ep| = 0. 
\end{align*} 
Hence $(F^a \mca{O}_M(k))_{k \ge 0}$ is a cocylic chain complex for every $a \in (0, \infty]$. 
In particular, $\widetilde{\mca{O}_M}$ and $\widetilde{\mca{O}_M}^\nm$ have filtrations defined by 
\[ 
F^a \widetilde{\mca{O}_M}_* = \prod_{k=0}^\infty F^a \mca{O}_M(k)_{*+k}, \qquad 
F^a \widetilde{\mca{O}_M}^\nm = F^a \widetilde{\mca{O}_M} \cap \widetilde{\mca{O}_M}^\nm, 
\] 
and the inclusion $F^a \widetilde{\mca{O}_M}^\nm \to F^a \widetilde{\mca{O}_M}$ is a quasi-isomorphism. 
\item[(ii):]
The filtration on $\widetilde{\mca{O}_M}^\nm$ is compatible with the $f\mca{P}$-algebra structure. 
Namely, 
\[
|x \cdot (y_1 \otimes \cdots \otimes y_r)| \le |y_1| + \cdots + |y_r|
\]
for any $x \in f\mca{P}(r)$ and $y_1,\ldots, y_r \in \widetilde{\mca{O}_M}^\nm$. 
\item[(iii):] 
There exists an isomorphism 
$\H_*(\L^a M) \cong H_*(F^a \widetilde{\mca{O}_M})$ for every $a \in (0, \infty]$, 
and this family of isomorphisms is compatible with inclusions. 
Namely, the following diagram commutes for every $0 < a \le b \le \infty$ 
where the vertical maps are induced by inclusions: 
\[ 
\xymatrix{\H_* (\L^a M) \ar[r]^-{\cong} \ar[d] & H_*(F^a \widetilde{\mca{O}_M})  \ar[d]  \\ \H_*(\L^b M)  \ar[r]^-{\cong}& H_*(F^b \widetilde{\mca{O}_M}). }
\] 
When $a=\infty$, the isomorphism $\H_*(\L M) \cong H_*(\widetilde{\mca{O}_M})$ coincides with the isomorphism $\Phi$ in Theorem \ref{160408_1} (ii). 
\end{enumerate} 
\end{prop} 
\begin{proof} 
For the definition of the filtration $F^a \mca{O}_M$ and verification of  (i), see Remark \ref{160626_1}. 
To verify (ii), it is sufficient to check that the generators of the operad $\mca{TS}_\infty$ (see \cite{Ward_12} Section 4.1) 
preserve the length filtration, and this is straightforward from (i). 
The isomorphism $\H_*(\L^a M) \cong H_*(F^a \widetilde{\mca{O}_M})$ in (iii) will be defined in Section 8.2. 
\end{proof}


\subsection{Previous work} 

Rich algebraic structures in chain level string topology were outlined in \cite{Sul_07} by D. Sullivan, 
and there have been several papers working out details. 

X. Chen \cite{XChen_11} introduced a chain model of the free loop space using Whitney differential forms, and 
defined several string topology operations (the loop product, loop bracket and rotation) on that chain model, recovering the BV algebra structure at the homology level
(\cite{XChen_11} also studied the $S^1$-equivariant case). 
It is not clear whether these operations extend to actions of a dg operad on this chain model. 

On the other hand, a recent paper \cite{DCPR_15} by G. Drummond-Cole, K. Poirier and N. Rounds
proposed a more geometric approach using short geodesic segments and diffuse intersection classes. 
\cite{DCPR_15} defines operations on the singular chain complex of the free loop space, recovering the homology level structure defined by Cohen-Godin \cite{CohGod_04}.
In particular, \cite{DCPR_15} covers operations with multiple outputs and those corresponding to surfaces of higher genus. 
However, the operations in \cite{DCPR_15} are associative only up to homotopy, and it seems that the resulting algebraic structure is yet to be fully worked out. 

\subsection{Potential applications to symplectic topology} 

Let us discuss some potential applications of results in this paper to symplectic topology. 
For any $C^\infty$-manifold $M$, the cotangent bundle $T^*M$ has a natural symplectic structure. 
When $M$ is closed, oriented and spin, the Floer homology $\HF_*(T^*M)$ has a BV algebra structure, and there exists an isomorphism
of BV algebras $\HF_*(T^*M) \cong \H_*(\mca{L}M)$ (see \cite{Abouzaid_13} and the references therein). 
There should be chain level refinements of this correspondence, and we expect that our chain level structures in string topology fit into this picture. 
More specifically, we expect that one can define an $A_\infty$ (resp. $L_\infty$) structure on $\HF_*(T^*M)$ via counting solutions of appropriate Floer equations 
which is homotopy equivalent to the $A_\infty$ (resp. $L_\infty$) refinement of the loop product (resp. bracket) on $\H_*(\mca{L}M)$ defined in 
Remark \ref{160421_1} (iii) and (iv). 

On the other hand, Fukaya \cite{Fuk_06} used a chain level loop bracket for compactifications of the moduli space of pseudo-holomorphic disks with Lagrangian boundary conditions, 
and obtained restrictions on topological types of Lagrangian submanifolds. 
We expect that our definition of a chain level loop bracket could be used to work out details of this approach. 

Finally, there is a very interesting program by Cieliebak-Latschev \cite{CiLa_09}, which compares the 
symplectic field theory of sphere cotangent bundles and the string topology of $S^1$-equivariant chains on free loops modulo constant loops. 
We hope that the construction in this paper will be a first step towards working out details of the string topology side of this program.

\subsection{Plan of proofs}

The rest of this paper is devoted to proofs of results presented in Section 3.1. 
The main step is to define the nonsymmetric cyclic dg operad $\mca{O}_M$ which appears in Theorem \ref{160408_1}. 
Roughly speaking, for every integer $k \ge 0$, the chain complex $\mca{O}_M(k)$ consists of ``chains'' of (Moore) loops on $M$ with $k+1$-marked points, 
and operad compositions  are defined by taking fiber products at marked points. 
The idea of using loops with marked points is partially inspired by the theory of iterated integrals. 

A main difficulty is that we need to define fiber products at the chain level, and the usual singular chains are not appropriate for this purpose. 
To avoid this trouble, we introduce a notion of \textit{de Rham chains}, 
which is a certain hybrid of the notions of singular chains and differentiable forms. 
We also introduce a notion of \textit{differentiable spaces}, on which de Rham chains are defined. 
A $C^\infty$-manifold and its free loop space have natural structures of differentiable spaces. 
In Section 4, we introduce these notions and define a de Rham chain complex (the chain complex which consists of de Rham chains) for any differentiable space. 

In Section 5, we show that the homology of the de Rham chain complex of a $C^\infty$-manifold is naturally isomorphic to the usual singular homology. 
In Section 6, we prove the same result for the free loop space of a closed $C^\infty$-manifold. 

In Section 7, we define differentiable spaces which consist of Moore loops on a $C^\infty$-manifold $M$ with marked points, 
and show that the collection of de Rham chain complexes of these spaces has a natural structure of a nonsymmetric cyclic dg operad with a multiplication and a unit. 
This operad is the operad $\mca{O}_M$. 
Finally in Section 8, we prove the results presented in Section 3.1.
Most of these results follow naturally from the definition of $\mca{O}_M$, 
nevertheless we need some complicated arguments to give complete proofs.

\section{Differentiable spaces and de Rham chains} 

In this section, we introduce the notions of \textit{differentiable spaces} and \textit{de Rham chain complexes}, which are basic for the arguments in this paper. 
The notion of differentiable spaces is introduced in Section 4.2, and de Rham chain complexes for these spaces are defined in Section 4.3. 
The rest of this section (4.4--4.8) is devoted to establishing several basic results about de Rham chain complexes of differentiable spaces. 

\subsection{Integration along fibers} 
 
Let $P$ be a $d$-dimensional $C^\infty$-manifold. 
Throughout this paper, all manifolds are without boundary, unless otherwise specified. 
Let $k \ge 0$ be an integer, and $E \to P$ be an $\R^k$-bundle. 
We define $\det E:= \wedge^k E$. 
When $k=0$, we define $\det E$ to be the trivial $\R$-bundle on $P$. 
Any exact sequence 
$0 \to E_0 \to E_1 \to E_2 \to 0$ 
of real vector bundles induces an isomorphism 
$\det E_1 \cong \det E_2 \otimes \det E_0$. 
An orientation of $E$ is a section of the double cover $(\det E)/\R_{>0} \to P$. 
An orientation of $TP \to P$ is called an orientation of $P$. 

Recall that we denote $\mca{A}^j(P):= C^\infty(\wedge^j T^*P)$ when $j = 0, \ldots, d$. 
Let $\mca{A}^j_c(P)$ denote the subspace of $\mca{A}^j(P)$ 
which consists of compactly supported $j$-forms. 
When $j \notin \{0, \ldots, d\}$, we set $\mca{A}^j_c(P) = \mca{A}^j(P)=0$. 
When $P$ is oriented, we define 
\[
\int_P: \mca{A}^*_c(P) \to \R; \quad \omega \mapsto \begin{cases} \int_P \omega &(*=d) \\ 0 &(* \ne d). \end{cases}
\]

Let $P_1$, $P_0$ be oriented $C^\infty$-manifolds, and 
$\pi: P_1 \to P_0$ be a submersion (i.e. $\pi$ is of class $C^\infty$ and $d\pi_p: T_pP_1 \to T_{\pi(p)}P_0$ is surjective for any $p \in P_1$). 
Let $d:= \dim P_1 - \dim P_0$. 
The \textit{integration along fibers} is a chain map $\pi_!: \mca{A}^*_c(P_1) \to \mca{A}^{*-d}_c(P_0)$, 
which is defined in the following way. 

First we consider the case $P_1=\R^{n+d}$, $P_0=\R^n$ and $\pi(x_1,\ldots, x_n, y_1,\ldots, y_d)= (x_1,\ldots,x_n)$. 
We assume that $dx_1 \wedge \cdots \wedge dx_n \wedge dy_1 \wedge \cdots \wedge dy_d \in \mca{A}^{n+d}(\R^{n+d})$ and 
$dx_1 \wedge \cdots \wedge dx_n \in \mca{A}^n(\R^n)$ are positive with respect to the orientations on $\R^{n+d}$ and $\R^n$. 

For $\omega(x,y):= u(x,y) dx_{i_1} \cdots dx_{i_k} dy_{j_1} \cdots dy_{j_l} \in \mca{A}^*_c(\R^{n+d})$ where 
$1 \le i_1 < \cdots < i_k \le n$ and $1 \le j_1 < \cdots < j_l \le d$, we define $\pi_! \omega \in \mca{A}^{*-d}_c(\R^n)$ by 
\[
\pi_! \omega(x):=  \begin{cases}
                          0 &(l < d) \\
                        \biggl( \int_{\R^d} u(x,y) \, dy_1 \cdots dy_d \biggr) dx_{i_1} \ldots dx_{i_k} &(l=d).
                         \end{cases}
\]
In the general case, $\pi_!$ is defined by taking local charts and partitions of unity on $P_1$. 
Below is a list of some basic properties of the integration along fibers. 

\begin{itemize}
\item If $P_0$ is a positively oriented point, then $\pi_! \omega = \int_{P_1} \omega$. 
\item $\pi_!$ is a chain map, i.e. there holds $d(\pi_! \omega) = \pi_! (d\omega)$. 
\item For any $\eta \in \mca{A}^*(P_0)$, $\pi_!(\pi^* \eta \wedge \omega) = \eta \wedge \pi_! \omega$. 
\item For any submersion $\pi': P_2 \to P_1$ and $\omega \in \mca{A}^*_c(P_2)$, there holds 
$(\pi \circ \pi')_! \omega = \pi_! (\pi'_! \omega)$. 
\end{itemize} 

\subsection{Differentiable spaces} 

For any integers $n \ge m\ge 0$, 
let $\mca{U}_{n,m}$ denote the set of oriented $m$-dimensional submanifolds in $\R^n$. 
We set $\mca{U}:= \bigsqcup_{n \ge m \ge 0} \mca{U}_{n,m}$. 

Let $X$ be a set. 
A \textit{differentiable structure} on $X$ is a family of maps called \textit{plots}, which satisfies the following conditions:  
\begin{itemize}
\item  Every plot is a map from $U \in \mca{U}$ to $X$. 
\item  If $\ph: U \to X$ is a plot, $U' \in \mca{U}$ and 
 $\theta: U' \to U$ is a submersion, then 
$\ph \circ \theta: U' \to X$ is a plot. 
\end{itemize} 
A \textit{differentiable space} is a pair of a set and a differentiable structure on it. 
For any differentiable space $X$, let 
$\mca{P}(X):= \{(U,\ph) \mid U \in \mca{U}, \text{$\ph:U \to X$ is a plot} \}$. 
A map $f: X \to Y$ between differentiable spaces $X$ and $Y$ is called \textit{smooth}, if there holds
\[
(U,\ph) \in \mca{P}(X) \implies (U, f \circ \ph) \in \mca{P}(Y).
\]

\begin{rem} 
The term ``plot'' is originally used in 
the theory of Chen's differentiable spaces (\cite{KTChen_86}) and 
the theory of diffeological spaces (\cite{Sou_80}, \cite{IgZe_13}). 
Our notion of differentiable spaces is weaker than these notions. 
In particular, in axioms of both of these spaces, it is required that 
all constant maps are plots, while we do not require this condition 
(see Example \ref{150623_1} (i)-(b) below). 
\end{rem} 

\begin{ex}\label{150623_1} 
Let us explain some examples of differentiable structures. 
\begin{enumerate} 
\item[(i):]  
Let $M$ be a $C^\infty$-manifold. 
One can consider the following differentiable structures on $M$: 
\begin{itemize}
\item[(a):] 
$\ph: U \to M$ is a plot if $\ph$ is of class $C^\infty$. 
We denote the resulting differentiable space by $M$. 
\item[(b):] 
$\ph: U \to M$ is a plot if $\ph$ is a submersion (we always assume that any submersion is of class $C^\infty$). 
We denote the resulting differentiable space by $M_\reg$. 
\end{itemize}
The identity map $\id_M: M_\reg \to M$ is smooth, but $\id_M: M \to M_\reg$ is not. 
\item[(ii):] 
$\mca{L}M:= C^\infty(S^1, M)$ has the following differentiable structure: 
a map $\ph: U \to \mca{L} M$ is a plot if 
$U \times S^1 \to M; \, (u,\theta) \mapsto \ph(u)(\theta)$ is of class $C^\infty$. 
\item[(iii):]
Let $X$ be a differentiable space, $Y$ a subset of $X$, 
and $i: Y \to X$ be the inclusion map. 
One can define the following differentiable structure on $Y$: a map 
$\ph: U \to Y$ is a plot if $i \circ \ph: U \to X$ is a plot of $X$. 
\item[(iv):]
Let $(X_s)_{s \in S}$ be a family of differentiable spaces parametrized by the nonempty set $S$. 
The product $X:= \prod_{s \in S} X_s$ has the following differentiable structure: 
$\ph:U \to X$ is a plot if $\pi_s \circ \ph$ is a plot of $X_s$ for every $s \in S$ ($\pi_s$ denotes the projection to $X_s$).
\end{enumerate}
\end{ex}

Smooth maps $f,g: X \to Y$ are called \textit{smoothly homotopic}, 
and denoted as $f \sim g$, 
if there exists a smooth map $h: X  \times \R  \to Y$ such that 
\[
h(x,s) = \begin{cases} 
            f(x) &(s<0), \\
            g(x) &(s>1).
           \end{cases}
\]
$h$ is called a \textit{smooth homotopy} between $f$ and $g$. 
Differentiable spaces $X$ and $Y$ are called smoothly homotopic 
if there exist smooth maps $f: X \to Y$ and $g: Y \to X$ such that 
$g \circ f \sim \id_X$ and $f \circ g \sim \id_Y$. 

\begin{rem} 
The differentiable structure on $\R$ is defined as in Example \ref{150623_1} (i)-(a), i.e. 
$(U,\ph) \in \mca{P}(\R) \iff \ph \in C^\infty(U, \R)$. 
The differentiable structure on $X \times \R$ is defined as in Example \ref{150623_1} (iv). 
\end{rem} 

\begin{rem} 
It seems difficult to prove the transitivity of $\sim$, 
since our definition of differentiable structures requires only very weak assumptions. 
The author expects that the transitivity does not hold in general. 
\end{rem} 

Unless otherwise specified, any $C^\infty$-manifold $M$ will be equipped with the differentiable structure in Example \ref{150623_1} (i)-(a). 
When $C^\infty$-manifolds $M$, $N$ are equipped with these differentiable structures, 
a map $f: M \to N$ is smooth if and only if $f$ is of class $C^\infty$.

\subsection{de Rham chain complex} 

Let $X$ be a differentiable space. 
For any $k \in \Z$, we set 
\[
\bar{C}^\dR_k(X):= \bigoplus_{(U,\ph) \in \mca{P}(X)} \mca{A}^{\dim U-k}_c(U). 
\]
Notice that $\bar{C}^\dR_k(X)=0$ for any $k<0$. 

For any $(U,\ph) \in \mca{P}(X)$ and $\omega \in \mca{A}^{\dim U-k}_c(U)$, 
let $(U,\ph,\omega)$ denote the image of $\omega$ by the natural injection 
$\mca{A}^{\dim U-k}_c(U) \to \bar{C}^\dR_k(X)$. 
Let $Z_k(X)$ denote the subspace of $\bar{C}^\dR_k(X)$, which is generated by 
\begin{align*} 
&\{(U,\ph, \pi_!\omega) - (V, \ph \circ \pi, \omega) \mid (U,\ph) \in \mca{P}(X), \quad V \in \mca{U}, \quad  \omega \in \mca{A}^{\dim V-k}_c(V), \\
& \qquad \text{$\pi:V \to U$ is a submersion}\}.
\end{align*} 
We define $C^\dR_k(X):= \bar{C}^\dR_k(X)/Z_k(X)$. 
For every $k \in \Z$, 
\[
\partial: C^\dR_k(X) \to C^\dR_{k-1}(X); \qquad [(U,\ph,\omega)] \mapsto [(U, \ph, d\omega)] 
\]
is well-defined, since $d(\pi_! \omega) = \pi_! (d\omega)$. 
Moreover, $\partial^2=0$ since $d^2=0$. 
We call $(C^\dR_*(X), \partial)$ the \textit{de Rham chain complex} of $X$, 
and denote its homology by $H^\dR_*(X)$. 
Elements of $C^\dR_*(X)$ are called \textit{de Rham chains} of $X$. 

\begin{rem}
Our notion of de Rham chains is inspired by the notion of \textit{approximate de Rham chains} by K. Fukaya (\cite{Fuk_06} Definition 6.4). 
However, an explicit definition of a chain complex is not given in \cite{Fuk_06}. 
\end{rem}

The \textit{augmentation map} 
$\ep: C^\dR_0(X) \to \R$ is defined by 
\[
\ep( [(U,\ph,\omega)]):= \int_U  \omega. 
\]
$\ep$ vanishes on $\partial  C^\dR_1(X)$ by Stokes' theorem. 

Next we define the \textit{fiber product} on de Rham chain complexes. 
Let $M$ be an oriented $C^\infty$-manifold of dimension $d$. 
Let us consider the differentiable space $M_\reg$ in Example \ref{150623_1} (i)-(b). 
Let $X$, $Y$ be differentiable spaces, and $e_X: X \to M_\reg$, $e_Y: Y \to M_\reg$ be smooth maps. 
We define a differentiable structure on 
\[
X \times_M Y:= \{(x,y) \in X \times Y \mid e_X(x)= e_Y(y)\}
\]
as a subset of $X \times Y$ (see Example \ref{150623_1} (iii) and (iv)). 

We are going to define a chain map  
\begin{equation}\label{150127_1} 
C^\dR_{k+d}(X) \otimes C^\dR_{l+d}(Y) \to C^\dR_{k+l+d}(X \times_M Y); \quad a \otimes b \mapsto a \times_M b, 
\end{equation} 
which we call the fiber product on de Rham chain complexes. 

Let $(U,\ph) \in \mca{P}(X)$, $(V,\psi) \in \mca{P}(Y)$. 
Then, 
$e_U:= e_X \circ \ph: U \to M$ and $e_V:= e_Y \circ \psi: V \to M$ are submersions. 
Thus, 
$U \times_M V:= \{(u,v) \in U \times V \mid e_U(u) = e_V(v) \}$ 
is a $C^\infty$-manifold, moreover it is a submanifold of a Euclidean space (since $U$ and $V$ are in $\mca{U}$). 
The map $e_{UV}: U \times_M V \to M; (u,v) \mapsto e_U(u)=e_V(v)$ is also a submersion.

Let us define an orientation on $U \times_M V$. 
Let $F_U:= \Ker \, de_U \subset TU$, 
$F_V:=\Ker \, de_V \subset TV$. 
There exist exact sequences 
\begin{align*} 
0& \to F_U \to TU \to e_U^*TM \to 0, \qquad 0 \to F_V \to TV \to e_V^*TM \to 0, \\
0& \to F_U \oplus F_V \to T(U\times_M V) \to e_{UV}^*TM \to 0.
\end{align*} 
Then we obtain isomorphisms 
\begin{align*}
\det(TU) &\cong (e_U)^* \det(TM) \otimes \det(F_U), \\
\det(TV) &\cong (e_V)^* \det(TM) \otimes \det(F_V),  \\
\det(T(U \times_M V)) &\cong (e_{UV})^* \det(TM) \otimes \det(F_U) \otimes \det(F_V).
\end{align*}
Since $M$, $U$ and $V$ are oriented, 
one can define an orientation on $F_U$ (resp. $F_V$) so that the first (resp. second) isomorphism is orientation-preserving. 
Then, 
one can define an orientation of $U \times_M V$ so that the third isomorphism is orientation-preserving. 
Equipped with this choice of orientation, $U \times_M V$ is an oriented submanifold of a Euclidean space and is therefore an element of $\mca{U}$. 

Let $\pi_X: X \times_M Y \to X$, $\pi_Y: X \times_M Y \to Y$, $\pi_U: U \times_M V \to U$ and $\pi_V: U \times_M V \to V$ be the projection maps. Then 
\begin{itemize} 
\item Since $\pi_U$ is a submersion, $(U \times_M V, \pi_X \circ (\ph\times \psi)) = (U \times_M V, \ph \circ \pi_U) \in  \mca{P}(X)$. 
\item Since $\pi_V$ is a submersion, $(U \times_M V, \pi_Y \circ (\ph \times \psi)) = (U \times_M V, \psi \circ \pi_V) \in \mca{P}(Y)$. 
\end{itemize}
Therefore $(U \times_M V, \ph \times \psi) \in \mca{P}(X \times_M Y)$. 
Now, let us define (\ref{150127_1}) by 
\[
[(U, \ph, \omega)] \times_M [(V, \psi, \eta)]:= (-1)^{l(\dim U-d)} [(U \times_M V, \ph \times \psi, \omega \times \eta)]. 
\]
It is easy to check that this is a well-defined chain map, and the product is associative. 

The smooth map $i: X \times_M Y \to Y \times_M X; \, (x,y) \mapsto (y,x)$ 
induces a chain map $i_*: C^\dR_*(X \times_M Y) \to C^\dR_*(Y \times_M X)$ (see Section 4.5). 
We need the lemma below for later use. 

\begin{lem} 
$i_*(a \times_M b)= (-1)^{kl} b \times_M a$
for any $a \in C^\dR_{k+d}(X)$ and $b \in C^\dR_{l+d}(Y)$. 
\end{lem}
\begin{proof} 
Setting $a:= [(U, \ph, \omega)]$ and $b:=[(V, \psi, \eta)]$, we obtain 
\begin{align*} 
i_*(a \times_M b) &= (-1)^{l (\dim U-d)} [(U \times_M V, i \circ (\ph \times \psi), \omega \times \eta)] \\
&=(-1)^{l (\dim U- d) + (\dim U-d) (\dim V-d) + |\omega||\eta|} [(V \times_M U, \psi \times \ph, \eta \times \omega)]
= (-1)^{kl} b \times_M a.
\end{align*} 
Notice that $(\dim U- d) (\dim V-d)$ appears in the exponent on the second line, 
since the map $U \times_M V \to V \times_M U; (u,v) \mapsto (v,u)$ changes the orientations by $(-1)^{(\dim U-d)(\dim V-d)}$.
\end{proof} 

When $M$ is a positively oriented point (thus $d=0$), the chain map (\ref{150127_1}) is 
\[
C^\dR_k(X) \otimes C^\dR_l(Y) \to C^\dR_{k+l}(X \times Y); \qquad a \otimes b \mapsto a \times b,
\]
which we  call the \textit{cross product} on de Rham chain complexes. 

\subsection{de Rham chain complex of $\pt$}

Let $\pt$ be a set which has a unique element. 
For any $U \in \mca{U}$, there exists a unique map $\ph_U: U \to \pt$. 
We define a differentiable structure on $\pt$ by 
$\mca{P}(\pt):= \{ (U,\ph_U) \mid U \in \mca{U}\}$. 

Let us consider $\{0\} \in \mca{U}_{1,0} \subset \mca{U}$ with the positive orientation. 
For any $U \in \mca{U}$, let us denote the unique map $U \to \{0\}$ by $\pi_U$. 
For any $\omega \in \mca{A}^*_c(U)$, 
we obtain
\[
[(U, \ph_U, \omega)] = [(\{0\}, \ph_{\{0\}}, (\pi_U)_! \omega)] = [(\{0\}, \ph_{\{0\}}, \int_U \omega)].
\]
Therefore, 
$C^\dR_k(\pt)=0$ if $k \ne 0$, 
and the augmentation map $\ep: C^\dR_0(\pt) \to \R$ is an isomorphism. 
In particular, 
\[
H^\dR_k(\pt) \cong  \begin{cases} 
                      \R &(k =0),  \\
                        0 &(k \ne 0).
                     \end{cases}
\] 

\subsection{Functoriality}

For any differentiable spaces $X$, $Y$ and a smooth map $f:X \to Y$, 
\[
f_*: C^\dR_*(X) \to C^\dR_*(Y); \qquad
[(U,\ph,\omega)] \mapsto [(U, f \circ \ph, \omega)]
\]
is a well-defined chain map.

\begin{prop}\label{prop:homotopy}
Let $X$, $Y$ be differentiable spaces, and $f,g: X \to Y$ be smooth maps. 
If $f$, $g$ are smoothly homotopic, then 
$f_*, g_*: C^\dR_*(X) \to C^\dR_*(Y)$ are chain homotopic. 
\end{prop}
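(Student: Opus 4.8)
The plan is to construct an explicit chain homotopy by integrating over the homotopy parameter, mimicking the classical proof that homotopic maps induce chain-homotopic maps on singular homology, but adapted to de Rham chains. Let $h: X \times \R \to Y$ be a smooth homotopy with $h(x,s) = f(x)$ for $s < 0$ and $h(x,s) = g(x)$ for $s > 1$. The idea is that a de Rham chain $[(U, \ph, \omega)]$ in $X$ gives rise to a plot $h \circ (\ph \times \id_\R): U \times \R \to Y$, and one should build an operator $K: C^\dR_*(X) \to C^\dR_{*+1}(Y)$ by "sweeping" $\omega$ across the interval, i.e. roughly $K([(U,\ph,\omega)]) := \pm [(U \times \R, h \circ (\ph \times \id_\R), \omega \times \rho)]$ for a suitable compactly supported $1$-form $\rho = \rho(s)\,ds$ on $\R$ with $\int_\R \rho = 1$ and $\supp \rho \subset (0,1)$. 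The sign and the precise normalization will have to be pinned down so that $\partial K + K \partial = g_* - f_*$.

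First I would check that $K$ is well-defined on $C^\dR_*(X)$, i.e. that it respects the relations generating $Z_*(X)$: if $\pi: V \to U$ is a submersion, then $(\pi \times \id_\R): V \times \R \to U \times \R$ is a submersion, $(\pi \times \id_\R)_!(\omega \times \rho) = (\pi_!\omega) \times \rho$ by the property $\pi_!(\pi^*\eta \wedge \omega) = \eta \wedge \pi_!\omega$ of integration along fibers (applied in the fiber direction, with the $\R$-factor untouched), and $h \circ ((\ph \circ \pi) \times \id_\R) = (h \circ (\ph \times \id_\R)) \circ (\pi \times \id_\R)$; hence $K$ descends to the quotient. Next I would compute $\partial K([(U,\ph,\omega)])$ using $\partial[(U',\psi,\omega')] = [(U',\psi,d\omega')]$ and the Leibniz rule $d(\omega \times \rho) = d\omega \times \rho + (-1)^{|\omega|}\omega \times d\rho$. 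The term with $d\omega$ reassembles into $K(\partial [(U,\ph,\omega)])$ up to sign. The term with $d\rho$ is where the homotopy endpoints enter: integrating $d\rho = \rho'(s)\,ds$ along the $\R$-fiber of $U \times \R \to U$ and using that $h$ is constant equal to $f$ resp. $g$ outside $[0,1]$, Stokes on $\R$ gives $\int_\R d\rho = 0$ naively, so one actually wants $\rho$ chosen with $\int_\R \rho = 1$ and then the relevant computation is that pushing $\omega \times d\rho$ forward along $h$ and then contracting the $\R$-direction produces $g_*[(U,\ph,\omega)] - f_*[(U,\ph,\omega)]$; concretely I would replace $\R$ by a picture where the chain $[(U\times\R, \ldots)]$ has its "boundary at infinity" split as the $s>1$ end (mapping via $g$) minus the $s<0$ end (mapping via $f$), which is legitimate because $\omega \times \rho$ has compact support but $h$ is eventually constant, so one can cut $\R$ at, say, $s=2$ and $s=-1$ and use that $(\pi_{U\times[-1,2] \to U})_!$ of the exact piece picks up exactly the two constant slices. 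The cleanest route is to take $\rho$ supported in $(0,1)$ and instead work with a primitive: write $\rho = d\lambda$ is impossible on $\R$ for $\int\rho \ne 0$, so instead use the two functions $\chi_f, \chi_g$ with $\chi_f + \chi_g = 1$, $\chi_f \equiv 1$ near $-\infty$, $\chi_g \equiv 1$ near $+\infty$, and set things up so that $d\chi_g = \rho$; then integrating $\omega \times d\chi_g$ along the fiber and comparing with the boundary contributions of the half-lines yields the endpoint terms.

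The main obstacle I anticipate is bookkeeping: getting the signs right in $\partial K + K\partial = g_* - f_*$ (the de Rham chain complex has its grading tied to $\dim U - k$, so shifting by the extra $\R$-factor introduces a degree shift and correspondingly a sign in the Koszul rule $d(\omega \times \rho)$), and, more substantively, making rigorous the "integrate the $\R$-direction and read off the two ends" step despite $\R$ being noncompact. I would handle the latter by noting that for a de Rham chain $[(U \times \R, h\circ(\ph\times\id), \eta)]$ with $\eta$ compactly supported and $h$ constant (in $s$) outside $[0,1]$, one may choose $R$ large and factor through the diffeomorphism-onto-image over $U \times [-R, R]$; more precisely I would argue directly with the defining relation of $Z_*$: the submersion $p: U \times \R \to U$ lets me write $[(U\times\R, h\circ(\ph\times\id), \eta)]$ in terms of $[(U, \ldots, p_!\eta)]$ only when $h\circ(\ph\times\id)$ factors through $U$, which it does not, so instead the correct manipulation is entirely within $Y$: apply $d$, use Leibniz, and recognize the boundary-of-interval terms by a partition-of-unity/Stokes computation on the $\R$-factor that only sees the compact support of $\eta$. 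I expect this to go through exactly as in Chen's iterated-integral arguments, and the final verification is a direct (if tedious) computation once the sign conventions of Section 2.3 are fixed.
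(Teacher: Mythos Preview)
Your overall strategy---cross with a chain on $\R$ and push forward along $h$---is exactly the paper's approach, but there is a degree error that breaks your computation as written. If $\rho = \rho(s)\,ds$ is a compactly supported $1$-form on $\R$, then $[(\R,\id_\R,\rho)]$ lies in $C^\dR_0(\R)$ (the de Rham degree is $\dim\R - \deg\rho = 1-1 = 0$), so your $K$ has degree $0$, not $+1$, and cannot serve as a chain homotopy. Worse, since $\rho$ is already top-degree on $\R$ one has $d\rho = 0$; the ``term with $d\rho$'' you are counting on to produce the endpoints is identically zero. Your later remark that $\rho = d\lambda$ is impossible with $\lambda$ compactly supported is correct, and your $\chi_g$ is not compactly supported either, so it does not define a de Rham chain; you never actually close this gap.

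The fix, and this is exactly what the paper does, is to use a $0$-form: take $a \in C_c^\infty(\R)$ with $a \equiv 1$ on $[-\ep, 1+\ep]$ (crucially \emph{not} supported inside $(0,1)$), set $u := [(\R,\id_\R,a)] \in C^\dR_1(\R)$ and $K(x) := (-1)^{|x|} h_*(x \times u)$. Then $\partial K(x) + K(\partial x) = h_*(x \times \partial u)$, and the key point is that $da$ is supported on $\R_{<0} \cup \R_{>1}$, precisely where $h$ is constant in $s$. Splitting $\partial u = v_0 + v_1$ with $v_0$ supported on $\R_{<0}$ and $v_1$ on $\R_{>1}$, the plot $h \circ (\ph \times \id)$ restricted to $U \times \R_{<0}$ factors as $f \circ \ph \circ \pr_U$; the defining relation of $C^\dR_*$ (push forward along the submersion $\pr_U: U \times \R_{<0} \to U$, together with $\int_{\R_{<0}} da = a(0) = 1$) then gives $h_*(x \times v_0) = f_*(x)$, and similarly $h_*(x \times v_1) = -g_*(x)$. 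In short: you want the compactly supported \emph{function} equal to $1$ on the active interval, so that its \emph{differential} lives where $h$ is constant and the fiber-integration relation applies directly---the opposite support profile from what you proposed.
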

\begin{proof} 
Let $h: X \times \R \to Y$ be a smooth homotopy between $f$ and $g$, 
i.e., 
$h$ is a smooth map such that 
$h(x,s)=f(x)$ if $s<0$ and $h(x,s)=g(x)$ if $s>1$. 

Take $a \in C_c^\infty(\R)$ so that $a \equiv 1$ on $[-\ep, 1+\ep]$ for some $\ep>0$. 
Let $u:= [(\R, \id_\R, a)] \in C^\dR_1(\R)$, and 
define a linear map 
$K: C^\dR_*(X) \to C^\dR_{*+1}(Y)$ by 
$K(x):= (-1)^{|x|} h_*(x \times u)$. Since $h_*$ and the cross product are chain maps, for any $x \in C^\dR_*(X)$, 
we have 
\[
\partial K(x) + K(\partial x) = (-1)^{|x|} h_*(\partial (x \times u)) + (-1)^{|x|-1} h_*(\partial x \times u) =  h_*( x \times \partial u). 
\]
Thus, 
it is enough to show that $h_*( x \times \partial u) = f_*(x) - g_*(x)$. 
Since both sides are linear on $x$, we may assume that 
$x=[(U,\ph,\omega)]$ for some $(U,\ph) \in \mca{P}(X)$ and $\omega \in \mca{A}^*_c(U)$. 

Let $i_0: \R_{<0} \to \R$, $i_1:\R_{>1} \to \R$ be the inclusion maps, 
and 
$\alpha_0:= da|_{\R_{<0}}$, $\alpha_1:= da|_{\R_{>1}}$. 
Define $v_0, v_1 \in C_0^\dR(\R)$ by 
$v_0:=[(\R_{<0} , i_0, \alpha_0)]$, $v_1:=[(\R_{>1}, i_1, \alpha_1)]$. 
Then, since $da$ is supported on $\R_{<0}  \cup \R_{>1}$, we have 
$\partial u = v_0 + v_1$. 

Let $x= [(U, \ph, \omega)] \in C^\dR_*(X)$. 
Then, 
\[
h_*(x \times v_0) = \big[  (U \times \R_{<0},  h \circ (\ph \times i_0), \omega \times \alpha_0) \big].
\]
There holds $h \circ (\ph \times i_0)= f \circ \ph \circ \pr_U$, 
where $\pr_U: U \times \R_{<0} \to U$ is the projection map.
Moreover, $(\pr_U)_!(\omega \times \alpha_0) = \omega$, since $\int_{\R_{<0}} \alpha_0 = a(0) =1$.
Thus, 
$h_*(x \times v_0)= f_*(x)$. 
A similar argument shows $h_*(x \times v_1)= - g_*(x)$. 
Hence, we get 
\[
h_*(x \times \partial u) =  h_*(x \times (v_0+v_1)) = f_*(x) - g_*(x). 
\]
\end{proof} 

\subsection{Truncated spaces} 

Let $X$ be a differentiable space. 
For any function $f:X \to \R$ and $a \in \R \cup \{\infty\}$, let us define 
\[
X_{f,a}:= \{ x \in X \mid f(x) < a\}. 
\]
This is a subspace of $X$ truncated by the inequality $f(x)<a$. 
Obviously $X_{f, \infty} = X$. 
We prove some technical results about de Rham chain complexes of these truncated spaces. 

A function $f: X \to \R$ is called smooth, if $f \circ \ph \in C^\infty(U)$ for any $(U,\ph) \in \mca{P}(X)$. 
$f$ is called \textit{approximately smooth}, if there exists 
a decreasing sequence $(f_j)_{j \ge 1}$ of smooth functions on $X$ such that 
$f(x) = \lim_{j \to \infty} f_j(x)$ for every $x \in X$. 

\begin{rem} 
If $f: X \to \R$ is approximately smooth, $f \circ \ph: U \to \R$ is upper semi-continuous for any $(U,\ph) \in \mca{P}(X)$.
This is because $(f \circ \ph)^{-1}(\R_{<a}) = \bigcup_{j=1}^\infty (f_j \circ \ph)^{-1}(\R_{<a})$ for any $a \in \R$. 
\end{rem}

An important example of an approximately smooth (but not smooth) function 
is the length functional on the free loop space. 

\begin{lem}
Let $M$ be a Riemannian manifold, and consider the differentiable structure on $\mca{L}M$ 
as in Example \ref{150623_1} (ii). 
Then, $\len: \mca{L} M \to \R; \, \gamma \mapsto \int_{S^1} |\dot{\gamma}|$ 
is approximately smooth. 
\end{lem}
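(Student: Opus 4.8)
The plan is to produce an explicit decreasing sequence $(f_j)_{j \ge 1}$ of smooth functions on $\L M$ converging pointwise to $\len$. The only obstruction to $\len$ itself being smooth is the square root: if $g$ denotes the Riemannian metric, then $\len(\gamma) = \int_{S^1} \sqrt{g_{\gamma(\theta)}(\dot\gamma(\theta), \dot\gamma(\theta))}\, d\theta$, and $\sqrt{\,\cdot\,}$ is not $C^\infty$ at the origin, so the integrand is not smooth at points where $\dot\gamma$ vanishes (e.g.\ along constant loops). I would regularize this by adding a positive constant under the square root: for each integer $j \ge 1$, set
\[
f_j(\gamma) := \int_{S^1} \sqrt{\, g_{\gamma(\theta)}(\dot\gamma(\theta), \dot\gamma(\theta)) + \tfrac1j \,}\; d\theta .
\]

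First I would check that each $f_j$ is smooth in the sense recalled above, i.e.\ $f_j \circ \ph \in C^\infty(U)$ for every plot $\ph : U \to \L M$. Write $F : U \times S^1 \to M$, $(u,\theta) \mapsto \ph(u)(\theta)$, for the associated $C^\infty$ map. Then $(u,\theta) \mapsto g_{F(u,\theta)}(\partial_\theta F(u,\theta), \partial_\theta F(u,\theta))$ is $C^\infty$ on $U \times S^1$, being a composition of smooth maps ($F$, its $\theta$-derivative, and the metric). Adding the constant $\tfrac1j$ keeps this argument bounded below by $\tfrac1j > 0$, so composing with $\sqrt{\,\cdot\,}$, which is $C^\infty$ on $(0,\infty)$, preserves smoothness. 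Finally, integrating a $C^\infty$ function over the compact manifold $S^1$ in the variable $\theta$ yields a $C^\infty$ function of $u$ by differentiation under the integral sign, valid because near any point of $U$ one may restrict to a compact neighborhood over which $F$ and all its derivatives are bounded. Hence $f_j \circ \ph \in C^\infty(U)$.

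Next I would verify monotonicity and pointwise convergence. For fixed $\gamma$ and $\theta$ the integrand $\sqrt{g_{\gamma(\theta)}(\dot\gamma(\theta),\dot\gamma(\theta)) + \tfrac1j}$ is nonincreasing in $j$, so $f_{j+1}(\gamma) \le f_j(\gamma)$; thus $(f_j)_{j\ge1}$ is a decreasing sequence of smooth functions. For the limit, for each $\theta$ one has $\sqrt{|\dot\gamma(\theta)|^2 + \tfrac1j} \to |\dot\gamma(\theta)|$ as $j \to \infty$, and all the integrands are dominated on $S^1$ by the continuous, hence integrable, function $\theta \mapsto \sqrt{|\dot\gamma(\theta)|^2 + 1}$; by dominated convergence $f_j(\gamma) \to \int_{S^1} |\dot\gamma(\theta)|\, d\theta = \len(\gamma)$. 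This exhibits the sequence required by the definition of approximate smoothness, and the lemma follows.

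There is no real difficulty here; the only point deserving care is the smoothness of each $f_j$ — concretely, the interchange of $u$-differentiation with the $\theta$-integral over $S^1$ — which is routine given compactness of $S^1$ and the joint smoothness of $F$ on compact subsets of $U \times S^1$.
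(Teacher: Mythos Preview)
Your proof is correct and follows essentially the same approach as the paper: both regularize the square root by replacing $\sqrt{t}$ with a smooth function $\rho_j(t)$ decreasing to $\sqrt{t}$, and your choice $\rho_j(t)=\sqrt{t+1/j}$ is precisely such a sequence. The paper states the construction at this level of generality and omits the verification of smoothness of $\gamma\mapsto\int_{S^1}\rho(|\dot\gamma|^2)$, which you supply in detail.
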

\begin{proof} 
It is easy to check that, for any $\rho \in C^\infty(\R_{\ge 0})$, 
the functional 
\[
\E_\rho: \L M \to \R; \qquad \gamma \mapsto \int_{S^1} \rho(|\dot{\gamma}|^2) 
\]
is smooth. 
Let us take a decreasing sequence $(\rho_j)_{j \ge 1}$ on $C^\infty(\R_{ \ge 0})$, such that  
$\lim_{j \to \infty} \rho_j(t) = \sqrt{t}$ for any $t \ge 0$. 
Then, $(\E_{\rho_j})_{j \ge 1}$ is a decreasing sequence of smooth functions on $\L M$, such that
$\lim_{j \to \infty} \E_{\rho_j} = \len$. 
\end{proof} 

\begin{lem}\label{150210_1}
Let $X$ be a differentiable space. 
\begin{enumerate} 
\item[(i):] 
Let $(f_j)_{j \ge 1}$ be a decreasing sequence of approximately smooth functions on $X$, such that 
$\lim_{j \to \infty} f_j(x) < 0$ for every $x \in X$. 
Then $\vlim_{j \to \infty} C^\dR_*(X_{f_j,0}) \to C^\dR_*(X)$, which is induced by the inclusion maps, is surjective. 
\item[(ii):]
Let $f$ be an approximately smooth function on $X$, 
$(c_j)_{j \ge 1}$ be an increasing sequence of real numbers, and $c:= \sup_j c_j$. 
Then $\vlim_{j \to \infty} C^\dR_*(X_{f,c_j}) \to C^\dR_*(X_{f,c})$ is surjective. 
\end{enumerate} 
\end{lem}
\begin{proof}
(ii) follows from (i) by setting $f_j: = (f-c_j)|_{X_{f,c}}$. 
Thus it is enough to prove (i). 
Let $(U,\ph) \in \mca{P}(X)$ and $\omega \in \mca{A}^*_c(U)$. 
Setting $\ph_j:= f_j \circ \ph$ for each $j \ge 1$, 
$(\ph_j)_{j \ge 1}$ is a decreasing sequence of upper semi-continuous functions on $U$. 
Since $\supp \, \omega$ is compact, there exists $j$ such that 
$\ph_j(u)<0$ for any $u \in \supp \  \omega$. 
Setting $U_j:=\{ u \in U \mid \ph_j(u) <0\}$, 
the chain map $C^\dR_*(X_{f_j,0}) \to C^\dR_*(X)$ maps
$[(U_j, \ph|_{U_j}, \omega|_{U_j})]$ to 
$[(U,\ph,\omega)]$. 
\end{proof}

\begin{lem}\label{150210_2}
Let $f: X \to \R$ be an approximately smooth function. 
Then, the chain map 
$I_0:C^\dR_*(X_{f,0}) \to C^\dR_*(X)$, which is induced by the inclusion map, is injective. 
\end{lem}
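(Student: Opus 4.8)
The plan is to construct an explicit left inverse (in fact a deformation retraction) of the inclusion $I_0: C^\dR_*(X_{f,0}) \to C^\dR_*(X)$, rather than arguing abstractly. Since $f$ is approximately smooth, fix a decreasing sequence $(f_j)_{j\ge1}$ of smooth functions on $X$ converging pointwise to $f$. The point is that for a generator $[(U,\ph,\omega)]$ with $\ph: U \to X$ and $\omega$ compactly supported, the set $\supp\,\omega$ is compact and $f\circ\ph$ is upper semi-continuous, so membership of a chain in $C^\dR_*(X_{f,0})$ is detectable on the level of plots: $[(U,\ph,\omega)]$ lies in the image of $I_0$ iff we may represent it with $f\circ\ph < 0$ on $\supp\,\omega$. (Here I would first record the observation, already implicit in the proof of Lemma \ref{150210_1}, that $X_{f,0} = X_{f_j,0}$ for suitable $j$ on any given compact piece, and that $I_0$ factors through each $C^\dR_*(X_{f_j,0})$.)

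The main step is to define a map $r: C^\dR_*(X) \to C^\dR_*(X_{f,0})$ that is the identity on the image of $I_0$. For a generator $x = [(U,\ph,\omega)]$, choose $j$ large enough that $f_j\circ\ph < 0$ on $\supp\,\omega$ is \emph{not} automatic — instead, multiply $\omega$ by a cutoff. Concretely: pick a smooth function $\chi: \R \to [0,1]$ with $\chi \equiv 1$ on $(-\infty, -\ep]$ and $\supp\,\chi \subset (-\infty, 0)$; set $U' := (f_j\circ\ph)^{-1}(\R_{<0}) \in \mca{U}$, which is open in $U$, and define $r(x) := [(U', \ph|_{U'}, (\chi\circ f_j\circ\ph)\cdot\omega|_{U'})]$, which lies in $C^\dR_*(X_{f,0})$ provided $f_j \ge f$ (so $f_j\circ\ph<0$ forces $f\circ\ph<0$ on the new support). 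One checks $r$ is well-defined on $C^\dR_*(X)$ — independence of the auxiliary data $j$, $\chi$ — using the fiber-product relations defining $Z_*(X)$, and that $r$ is a chain map only up to homotopy in general because $d$ hits the cutoff; to get a genuine chain map I would instead argue that $r \circ I_0$ is chain-homotopic to the identity and that $I_0 \circ r$ is too, which already gives injectivity of $I_0$ on homology — but the statement asserts injectivity of $I_0$ on the chain level, so the cutoff must be chosen so that $d$ of it vanishes where it matters. This forces the cleaner approach below.

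The clean approach: since $I_0$ is a direct-limit-compatible inclusion of the \emph{generating} index sets $\mca{P}(X_{f,0}) \subset \mca{P}(X)$, it suffices to show that the subspace $Z_*(X_{f,0})$ of relations among plots of $X_{f,0}$ is exactly the intersection $Z_*(X) \cap \bar C^\dR_*(X_{f,0})$, where $\bar C^\dR_*(X_{f,0}) \subset \bar C^\dR_*(X)$ is the span of those $(U,\ph,\omega)$ with $\ph$ factoring through $X_{f,0}$. The inclusion $\subset$ is clear. For $\supset$, given a relation $\sum_i \lambda_i\bigl((U_i,\ph_i,(\pi_i)_!\omega_i) - (V_i,\ph_i\circ\pi_i,\omega_i)\bigr)$ lying in $\bar C^\dR_*(X_{f,0})$, the key is that the submersions $\pi_i: V_i \to U_i$ can be cut down: replace each $U_i$ by $(f\circ\ph_i)^{-1}(\R_{<0})$ — wait, $f$ is only approximately smooth, so instead replace by $(f_j\circ\ph_i)^{-1}(\R_{<0})$ for $j$ depending on the compact supports involved, intersect along the finitely many terms, and use that $\pi_i$ restricts to a submersion over the restricted base with $(\pi_i)_!$ commuting with restriction. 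This realizes the relation inside $Z_*(X_{f,0})$.

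The hard part will be the bookkeeping around approximate (as opposed to genuine) smoothness: because $f$ itself is not smooth, the sets $\{f\circ\ph < 0\}$ need not be in $\mca{U}$ (they are only $F_\sigma$), so every construction must be pushed down to some $f_j$ with $f_j \ge f$, and one must verify that the finitely many $f_j$'s needed for the finitely many terms in a given chain or relation can be chosen uniformly (take the max of the indices). I expect the well-definedness verification — that the map on $C^\dR$ descends past $Z_*$ and is independent of the choice of approximating index $j$ and cutoff $\chi$ — to be the only genuinely delicate point; everything else is the same fiber-integration formalism already used in Sections 2.3--2.6.
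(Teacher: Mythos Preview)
Your first approach is essentially the paper's, but you talk yourself out of it by demanding too much of $r$. You do \emph{not} need $r$ to be a chain map, nor do you need $r\circ I_0 = \id$ globally (which indeed fails, since chains in $C^\dR_*(X_{f,0})$ can have $f$ arbitrarily close to $0$ on their support). The missing ingredient is a buffer: given $u\in C^\dR_*(X_{f,0})$ with $I_0(u)=0$, first use Lemma~\ref{150210_1} to write $u = I_{c,0}(v)$ for some $c<0$ and $v\in C^\dR_*(X_{f,c})$. Now, assuming $f$ smooth, fix $\kappa\in C^\infty(\R,[0,1])$ with $\kappa\equiv 1$ on $(-\infty,c]$ and $\supp\kappa\subset(-\infty,0)$, and set $J[(U,\ph,\omega)]:=[(U_{f,0},\ph|_{U_{f,0}},((\kappa\circ f\circ\ph)\cdot\omega)|_{U_{f,0}})]$. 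This $J$ is a well-defined linear map on $C^\dR_*(X)$ (since $\pi_!(\pi^*\eta\wedge\omega)=\eta\wedge\pi_!\omega$ and $\pi^{-1}(U_{f,0})=V_{f,0}$); it depends on $c$ and $\kappa$, but that is harmless. Crucially, $J\circ I_c = I_{c,0}$ because $\kappa\circ f\circ\ph\equiv 1$ on any plot of $X_{f,c}$. Hence $u=I_{c,0}(v)=J(I_c(v))=J(I_0(I_{c,0}(v)))=J(I_0(u))=0$. The approximately smooth case then reduces to the smooth case via the surjection $\varinjlim_j C^\dR_*(X_{f_j,0})\to C^\dR_*(X_{f,0})$ of Lemma~\ref{150210_1}, together with injectivity of each $C^\dR_*(X_{f_j,0})\to C^\dR_*(X)$.

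Your ``clean approach'' has a genuine gap. An element $z\in Z_*(X)\cap\bar{C}^\dR_*(X_{f,0})$ admits a presentation $z=\sum_i\lambda_i g_i$ with each $g_i=(U_i,\ph_i,(\pi_i)_!\omega_i)-(V_i,\ph_i\circ\pi_i,\omega_i)$ a generator of $Z_*(X)$, but the \emph{individual} plots $(U_i,\ph_i)$ and $(V_i,\ph_i\circ\pi_i)$ need not factor through $X_{f,0}$; the hypothesis $z\in\bar{C}^\dR_*(X_{f,0})$ only says that, after collecting terms by plot-index, the components at indices outside $\mca{P}(X_{f,0})$ cancel. When you cut down each $U_i$ to an open subset $U_i'$, you change the plot-index of every term, so those cancellations are destroyed and there is no reason the modified sum $\sum_i\lambda_i g_i'$ still equals $z$ in $\bar{C}^\dR_*(X)$. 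To make this work you would need a presentation of $z$ in which every generator already lies in $Z_*(X_{f,0})$, and nothing in your sketch produces one.
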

\begin{proof}
First let us consider the case when $f$ is a smooth function on $X$. 
For any $c < 0$, 
\[
I_{c,0}: C^\dR_*(X_{f,c}) \to C^\dR_*(X_{f,0}), \qquad
I_c: C^\dR_*(X_{f,c}) \to C^\dR_*(X)
\]
denote the chain maps induced by the inclusion maps. 
Suppose that 
$u \in C^\dR_*(X_{f,0})$ satisfies $I_0(u)=0$. 
By Lemma \ref{150210_1} (ii), 
there exists $c<0$ such that $u \in \Image I_{c,0}$. 
Take $v \in C^\dR_*(X_{f,c})$ so that $u= I_{c,0}(v)$. 

If there exists a linear map $J: C^\dR_*(X) \to C^\dR_*(X_{f,0})$ such that 
$J \circ I_c = I_{c,0}$, 
we can prove $u=0$ by 
\[
u= I_{c,0}(v) = J \circ I_c (v) = J \circ I_0 \circ I_{c,0}(v) = J \circ I_0(u)=0.
\]
To define such $J$, 
we fix $\kappa \in C^\infty(\R, [0,1])$ so that $\kappa \equiv 1$ on $\R_{\le c}$ and $\supp \kappa \subset \R_{<0}$. 
For any $(U,\ph) \in \mca{P}(X)$, 
we set $U_{f,0}:= \{ u \in U \mid f(\ph(u)) < 0\}$. 
Then, the linear map 
\[
J: C^\dR_*(X) \to C^\dR_*(X_{f,0}); \qquad 
[(U,\ph,\omega)] \mapsto [ (U_{f,0}, \ph|_{U_{f,0}},  ((\kappa \circ f \circ \ph) \cdot \omega)|_{U_{f,0}})]
\]
is well-defined (it is \textit{not} a chain map). 
$J \circ I_c = I_{c,0}$ is obvious since $\kappa \equiv 1$ on $\R_{\le c}$. 
This completes the proof when $f$ is a smooth function on $X$. 

Finally, we consider the case when $f$ is any approximately smooth function on $X$. 
By definition, there exists a decreasing sequence of smooth functions $(f_j)_{j \ge 1}$ such that 
$\lim_{j \to \infty} f_j = f$. 
Then Lemma \ref{150210_1} (i) applied to $(f_j|_{X_{f,0}})_{j \ge 1}$ shows that 
the chain map $\vlim_{j \to \infty} C^\dR_*(X_{f_j,0}) \to C^\dR_*(X_{f,0})$ is surjective. 
On the other hand, since each $f_j$ is smooth, 
$C^\dR_*(X_{f_j,0}) \to C^\dR_*(X)$ is injective. 
Thus $C^\dR_*(X_{f,0}) \to C^\dR_*(X)$ is injective. 
\end{proof} 

\begin{cor}\label{150213_1} 
Let $X$ be a differentiable space. 
\begin{enumerate} 
\item[(i):] 
Let $(f_j)_{j \ge 1}$ be a decreasing sequence of approximately smooth functions on $X$, such that 
$\lim_{j \to \infty} f_j(x)<0$ for every $x \in X$. 
Then $\vlim_{j \to \infty} C^\dR_*(X_{f_j,0}) \to C^\dR_*(X)$, which is induced by the inclusion maps, is an isomorphism. 
\item[(ii):] 
Let $f$ be an approximately smooth function on $X$,
$(c_j)_{j \ge 1}$ be an increasing sequence of real numbers, 
and $c:= \sup_j c_j$. 
Then $\vlim_{j \to \infty} C^\dR_*(X_{f,c_j}) \to C^\dR_*(X_{f,c})$ is an isomorphism. 
\end{enumerate} 
\end{cor}
\begin{proof}
(ii) follows from (i) by setting $f_j: = (f-c_j)|_{X_{f,c}}$. 
Thus it is enough to prove (i). 
We already proved the surjectivity in Lemma \ref{150210_1} (i). 
By Lemma \ref{150210_2}, 
$C^\dR_*(X_{f_j,0}) \to C^\dR_*(X)$ is injective for every $j$, and thus 
$\vlim_{j \to \infty} C^\dR_*(X_{f_j,0}) \to C^\dR_*(X)$ is also injective. 
\end{proof}

\subsection{Smooth singular chains} 

We define \textit{smooth singular chains} on differentiable spaces, 
and compare them with de Rham chains. 

Let $X$ be a differentiable space, and $k \ge 0$ be an integer. 
A map $\sigma: \Delta^k \to X$ is called \textit{strongly smooth}, if 
there exists an open neighborhood $U$ of $\Delta^k \subset \R^k$, 
and a smooth map $\bar{\sigma}: U \to X$ such that $\bar{\sigma}|_{\Delta^k}= \sigma$. 
$\Delta^k$ and $U$ are equipped with the differentiable structures as subsets of $\R^k$. 

For any $k \ge 0$, 
let $C^\sm_k(X)$ denote the $\R$-vector space generated by all strongly smooth maps $\Delta^k \to X$. 
For any $k<0$, we set $C^\sm_k(X):=0$. 
A differential on $C^\sm_*(X)$ is defined in the same way as in the singular chain complex.
To fix notation, let us spell out the details. 
For any $k \ge 1$ and $0 \le j \le k$, 
we define a map $d_{k,j}: \Delta^{k-1} \to \Delta^k$ by 
\begin{equation}\label{eq:dkj}
d_{k,j}(t_1,\ldots,t_{k-1}):= \begin{cases}
                                 (0,t_1,\ldots, t_{k-1}) &( j=0),  \\
                                 (t_1, \ldots, t_j, t_j, \ldots, t_{k-1}) &(1 \le j \le k-1), \\
                                 (t_1,\ldots, t_{k-1}, 1)&(j=k).
                                 \end{cases}
\end{equation}
In particular, $d_{1,j}: \Delta^0 \to \Delta^1$ is defined as 
$d_{1,j}(0)=j$ for $j=0, 1$. 
For any $k \ge 1$, 
a differential $\partial: C^\sm_k(X) \to C^\sm_{k-1}(X)$ is defined as 
\[
\partial \sigma := \sum_{j=0}^k  (-1)^j  \sigma \circ d_{k,j}. 
\]
We call the chain complex $(C^\sm_*(X), \partial)$ the \textit{smooth singular chain complex}, and 
its homology $H^\sm_*(X)$ the \textit{smooth singular homology}. 
For any smooth map $f: X \to Y$ between differentiable spaces, one can define the chain map $f_*: C ^\sm_*(X) \to C^\sm_*(Y)$ in the obvious way. 
If smooth maps $f,g: X \to Y$ are smoothly homotopic, $f_*, g_*: C^\sm_*(X) \to C^\sm_*(Y)$ are chain homotopic. 

\begin{lem}\label{150624_10} 
\begin{enumerate}
\item[(i):]
There exists a sequence $(u_k)_{k \ge 0}$ such that 
$u_k \in C^\dR_k(\Delta^k)$ for any $k \ge 0$, 
$u_0 \in C^\dR_0(\Delta^0)$ is characterized by $\ep(u_0)=1$, 
and 
\[
\partial u_k = \sum_{j=0}^k (-1)^j (d_{k,j})_*(u_{k-1})  \qquad(\forall k \ge 1).
\]
\item[(ii):]
Suppose that $(u_k)_{k \ge 0}$ and $(u'_k)_{k \ge 0}$ satisfy the conditions in (i). 
Then there exists a sequence $(v_k)_{k \ge 1}$ such that 
$v_k \in C^\dR_{k+1}(\Delta^k)$ for any $k \ge 1$, 
$\partial v_1 = u_1- u'_1$, and 
\[
\partial v_k  = u_k - u'_k - \sum_{j=0}^k (-1)^j (d_{k,j})_*(v_{k-1}) \qquad(\forall k \ge 2). 
\]
\end{enumerate}
\end{lem}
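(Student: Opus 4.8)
The plan is to construct both sequences by induction on $k$, using the acyclicity of $C^\dR_*(\Delta^k)$ (which holds because $\Delta^k$ is smoothly contractible, so Proposition~\ref{prop:homotopy} gives $H^\dR_*(\Delta^k) \cong H^\dR_*(\pt)$, concentrated in degree $0$). For part (i), I would start by choosing $u_0 \in C^\dR_0(\Delta^0)$ with $\ep(u_0)=1$; since $C^\dR_*(\Delta^0) = C^\dR_*(\pt)$, this $u_0$ is a cycle and is unique. For the inductive step, suppose $u_0,\ldots,u_{k-1}$ have been constructed with $\partial u_j = \delta_j(u_{j-1})$ for $1 \le j \le k-1$. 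I need a $u_k \in C^\dR_k(\Delta^k)$ with $\partial u_k = \delta_k(u_{k-1})$. Since $C^\dR_*(\Delta^k)$ is acyclic in positive degrees, it suffices to check that $\delta_k(u_{k-1})$ is a cycle in $C^\dR_{k-1}(\Delta^k)$ and, when $k=1$, that it has augmentation zero. The first follows from $\partial \delta_k(u_{k-1}) = \delta_k(\partial u_{k-1}) = \delta_k \delta_{k-1}(u_{k-2}) = 0$ for $k \ge 2$ (using $\delta_k \circ \delta_{k-1} = 0$), and from $\partial \delta_1(u_0) = \delta_1(\partial u_0) = 0$ for $k=1$. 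For the augmentation in the case $k=1$: $\ep(\delta_1(u_0)) = \ep((d_{1,1})_* u_0) - \ep((d_{1,0})_* u_0) = \ep(u_0) - \ep(u_0) = 0$, since $\ep$ is functorial under pushforward along the point inclusions $d_{1,j}\colon \Delta^0 \to \Delta^1$. Hence $\delta_k(u_{k-1})$ is a boundary, and we pick any preimage $u_k$.

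For part (ii), given two such sequences $(u_k)$ and $(u'_k)$, I construct the homotopy $(v_k)_{k\ge 1}$ inductively in the same style. Since $u_0$ is unique we have $u_0 = u'_0$. For $k=1$: $u_1 - u'_1$ is a cycle (as $\partial(u_1 - u'_1) = \delta_1(u_0) - \delta_1(u'_0) = 0$) and $\ep(u_1-u'_1) = 0$ because $\ep \circ \partial = 0$ forces $u_1 - u'_1$ to lie in $\ker\ep$ once we note both are degree-$1$ chains whose boundary is the common cycle $\delta_1(u_0)$; more directly, $u_1, u'_1 \in C^\dR_1(\Delta^1)$, and the only constraint, vanishing augmentation, is automatic in degree $1$. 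By acyclicity of $C^\dR_*(\Delta^1)$ there is $v_1 \in C^\dR_2(\Delta^1)$ with $\partial v_1 = u_1 - u'_1$. For the inductive step with $k \ge 2$, assume $v_1,\ldots,v_{k-1}$ are chosen with the required identities. Set $w_k := u_k - u'_k - \delta_k(v_{k-1}) \in C^\dR_k(\Delta^k)$; then
\[
\partial w_k = \delta_k(u_{k-1}) - \delta_k(u'_{k-1}) - \delta_k(\partial v_{k-1}) = \delta_k\bigl( u_{k-1} - u'_{k-1} - \partial v_{k-1} \bigr).
\]
When $k \ge 3$ the bracket equals $\delta_{k-1}(v_{k-2})$ by the inductive hypothesis, so $\partial w_k = \delta_k \delta_{k-1}(v_{k-2}) = 0$; when $k=2$ the bracket equals $u_1 - u'_1 - \partial v_1 = 0$ directly, so again $\partial w_2 = 0$. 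Thus $w_k$ is a cycle in positive degree $k \ge 2$, hence a boundary, and I take $v_k \in C^\dR_{k+1}(\Delta^k)$ with $\partial v_k = w_k$, which is exactly the desired relation.

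The routine points to double-check are: the formula $\delta_{k+1}\circ\delta_k = 0$ (stated just above the lemma, so available), the acyclicity statement $H^\dR_{>0}(\Delta^k) = 0$ (from Proposition~\ref{prop:homotopy} applied to the smooth contraction of $\Delta^k$ together with the computation of $H^\dR_*(\pt)$), and the functoriality of $\ep$ along pushforwards, which is immediate from the definition $\ep([(U,\ph,\omega)]) = \int_U \omega$ since $f_*$ does not change the form $\omega$ or its domain $U$. I expect no serious obstacle; the only mildly delicate issue is keeping track of where the augmentation condition is needed — it is genuinely needed only to descend from cycles to boundaries in degree $0$, which is why it appears explicitly for $u_0$ but is vacuous in all higher degrees. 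If one prefers to avoid invoking Proposition~\ref{prop:homotopy}, one can instead build the $u_k$ and $v_k$ by an explicit coning construction using the contraction $\Delta^k \times \R \to \Delta^k$, but the homological argument above is cleaner.
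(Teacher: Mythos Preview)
Your proposal is correct and follows exactly the same approach as the paper's proof: induction on $k$ using the fact that $H^\dR_*(\Delta^k) \cong H^\dR_*(\pt)$ (since $\Delta^k$ is smoothly homotopic to a point). The paper's proof is a two-line sketch stating precisely this, and you have simply filled in the routine details of the induction; the only cosmetic slip is a sign reversal in your expression for $\ep(\delta_1(u_0))$, which is harmless since both terms equal $\ep(u_0)$.
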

\begin{proof}
For any $k \ge 0$, $\Delta^k$ is smoothly homotopic to $\pt$, and thus $H^\dR_*(\Delta^k) \cong H^\dR_*(\pt)$. 
Using this fact, the assertions are easy to prove by induction on $k$. 
\end{proof} 

For any $u=(u_k)_{k \ge 0}$ which satisfies Lemma \ref{150624_10} (i), one can define 
a natural transformation $\iota^u: C^\sm_* \to C^\dR_*$ as follows (we set $\iota^u_k=0$ when $k<0$): 
\[
\iota^u(X)_k: C^\sm_k(X) \to C^\dR_k(X); \quad  \sigma \mapsto \sigma_*(u_k).
\]
Then $\iota^u(X)_*$ is a chain map. 
Lemma \ref{150624_10} (ii) shows that the homotopy equivalence class 
of $\iota^u(X)_*$ does not depend on the choices of $u$. 
In particular, the linear map 
$H_*(\iota^u(X)): H^\sm_*(X) \to H^\dR_*(X)$
does not depend on $u$. 

Finally, we define the cross product on $C^\sm_*$. 
Let us take $\tau_{k,l} \in C^\sm_{k+l}(\Delta^k \times \Delta^l)$ for all $k,l \ge 0$, such that 
$\tau_{0,0}$ is characterized by $\ep(\tau_{0,0})=1$, 
and the following equation holds for any $k,l \ge 0$:
\[
\partial \tau_{k,l} = \sum_{0 \le i \le k} (-1)^i (d_{k,i} \times \id_{\Delta^l})_*( \tau_{k-1,l}) + (-1)^k \sum_{0 \le j \le l} (-1)^j (\id_{\Delta^k} \times d_{l,j})_*(\tau_{k, l-1}).
\]
We define the cross product 
$C^\sm_k(X) \otimes C^\sm_l(Y) \to C^\sm_{k+l}(X \times Y)$ by 
\[
\sigma_X \otimes \sigma_Y \mapsto (\sigma_X \times \sigma_Y)_*(\tau_{k,l}) \qquad
(\sigma_X: \Delta^k \to X, \, \sigma_Y: \Delta^l \to Y, \, k,l \ge 0). 
\]
The homotopy equivalence class of this map does not depend on the choices of $(\tau_{k,l})_{k,l \ge 0}$. 

On the other hand, we defined the cross product for de Rham chains
(see Section 4.3). 
The next lemma is proved by the standard method of acyclic models. 

\begin{lem}\label{150627_1} 
For any differentiable spaces $X$, $Y$ and $u=(u_k)_{k \ge 0}$ satisfying Lemma \ref{150624_10} (i), 
the following diagram commutes up to homotopy, where horizontal maps are cross products: 
\[
\xymatrix{
C^\sm_*(X) \otimes C^\sm_*(Y) \ar[r] \ar[d]_{\iota^u(X)  \otimes \iota^u(Y) } & C^\sm_*(X \times Y) \ar[d]^{\iota^u(X \times Y) } \\
C^\dR_*(X) \otimes C^\dR_*(Y) \ar[r] & C^\dR_*(X \times Y).
}
\]
\end{lem}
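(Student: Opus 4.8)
The plan is to prove Lemma \ref{150627_1} by the method of acyclic models, adapted to the category of differentiable spaces and to the two functors appearing here. First I would introduce the auxiliary category whose objects are pairs $(k,l)$ of nonnegative integers and set up the two functors $F, G$ from this category to chain complexes: $F$ sends $(X,Y)$ to the composition $C^\sm_*(X) \otimes C^\sm_*(Y) \to C^\sm_*(X\times Y) \to C^\dR_*(X\times Y)$ along the top-right path of the diagram, and $G$ sends it to the bottom-left path $C^\sm_*(X)\otimes C^\sm_*(Y) \to C^\dR_*(X)\otimes C^\dR_*(Y) \to C^\dR_*(X\times Y)$. Both are functorial in $(X,Y)$ via $f_*$ for smooth maps, and both are natural transformations out of the functor $(X,Y) \mapsto C^\sm_*(X)\otimes C^\sm_*(Y)$. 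The models are the pairs $(\Delta^k, \Delta^l)$; the source functor $C^\sm_*(X)\otimes C^\sm_*(Y)$ is free with respect to these models, with basis the pair of identity (universal) simplices $(\id_{\Delta^k}, \id_{\Delta^l})$ in degree $k+l$, because any generator $\sigma_X \otimes \sigma_Y$ of $C^\sm_k(X)\otimes C^\sm_l(Y)$ is pulled back from $(\id_{\Delta^k},\id_{\Delta^l})$ along the smooth map $\sigma_X \times \sigma_Y : \Delta^k \times \Delta^l \to X\times Y$ (using strong smoothness to extend to a neighborhood, exactly as in the definition of $C^\sm_*$).

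Next I would check acyclicity of the target functor on the models: I need that $F(\Delta^k,\Delta^l)$ and $G(\Delta^k,\Delta^l)$, namely $C^\dR_*(\Delta^k\times\Delta^l)$ in both cases, are acyclic in positive degrees and have $H_0 = \R$ with the zeroth homology detected by a chain map to $\R$ compatible with the augmentation. This follows from the fact that $\Delta^k \times \Delta^l$ is smoothly homotopic to $\pt$ (a straight-line smooth homotopy extended to all of $\R$ by the usual cutoff), together with Proposition \ref{prop:homotopy} and the computation $H^\dR_*(\pt) \cong \R$ concentrated in degree $0$ from Section 2.4. I would also record that both $F$ and $G$ agree in degree $0$: on a pair of points $\sigma_X \in C^\sm_0(X)$, $\sigma_Y \in C^\sm_0(Y)$ both paths send $\sigma_X \otimes \sigma_Y$ to $(\sigma_X\times\sigma_Y)_*(u_0) = (\sigma_X\times\sigma_Y)_*(\tau_{0,0})$ since $u_0$ and $\tau_{0,0}$ are both characterized by $\ep = 1$ and $C^\dR_0(\pt) \cong \R$; so there is a canonical choice of $F = G$ on the models in degree $0$. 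With freeness on the source and acyclicity on the target, the standard acyclic models argument (inducting on degree, extending the chain homotopy one basis element at a time over the universal simplices and transporting by naturality) produces a natural chain homotopy between $F$ and $G$, which is precisely the assertion that the diagram commutes up to homotopy.

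The main obstacle I anticipate is not the acyclic-models machinery itself but the bookkeeping needed to verify that $F$ and $G$ really are natural transformations of chain complexes on the nose, i.e. that all the maps in sight are genuine chain maps and strictly natural for smooth maps between differentiable spaces. Concretely: $\sigma_* : C^\sm_* \to C^\sm_*$ and $\sigma_* : C^\dR_* \to C^\dR_*$ are functorial and commute with the simplicial faces $d_{k,j}$; the de Rham cross product of Section 2.3 (with its sign $(-1)^{l(\dim U - d)}$, here with $d = 0$) is a chain map and is natural; and the simplicial cross product built from $(\tau_{k,l})$ is a chain map via the defining identity for $\partial \tau_{k,l}$ and is natural up to the chosen homotopy class. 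One slightly delicate point is that the \emph{source} functor $(X,Y)\mapsto C^\sm_*(X)\otimes C^\sm_*(Y)$ is being treated as free on the single model family $\{(\Delta^k,\Delta^l)\}$ rather than on each factor separately; I would spell out that its degree-$n$ part is the free $\R$-module on pairs of strongly smooth simplices, each of which factors uniquely through $(\id_{\Delta^k},\id_{\Delta^l})$ for the appropriate $(k,l)$ with $k+l=n$, so the representability hypothesis of the acyclic models theorem is met. Once these functoriality checks are in place, the conclusion is immediate, and I would cite the acyclic models theorem (e.g. as used implicitly already for Lemma \ref{150624_10} and Proposition \ref{prop:homotopy}) rather than reprove it.
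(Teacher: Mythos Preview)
Your proposal is correct and is precisely the approach the paper intends: the paper's proof consists of the single sentence ``The next lemma is proved by the standard method of acyclic models,'' and your outline spells out exactly that argument (freeness of $C^\sm_*(X)\otimes C^\sm_*(Y)$ on the models $(\Delta^k,\Delta^l)$, acyclicity of $C^\dR_*(\Delta^k\times\Delta^l)$ via smooth contractibility and Proposition~\ref{prop:homotopy}, and agreement in degree $0$ from the normalization $\ep(u_0)=\ep(\tau_{0,0})=1$). There is nothing to add.
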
 

\subsection{Integration over de Rham chains} 

Let $M$ be a $C^\infty$-manifold, and $n \ge 0$. 
We define $\langle \, , \,\rangle: \mca{A}^n(M) \otimes C^\dR_n(M) \to \R$ by 
\[
\langle \alpha, [(U, \ph, \omega)] \rangle := \int_U \ph^* \alpha \wedge \omega.
\]
It induces a linear map $H^n_\dR(M) \otimes H^\dR_n(M) \to \R$, which we also denote by $\langle \, , \, \rangle$. 

For any subset $S \subset M$, let $\mca{A}^*(S):= \vlim_ U \mca{A}^*(U)$ where $U$ runs over all open neighborhoods of $S$, 
and let $H^*_\dR(S):=H^*(\mca{A}^*(S), d)$. 
Then, one can define $H^*_\dR(S) \otimes H^\dR_*(S) \to \R$. 
In the next lemma, we consider the case $S= \Delta^{k_1} \times \cdots \times \Delta^{k_m} \subset \R^{k_1+\cdots+k_m}$. 

\begin{lem}\label{150629_2} 
Suppose that $(u_k)_{k \ge 0}$ satisfies Lemma \ref{150624_10} (i). 
For any nonnegative integers $k_1, \ldots, k_m$ and 
$\alpha \in \mca{A}^K(\Delta^{k_1} \times \cdots \times \Delta^{k_m})$, where $K:= k_1 + \cdots + k_m$, there holds 
\[
\langle \alpha, u_{k_1} \times \cdots \times u_{k_m} \rangle = (-1)^{K(K-1)/2} \int_{\Delta^{k_1}\times \cdots \times \Delta^{k_m}} \alpha.
\]
\end{lem}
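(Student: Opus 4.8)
The plan is to prove Lemma \ref{150629_2} by induction on $m$, reducing everything to the single-simplex case $m=1$ and then the single-simplex case to an induction on $k$ using the defining relation $\partial u_k = \delta_k(u_{k-1})$.

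\textbf{Step 1: The case $m=1$.} First I would establish the identity
\[
\langle \alpha, u_k \rangle = (-1)^{k(k-1)/2} \int_{\Delta^k} \alpha \qquad (\forall \alpha \in \mca{A}^k(\Delta^k))
\]
by induction on $k$. For $k=0$ this is just $\ep(u_0)=1$. For the inductive step, I would \emph{not} try to use $\alpha$ directly; instead, observe that both sides of the claimed identity, viewed as functionals of $u_k$, are determined by the pairing $\langle\,,\,\rangle$ together with the fact that $u_k$ is pinned down (up to the image of $\partial$ on $C^\dR_{k+1}(\Delta^k)$, which pairs trivially with closed forms) by its boundary $\partial u_k = \delta_k(u_{k-1})$ and by the value of some auxiliary pairing detecting the top class. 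Concretely, I would use Stokes' theorem for the pairing: since $\langle d\beta, c\rangle = \pm \langle \beta, \partial c\rangle$ up to signs (this is exactly the adjunction built into the definition of $\langle\,,\,\rangle$ via $\int_U \ph^*(d\beta)\wedge\omega = \pm\int_U \ph^*\beta\wedge d\omega$ plus an exact-form boundary term that integrates to zero), one can write a general $\alpha\in\mca{A}^k(\Delta^k)$ — after subtracting a representative of its de Rham class — as $d\beta$ for $\beta\in\mca{A}^{k-1}(\Delta^k)$, compute $\langle d\beta, u_k\rangle = \pm\langle\beta, \partial u_k\rangle = \pm\langle\beta,\delta_k(u_{k-1})\rangle = \pm\sum_j (-1)^j \langle d_{k,j}^*\beta, u_{k-1}\rangle$, apply the inductive hypothesis on each face $\Delta^{k-1}$, and recognize the result as $\pm\int_{\Delta^k} d\beta$ by Stokes on $\Delta^k$. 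The part of $\alpha$ left over (a fixed volume-form representative) is handled once and for all by direct computation with one explicit choice of $u_k$, since both sides of the identity are independent of the choice of $(u_k)_{k\ge0}$ by Lemma \ref{150624_10}(ii) (closed forms pair trivially with boundaries, so $\langle\alpha, u_k\rangle$ depends only on the homology-level data).

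\textbf{Step 2: The inductive step in $m$.} Assuming the formula for $m-1$ factors, I would use Fubini for integration along fibers together with the definition of the cross product on de Rham chains. Writing $c = u_{k_1}\times(u_{k_2}\times\cdots\times u_{k_m})$ and using the representatives $u_{k_i}=[(\Delta^{k_i},\id,\omega_i)]$ (one may choose such representatives, or pass to general ones as in Lemma \ref{150624_10}(ii)), the pairing $\langle\alpha, c\rangle$ becomes $\pm\int_{\Delta^{k_1}\times\cdots\times\Delta^{k_m}}\alpha\wedge\omega_1\wedge\cdots\wedge\omega_m$. The main bookkeeping is to track the sign $(-1)^{l(\dim U - d)}$ appearing in the definition of the fiber product (here with $d=0$, the cross-product case), accumulated over the $m-1$ iterated cross products, and to compare it against the claimed global sign $(-1)^{k(k-1)/2}$ where $k=k_1+\cdots+k_m$, using the identity $\binom{k}{2} \equiv \sum_i \binom{k_i}{2} + \sum_{i<j} k_i k_j \pmod 2$. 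I would reduce this to the two-factor case $\langle\alpha, u_{k_1}\times u_{k_2}\rangle$ by associativity of the cross product and then iterate.

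\textbf{Main obstacle.} I expect the sign computation to be the genuinely delicate part: the pairing $\langle\,,\,\rangle$ involves the order $\ph^*\alpha\wedge\omega$, the cross product on de Rham chains carries the sign $(-1)^{l(\dim U - d)}$ which must be expanded with $\dim U = k_1$, $\dim V = k_2$, etc., and the target sign $(-1)^{k(k-1)/2}$ must emerge from a telescoping sum of such terms together with the Koszul signs from reordering $\ph^*\alpha \wedge \omega_1 \wedge \cdots \wedge \omega_m$ where $\alpha$ has degree $k$ and each $\omega_i$ has degree $\dim\Delta^{k_i} - k_i = 0$ — wait, in fact $\omega_i$ has top degree $k_i$ on $\Delta^{k_i}$ since $u_{k_i}\in C^\dR_{k_i}(\Delta^{k_i})$ means $\omega_i\in\mca{A}^{0}_c$; that makes the Koszul reordering trivial and concentrates all the sign difficulty in the iterated fiber-product sign and in Step 1's Stokes-theorem adjunction sign. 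Verifying that these collapse exactly to $(-1)^{k(k-1)/2}$, rather than its negative, is the one place where an explicit small-case check ($k=1,2$ and $m=2$) will be indispensable to fix the conventions.
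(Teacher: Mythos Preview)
Your Step 1 has the right idea but an unnecessary complication: for $k\ge 1$ the de Rham cohomology $H^k_\dR(\Delta^k)$ vanishes, so every $\alpha\in\mca{A}^k(\Delta^k)$ is already $d\beta$ --- there is no ``leftover volume-form representative'' to handle separately, and no explicit choice of $u_k$ is needed.

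The real gap is in Step 2. You propose to compute $\langle\alpha, u_{k_1}\times\cdots\times u_{k_m}\rangle$ by choosing representatives $u_{k_i}=[(\Delta^{k_i},\id,\omega_i)]$, but no such representatives are available: the $u_k$ are arbitrary de Rham chains satisfying $\partial u_k=\delta_k(u_{k-1})$, and nothing says they are of this special form (indeed $\Delta^{k_i}$ is not even in $\mca{U}$). Your fallback to Lemma~\ref{150624_10}(ii) does not rescue this, because the pairing $\langle\alpha, u_{k_1}\times\cdots\times u_{k_m}\rangle$ is not obviously independent of the choice of $(u_k)$: the individual $u_{k_j}$ are \emph{not cycles}, so replacing one of them by $u_{k_j}+\partial v_{k_j}+\delta_{k_j}(v_{k_j-1})$ produces cross-terms like $\langle\alpha, v_{k_1}\times\partial u_{k_2}\times\cdots\rangle$ that have no reason to vanish. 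The Fubini computation therefore never gets off the ground.

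The paper sidesteps this entirely by fixing $m$ and inducting on $k=k_1+\cdots+k_m$ directly --- in other words, running your Step~1 argument on the product of simplices rather than on a single one. For $k\ge 1$ the product $\Delta^{k_1}\times\cdots\times\Delta^{k_m}$ is still contractible, so $H^k_\dR=0$ and one writes $\alpha=d\beta$. Then
\[
\langle d\beta,\, u_{k_1}\times\cdots\times u_{k_m}\rangle
= (-1)^k \sum_{j=1}^m (-1)^{k_1+\cdots+k_{j-1}}\,\langle\beta,\, u_{k_1}\times\cdots\times\partial u_{k_j}\times\cdots\times u_{k_m}\rangle,
\]
by the Leibniz rule for $\partial$ on the cross product; substituting $\partial u_{k_j}=\delta_{k_j}(u_{k_j-1})$ gives terms of total degree $k-1$, the induction hypothesis applies, and Stokes on $\partial(\Delta^{k_1}\times\cdots\times\Delta^{k_m})$ closes the loop. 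No induction on $m$, no explicit representatives, no Fubini.
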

\begin{proof}
We fix $m$ and prove the lemma by induction on $K$.  
When $K=0$, i.e. $k_1=\cdots=k_m=0$, the lemma can be directly checked. 
If the lemma is established for $K \le N-1$, the case $K=N$ is proved as follows.
Let us take $\beta \in \mca{A}^{K-1}(\Delta^{k_1} \times \cdots \times \Delta^{k_m})$ such that $d\beta=\alpha$ (this is always possible since $H^K_\dR(\Delta^{k_1}\times \cdots \times \Delta^{k_m})=0$). 
Then, 
\begin{align*}
\langle d\beta , u_{k_1} \times \cdots \times u_{k_m} \rangle &= (-1)^K \sum_{j=1}^m (-1)^{k_1+\cdots+ k_{j-1}} \langle \beta,  u_{k_1} \times \cdots \times \partial u_{k_j} \times \cdots \times u_{k_m} \rangle \\
&=(-1)^{K+ (K-1)(K-2)/2} \int_{\partial(\Delta^{k_1}\times \cdots \times \Delta^{k_m})} \beta \\
&=(-1)^{K(K-1)/2} \int_{\Delta^{k_1}\times \cdots \times \Delta^{k_m}} d \beta.
\end{align*}
The second equality follows from the induction hypothesis, 
and the last equality follows from Stokes' theorem. 
\end{proof} 

\section{de Rham chains on $C^\infty$-manifolds} 

Let $M$ be an oriented $C^\infty$-manifold, 
and consider the differentiable structure as in Example \ref{150623_1} (i)-(a), i.e. 
$\mca{P}(M):= \{(U,\ph) \mid U \in \mca{U}, \, \ph \in C^\infty(U, M)\}$. 
With this differentiable structure, a map $\Delta^k \to M$ is strongly smooth if 
it extends to a $C^\infty$-map $U \to M$ for some open neighborhood $U$ of $\Delta^k \subset \R^k$. 
In particular, $C^\sm_*(M)$ is a subcomplex of $C_*(M)$ (the usual singular chain complex of $M$). 
It is known that $C^\sm_*(M) \to C_*(M)$ is a quasi-isomorphism (see \cite{Lee_13} Theorem 18.7). 
Therefore, we obtain a natural isomorphism $H^\sm_*(M) \cong H_*(M)$. 

In Section 4.7, 
we defined the map $H^\sm_*(X) \to H^\dR_*(X)$ for any differentiable space $X$. 
The goal of this section is to prove the following theorem. 
As an immediate consequence, we obtain a natural isomorphism $H_*(M) \cong H^\dR_*(M)$. 

\begin{thm}\label{150623_2}
For any oriented $C^\infty$-manifold $M$, 
the map $H^\sm_*(M) \to H^\dR_*(M)$ is an isomorphism. 
\end{thm}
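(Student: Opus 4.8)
The plan is to prove Theorem \ref{150623_2} by a Mayer--Vietoris / partition-of-unity argument, reducing the general case to that of a convex (or star-shaped) open subset of $\R^n$, where both sides are one-dimensional. More precisely, I would first verify the statement for $M = \R^n$ (or any smoothly contractible $U \in \mca{U}$): by Proposition \ref{prop:homotopy} and the corresponding statement for smooth singular chains, both $H^\dR_*(M)$ and $H^\sm_*(M)$ are concentrated in degree $0$ where the augmentation $\ep$ identifies them with $\R$, and the natural map $H^\sm_*(M) \to H^\dR_*(M)$ sends the class of a point to $u_0$, which has $\ep(u_0) = 1$; hence it is an isomorphism. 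The content of Lemma \ref{150624_10} guarantees that the chain map $\iota^u$ is well defined and, up to homotopy, independent of the auxiliary choice $u = (u_k)_{k \ge 0}$, so this base case is clean.

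Next I would set up a Mayer--Vietoris sequence for de Rham chains. For open subsets $U, V \subset M$ with $U \cup V = M$, I expect a short exact sequence of complexes
\[
0 \to C^\dR_*(U \cap V) \to C^\dR_*(U) \oplus C^\dR_*(V) \to C^\dR_*(U + V) \to 0,
\]
where $C^\dR_*(U+V)$ denotes the subcomplex of $C^\dR_*(M)$ generated by plots $(W,\ph,\omega)$ with $\ph(\supp\omega)$ contained in $U$ or in $V$ (one can always achieve this by a partition-of-unity decomposition of $\omega$ on the source manifold $W$, since $\supp\omega$ is compact and $\ph$ is continuous). The key point, analogous to the subdivision/small-simplices theorem in singular homology, is that the inclusion $C^\dR_*(U+V) \hookrightarrow C^\dR_*(M)$ is a quasi-isomorphism; here, however, no subdivision is needed, only the partition of unity on the parametrizing manifold, which is the advantage of the de Rham chain formalism. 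The smooth singular complex $C^\sm_*$ admits the same kind of sequence (this is standard), and the map $\iota^u$ is compatible with both, because the cross product compatibility in Lemma \ref{150627_1} lets us control the comparison map on the pieces. One then runs the five lemma.

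The induction is then organized exactly as in the classical proof that de Rham cohomology computes singular cohomology (e.g. the Bott--Tu or Warner scheme): first prove the isomorphism for finite disjoint unions of convex open sets (using that both $C^\dR_*$ and $C^\sm_*$ take finite disjoint unions to direct sums, plus the base case), then for open sets admitting a finite good cover by Mayer--Vietoris and the five lemma, and finally for arbitrary $M$ by writing $M$ as an increasing union of open subsets each of which has a finite good cover and passing to the direct limit, using that $C^\dR_*$, $C^\sm_*$, and $\iota^u$ all commute with direct limits over such an exhaustion. Alternatively one can run a single induction over a (possibly countable) good cover using a telescope argument. I would also record the compatibility with the pairing $\langle\,,\,\rangle$ from Section 2.9 and Lemma \ref{150629_2} as a sanity check, but it is not logically needed.

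The main obstacle I anticipate is establishing that $C^\dR_*(U+V) \hookrightarrow C^\dR_*(M)$ is a quasi-isomorphism with the correct interaction with the quotient by the relations $Z_k$ defining $C^\dR_*$: one must check that the partition-of-unity decomposition of a single generator $(W,\ph,\omega)$ is compatible with the submersion relations, so that it descends to a well-defined chain-level operation on $C^\dR_*(M)$ (not merely on $\bar C^\dR_*$), and that the resulting map is a genuine deformation retraction up to chain homotopy. A clean way around this is to instead prove the Mayer--Vietoris statement directly as an isomorphism on homology via a hands-on chain homotopy built from a fixed partition of unity $\{\rho_U,\rho_V\}$ subordinate to $\{U,V\}$, exploiting that multiplication of $\omega$ by $\rho_U\circ\ph$ is an operation on $\bar C^\dR_*$ that is compatible with $\partial$ up to the $Z_k$-relations — this is essentially the same trick already used in the proof of Lemma \ref{150210_2}. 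The rest is bookkeeping with the five lemma and direct limits, which I would not spell out in detail.
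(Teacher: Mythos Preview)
Your approach is correct but takes a genuinely different route from the paper. The paper does not use Mayer--Vietoris; instead it factors the comparison through compactly supported de Rham cohomology. First it proves $H^\dR_*(M_\reg) \cong H^\dR_*(M)$ (where $M_\reg$ has only submersions as plots) by a ``thickening'' trick --- composing an arbitrary plot with a family of diffeomorphisms $F: M \times \R^D \to M$ so that it becomes a submersion (Lemma~\ref{150210_6}); then it uses the tautological isomorphism $C^\dR_*(M_\reg) \cong \mca{A}^{d-*}_c(M)$ together with Poincar\'e duality $H^\sm_*(M) \cong H^{d-*}_{c,\dR}(M)$, and finally checks via Lemma~\ref{150629_2} that the composite agrees with $\iota^u$ up to sign. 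Your argument is more elementary and in fact does not use the orientation hypothesis on $M$ at all (the paper needs it only for Poincar\'e duality). One small correction: the step you flag as the obstacle --- that $C^\dR_*(U+V) \hookrightarrow C^\dR_*(M)$ be a quasi-isomorphism --- is actually trivial, since pulling back a partition of unity on $M$ makes this inclusion surjective on the nose; the real subtlety is exactness at the \emph{middle} of the short exact sequence (which does not follow formally, because $C^\dR_*$ is a quotient), but the same multiplication-by-$\rho_U$ operators you describe, combined with the injectivity from Lemma~\ref{150210_2}, handle it. What the paper's detour buys is that the thickening technique and the isomorphism $H^\dR_*(M_\reg)\cong H^\dR_*(M)$ are reused verbatim later, for Moore loop spaces (Lemma~\ref{150626_3}) and for fiber products (Section~6.7); so its proof of Theorem~\ref{150623_2} is doing double duty, whereas your cleaner argument would leave those later results to be established separately.
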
 

Let us denote the map $H^\sm_*(M) \to H^\dR_*(M)$ by $I_0$. 
The proof of Theorem \ref{150623_2} is separated into  two steps. 
Let $d:= \dim M$. 
\begin{itemize}
\item In Section 5.1, we define an isomorphism $I_1: H^{d-*}_{c,\dR}(M) \cong H^\dR_*(M)$. 
\item In Section 5.2, we define an isomorphism $I_2: H^\sm_*(M) \cong H^{d-*}_{c,\dR}(M)$ via Poincar\'{e} duality, 
and show that $I_0 = \pm I_1 \circ I_2$. 
\end{itemize} 

\subsection{Comparison with compactly supported de Rham cohomology}

Let us consider the differentiable space $M_\reg$ (see Example \ref{150623_1} (i)-(b)). 
It is easy to check that 
\begin{equation}\label{150210_3} 
C^\dR_*(M_\reg) \to \mca{A}^{d-*}_c(M); \quad [(U,\ph,\omega)] \mapsto \ph_!\omega
\end{equation} 
is a well-defined chain map. 

On the other hand, for any $\omega \in \mca{A}^{d-*}_c(M)$, 
let us take $U \in \mca{U}$ and an orientation-preserving open embedding $\ph:U \to M$ such that 
$\supp\, \omega \subset \ph(U)$. 
Then, $[(U,\ph, \ph^*\omega)] \in C^\dR_*(M_\reg)$ does not depend on the choices of $U$ and $\ph$. 
Thus, one can define a chain map 
\[
\mca{A}^{d-*}_c(M) \to C^\dR_*(M_\reg); \quad \omega \mapsto [(U,\ph,\ph^*\omega)], 
\]
and this is the inverse of (\ref{150210_3}). 
Therefore, (\ref{150210_3}) is an isomorphism of chain complexes. 
In particular, $H^\dR_*(M_\reg) \cong H^{d-*}_{c,\dR}(M)$. 

$\id_M: M_\reg \to M$ is a map of differentiable spaces, as noted in Example \ref{150623_1} (i). 
The goal of this subsection is to prove the next proposition. 

\begin{prop}\label{150210_4} 
$\id_M: M_\reg \to M$ induces an isomorphism 
$H^\dR_*(M_\reg) \cong H^\dR_*(M)$. 
\end{prop}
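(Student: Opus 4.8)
The plan is to build an explicit homotopy inverse to $(\id_M)_*: C^\dR_*(M_\reg) \to C^\dR_*(M)$ by "smoothing out" a general plot $\ph: U \to M$ into a submersion after multiplying by a bump form on an auxiliary Euclidean factor. The key observation is that for any $(U,\ph) \in \mca{P}(M)$ with $U \subset \R^n$ a submanifold and any $\omega \in \mca{A}^{\dim U - k}_c(U)$, one can consider the map $\tilde{\ph}: U \times \R^d \to M$, $(u,v) \mapsto \exp_{\ph(u)}(\chi(u) v)$ built from a Riemannian metric on $M$ and a cutoff $\chi$ supported near $\supp\,\omega$; for $v$ in a small ball this is a submersion, hence a plot of $M_\reg$. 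Pairing $\omega$ with a compactly supported top-degree bump form $\rho$ on the $\R^d$-factor normalized so that $\int_{\R^d}\rho = 1$ gives a de Rham chain $[(U\times B^d, \tilde{\ph}, \omega \times \rho)] \in C^\dR_*(M_\reg)$, and composing with $\id_M$ returns $[(U,\ph,\omega)]$ because $(\pr_U)_!(\omega\times\rho)=\omega$ and $\id_M \circ \tilde{\ph}$ factors through $\ph \circ \pr_U$ up to the submersion $\pr_U$. The first step is therefore to make this construction well-defined as a chain map $s: C^\dR_*(M) \to C^\dR_*(M_\reg)$, checking that it respects the relations defining $Z_*(M)$ (compatibility with pushforward along submersions $\pi: V \to U$, which is where the choices of $\chi$ and $\rho$ must be made functorially or shown to be irrelevant in homology).

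The second step is to verify $(\id_M)_* \circ s \simeq \id$ on $C^\dR_*(M)$ and $s \circ (\id_M)_* \simeq \id$ on $C^\dR_*(M_\reg)$. The first of these is essentially immediate from $(\pr_U)_!(\omega \times \rho) = \omega$ as sketched above, so $(\id_M)_* \circ s = \id$ on the nose (or after passing to the quotient by $Z_*$). For the second, I would argue that on $M_\reg$, where $\ph$ is already a submersion, the enlarged plot $\tilde{\ph}: U \times B^d \to M$ is smoothly homotopic (through submersions, or at least through plots of $M_\reg$) to the projection-precomposition $\ph \circ \pr_U$ by scaling the exponential-map perturbation $v \mapsto tv$; feeding this homotopy into Proposition \ref{prop:homotopy} (applied in $M_\reg$, noting $\tilde\ph$ and its degenerations are plots there) yields $s\circ(\id_M)_* \simeq \id$. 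Alternatively, one can invoke the explicit isomorphism $C^\dR_*(M_\reg) \cong \mca{A}^{d-*}_c(M)$ from (\ref{150210_3}) and check that $s\circ(\id_M)_*$ corresponds to a chain-homotopy-trivial modification of the identity on $\mca{A}^{d-*}_c(M)$ (integration of a bump form against itself), which may be the cleanest route.

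The main obstacle I anticipate is the well-definedness of $s$ with respect to the submersion relations: a single plot $[(U,\ph,\pi_!\omega)] = [(V,\ph\circ\pi,\omega)]$ has two representatives, and the smoothing construction applied to each must give the same class in $C^\dR_*(M_\reg)$. This forces one either to make the auxiliary data $(\chi,\rho,\text{metric})$ depend only on the image geometry in $M$ (not on the source $U$), or to prove a lemma that different choices of smoothing data yield chain-homotopic maps and then assemble $s$ on homology only — but since Proposition \ref{150210_4} is stated at chain-complex level via "induces an isomorphism", working at the homology level suffices and is the path I would take. A secondary technical point is ensuring the perturbed map $\tilde\ph$ is genuinely a submersion on a neighborhood of $\supp\,\omega \times \{0\}$ uniformly, which follows from $d\tilde\ph_{(u,0)}$ surjecting onto $T_{\ph(u)}M$ via the $\R^d$-directions and compactness of $\supp\,\omega$, together with shrinking $\chi$'s support. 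Once these points are handled, Theorem \ref{150623_2} follows by combining $I_1$ with $I_2$ as outlined.
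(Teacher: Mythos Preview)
Your core idea --- thicken a plot $\ph: U \to M$ to $U \times \R^D \to M$ so that the second factor forces surjectivity of the differential, then weight by a normalized bump form on $\R^D$ --- is precisely the mechanism the paper uses. But two execution points in your sketch are genuine gaps, and the paper's argument is organized around fixing them.

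First, your cutoff $\chi$ (and the exponential map, which needs an identification $\R^d \cong T_{\ph(u)}M$) depends on the representative $(U,\ph,\omega)$, which you correctly flag as the obstruction to well-definedness on the quotient. Your proposed cure, making the auxiliary data depend only on $M$, is exactly what the paper does, but it requires more than the exponential map: Lemma~\ref{150210_6} builds a single $C^\infty$ map $F: N \times \R^D \to N$ from flows of finitely many compactly supported vector fields spanning $TN$ over a given compact $K$, so that $J([(U,\ph,\omega)]) := [(U\times\R^D,\, F\circ(\ph\times\id),\, \omega\times\nu_D)]$ is manifestly well-defined. Crucially, each slice $F_z: N \to N$ is a global diffeomorphism; this is what makes the homotopy $\Phi(u,z,s)=F(\ph(u),a(s)z)$ remain a submersion whenever $\ph$ is one, which is exactly the step needed for $J\circ(\id)_* \simeq I_\reg$ on the $\reg$ side. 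Your scaling homotopy $v\mapsto tv$ through $\exp$ does not obviously have this property, since $x\mapsto\exp_x(v)$ need not be a diffeomorphism.

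Second, those vector fields can only span $TN$ over a compact set, so this produces a map $J: C^\dR_*(W) \to C^\dR_*(N_\reg)$ only for $W$ relatively compact in $N$ (Lemma~\ref{150210_5}). For noncompact $M$ the paper exhausts by $M^{(k)}=f^{-1}((-k,k))$ for a proper $f$, uses Corollary~\ref{150213_1} to identify $H^\dR_*(M)\cong\varinjlim_k H^\dR_*(M^{(k)})$ (and likewise for $M_\reg$), and passes the $J^{(k)}$'s to the limit to get the inverse. Your outline does not address this reduction, and for noncompact $M$ it is unavoidable.
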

As an immediate consequence, we obtain an isomorphism $I_1: H^{d-*}_{c,\dR}(M) \cong H^\dR_*(M)$. 

To prove Proposition \ref{150210_4}, 
let us take a proper $C^\infty$-function $f: M \to \R_{\ge 0}$, 
and set $M^{(k)}:= f^{-1}(\R_{<k})$ for every $k \in \Z_{>0}$. 
Then Corollary \ref{150213_1} (ii) implies isomorphisms 
\[
H^\dR_*(M) \cong \vlim_k H^\dR_*(M^{(k)}), \qquad
H^\dR_*(M_\reg) \cong \vlim_k H^\dR_*(M^{(k)}_\reg).
\]
Here we need the following lemma. 

\begin{lem}\label{150210_5} 
Let $N$ be an oriented $C^\infty$-manifold, and let $W$ be an open set in $N$ with compact closure. 
Then, there exists a chain map 
$J: C^\dR_*(W) \to C^\dR_*(N_\reg)$ such that the following diagram of chain maps commutes up to homotopy. 
\[
\xymatrix{
C^\dR_*(W_\reg) \ar[r]^{I_\reg} \ar[d]_-{(\id_W)_*} & C^\dR_*(N_\reg)\ar[d]^-{(\id_N)_*} \\
C^\dR_*(W) \ar[r]_{I} \ar[ru]_{J}& C^\dR_*(N)}
\]
$I_\reg$ and $I$ are induced by the inclusion map $W \to N$. 
\end{lem}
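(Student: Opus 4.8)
The claim is a local-to-global statement: a chain map $J$ from de Rham chains on $W$ to de Rham chains on $N_\reg$ making the square commute up to homotopy. The key point is that $W$ has compact closure in $N$, so any de Rham chain on $W$ is carried by finitely many plots with compactly supported forms, and each such plot $(U,\ph)$ with $\ph\colon U\to W\subset N$ can be ``thickened'' to a submersion. Concretely, I would fix a Riemannian metric on $N$ and use the exponential map to produce, for a plot $\ph\colon U\to N$ and a small ball $B^d_\ep=\{v\in\R^d\mid |v|<\ep\}$, the map
\[
\Phi\colon U\times B^d_\ep \to N;\qquad (u,v)\mapsto \exp_{\ph(u)}(P_u v),
\]
where $P_u$ is a trivialization of $\ph^*TN$; for $\ep$ small enough on the compact set $\overline{W}$ this is a submersion, hence a plot of $N_\reg$. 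One then defines $J[(U,\ph,\omega)]:=[(U\times B^d_\ep,\Phi,\omega\times\rho_\ep)]$ where $\rho_\ep$ is a bump $d$-form on $B^d_\ep$ of total integral $1$. The well-definedness (independence of the choices of metric, trivialization, $\ep$, and $\rho_\ep$, and compatibility with the relations $Z_*$) and the chain-map property follow from the properties of integration along fibers listed in Section 2.1, exactly as in the proof of Proposition \ref{prop:homotopy}: collapsing the $B^d_\ep$-factor by $(\pr_U)_!$ recovers $\omega$, and $\partial(\omega\times\rho_\ep)=d\omega\times\rho_\ep$ since $\rho_\ep$ is closed.

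For the two triangles: the lower triangle $I=(\id_N)_*\circ J$ holds up to homotopy because $(\id_N)_*\circ J$ sends $[(U,\ph,\omega)]$ to $[(U\times B^d_\ep,\id_N\circ\Phi,\omega\times\rho_\ep)]$, and the straight-line homotopy inside the normal ball from $\Phi$ to $\ph\circ\pr_U$ (i.e.\ scaling $v\mapsto tv$) is a smooth homotopy in $N$, so by Proposition \ref{prop:homotopy} the induced maps are chain homotopic; after collapsing the ball factor the latter equals $I$. The upper triangle $I_\reg=J\circ(\id_W)_*$ up to homotopy is the same argument but carried out one level up: for a plot $\ph\colon U\to W$ that is already a submersion, $J\circ(\id_W)_*[(U,\ph,\omega)]=[(U\times B^d_\ep,\Phi,\omega\times\rho_\ep)]$ and $I_\reg[(U,\ph,\omega)]=[(U,\ph,\omega)]$ viewed in $N_\reg$; again the scaling homotopy $\Phi_t(u,v)=\exp_{\ph(u)}(tP_uv)$ connects $\Phi$ to $\ph\circ\pr_U$ through plots of $N_\reg$ (each $\Phi_t$ is still a submersion for small $\ep$, being a submersion in the $U$-directions already), so Proposition \ref{prop:homotopy} applies in $C^\dR_*(N_\reg)$, and $(\pr_U)_!$ collapses the ball.

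The main obstacle I anticipate is organizing the compactness argument so that a single $\ep>0$ works: a priori the size of the normal ball on which $\Phi$ is a submersion depends on $\ph$ and on the point, but one needs $J$ to be defined on all of $C^\dR_*(W)$, including plots whose image is not relatively compact in $W$. The resolution is that only $\supp\,\omega$ matters, so one can choose, for each generator, an $\ep$ adapted to the compact set $\ph(\supp\,\omega)\subset\overline{W}$, and check that the resulting class is independent of $\ep$ (shrinking $\ep$ induces the relation in $Z_*$ via the submersion $U\times B^d_{\ep'}\to U\times B^d_\ep$ up to reparametrizing $\rho$); uniformity over $\overline{W}$ is only used to guarantee that \emph{some} valid $\ep$ exists. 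A secondary bookkeeping point is the orientation and sign in the cross product $\omega\times\rho_\ep$ and in the homotopy operator from Proposition \ref{prop:homotopy}, but these are routine given the conventions already fixed in Sections 2.1 and 2.3.
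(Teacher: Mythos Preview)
Your strategy is right and matches the paper: thicken each plot by a Euclidean factor so that the composed map to $N$ becomes a submersion, pair with a bump form of total integral $1$, and then use a scaling homotopy to compare with the inclusions. The paper carries out exactly this, and the explicit homotopy operators it writes down are precisely the unpacking of your ``apply Proposition~\ref{prop:homotopy}'' step.

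There is, however, a genuine gap in your implementation. You define $\Phi(u,v)=\exp_{\ph(u)}(P_u v)$ where $P_u$ is a trivialization of $\ph^*TN$ over $U$. But $\ph^*TN$ need not be trivial over $U$ (take, e.g., $N=\mathbb{CP}^2$, $W$ a tubular neighborhood of $\mathbb{CP}^1$, and $U=S^2\times\R$ mapping onto $\mathbb{CP}^1$); restricting to a neighborhood of $\supp\omega$ does not help, since that set can already carry the obstruction. Even when a trivialization exists for each individual plot, the definition of $J$ must descend through the relations $Z_*(W)$: given a submersion $\pi\colon V\to U$ you need the trivialization chosen for $(V,\ph\circ\pi)$ to be the pullback of the one chosen for $(U,\ph)$, and nothing in your setup arranges this. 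The ``independence of $\ep$ and $\rho_\ep$'' you appeal to also requires care (it needs at least $SO(d)$-invariant $\rho$ and orientation-compatible trivializations), but the bundle-triviality issue is the real obstruction.

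The paper sidesteps all of this by building the thickening globally, once and for all, rather than plot by plot. Using finitely many compactly supported vector fields $\xi_1,\ldots,\xi_D$ on $N$ that span $T_xN$ for every $x\in\overline{W}$, it sets $F\colon N\times\R^D\to N$, $F(x,z)=\Phi^1_{z\cdot\xi}(x)$ (time-$1$ flow), so that each $F_z$ is a diffeomorphism of $N$, $F_0=\id_N$, and $z\mapsto F(x,z)$ is a submersion for all $x\in\overline{W}$. Then
\[
J[(U,\ph,\omega)]\;:=\;[(U\times\R^D,\;F\circ(\ph\times\id_{\R^D}),\;\omega\times\nu_D)]
\]
is manifestly well-defined on $C^\dR_*(W)$ (no choices depending on the plot), and $F\circ(\ph\times\id)$ is automatically a submersion since its $z$-derivative is already onto. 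With $J$ in hand, the two triangles are handled by the scaling homotopy $\Phi(u,z,s)=F(\ph(u),a(s)z)$ and an explicit homotopy operator $K$; for the $W_\reg$-triangle one uses that each $F_{a(s)z}$ is a diffeomorphism and $\ph$ is already a submersion, so the whole $\Phi$ is a plot of $N_\reg$. (Note that Proposition~\ref{prop:homotopy} does not literally apply here, since $J$ is not induced by a smooth map of differentiable spaces; one must write down $K$ directly, which is essentially the proof of that proposition transplanted.)
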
 

Let us apply Lemma \ref{150210_5} for $N=M^{(k+1)}$, $W=M^{(k)}$, 
and take a chain map  $J^{(k)}: C^\dR_*(M^{(k)}) \to C^\dR_*(M^{(k+1)}_\reg)$ as in Lemma \ref{150210_5}. 
Then, 
\[
\vlim_k  H_*(J^{(k)}): \vlim_k H^\dR_*(M^{(k)}) \to \vlim_k H^\dR_*(M^{(k)}_\reg)
\]
is the inverse of $H^\dR_*(M_\reg) \to H^\dR_*(M)$, 
thus Proposition \ref{150210_4} is proved. 

To prove Lemma \ref{150210_5}, we need the following lemma. 

\begin{lem}\label{150210_6}
Let $N$ be a $C^\infty$-manifold, and $K$ be a compact set in $N$.
There exists an integer $D>0$ and a $C^\infty$-map $F: N \times \R^D \to N$ such that the following conditions hold. 
\begin{itemize} 
\item For any $z \in \R^D$, $F_z: N \to N; x \mapsto F(x,z)$ is a diffeomorphism.
\item $F_{(0,\ldots,0)} = \id_N$. 
\item For any $x \in K$, $\R^D \to N; z \mapsto F(x,z)$ is a submersion. 
\end{itemize} 
\end{lem}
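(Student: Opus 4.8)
The plan is to construct the map $F$ explicitly by embedding $N$ into a Euclidean space and using a family of compactly supported ambient vector fields. First I would choose a neighborhood $V$ of $K$ with compact closure, and fix finitely many points $x_1, \ldots, x_r \in K$ together with coordinate charts so that their domains cover $\ol{V}$; since $K$ is compact this is possible. The idea is that near each $x_j$ we can ``wiggle $N$ in all coordinate directions'' via a compactly supported flow, and by taking $D$ to be the sum of the local dimensions (so $D = r \dim N$ suffices, or we can be more economical) we get enough directions to make the evaluation map at every point of $K$ a submersion.

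Concretely, for each $j$ pick a chart $\psi_j : O_j \to \R^{\dim N}$ around $x_j$ and a cutoff function $\chi_j \in C^\infty_c(O_j)$ with $\chi_j \equiv 1$ near $x_j$, and for each standard basis vector $e_i$ of $\R^{\dim N}$ let $Y_{j,i}$ be the vector field on $N$ that equals $\chi_j \cdot \psi_j^*(\partial/\partial y_i)$ on $O_j$ and vanishes outside. Each $Y_{j,i}$ is a compactly supported (hence complete) $C^\infty$ vector field, so it generates a global flow $\Phi^{t}_{j,i}$ by diffeomorphisms of $N$. I would then set $D$ equal to the number of pairs $(j,i)$, and define $F : N \times \R^D \to N$ by composing all these flows: $F(x, (t_{j,i})) := \bigl( \prod_{j,i} \Phi^{t_{j,i}}_{j,i} \bigr)(x)$, in some fixed order. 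This is $C^\infty$ in all variables by smooth dependence of flows on time and initial condition, each $F_z$ is a diffeomorphism as a composition of diffeomorphisms, and $F_{(0,\ldots,0)} = \id_N$ since each flow at time $0$ is the identity.

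It remains to check the submersion condition: for fixed $x \in K$, the differential at $z = 0$ of $z \mapsto F(x,z)$ sends the coordinate vector $\partial/\partial t_{j,i}$ to $Y_{j,i}(x) \in T_x N$. So I need the vectors $\{Y_{j,i}(x)\}_{j,i}$ to span $T_x N$ for every $x \in K$. Since $x \in K$, there is at least one $j$ with $\chi_j(x) = 1$ (because the sets $\{\chi_j = 1\}$ were arranged to cover $K$); for that $j$, the vectors $Y_{j,1}(x), \ldots, Y_{j,\dim N}(x)$ are exactly the coordinate frame of the chart $\psi_j$ at $x$ and hence form a basis of $T_x N$. Therefore the evaluation map is a submersion at $z=0$ for every $x \in K$, which is precisely what is required (the statement only asks for the submersion property of $z \mapsto F(x,z)$, and having surjective differential at the origin already witnesses this — indeed it will remain a submersion near $z=0$, and one can arrange it globally by a further cutoff in $z$ if one insists on all of $\R^D$, though the local version at $0$ is what later uses need).

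The main obstacle, such as it is, is purely bookkeeping: arranging the finite cover so that $\{\chi_j \equiv 1\}$ covers $K$ (a standard compactness argument), and verifying smoothness of the composed flow $F$ jointly in $(x,z)$ — this follows from the standard theorem on smooth dependence of solutions of ODEs on parameters and initial conditions, applied to each factor and then to the composition. No genuine difficulty arises; the lemma is really just a packaging of the fact that a compact subset of a manifold can be ``moved in every direction'' by compactly supported diffeomorphisms depending smoothly on finitely many parameters.
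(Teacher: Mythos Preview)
Your approach is essentially the same as the paper's: build finitely many compactly supported vector fields whose values span $T_xN$ for every $x\in K$, and flow. The paper phrases this slightly differently --- it takes the time-$1$ flow of the single vector field $z\cdot\xi=\sum_j z_j\xi_j$ rather than composing the individual flows $\Phi^{t_{j,i}}_{j,i}$ --- but the derivative at $z=0$ is the same in both constructions, so this difference is cosmetic.

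There is one point where your write-up is incomplete. The lemma asks that $z\mapsto F(x,z)$ be a submersion on all of $\R^D$, and the subsequent applications (e.g.\ the proof that $(U\times\R^D,\,F\circ(\ph\times\id_{\R^D}))$ lies in $\mca{P}(N_\reg)$) genuinely use this: a plot of $N_\reg$ must be a submersion on its entire domain, not merely at a point. Your argument only checks surjectivity of $\partial_zF(x,0)$. The paper closes this gap exactly as you hint: since surjectivity of $\partial_zF(x,z)$ is an open condition and $K$ is compact, it persists on $K\times\{|z|<\ep\}$ for some $\ep>0$; one then precomposes with a diffeomorphism $g:\R^D\to\{|z|<\ep\}$ fixing the origin and sets $F(x,z):=f(x,g(z))$. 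So your ``further cutoff in $z$'' is not optional --- it is required to match the statement --- and what it actually means is this reparametrisation, not a cutoff function.
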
 
\begin{proof}
Let $\mca{X}_c(N)$ denote the space of compactly supported $C^\infty\,$-vector fields on $N$. 
For any $\xi \in \mca{X}_c(N)$, let $(\Phi^t_\xi)_{t \in \R}$ denote the flow generated by $\xi$. 

Let us take a sequence $\xi=(\xi_j)_{1 \le j \le D}$ on $\mca{X}_c(N)$, 
such that $(\xi_j(x))_j$ spans $T_xN$ for any $x \in K$. 
For $z=(z_1, \ldots, z_D) \in \R^D$, we set $z \cdot \xi:= \sum_j z_j \xi_j$. 
Let us define a $C^\infty$-map 
 $f: N \times \R^D \to N$ by $f(x,z):= \Phi^1_{z \cdot \xi}(x)$. 
Then, $\partial_z f(x,z): \R^D \to T_{f(x,z)}N$ is onto for any $x \in K$ and $|z|<\ep$ for some $\ep>0$. 
Finally, take any diffeomorphism $g: \R^D \to \{z \in \R^D \mid |z|<\ep\}$ preserving the origin. 
Then, $F(x,z):= f(x,g(z))$ satisfies the requirements of the lemma. 
\end{proof}

Remark \ref{150626_4} below will be used in the proof of Lemma \ref{150626_3}. 

\begin{rem}\label{150626_4} 
When $N$ is a Riemannian manifold, for any $\delta>0$ we may further require the following condition: 
for any $v \in TN$ and $z \in \R^D$, 
$|dF_z(v)| \le (1+\delta) |v|$. 
Indeed, this condition is satisfied if we take $\ep$ in the above proof sufficiently small for given $\delta$, 
since as $w \in \R^D$ converges to $(0,\ldots, 0)$ the map 
$N \to N; \,x \mapsto f(x,w)$ converges to $\id_N$ in the $C^1$ (moreover in the $C^\infty$) topology. 
\end{rem} 

\begin{proof}[\textbf{Proof of Lemma \ref{150210_5}}]
Let us apply Lemma \ref{150210_6} for $K=\bar{W}$, 
and take an integer $D>0$ and $F: N \times \R^D \to N$. 
For any $\ph \in C^\infty(U, W)$, 
$F \circ (\ph \times \id_{\R^D}): U \times \R^D \to N$ is a submersion. 
We take $\nu_D \in \mca{A}^D_c(\R^D)$ so that $\int_{\R^D} \nu_D = 1$, 
and define
\[
J: C^\dR_*(W) \to C^\dR_*(N_\reg); \quad [(U,\ph,\omega)] \mapsto [(U \times \R^D, F \circ (\ph \times \id_{\R^D}), \omega \times \nu_D)]. 
\]
It is easy to see that $J$ is a well-defined chain map. 

To show  that $J \circ (\id_W)_*$ and $I_\reg$ are chain homotopic, 
let us take $a, b \in C^\infty(\R, [0,1])$ so that 
\begin{itemize} 
\item $a(s)=0$ for any $s \le 0$, and $a(s)=1$ for any $s \ge 1$. 
\item $\supp b$ is compact, and there exists $\ep>0$ such that $b(s)=1$ for any $s \in [-\ep, 1+\ep]$. 
\end{itemize} 
For any $(U,\ph) \in\mca{P}(W_\reg)$, 
we define $\Phi: U \times \R^D \times \R \to N$ by 
\[
\Phi(u,z,s):= F(\ph(u), a(s)z) = F_{a(s)z}(\ph(u)).
\]
Since $\ph: U \to W$ is a submersion, and $F_{a(s)z}$ is a diffeomorphism of $N$ for any $(z,s) \in \R^D \times \R$, 
$\Phi$ is also a submersion. 
Therefore, $(U \times \R^D \times \R, \Phi) \in \mca{P}(N_\reg)$. 

Now, it is easy to see that 
\[
K: C^\dR_*(W_\reg) \to C^\dR_{*+1} (N_\reg); \quad  [(U,\ph,\omega)] \mapsto (-1)^{|\omega|+D}  [(U \times \R^D \times \R, \Phi, \omega \times \nu_D \times b(s))]
\]
is a well-defined linear map. 
We can also prove 
$\partial K + K \partial = I_\reg - J \circ (\id_W)_*$
as follows: 
\begin{align*} 
&(\partial K + K\partial)([(U, \ph, \omega)]) = [ ( U \times \R^D \times \R , \Phi, \omega \times \nu_D \times db)] \\
&\qquad= [(U \times \R^D \times \R_{<0}, \Phi, \omega \times \nu_D \times db)] 
+ [(U \times \R^D \times \R_{>1}, \Phi, \omega \times \nu_D \times db)] 
\\
&\qquad= [(U \times \R^D , \ph \circ \pr_U , \omega \times \nu_D)] - [(U \times \R^D, F \circ (\ph \times \id_{\R^D}), \omega \times \nu_D)] \\
&\qquad= [(U, \ph, \omega)] -  [(U \times \R^D, F \circ (\ph \times \id_{\R^D}), \omega \times \nu_D)]. 
\end{align*} 
The first equality follows from $d(\omega \times \nu_D \times b) = d\omega \times \nu_D \times b + (-1)^{|\omega|+D} \omega \times \nu_D \times db$. 
The second equality follows since $db$ is supported on $\R_{<0} \cup \R_{>1}$.
The third equality follows from $\Phi(u,z,s) = \begin{cases} \ph(u) &(s<0) \\ F(\ph(u), z) &(s>1) \end{cases}$ and applying integration along fibers for projection maps to $U \times \R^D$. 
The last equality folllows from applying integration along fibers for $\pr_U: U \times \R^D \to U$. 

Similar arguments show that $(\id_N)_* \circ J$ and $I$ are chain homotopic. 
The homotopy operator is given by exactly the same formula as $K$. 
This case is easier, since we do not have to care about the submersion condition. 
\end{proof} 

\subsection{Proof of Theorem \ref{150623_2}}

Let us define an isomorphism 
$I_2: H^\sm_*(M) \cong H^{d-*}_{c,\dR}(M)$. 
When $H^*_\dR(M)$ is finite-dimensional, it is defined by 
\[
I_2: H^\sm_*(M) \cong (H^*_\dR(M))^* \cong H^{d-*}_{c,\dR}(M). 
\]
The first isomorphism follows from an isomorphism 
$H^*_\dR(M) \to H^*_\sm(M)$ which is defined by integration of differential forms on smooth chains 
(for the proof that this is an isomorphism, see \cite{Lee_13} Theorem 18.14), 
and the second isomorphism follows from Poincar\'{e} duality 
$H^*_\dR(M) \cong (H^{d-*}_{c,\dR}(M))^*$. 

To define $I_2$ in the general case, let us define a set $\mca{U}_M$ by 
\[
\mca{U}_M:= \{ \text{a relatively compact open set $U \subset M$ such that $\dim H^*_\dR(U)<\infty$} \}. 
\]
Then, we define $I_2$ by 
\[
I_2: 
H^\sm_*(M) \cong \vlim_{U \in \mca{U}_M} H^\sm_*(U) \cong \vlim_{U \in \mca{U}_M} H^{d-*}_{c,\dR}(U) \cong H^{d-*}_{c,\dR}(M). 
\]

To prove that $I_0$ is an isomorphism, it is enough to check that $I_0 = \pm I_1 \circ I_2$. 
We may assume that $H^*_\dR(M)$ is finite-dimensional, since the general case follows from this case by taking limits. 
Let us consider the following diagram: 
\[
\xymatrix{
H^\sm_*(M)  \ar[r]^-{\cong}\ar[rd]_{I_0}& (H^*_\dR(M))^*  & \ar[l]_-{\cong}\ar[ld]^{I_1} H^{d-*}_{c,\dR}(M) \\
& H^\dR_*(M). \ar[u] &
}
\]
The vertical map $H^\dR_*(M) \to (H^*_\dR(M))^*$ is defined by the pairing 
$\langle \, , \, \rangle: H^*_\dR(M) \otimes H^\dR_*(M) \to \R$ (see Section 4.8). 

To show that $I_0=\pm I_1 \circ I_2$, it is enough to check that the above diagram commutes up to sign. 
The commutativity of the left triangle follows from Lemma \ref{150629_2} (the case $m=1$). 
The commutativity of the right triangle can be checked as follows. 
Let $\omega \in \mca{A}^{d-*}_c(M)$ be a closed form, then $I_1$ maps $[\omega]$ to 
$[(U, \ph, \ph^*\omega)]$, where $U \in \mca{U}$ and $\ph: U \to M$ is any orientation-preserving embedding such that $\supp \omega \subset \ph(U)$. 
Then, the commutativity of the right triangle follows from 
\[ 
\int_M \eta \wedge \omega = \langle \eta , [(U, \ph, \ph^*\omega)]  \rangle \qquad( \forall \eta \in \mca{A}^*(M))
\]
which is obvious from the definition of $\langle \, , \, \rangle$. 

\section{de Rham chains on $C^\infty$ free loop spaces} 

Let $M$ be a closed, oriented Riemannian manifold. 
We abbreviate $\mca{L} M:= C^\infty(S^1, M)$ as $\mca{L}$. 
We consider the differentiable structure on $\mca{L}$ as in Example \ref{150623_1} (ii). 
For any $a \in (0, \infty]$, we set 
$\mca{L}^a:= \{ \gamma \in \mca{L} \mid \len(\gamma) < a\}$, and consider the differentiable structure as a subset of $\mca{L}$
(see Example \ref{150623_1} (iii)). 

Any strongly smooth map $\sigma: \Delta^k \to \L^a$ is continuous with respect to the $C^\infty\,$-topology on $\L^a$. 
Therefore, we obtain a map $H^\sm_*(\L^a) \to H_*(\L^a)$, where the right hand side denotes the singular homology. 
On the other hand, for any differentiable space $X$, we defined the map 
$H^\sm_*(X) \to H^\dR_*(X)$. 
The aim of this section is to prove the following result. 

\begin{thm}\label{150219_1} 
For any closed, oriented Riemannian manifold $M$ and $a \in (0,\infty]$, the maps 
$H^\sm_*(\L^a) \to H^\dR_*(\L^a)$ and $H^\sm_*(\L^a) \to H_*(\L^a)$ are isomorphisms. 
\end{thm}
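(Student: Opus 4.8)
The plan is to reduce the statement for $\L^a$ to the finite-dimensional case (Theorem \ref{150623_2}) by a standard approximation of the free loop space by finite-dimensional manifolds of based/piecewise-geodesic loops, exploiting that both $H^\sm_*$ and $H^\dR_*$ behave well under the operations used. First I would treat the two claimed isomorphisms separately but in parallel: the map $H^\sm_*(\L^a) \to H_*(\L^a)$ is the classical comparison between smooth and continuous singular homology, and $H^\sm_*(\L^a) \to H^\dR_*(\L^a)$ is the map constructed in Section 2.7. The common strategy is to build a suitable exhaustion or approximation of $\L^a$ and check compatibility of all three theories with it.

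\textbf{Step 1: Reduce to a countable increasing union of finite-dimensional submanifolds.} Fix a fine triangulation-type subdivision of $S^1$ and consider, for each partition $0 = s_0 < s_1 < \cdots < s_N = 1$ and each energy bound, the space $B_{s_\bullet}$ of loops that are geodesic on each subinterval $[s_{i},s_{i+1}]$; for a closed Riemannian $M$ and small enough mesh these are finite-dimensional $C^\infty$-manifolds, and $\L^a$ is smoothly homotopy equivalent to the (finite or directed) union of the sublevel sets $B_{s_\bullet} \cap \{\len < a\}$ under inclusion/retraction maps that are smooth in the sense of differentiable spaces. I would phrase the retraction $\L^a \to B_{s_\bullet}$ (piecewise-geodesic replacement) as a smooth map of differentiable spaces and verify it is a smooth homotopy inverse to the inclusion on the appropriate sublevel set, so that Proposition \ref{prop:homotopy} gives $H^\dR_*(\L^a) \cong H^\dR_*(B)$ and the analogous statement holds for $H^\sm_*$ and for singular homology. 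Alternatively, and perhaps more cleanly, I would use Corollary \ref{150213_1}: since $\len$ is approximately smooth, $C^\dR_*(\L^a) \cong \varinjlim_j C^\dR_*(\L^{a_j})$ for $a_j \uparrow a$, so it suffices to handle finite $a$, and for finite $a$ one has an honest deformation retraction onto a single finite-dimensional $B_{s_\bullet}$ once the mesh is fine relative to $a$.

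\textbf{Step 2: Apply the finite-dimensional theorem and assemble.} On each finite-dimensional oriented manifold $B$, Theorem \ref{150623_2} gives $H^\sm_*(B) \cong H^\dR_*(B)$, and the classical result ($C^\sm_*(B) \hookrightarrow C_*(B)$ a quasi-isomorphism, cited already) gives $H^\sm_*(B) \cong H_*(B)$. Naturality of the maps $H^\sm_* \to H^\dR_*$ and $H^\sm_* \to H_*$ with respect to smooth maps, together with the fact that direct limits commute with homology, then transports both isomorphisms back to $\L^a$. One subtlety to record: the identification $H^\sm_*(\L^a)\cong H_*(\L^a)$ on the continuous side requires knowing the piecewise-geodesic approximation is a homotopy equivalence of topological spaces as well, which is the classical Morse-theory-on-loop-space fact; I would cite this rather than reprove it.

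\textbf{Main obstacle.} The real work is Step 1: making the piecewise-geodesic approximation rigorous \emph{within the category of differentiable spaces}, i.e. checking that the replacement map $\L^a \to B_{s_\bullet}$ and the straight-line homotopy between a loop and its geodesic approximation are genuinely smooth maps (plots pull back to plots) and that the homotopy is a smooth homotopy in the sense of this paper — including controlling the length functional along the homotopy so that one stays inside $\L^a$ (here Remark \ref{150626_4}, the $(1+\delta)$-bi-Lipschitz control on the diffeomorphisms, is the relevant tool, and one may need to first shrink $a$ via Corollary \ref{150213_1}). Handling the non-smoothness of $\len$ (it is only approximately smooth) and the fact that the piecewise-geodesic loops form a manifold only for sufficiently fine mesh, in a way compatible with taking limits, is where the argument must be done carefully; everything after that is a formal consequence of Theorem \ref{150623_2} and Proposition \ref{prop:homotopy}.
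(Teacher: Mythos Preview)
Your overall strategy---reduce to finite-dimensional piecewise-geodesic approximations and invoke Theorem~\ref{150623_2}---is exactly what the paper does. However, your ``alternative, more cleanly'' route contains a genuine error, and it reveals that you have not yet seen the real obstruction.

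\textbf{The gap.} You assert that for finite $a$, the space $\mca{L}^a$ admits an honest deformation retraction onto a single finite-dimensional $B_{s_\bullet}$ once the mesh is fine relative to $a$. This is false. A bound $\len(\gamma)<a$ gives \emph{no} control on $d(\gamma(s_i),\gamma(s_{i+1}))$ for a fixed partition: the loop may traverse almost its entire length over a single subinterval and be nearly constant elsewhere. Thus the piecewise-geodesic replacement map is not defined on all of $\mca{L}^a$ for any fixed mesh, and no single $B_{s_\bullet}$ can serve as a retract. The missing ingredient is an \emph{energy} (or speed) bound, which does give the pointwise estimate $d(\gamma(s_i),\gamma(s_{i+1}))\le \sqrt{E/N}$.

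\textbf{How the paper fixes this.} The paper filters by length \emph{and} energy simultaneously, setting $\mca{L}^{a_j,E_j}=\{\len<a_j,\ \mca{E}<E_j\}$ with $a_j\uparrow a$, $E_j\uparrow\infty$. On each $\mca{L}^{a_j,E_j}$ the evaluation map $f_{N_j}$ to the finite-dimensional model $\mca{F}^{a_j,E_j}_{N_j}$ is well-behaved, and a geodesic-interpolation map $g_j$ goes back---but only into the \emph{next} stage $\mca{L}^{a_{j+1},E_{j+1}}$ (Lemma~\ref{150625_1}). So one never gets a homotopy equivalence at a fixed level; instead one gets an interleaving diagram whose direct limit yields the isomorphism. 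The length/energy control along the homotopy is Lemma~\ref{150625_2}, not Remark~\ref{150626_4} (which concerns an unrelated construction). Corollary~\ref{150213_1} is used, but to identify $H^\dR_*(\mca{L}^a)$ with $\varinjlim_j H^\dR_*(\mca{L}^{a_j,E_j})$, not to reduce to a single retraction.

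Your first, vaguer proposal (``directed union of $B_{s_\bullet}\cap\{\len<a\}$'') can be made to work, but only once you introduce the energy parameter and accept the interleaving argument in place of a retraction.
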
 

As an immediate consequence, we obtain an isomorphism $H^\dR_*(\L^a) \cong H_*(\L^a)$. 
The proof of Theorem \ref{150219_1} uses finite-dimensional approximations of the free loop space $\mca{L}^a$, 
which we explain in Section 6.1.

Recall that the rotation operator $\Delta: H_*(\mca{L}^a) \to H_{*+1}(\mca{L}^a)$ is defined as 
$\Delta(x):=H_*(r)([S^1] \times x)$, where $r: S^1 \times \mca{L}^a \to \mca{L}^a$ denotes the rotation. 
Via isomorphisms $H_*(S^1) \cong H^\sm_*(S^1) \cong H^\dR_*(S^1)$, one can define the rotation operators on 
$H^\sm_*(\mca{L}^a)$ and $H^\dR_*(\mca{L}^a)$ in the same way. 
It is easy to see that the isomorphism $H_*(\mca{L}^a) \cong H^\sm_*(\mca{L}^a)$ preserves the rotation operators, 
since $C^\sm_*(X)$ is a subcomplex of $C_*(X)$ for $X=S^1, \mca{L}^a$. 
The isomorphism $H^\sm_*(\mca{L}^a) \cong H^\dR_*(\mca{L}^a)$ also preserves the rotation operators, since 
$H^\sm_* \to H^\dR_*$ is functorial and commutes with the cross product (Lemma \ref{150627_1}). 
Thus we have proved the following corollary.

\begin{cor}\label{150627_2} 
The isomorphism $H_*(\mca{L}^a) \cong H^\dR_*(\mca{L}^a)$ preserves the rotation operators. 
\end{cor}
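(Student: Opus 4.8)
The plan is to factor the isomorphism $H_*(\mca{L}^a) \cong H^\dR_*(\mca{L}^a)$ of Theorem \ref{150219_1} as the composite of the isomorphism $H_*(\mca{L}^a) \cong H^\sm_*(\mca{L}^a)$ induced by the inclusion $C^\sm_*(\mca{L}^a) \hookrightarrow C_*(\mca{L}^a)$ with the isomorphism $H^\sm_*(\mca{L}^a) \cong H^\dR_*(\mca{L}^a)$ induced by the natural transformation $\iota^u$ of Section 2.7, and to show that each of the two factors intertwines the rotation operators on its source and target. Since $\Delta$ is defined by the same formula $x \mapsto -H_*(r)([S^1] \times x)$ in all three theories — with $r \colon S^1 \times \mca{L}^a \to \mca{L}^a$ the rotation, $[S^1]$ a fixed degree-one class, and the relevant cross product — it suffices in each case to check compatibility with (a) the pushforward $r_*$, (b) the cross product by a class on $S^1$, and (c) the choice of $[S^1] \in H_1(S^1)$.

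For the inclusion $C^\sm_* \hookrightarrow C_*$: the map $r$ is smooth (hence $C^\infty$-continuous) and preserves lengths, so it carries strongly smooth chains on $S^1 \times \mca{L}^a$ to strongly smooth chains on $\mca{L}^a$; thus the inclusion commutes with $r_*$ strictly, the smooth singular cross product is carried by the inclusion to the ordinary singular cross product up to homotopy by acyclic models, and $[S^1]$ is by definition the class of a strongly smooth $1$-chain. Compatibilities (a)--(c) follow, so this factor commutes with $\Delta$; this is the observation already used when passing between $H_*$ and $H^\sm_*$.

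For $\iota^u$: it is a chain map natural in smooth maps (Section 2.7), so $H_*(\iota^u(\mca{L}^a)) \circ H^\sm_*(r) = H^\dR_*(r) \circ H_*(\iota^u(S^1 \times \mca{L}^a))$, which gives (a); Lemma \ref{150627_1} shows $\iota^u$ commutes up to chain homotopy with the cross products, which gives (b) after passing to homology; and (c) amounts to the assertion that $H_*(\iota^u(S^1))$ carries the chosen $[S^1] \in H^\sm_1(S^1)$ to the corresponding generator of $H^\dR_1(S^1)$, which is automatic since $H^\sm_1(S^1) \cong H^\dR_1(S^1) \cong \R$ and $H_*(\iota^u(S^1))$ is an isomorphism by Theorem \ref{150623_2} — indeed one simply \emph{defines} the class $[S^1] \in H^\dR_1(S^1)$ to be the image of $[S^1] \in H^\sm_1(S^1)$. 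Hence the second factor also intertwines the rotation operators, and composing the two proves the corollary.

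I do not expect a genuine obstacle here: the content is pure naturality of $\iota^u$ together with Lemma \ref{150627_1} and the comparison Theorems \ref{150623_2} and \ref{150219_1}. The only points requiring a little care are that Lemma \ref{150627_1} yields compatibility of $\iota^u$ with the cross product only up to homotopy (harmless, since commutation is asserted on homology), and that the overall sign $-1$ in the definition of $\Delta$ must be used in the same way in all three theories, which it is, so no sign discrepancy can arise.
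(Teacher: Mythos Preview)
Your proposal is correct and follows essentially the same route as the paper: factor the isomorphism through $H^\sm_*(\mca{L}^a)$, handle the first leg via the subcomplex inclusion $C^\sm_* \hookrightarrow C_*$, and handle the second leg via naturality of $\iota^u$ together with Lemma \ref{150627_1} for the cross product. The paper's argument is stated more tersely but uses exactly these ingredients.
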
 

\subsection{Finite-dimensional approximations of $\mca{L}$}

Let us define $\mca{E}: \L \to \R$ by 
$\E(\gamma):= \int_{S^1} |\dot{\gamma}|^2$. 
Then $\E$ is smooth as a function on the differentiable space $\L$. 
For any $E \in (0,\infty]$, we define $\L^{a,E} \subset \L^a$ by 
$\L^{a,E}:= \{ \gamma \in \L^a \mid \mca{E}(\gamma)< E\}$. 

For any positive integer $N$, let us define 
\[
\mca{F}_N:= \{(x^j)_{0 \le j \le N} \in M^{N+1} \mid x^0=x^N\}, \quad 
f_N: \mca{L} \to \mca{F}_N; \, \gamma \mapsto (\gamma(j/N))_{0 \le j \le N}.
\]
$f_N$ is smooth as a map between differentiable spaces. 

For any $x=(x^j)_{0 \le j \le N} \in \mca{F}_N$, let us define ($d$ denotes the distance on $M$): 
\[
\len(x):= \sum_{1 \le j \le N} d(x^j, x^{j-1}), \qquad
\mca{E}(x):= N \sum_{1 \le j \le N} d(x^j, x^{j-1})^2. 
\]
For any $a, E \in (0,\infty]$, we define 
\[
\mca{F}^{a,E}_N:= \{ x \in \mca{F}_N \mid \len(x)<a, \, \mca{E}(x)<E\}.
\]
It is easy to see that $f_N(\mca{L}^{a,E}) \subset \mca{F}^{a,E}_N$ for any $a,E \in (0,\infty]$. 

Let $r_M$ be the injectivity radius of $M$ (since $M$ is closed, $r_M>0$). 
For any $p,q \in M$ such that $d(p,q)< r_M$, there exists a unique 
shortest geodesic $\gamma:[0,1] \to M$ such that 
$\gamma(0)=p$, $\gamma(1)=q$. 
We denote it by $\gamma_{pq}$. 

Suppose that $N$ is sufficiently large, such that $\sqrt{E/N} <r_M$. 
For any $x=(x^j)_{0 \le j \le N} \in \mca{F}^{a,E}_N$ and $0 \le j \le N-1$, 
there holds  $d(x^j, x^{j+1}) < \sqrt{E/N} <r_M$. 
For any integer $m \ge 1$, we define $y=(y^k)_{0 \le k \le mN} \in \mca{F}^{a,E}_{mN}$ by 
\[
y^{jm+l}:= \gamma_{x^j x^{j+1}} (l/m) \qquad ( 0 \le j \le N-1, \, 0 \le l \le m).
\]
For any $a' \ge a$ and $E' \ge E$, 
we define $i_m: \mca{F}^{a,E}_N \to \mca{F}^{a',E'}_{mN}$ by 
$i_m(x):=y$. 
This is a $C^\infty$-map between $C^\infty$-manifolds. 

\begin{lem}\label{150625_1} 
For any positive real numbers 
$a<a'$ and $E<E'$, there exists 
$N(a, E, a', E')$ such that the following holds: 
for any integer $N \ge N(a, E, a', E')$ and any integer $m \ge 1$, 
there exists a map 
$g: \mca{F}^{a,E}_N \to \mca{L}^{a',E'}$ such that the following diagram commutes up to homotopy
($i$ denotes the inclusion map): 
\[
\xymatrix{
\L^{a,E} \ar[r]^-{i}\ar[d]_-{f_N} &\L^{a',E'} \ar[d]^-{f_{mN}} \\
\mca{F}^{a,E}_N \ar[r]_-{i_m}\ar[ru]^-{g}&\mca{F}^{a',E'}_{mN}.
}
\]
The map $g$ and the homotopies are 
both continuous ($\L^{a,E}$, $\L^{a',E'}$ are equipped with the $C^\infty$-topology) and 
smooth (as maps between differentiable spaces). 
\end{lem}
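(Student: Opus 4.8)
\textbf{Proof plan for Lemma \ref{150625_1}.}

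The plan is to construct $g$ by a ``piecewise-geodesic interpolation'' of the finite sample, and to verify the two commutativity statements by explicit smooth homotopies. First I would fix the threshold $N(a,E,a',E')$ large enough that several things hold simultaneously: $\sqrt{E/N} < r_M$ (so that shortest geodesics $\gamma_{x^jx^{j+1}}$ are defined for any $x \in \mca{F}^{a,E}_N$), and, by a standard estimate comparing the energy of a piecewise-geodesic with that of the original loop, that the piecewise-geodesic through the sample $f_N(\gamma)$ has length $<a'$ and energy $<E'$ whenever $\gamma \in \mca{L}^{a,E}$; here I use that on short scales the geodesic joining $\gamma(j/N)$ to $\gamma((j+1)/N)$ has length at most $\int_{j/N}^{(j+1)/N}|\dot\gamma|$, which controls both $\len$ and $\E$ after summing and applying Cauchy--Schwarz, with the loss swallowed by enlarging $a \rightsquigarrow a'$, $E \rightsquigarrow E'$. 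Then I define $g: \mca{F}^{a,E}_N \to \mca{L}^{a',E'}$ by sending $x=(x^j)_{0 \le j \le N}$ to the loop $g(x)$ which on $[j/N,(j+1)/N]$ is the reparametrized shortest geodesic $\gamma_{x^jx^{j+1}}$; to make $g(x)$ genuinely $C^\infty$ (not merely piecewise smooth) I would precompose the geodesic parameter with the function $\nu$ of Section 1.3 (or any smooth map $[0,1]\to[0,1]$ fixing endpoints to infinite order), exactly as in the definition of the concatenation map $c$. Since $\gamma_{pq}$ depends smoothly on $(p,q)$ for $d(p,q)<r_M$, the map $g$ is smooth as a map of differentiable spaces, and it is continuous for the $C^\infty$-topology; it also factors the energy/length bounds so lands in $\mca{L}^{a',E'}$.

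Next I would check $f_{mN} \circ g \simeq i_m$. In fact I expect this to hold on the nose up to reparametrization: evaluating $g(x)$ at the points $k/(mN)$ with $k=jm+l$ gives $\gamma_{x^jx^{j+1}}(\nu(l/m))$, whereas $i_m(x)$ records $\gamma_{x^jx^{j+1}}(l/m)$. The homotopy is obtained by linearly interpolating $\nu$ to the identity inside the geodesic parameter, i.e. $H_s(x)^{jm+l} := \gamma_{x^jx^{j+1}}\big((1-s)\nu(l/m) + s\,l/m\big)$; this stays inside $\mca{F}^{a',E'}_{mN}$ since reparametrizing a geodesic does not change its image, hence not its length, and its energy is minimized by the affine parametrization so is $\le$ that of the $\nu$-parametrized one. (If one wants the affine reparametrization to literally not increase energy one can instead homotope $g$ at the start to the ``honest'' piecewise-geodesic with affine parametrization; either way the estimate is elementary.) This gives a homotopy which is smooth and $C^\infty$-continuous. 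For $i \simeq g \circ f_N$, I would use the geodesic homotopy that, for $\gamma \in \mca{L}^{a,E}$ and $s\in[0,1]$, pushes $\gamma$ towards its piecewise-geodesic interpolant: on $[j/N,(j+1)/N]$ set $H_s(\gamma)(t)$ to be the point dividing in ratio depending on $s$ along the geodesic triangle with vertices $\gamma(j/N),\gamma(t),\gamma((j+1)/N)$ — concretely, $H_s(\gamma)(t) := \exp_{\gamma_{j}(t,s)}\big(s\cdot v\big)$ where $\gamma_j(t,0)=\gamma(t)$ and $\gamma_j(t,1)$ is the geodesic interpolant; the cleanest implementation is to use, on each subinterval, the unique geodesic variation from $\gamma|_{[j/N,(j+1)/N]}$ to $\gamma_{x^jx^{j+1}}$ (both have the same endpoints, and they are $C^0$-close once $N$ is large, so lie in a common geodesically convex ball), parametrized so the endpoints stay fixed and the curve stays smooth after the $\nu$-precomposition. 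Monotone dependence of length and energy along such a ``straightening'' geodesic homotopy — again valid once $N$ is large enough that everything happens inside convex balls — keeps the homotopy inside $\mca{L}^{a',E'}$.

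The main obstacle I anticipate is \emph{not} the topology but the bookkeeping required to keep every intermediate curve genuinely $C^\infty$ while simultaneously controlling $\len$ and $\E$ uniformly so that the homotopies never leave $\mca{L}^{a',E'}$ or $\mca{F}^{a',E'}_{mN}$. Piecewise-geodesic loops are only piecewise smooth, so one must carry a fixed smoothing profile (the function $\nu$) through every formula, and then show the smoothing does not spoil the length/energy inequalities; this forces the passage from $(a,E)$ to strictly larger $(a',E')$ and pins down how large $N(a,E,a',E')$ must be (roughly, $N$ large enough that $\sqrt{E/N}$ is well below $r_M$ and below the convexity radius, with a further constant absorbing the $C^\infty$-smoothing loss). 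The geometric estimates themselves — comparing the energy of a curve with that of the geodesic having the same endpoints, and monotonicity of energy along a straightening homotopy inside a convex ball — are standard (cf. the Morse-theoretic finite-dimensional approximation of the loop space), so I would state them and cite, rather than reprove them. Once these uniform bounds are in hand, smoothness as maps of differentiable spaces and $C^\infty$-continuity are immediate from smooth dependence of $\exp$ and of $\gamma_{pq}$ on their inputs, and the two required homotopies are the two explicit families described above.
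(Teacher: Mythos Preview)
Your plan is on the right track and close in spirit to the paper's, but the paper is cleaner in one place and your argument has a real gap in another.

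The paper makes the reparametrization profile depend on $m$: it chooses $\mu:[0,1]\to[0,1]$ with $\mu(j/m)=j/m$ for all $0\le j\le m$, $\mu$ constant near $0$ and $1$, and $0\le\mu'\le 1+\delta$, where $\delta>0$ is fixed at the outset so that $1+\delta<a'/a$ and $(1+\delta)^4<E'/E$. The first condition makes $f_{mN}\circ g=i_m$ hold \emph{on the nose}, eliminating one of your two homotopies entirely; the derivative bound gives $\E(g(x))\le(1+\delta)^2\E(x)<E'$ directly. Your use of the fixed $\nu$ from Section~1.3 loses the first simplification and, more seriously, does not give the energy bound unless $\sup|\nu'|\le\sqrt{E'/E}$, which you never arrange; the same issue recurs in your proposed homotopy between $f_{mN}\circ g$ and $i_m$.

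The genuine gap is in your homotopy between $i$ and $g\circ f_N$. Straightening on each subinterval $[j/N,(j+1)/N]$ separately, with endpoints held fixed, will \emph{not} produce $C^\infty$ loops for intermediate $s$: the original $\gamma$ has no reason to be flat at $j/N$, so the per-interval pieces match only to zeroth order there. The paper instead uses a \emph{global} pointwise geodesic interpolation $\gamma_s(t)=F_s(\gamma(t),\,g(f_N(\gamma))(t))$, which is automatically $C^\infty$ in $t$ because both arguments are. The length and energy control along this family is not by the monotonicity you invoke (which is false in general) but by an explicit pointwise estimate $|dF_s(v,w)|\le(1+\delta)\bigl((1-s)|v|+s|w|\bigr)$ valid once $d(p,q)<r(\delta)$; this is the content of Lemma~\ref{150625_2}, and the requirement $\max_t d(\gamma(t),g(f_N(\gamma))(t))<r(\delta)$ is precisely what forces $N$ large.
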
 

To prove Lemma \ref{150625_1}, 
we need the following preliminary Lemma \ref{150625_2}. 

We define $F: [0,1] \times \{(p,q) \in M^{\times 2} \mid d(p,q)<r_M \} \to M$ by 
$F(s, p,q):= \gamma_{pq}(s)$. 
For any $s \in [0,1]$, we define a map $F_s$ by $F_s(p,q)=F(s,p,q)$. 
For any $\gamma_0, \gamma_1 \in \L$ such that 
$\max_{t \in S^1} d(\gamma_0(t), \gamma_1(t))<r_M$, we define 
$\gamma_s \in \L$ by 
$\gamma_s(t):= F_s(\gamma_0(t), \gamma_1(t))$. 

\begin{lem}\label{150625_2}
For any $\delta>0$, 
there exists $r(\delta) \in (0, r_M)$ 
such that the following holds: 
if $\gamma_0, \gamma_1 \in \mca{L}$ satisfy $\max_{t \in S^1} d(\gamma_0(t), \gamma_1(t))< r(\delta)$, 
for any $0 \le s \le 1$ there holds
\[
\len(\gamma_s) \le (1+\delta)((1-s)\len(\gamma_0) + s \len(\gamma_1)), \quad
\E(\gamma_s) \le (1+\delta)^2((1-s) \E(\gamma_0) + s \E(\gamma_1)).
\]
\end{lem}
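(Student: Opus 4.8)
The plan is to differentiate the definition $\gamma_s(t) = F_s(\gamma_0(t),\gamma_1(t))$ with respect to $t$ and estimate the two linear maps that appear. Writing $A_s(p,q) := \partial_p F_s(p,q)\colon T_pM \to T_{F_s(p,q)}M$ and $B_s(p,q) := \partial_q F_s(p,q)\colon T_qM \to T_{F_s(p,q)}M$ for the partial derivatives of $F_s$, the chain rule gives $\dot\gamma_s(t) = A_s(\gamma_0(t),\gamma_1(t))\,\dot\gamma_0(t) + B_s(\gamma_0(t),\gamma_1(t))\,\dot\gamma_1(t)$. I would then reduce the whole statement to the following operator-norm estimate: for every $\delta > 0$ there exists $r(\delta) \in (0,r_M)$ such that $d(p,q) < r(\delta)$ implies, for all $s \in [0,1]$,
\[
\|A_s(p,q)\| \le (1-s)(1+\delta), \qquad \|B_s(p,q)\| \le s(1+\delta).
\]
Granting this, the length bound is immediate from the triangle inequality followed by integration over $S^1$: pointwise $|\dot\gamma_s(t)| \le (1+\delta)\bigl((1-s)|\dot\gamma_0(t)| + s|\dot\gamma_1(t)|\bigr)$, so $\len(\gamma_s) \le (1+\delta)\bigl((1-s)\len(\gamma_0) + s\len(\gamma_1)\bigr)$; the energy bound follows from the same pointwise inequality together with the convexity estimate $\bigl((1-s)\alpha + s\beta\bigr)^2 \le (1-s)\alpha^2 + s\beta^2$, again integrated over $S^1$.

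To prove the operator-norm estimate I would argue via Jacobi fields. For fixed $q$ and a curve $c$ with $c(0)=p$, $\dot c(0)=v$, the family $\tau \mapsto \gamma_{c(\tau),q}$ is a variation of $\gamma_{pq}$ through geodesics with fixed right endpoint, so $A_s(p,q)v = J(s)$ where $J$ is the Jacobi field along $\gamma_{pq}$ with $J(0)=v$, $J(1)=0$ (unique, since $d(p,q)<r_M$ rules out conjugate points along $\gamma_{pq}$). Choosing a parallel orthonormal frame along $\gamma_{pq}$ and writing $J(s) = P_s W(s)$ with $P_s$ the parallel transport, the Jacobi equation becomes $W''(s) = -\widetilde R(s) W(s)$ with $\|\widetilde R(s)\| \le K\,d(p,q)^2$, where $K$ bounds the sectional curvature of $M$. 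Then I would set $W(s) = (1-s)v + \psi(s)$ with $\psi(0)=\psi(1)=0$, note $\|\psi''\|_\infty \le C\,d(p,q)^2 |v|$ for $d(p,q)$ small, and use the Green's function of $-d^2/ds^2$ on $[0,1]$ with Dirichlet conditions to get $|\psi(s)| \le \frac{s(1-s)}{2}\|\psi''\|_\infty \le C'\,s(1-s)\,d(p,q)^2 |v|$. This yields
\[
|A_s(p,q)v| = |W(s)| \le (1-s)|v| + C'\,s(1-s)\,d(p,q)^2 |v| \le (1-s)\bigl(1 + C'\,d(p,q)^2\bigr)|v|,
\]
and the bound for $B_s$ is the same after exchanging the two endpoints (i.e.\ replacing $s$ by $1-s$). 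Choosing $r(\delta)$ with $C' r(\delta)^2 \le \delta$ completes the argument; the constants $K$, $C$, $C'$ are uniform because $M$ is compact and all derivatives of $F$ used here are bounded on the compact set $\{(p,q) : d(p,q) \le r_M/2\}\times[0,1]$.

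The one place where the geometry genuinely enters, and the step I expect to be the main obstacle to carry out cleanly, is making the error in $\|A_s(p,q)\| \le 1-s$ proportional to $(1-s)$ rather than merely to $d(p,q)^2$: this is exactly what the factor $s(1-s)$ from the Green's function provides, and it is what makes the final bound multiplicative in $(1+\delta)$ — as it must be, since the desired inequality is an equality at $s=0$ and $s=1$. Everything else (the chain rule, the triangle inequality, convexity of $x \mapsto x^2$, and the uniformity of constants via compactness) is routine.
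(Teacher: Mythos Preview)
Your reduction is the same as the paper's: derive the pointwise bound
\[
|\dot\gamma_s(t)| \le (1+\delta)\bigl((1-s)|\dot\gamma_0(t)| + s|\dot\gamma_1(t)|\bigr)
\]
from an estimate on $dF_s$, then integrate (for the energy bound you correctly invoke the convexity of $x\mapsto x^2$, which the paper leaves implicit). The difference is in how the derivative estimate is obtained. The paper simply observes that at the diagonal $p=q$ the geodesic $\gamma_{pp}$ is constant, so the Jacobi equation reduces to $J''=0$ and $dF_s(v,w) = (1-s)v + sw$ exactly; the inequality with $(1+\delta)$ then follows by a one-line compactness/continuity argument (``by contradiction'') since $M$ is closed. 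You instead carry out an explicit Jacobi-field perturbation: write $W(s) = (1-s)v + \psi(s)$, bound $\psi''$ by $K\,d(p,q)^2\|W\|_\infty$ coming from the curvature term $R(J,\dot\gamma)\dot\gamma$ with $|\dot\gamma|=d(p,q)$, and invert via the Dirichlet Green's function to extract the crucial factor $s(1-s)$. This is correct, and it is more informative --- you get an explicit $r(\delta)\sim\sqrt{\delta/K}$ and you see transparently why the error is multiplicative in $(1-s)$ --- at the cost of a somewhat longer argument. One small point worth making explicit in your write-up: the step ``$\|\psi''\|_\infty \le C\,d(p,q)^2|v|$'' uses an a priori bound $\|W\|_\infty \le C_0|v|$, which you can get either from compactness (uniform bounds on Jacobi fields for $d(p,q)\le r_M/2$) or by a simple bootstrap once $d(p,q)$ is small.
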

\begin{proof}
The following assertion is easy to prove by contradiction: 
there exists $r(\delta) \in (0, r_M)$ 
such that, if $p,q \in M$ satisfy $d(p,q)<r(\delta)$, then 
\[
|dF_s(v,w)| \le (1+\delta)((1-s)|v|+s|w|) \quad (\forall v \in T_pM, \, \forall w \in T_qM, \, \forall s \in [0,1]).
\]

Take $r(\delta)>0$ as above. Then, if $\gamma_0, \gamma_1 \in \L$ satisfy 
$\max_{t \in S^1} d(\gamma_0(t), \gamma_1(t))<r(\delta)$, there holds 
$|\dot{\gamma_s}(t)| \le (1+\delta)((1-s)|\dot{\gamma_0}(t)| + s| \dot{\gamma_1}(t)|)$ for any $s \in [0,1]$ and $t \in S^1$. 
The lemma follows from this estimate. 
\end{proof}

\begin{proof}[\textbf{Proof of Lemma \ref{150625_1}}]
Let us take $\delta>0$ so that 
$1+\delta  < a'/a$ and 
$(1+\delta)^4 < E'/E$. 
Let us also take a $C^\infty$-function $\mu:[0,1] \to [0,1]$ with the following properties: 
\begin{itemize}
\item $0 \le \mu'(t) \le 1+ \delta$ for any $t \in [0,1]$. 
\item $\mu(j/m)=j/m$ for any integer $0 \le j \le m$. 
\item $\mu$ is constant on some neighborhoods of $0$ and $1$. 
\end{itemize}

Let us take an integer $N$ so that $\sqrt{E/N} < r_M$. 
For any $x=(x^j)_{0 \le j \le N} \in \mca{F}^{a,E}_N$, we define $\gamma \in \L$ by 
\[
\gamma((j+t)/N):= \gamma_{x^j, x^{j+1}} (\mu(t)) \qquad (0 \le j \le N-1, \, 0 \le t \le 1).
\]
Then $\gamma$ satisfies 
$\len(\gamma) = \len (x) < a'$ and 
$\E(\gamma) \le (1+\delta)^2 \E(x) < E'$, and thus one can define 
$g: \mca{F}^{a,E}_N \to \L^{a',E'}$ by $g(x):= \gamma$. 
It is clear that $f_{mN} \circ g = i_m$. 
The map $g$ is both smooth (as a map between differentiable spaces) and 
continuous ($\L^{a', E'}$ is equipped with the $C^\infty$-topology). 
To see that $g$ is smooth, 
let us take $(U,\ph) \in \mca{P}(\mca{F}^{a,E}_N)$ and denote
$\ph(u) = (x^j(u))_{0 \le j \le N}$. 
Then 
\[
U \times S^1  \to M; \, (u, \theta) \mapsto \gamma_{x^j(u), x^{j+1}(u)} ( \mu(N\theta-j)) \quad( j/N \le \theta  \le (j+1)/N)
\]
is a $C^\infty$-map, which means that 
$(U, g \circ \ph) \in \mca{P}(\mca{L}^{a', E'})$. 
Hence $g$ is a smooth map of differentiable spaces. 

Let us define a homotopy between $i$ and $g \circ f_N$. 
For any $\gamma \in \L^{a,E}$, we set 
$\gamma_0:= \gamma$, and $\gamma_1:= g \circ f_N(\gamma)$. 
Then $\gamma_1$ satisfies 
$\len(\gamma_1) < a$ and 
$\E(\gamma_1) < (1+\delta)^2 E$, since 
\begin{align*}
& \len (\gamma_1) = \len (f_N(\gamma)) \le \len (\gamma) < a, \\
& \mca{E}(\gamma_1) \le (1+\delta)^2 \mca{E}(f_N(\gamma)) \le (1+\delta)^2 \mca{E}(\gamma) < (1+\delta)^2 E.
\end{align*} 

Let us suppose that $N$ is sufficiently large so that 
$\max_{t \in S^1} d(\gamma_0(t), \gamma_1(t)) \le r(\delta)$. 
Then, for any $0 \le s \le 1$, $\gamma_s$ satisfies 
$\len(\gamma_s) < (1+\delta) a < a'$ and 
$\E(\gamma_s) < (1+\delta)^4 E < E'$. 

Finally, take $\nu \in C^\infty(\R, [0,1])$ so that $\nu(s)=0$ for any $s \le 0$, 
and $\nu(s)=1$ for any $s \ge 1$. 
Then, 
$h: \L^{a, E} \times \R \to \L^{a', E'};  (\gamma,s) \mapsto \gamma_{\nu(s)}$ is a homotopy between $i$ and $g \circ f_N$. 
$h$ is both smooth and continuous.
To see that $h$ is smooth, 
let us take $(U,\ph) \in \mca{P}(\L^{a,E} \times \R)$, 
and set $\ph(u) = (\gamma(u), s(u))$. 
Then 
$U \times \R \to M; \,(u,t) \mapsto \gamma_0(u)(t)$ is of class $C^\infty$ since $i$ is smooth, and 
$U \times \R \to M; \,(u,t) \mapsto \gamma_1(u)(t)$ is of class $C^\infty$ since $g \circ f_N$ is smooth. 
Then 
\[ 
U \times \R \to M; \, (u,t) \mapsto \gamma_{\nu(s(u))}(t) = F_{\nu(s(u))} (\gamma_0(u)(t) , \, \gamma_1(u)(t))
\]
is of class $C^\infty$, which means that 
$(U, h \circ \ph) \in \mca{P}(\L^{a', E'})$. 
Hence $h$ is a smooth map of differentiable spaces. 
\end{proof}

\begin{rem}\label{rem:a=infty}
As is clear from the above proof, Lemma \ref{150625_1} holds even when 
$a=a'=\infty$. 
In this case, we denote $N(a,E,a',E')$ by $N(E,E')$. 
\end{rem} 

\subsection{Proof of Theorem \ref{150219_1}}

Let us take strictly increasing sequences of positive real numbers $(a_j)_{j \ge 1}$ and $(E_j)_{j \ge 1}$, 
such that 
$\lim_{j \to \infty} a_j =a$, $\lim_{j \to \infty}  E_j=\infty$. 
Then, $(\L^{a_j,E_j})_{j \ge 1}$ is an increasing sequence of open sets (with respect to the $C^\infty$-topology) of $\L^a$, 
and $\bigcup_{j \ge 1} \L^{a_j,E_j} = \L^a$. 
Thus, 
we have isomorphisms 
\[
\vlim_j H_*(\L^{a_j, E_j}) \cong H_*(\L^a), \qquad 
\vlim_j H^\sm_*(\L^{a_j,E_j}) \cong H^\sm_*(\L^a).
\]
Since the functionals $\len$ and $\mca{E}$ are approximately smooth, 
by Corollary \ref{150213_1}(ii) we obtain 
\[ 
H^\dR_*(\L^a) \cong \lim_{j \to \infty} H^\dR_*(\L^{a_j}) 
\cong \lim_{j \to \infty} (\lim_{j' \to \infty} H^\dR_*(\L^{a_j, E_{j'}})) \cong \lim_{j \to \infty} H^\dR_*(\L^{a_j, E_j}). 
\]
Therefore,  it is enough to prove that the following maps are isomorphisms:
\[
\vlim_j H^\sm_*(\L^{a_j,E_j}) \to \vlim_j H_*(\L^{a_j,E_j}), \qquad
\vlim_j H^\sm_*(\L^{a_j,E_j}) \to \vlim_j H^\dR_*(\L^{a_j,E_j}).
\]

Now we apply Lemma \ref{150625_1} for each $j$. 
Let us take a sequence $(N_j)_{j \ge 1}$ of positive integers so that 
$N_j \ge N(a_j, E_j, a_{j+1}, E_{j+1})$ and $N_j | N_{j+1}$ for every $j$. 
Then, there exists a map  
$g_j: \mca{F}^{a_j,E_j}_{N_j} \to \L^{a_{j+1}, E_{j+1}}$ such that the following diagram commutes up to homotopy: 
\[
\xymatrix{
\L^{a_j, E_j} \ar[r]\ar[d]_-{f_{N_j}} &\L^{a_{j+1}, E_{j+1}} \ar[d]^-{f_{N_{j+1}}}\\
\mca{F}^{a_j, E_j}_{N_j} \ar[r] \ar[ru]^-{g_j}&\mca{F}^{a_{j+1}, E_{j+1}}_{N_{j+1}}. 
}
\]
Then $\vlim_j H_*(f_{N_j}): \vlim_j H_*(\L^{a_j,E_j}) \to \vlim_j H_*(\mca{F}^{a_j,E_j}_{N_j})$ is an isomorphism, since 
$\vlim_j H_*(g_j)$ is its inverse. 
The same argument also works for $H^\sm_*$ and $H^\dR_*$, 
and we obtain isomorphisms 
\[
\vlim_j H^\sm_*(\L^{a_j, E_j}) \cong \vlim_j H^\sm_*(\mca{F}^{a_j,E_j}_{N_j}), \qquad
\vlim_j H^\dR_*(\L^{a_j, E_j}) \cong \vlim_j H^\dR_*(\mca{F}^{a_j,E_j}_{N_j}). 
\]
These isomorphisms fit into the following commutative diagram:
\[
\xymatrix{
\vlim_j H_*(\L^{a_j,E_j}) \ar[d]^-{\cong} & \vlim_j H^\sm_*(\L^{a_j, E_j}) \ar[d]^-{\cong} \ar[r] \ar[l] & \vlim_j H^\dR_*(\L^{a_j,E_j}) \ar[d]^-{\cong} \\
\vlim_j H_*(\mca{F}^{a_j,E_j}_{N_j}) & \vlim_j H^\sm_*(\mca{F}^{a_j,E_j}_{N_j}) \ar[r] \ar[l] & \vlim_j H^\dR_*(\mca{F}^{a_j,E_j}_{N_j}).
}
\]
Since $\mca{F}^{a_j,E_j}_{N_j}$ is an oriented \textit{finite-dimensional} $C^\infty$-manifold for every $j$, 
the maps 
$H^\sm_*(\mca{F}^{a_j,E_j}_{N_j}) \to H_*(\mca{F}^{a_j,E_j}_{N_j})$ and 
$H^\sm_*(\mca{F}^{a_j,E_j}_{N_j}) \to H^\dR_*(\mca{F}^{a_j,E_j}_{N_j})$ are isomorphisms. 
Thus, 
the horizontal maps on the second row are isomorphisms. 
Therefore, 
the horizontal maps on the first row are also isomorphisms. 
This completes the proof of Theorem \ref{150219_1}. 

\section{Moore loops with marked points} 

Constructions of string topology operations (e.g. the loop product) require (at least) two steps: 
\begin{itemize}
\item Fiber products of (de Rham) chains of the loop space with respect to evaluation maps. 
\item Concatenations of loops. 
\end{itemize} 
The differentiable space $\mca{L}=\mca{L}M$, which we studied in the previous section, is not adequate for either step. 
To avoid this trouble, in this section we introduce \textit{Moore loops with marked points}. 

We explain the plan of this section. Let $M$ denote a closed, oriented Riemannian manifold of dimension $d$. 
In Section 7.1, we introduce the space $\Pi$, which consists of Moore paths on $M$. 
In Section 7.2, we introduce the space $\bL_k$, which consists of Moore loops with $k+1$ marked points. 
We define two differentiable structures on $\bL_k$, and denote the resulting differentiable spaces by $\bL_k$ and $\bL_{k, \reg}$. 
The latter space $\bL_{k,\reg}$ is adequate to define fiber products on de Rham chain complexes, and we show that
the sequence of de Rham chain complexes $(C^\dR_{*+d}(\bL_{k, \reg}))_{k \ge 0}$ has a natural cyclic dg operad structure with a multiplication and a unit (see Definitions \ref{150624_4} and \ref{150723_1}). 
This is the operad $\mca{O}_M$ which appeared in Theorem \ref{160408_1}, 
namely $\mca{O}_M(k):= C^\dR_{*+d}(\bL_{k,\reg})$ for every $k \ge 0$. 
Sections 7.3--7.5 are devoted to proofs of Lemmas \ref{150624_2}, \ref{150624_3} and \ref{150626_5}, which we state at the end of Section 7.2. 
These technical lemmas play important roles in Section 8, which is devoted to proofs of results presented in Section 3.1.  

\subsection{Moore paths} 

Let $M$ be a closed, oriented $C^\infty$-manifold. 
We define the set of Moore paths on $M$ as follows: 
\[
\Pi:= \{(\gamma, T) \mid T \in \R_{\ge 0}, \, \gamma \in C^\infty([0,T], M), \, \gamma^{(m)}(0)=\gamma^{(m)}(T)=0 \,(\forall m \ge 1) \}.
\]
$\gamma^{(m)}$ denotes the $m$-th derivative of $\gamma$. 
The last condition is required to take concatenations of $C^\infty$-paths. 
We define $e_0, e_1: \Pi \to M$ by 
$e_0(\gamma, T):= \gamma(0)$, 
$e_1(\gamma, T):= \gamma(T)$. 
For any $p \in M$, let us define a map $\gamma_p$ and a Moore path $c_p \in \Pi$ by  
\[
\gamma_p: \{0\} \to M; \quad  0 \mapsto p, \qquad c_p:= (\gamma_p, 0) \in \Pi.
\]
The concatenation map 
$\Pi \fbp{e_1}{e_0} \Pi \to \Pi; \, (\gamma_0, T_0, \gamma_1, T_1) \mapsto (\gamma_0*\gamma_1, T_0+T_1)$
is defined by 
\[
\gamma_0*\gamma_1(t):= \begin{cases} \gamma_0(t) &(0 \le t \le T_0) \\ \gamma_1(t-T_0) &(T_0 \le t \le T_0+T_1).\end{cases}
\]
Functionals $\len$ and $\mca{E}$ on $\Pi$ are defined by 
\[
\len(\gamma, T):= \int_0^T |\dot{\gamma}|, \qquad \mca{E}(\gamma, T):= \int_0^T  | \dot{\gamma}|^2. 
\]
To define a differentiable structure on $\Pi$, we need the following definition.

\begin{defn}\label{150625_3} 
Let $X$ and $Y$ be $C^\infty$-manifolds, and $S$ be any subset of $X$. 
Then, a map $f: S \to Y$ is of class $C^\infty$, if there exists an open neighborhood $U$ of $S \subset X$ and a $C^\infty$-map $\bar{f}: U \to Y$ 
such that $\bar{f}|_S = f$.
\end{defn} 

We define two differentiable structures on $\Pi$, and denote the resulting differentiable spaces as $\Pi$ and $\Pi_\reg$. 
The set of plots $\mca{P}(\Pi)$ and $\mca{P}(\Pi_\reg)$ are defined as follows: 
\begin{itemize}
\item 
Let $U \in \mca{U}$ and $\ph: U \to \Pi$. 
Denote $\ph(u)= (\gamma(u), T(u))$ for any $u \in U$. 
Then, $(U,\ph) \in \mca{P}(\Pi)$ if
$T \in C^\infty(U)$ and 
\[
\tilde{U}:=\{(u, t) \mid u \in U, \, 0 \le t \le T(u)\} \to M; \quad (u,t) \mapsto \gamma(u)(t)
\]
is of class $C^\infty$ in the sense of Definition \ref{150625_3}, 
where $X=U \times \R$, $S = \tilde{U}$ and $Y= M$. 
\item 
$\mca{P}(\Pi_\reg)$ consists of $(U,\ph) \in \mca{P}(\Pi)$ such that 
$e_j \circ \ph: U \to M$ is a submersion for $j=0, 1$.
\end{itemize} 
The identity map $\id_\Pi: \Pi_\reg \to \Pi$ is smooth as a map of differentiable spaces. 
The functional $\mca{E}$ is smooth, and $\len$ is approximately smooth with both differentiable structures ($\Pi$ and $\Pi_\reg$). 
The goal of this subsection is to prove the next lemma. 

\begin{lem}\label{150626_1} 
The concatenation maps 
\[
\Pi \fbp{e_1}{e_0} \Pi \to \Pi, \qquad \Pi_\reg \fbp{e_1}{e_0} \Pi_\reg \to \Pi_\reg
\]
are smooth as maps of differentiable spaces. 
\end{lem}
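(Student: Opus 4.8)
The plan is to unwind the definitions. By the definitions of the subspace and product differentiable structures (Example \ref{150623_1} (iii), (iv)), a plot of $\Pi \fbp{e_0}{e_1} \Pi$ is a map $\Phi$ into the subset $\{((\gamma_0,T_0),(\gamma_1,T_1)) \mid \gamma_0(T_0)=\gamma_1(0)\} \subset \Pi\times\Pi$ whose two components $\Phi_0,\Phi_1 \colon U \to \Pi$ are plots of $\Pi$; writing $\Phi_j(u)=(\gamma_j(u),T_j(u))$, this says $T_j\in C^\infty(U)$, the map $(u,t)\mapsto\gamma_j(u)(t)$ is of class $C^\infty$ (Definition \ref{150625_3}) on $\tilde U_j:=\{(u,t)\mid 0\le t\le T_j(u)\}$, each $\gamma_j(u)$ is a Moore path, and $\gamma_0(u)(T_0(u))=\gamma_1(u)(0)$ for all $u$. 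I must show that the composite $u\mapsto(\gamma_0(u)*\gamma_1(u),\,T_0(u)+T_1(u))$ is again a plot of $\Pi$; the $\Pi_\reg$ statement is the same with "$\Pi$" replaced by "$\Pi_\reg$" throughout. Several parts are immediate: $u\mapsto T_0(u)+T_1(u)$ is $C^\infty$; the concatenated path is again a Moore path, since near $t=0$ it coincides with $\gamma_0(u)$ and near the right endpoint with $t\mapsto\gamma_1(u)(t-T_0(u))$, so the endpoint flatness conditions are inherited; and in the $\Pi_\reg$ case the two evaluation maps of the composite are $u\mapsto\gamma_0(u)(0)=e_0\circ\Phi_0(u)$ and $u\mapsto\gamma_1(u)(T_1(u))=e_1\circ\Phi_1(u)$, both submersions by hypothesis. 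So the entire content is that $G\colon(u,t)\mapsto(\gamma_0(u)*\gamma_1(u))(t)$ is of class $C^\infty$ on $\tilde U:=\{(u,t)\mid 0\le t\le T_0(u)+T_1(u)\}$.

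Since being of class $C^\infty$ is a local condition and $\tilde U$ is closed in $U\times\R$, I would produce a $C^\infty$ extension of $G$ near each point $(u_0,t_0)\in\tilde U$ and then glue the local extensions together by a standard argument (embed $M$ in a Euclidean space, combine the local extensions by a partition of unity into a $C^\infty$ map defined near $\tilde U$, and compose with the retraction of a tubular neighborhood of $M$, which is legitimate because the combined map already takes values in $M$ along $\tilde U$). If $t_0<T_0(u_0)$ then near $(u_0,t_0)$ the map $G$ agrees with the given $C^\infty$ extension of $(u,t)\mapsto\gamma_0(u)(t)$; if $t_0>T_0(u_0)$ it agrees with the $C^\infty$ extension of $(u,s)\mapsto\gamma_1(u)(s)$ precomposed with the smooth map $(u,t)\mapsto(u,t-T_0(u))$. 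The only remaining case is the seam $t_0=T_0(u_0)$.

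Near such a point, fix a chart of $M$ around the common value $\gamma_0(u_0)(T_0(u_0))=\gamma_1(u_0)(0)$ and introduce the coordinate $s:=t-T_0(u)$ on the base (a diffeomorphism, since $T_0$ is $C^\infty$). In these coordinates $G$ is obtained by gluing two $\R^d$-valued $C^\infty$ maps $A$ and $B$, defined on a common box about $(u_0,0)$, where $A$ restricted to $\{s\le 0\}$ is $(u,s)\mapsto\gamma_0(u)(s+T_0(u))$ and $B$ restricted to $\{s\ge 0\}$ is $(u,s)\mapsto\gamma_1(u)(s)$. Because every $\gamma_0(u)$ and $\gamma_1(u)$ is a Moore path, $\partial_s^m A(u,s)=\gamma_0(u)^{(m)}(s+T_0(u))\to 0$ as $s\to 0^-$ and $\partial_s^m B(u,s)=\gamma_1(u)^{(m)}(s)\to 0$ as $s\to 0^+$ for every $m\ge 1$, and continuity of these derivatives (valid since $A,B$ are $C^\infty$ on the box) forces $\partial_s^m A(u,0)=\partial_s^m B(u,0)=0$ for all $u$ and all $m\ge 1$; moreover $A(u,0)=\gamma_0(u)(T_0(u))=\gamma_1(u)(0)=B(u,0)$ by the compatibility condition. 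Hence all partial derivatives of $A$ and of $B$ agree along $\{s=0\}$, so $B-A$ is a $C^\infty$ map vanishing to infinite order on $\{s=0\}$. By Hadamard's lemma $B-A=s^N R_N$ with $R_N$ of class $C^\infty$, for every $N$, whence $(B-A)\cdot\mathbf{1}_{\{s\ge 0\}}=s^N R_N\cdot\mathbf{1}_{\{s\ge 0\}}$ is of class $C^{N-1}$ for every $N$, hence of class $C^\infty$. Therefore $A+(B-A)\cdot\mathbf{1}_{\{s\ge 0\}}$ is a $C^\infty$ map which restricts to $G$ on $\tilde U$ near $(u_0,t_0)$, completing the local construction and the proof. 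The main obstacle is exactly this last step — producing a genuine $C^\infty$ extension of the concatenated path across the seam $t=T_0(u)$ — and it is precisely here that the infinite-order flatness built into the definition of $\Pi$ is indispensable: it is what makes $B-A$ flat along $\{s=0\}$ so that Hadamard's lemma applies, whereas without it the concatenation would in general be only finitely differentiable there.
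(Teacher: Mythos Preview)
Your approach --- gluing the two $C^\infty$ extensions directly at the seam and invoking Hadamard's lemma --- is natural and differs from the paper's, which instead passes to the $t$-derivative $\partial_t\gamma_j$, extends it by zero to all of $U\times\R$ via the preparatory Lemma~\ref{150626_2}, and integrates back. However, there is a genuine gap in your seam argument, located precisely at the point the paper flags as the ``nontrivial part'': the case where $T_0$ or $T_1$ can vanish. When you assert that $A$ restricted to $\{s\le 0\}$ equals $(u,s)\mapsto\gamma_0(u)(s+T_0(u))$ and then take $s\to 0^-$, you are tacitly assuming $T_0(u)>0$ for every $u$ in your box; otherwise the given formula for $A$ holds only on the degenerate set $\{-T_0(u)\le s\le 0\}$, possibly just $\{s=0\}$, and there is no left-hand limit to take. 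Likewise for $B$ when $T_1(u)=0$. Since $T=0$ is allowed in $\Pi$ (the constant paths $c_p$ are used throughout Section~5), this case cannot be dismissed. Indeed, if $T_1\equiv 0$ on an open set near $u_0$, the $C^\infty$ extension $B$ is unconstrained in the $s$-direction there, so $\partial_s^m B(u,0)$ need not vanish and your proposed extension $A+(B-A)\mathbf{1}_{\{s\ge 0\}}$ may genuinely fail to be $C^\infty$.

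The repair is to first replace $B$ by an extension that is $s$-flat along $\{s=0\}$ for \emph{all} $u$ (and similarly for $A$). That such an extension exists is exactly what needs proof, and it comes down to the paper's Lemma~\ref{150626_2}: since the Moore-path condition makes $\partial_t\gamma_j$ vanish to infinite order at both endpoints for every $u$, its extension by zero is $C^\infty$; integrating it from the initial point gives a $C^\infty$ extension of $\gamma_j$ which is $t$-flat at both endpoints regardless of whether $T_j(u)=0$. Once that is in hand your Hadamard argument goes through, but at that point the paper's direct integration of the zero-extended derivatives is the shorter route to the same conclusion.
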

\begin{rem}
Set theoretically, the two maps in Lemma \ref{150626_1} are same.
 However, the differentiable structures on the domain and the target of the maps are different. 
\end{rem} 

First we prove the following lemma, which is a subtle point of the proof. 

\begin{lem}\label{150626_2} 
Let $U$ be a $C^\infty$-manifold, and $T \in C^\infty(U, \R_{\ge 0})$. 
Let $\tilde{U}:= \{(u,t) \mid u \in U, 0 \le t \le T(u)\} \subset U \times \R$, 
and suppose that $f: \tilde{U} \to \R$ satisfies the following conditions: 
\begin{itemize}
\item $f$ is of class $C^\infty$ in the sense of Definition \ref{150625_3}. 
\item For any $u \in U$, $f(u, 0) = f(u, T(u)) = 0$. 
\item For any $u \in U$ such that $T(u)>0$ and an integer $m \ge 1$, $\partial_t^m f(u,0) = \partial_t^m f(u, T(u)) = 0$. 
\end{itemize} 
Then, $\tilde{f}: U \times \R \to \R$ defined by 
$\tilde{f}(u,t) = \begin{cases} f(u,t) &((u,t) \in \tilde{U}) \\ 0 &((u,t) \notin \tilde{U}) \end{cases}$ is of class $C^\infty$. 
\end{lem}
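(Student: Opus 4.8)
The plan is to reduce the claim to a one-variable statement and then use a Taylor-expansion argument with a careful treatment of uniformity on compact sets. First I would fix an arbitrary point $u_0 \in U$, and work on a relatively compact open neighborhood $W$ of $u_0$. Since the statement is local in $u$ (smoothness of $\tilde f$ is checked on a neighborhood of each point of $U \times \R$), and since away from the "corner" sets $\{t = 0\}$ and $\{t = T(u)\}$ the function $\tilde f$ is locally either equal to $f$ (hence $C^\infty$ by hypothesis, after extending $f$ to an open set $\tilde{V} \supset \tilde U$) or identically $0$, the only points needing attention are those of the form $(u_0, 0)$ with $T(u_0) = 0$ or $(u_0, t_0)$ with $t_0 \in \{0, T(u_0)\}$. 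So the real content is: $\tilde f$ is $C^\infty$ across the graphs $t = 0$ and $t = T(u)$.

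Next I would treat the graph $t = 0$ (the graph $t = T(u)$ is symmetric, and the degenerate case $T(u_0) = 0$ where the two graphs meet is handled by combining both estimates). Let $\bar f$ be a $C^\infty$ extension of $f$ to an open neighborhood of $\tilde U$ in $U \times \R$; then $\bar f(u,0) = 0$ and, by the hypothesis on vanishing of $t$-derivatives together with continuity of derivatives, $\partial_t^m \bar f(u,0) = 0$ for all $m \ge 0$ and all $u$ in a neighborhood of $u_0$ (here I use that $\partial_t^m f(u,0) = \partial_t^m \bar f(u,0)$ since these are intrinsic to values on $\tilde U$). By Taylor's theorem with integral remainder in the $t$-variable, for every $m$ there is a $C^\infty$ function $g_m$ on a neighborhood of $(u_0,0)$ with $\bar f(u,t) = t^m g_m(u,t)$ and $g_m$ given by $\int_0^1 \frac{(1-s)^{m-1}}{(m-1)!}\,\partial_t^m \bar f(u, st)\,ds$. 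The key point is to show $\tilde f$ is $C^\infty$ near $(u_0,0)$ by showing all its partial derivatives extend continuously. I would show inductively that for any multi-index in $u$ and any order in $t$, the corresponding derivative of $\tilde f$ equals that derivative of $\bar f$ on $\tilde U \cap (\text{nbhd})$ and equals $0$ off $\tilde U$, and that both one-sided limits agree — because each such derivative of $\bar f$ is again of the form $t^{\,k}(\text{smooth})$ for every $k$ (differentiating the relation $\bar f = t^m g_m$ and using that $m$ is arbitrary), hence vanishes to infinite order along $t=0$ uniformly for $u$ near $u_0$. This forces the piecewise-defined derivative to be continuous across $t = 0$, and an induction on the total order of differentiation closes the argument.

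The main obstacle I expect is the uniformity over $u$ in the vanishing-to-infinite-order statement: the hypothesis gives $\partial_t^m f(u,0) = 0$ pointwise in $u$, and I must upgrade this to the statement that $\bar f$ (and all its derivatives) vanish to arbitrarily high order along $t = 0$ \emph{with remainder estimates locally uniform in $u$}, so that the one-sided limits of the piecewise derivatives of $\tilde f$ genuinely exist and match. This is exactly where Taylor's formula with integral remainder — rather than just the pointwise Peano form — is needed, since the integral remainder automatically provides the locally uniform bounds on compact sets. A secondary subtlety, easily handled once the above is in place, is the corner case $T(u_0) = 0$: there $\tilde U$ pinches to a single fiber point and one needs both the $t = 0$ and $t = T(u)$ estimates simultaneously, but since $\bar f$ vanishes to infinite order along \emph{both} graphs and $T$ is continuous, the same continuity-of-derivatives conclusion follows. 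Once $\tilde f$ is shown $C^\infty$ near every point, the proof is complete.
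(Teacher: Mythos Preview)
Your overall strategy---extend $f$ to a smooth $\bar f$, show $\bar f$ vanishes to infinite order along the boundary graphs, then conclude continuity of all derivatives of $\tilde f$---matches the paper's, but there is a genuine gap in your treatment of the degenerate case. You assert that $\partial_t^m \bar f(u,0)=0$ for all $u$ in a neighborhood of $u_0$, justifying this by saying the $t$-derivatives are ``intrinsic to values on $\tilde U$''. That intrinsic-ness holds only when $T(u)>0$: if $T(u)=0$ the fiber of $\tilde U$ over $u$ is the single point $(u,0)$, and the $t$-derivatives of an extension are completely undetermined by $f$. Consequently, if $u_0$ lies on the boundary of the open set $U_0:=\{u : T\equiv 0 \text{ near } u\}$, every neighborhood of $u_0$ contains points of $U_0$ where $\partial_t^m \bar f(u,0)$ may be nonzero, and your Taylor factorization $\bar f=t^m g_m$ is unavailable there. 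Your ``secondary subtlety'' paragraph does not address this, since it already presupposes that $\bar f$ vanishes to infinite order along both graphs.

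The paper's proof is organized around precisely this point. It introduces $U_0$ explicitly, notes that smoothness of $\tilde f$ is trivial over $U_0$ (since $\tilde f\equiv 0$ there), and establishes only that $\partial^\alpha F(u,0)=0$ for $u\notin U_0$---first on $\{T>0\}$ by the intrinsic-ness you invoke, then on its closure by continuity of $\partial^\alpha F$. Lacking a uniform Taylor factorization near $\partial U_0$, the paper replaces your integral-remainder estimate by a direct sequence argument: if $D(\partial^\alpha\tilde f)(u_0,0)\ne 0$ at some $u_0\notin U_0$ with $T(u_0)=0$, choose $(u_j,t_j)\to(u_0,0)$ witnessing this; the inductive hypothesis forces $0<t_j<T(u_j)$, where $\tilde f=F$, contradicting $\partial^\alpha F(u_0,0)=0$. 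Your approach could plausibly be repaired either by modifying the extension (subtracting a Borel-type realization of its $t$-jet along $t=0$, arranged to vanish on $\tilde U$) or by grafting the paper's sequence argument onto the boundary $\partial U_0$, but as written the proof is incomplete at exactly the place the paper flags as the ``nontrivial part''.
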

\begin{proof}
We may assume that $U$ is an open set in $\R^n$ with coordinates $x_1, \ldots, x_n$.
For any sequence of nonnegative integers $\alpha = (\alpha_0, \ldots, \alpha_n)$, we set 
$\partial^\alpha:= \partial_t^{\alpha_0} \partial_{x_1}^{\alpha_1} \cdots \partial_{x_n}^{\alpha_n}$. 

Since $f$ is of class $C^\infty$, there exists a $C^\infty$-function $F$, which is defined on an open neighborhood of $\tilde{U}$ and
$F|_{\tilde{U}} = f$. 
Let $U_0 := \{ u \in U \mid \text {$T \equiv 0$ on a neighborhood of $u$} \}$. 
We are going to prove that 
$\partial^\alpha F(u,0) = \partial^\alpha F(u, T(u))=0$ for any $\alpha$ and $u \notin U_0$. 
When $T(u)>0$ this is easy to check, since for any $v$ near $u$ one has 
\[ 
\partial_t^{\alpha_0} F(v,0) = \partial_t^{\alpha_0} f(v,0) = 0, \qquad
\partial_t^{\alpha_0} F(v, T(v)) = \partial_t^{\alpha_0} f(v, T(v)) = 0. 
\] 
Even when $T(u)=0$, 
there exists a sequence $(u_j)_{j \ge 1}$ such that 
$\lim_{j \to \infty} u_j=u$ and $T(u_j)>0$, 
since $u \notin U_0$. 
Thus, 
$\partial^\alpha F(u,0) = \lim_{j \to \infty} \partial^\alpha F(u_j,0)=0$. 

To prove the lemma, it is enough to prove the claim $\text{Cl}(m)$ for every integer $m \ge 0$: 
\begin{quote}
$\text{Cl}(m)$: 
For any $\alpha$ such that $\alpha_0 + \cdots + \alpha_n = m$, 
$\partial^\alpha \tilde{f}$ is totally differentiable, and 
$D(\partial^\alpha \tilde{f})(u,t)=0$ unless $0<t< T(u)$. 
\end{quote}

Let us prove $\text{Cl}(0)$. 
It is enough to check $D\tilde{f}(u,0) = D\tilde{f}(u, T(u))=0$ for any $u \in U$. 
We only prove $D\tilde{f}(u,0)=0$, since the proof for $D\tilde{f}(u, T(u))=0$ is parallel. 
The case $u \in U_0$ is easy since $\tilde{f} \equiv 0$ near $(u,0)$. 
Let us consider the case $u \notin U_0$. 
If $D\tilde{f}(u,0)=0$ does not hold, 
there exists a sequence $(u_j,t_j)_{j \ge 1}$ such that 
\[
\lim_{j \to \infty} (u_j, t_j) = (u,0), \qquad 
\liminf_j |\tilde{f}(u_j,t_j) - \tilde{f}(u,0)|/|(u_j,t_j)-(u,0)| >0.
\]
Since $\tilde{f}(u,0)=0$, one has $\tilde{f}(u_j,t_j) \ne 0$ for sufficiently large $j$. 
Then $0 < t_j < T(u_j)$, 
and thus, 
$F(u_j,t_j) = \tilde{f}(u_j, t_j)$. 
On the other hand $F(u,0)=0$,
and thus, 
$\liminf_j |F(u_j,t_j) - F(u,0)|/|(u_j,t_j)-(u,0)| >0$.
This contradicts $DF(u,0)=0$, and $\text{Cl}(0)$ is proved. 

Let us prove $\text{Cl}(m-1) \implies \text{Cl}(m)$. 
It is enough to check
$D(\partial^\alpha \tilde{f})(u,0) = D(\partial^\alpha \tilde{f})(u, T(u)) = 0$ for any $u \in U$. 
We only prove $D(\partial^\alpha \tilde{f})(u,0)=0$, since the proof for $D(\partial^\alpha \tilde{f})(u, T(u))=0$ is parallel. 
The case $u \in U_0$ is easy since $\tilde{f} \equiv 0$ near $(u,0)$. 
Let us consider the case $u \notin U_0$. 
If $D(\partial^\alpha \tilde{f})(u,0)=0$ does not hold, 
there exists a sequence $(u_j,t_j)_{j \ge 1}$ such that 
$\lim_{j \to \infty} (u_j, t_j) = (u,0)$ and 
\[
\liminf_j |\partial^\alpha \tilde{f}(u_j,t_j) - \partial^\alpha \tilde{f}(u,0)|/|(u_j,t_j) - (u,0)| >0.
\]
By $\text{Cl}(m-1)$, $\partial^\alpha \tilde{f}(u,0)=0$. 
Thus $\partial^\alpha \tilde{f}(u_j,t_j) \ne 0$ for sufficiently large $j$. 
By $\text{Cl}(m-1)$, this implies $0 < t_j < T(u_j)$, and thus, 
$\partial^\alpha F(u_j, t_j) = \partial^\alpha \tilde{f}(u_j, t_j)$. 
On the other hand $\partial^\alpha F(u,0)=0$, 
and thus, 
$\liminf_j |\partial^\alpha F(u_j,t_j) - \partial^\alpha F(u,0)|/|(u_j,t_j) - (u,0)| >0$. 
This contradicts $D(\partial^\alpha F)(u,0) =0$, and $\text{Cl}(m)$ is proved. 
\end{proof} 

\begin{proof}[\textbf{Proof of Lemma \ref{150626_1}}]
We first prove that $\Pi \fbp{e_1}{e_0} \Pi \to \Pi$ is smooth. 
Let $(U, \ph) \in \mca{P}(\Pi \fbp{e_1}{e_0} \Pi)$, 
and let $\ph(u)$ denote 
$(\gamma_0(u), T_0(u), \gamma_1(u), T_1(u))$. 
We need to show that 
\[
U \to \Pi; u \mapsto (\gamma_0(u)*\gamma_1(u), T_0(u)+T_1(u))
\]
is a plot on $\Pi$. 
It is enough to show that 
\[
\Gamma_{01}: \{(u,t) \mid u \in U, \, 0 \le t \le T_0(u)+T_1(u)\} \to M; \quad (u,t) \mapsto \gamma_0(u)*\gamma_1(u)(t)
\]
is of class $C^\infty$ in the sense of Definition \ref{150625_3}. 
We may assume that $M$ is embedded in $\R^N$ for some integer $N$. 
For $j=0,1$, we define $\delta_j: U \times \R \to \R^N$ by 
\[
\delta_j(u,t):= \begin{cases} \partial_t \gamma_j(u,t) &(0 \le t \le T_j(u)) \\  0 &(\text{otherwise}). \end{cases}
\]
Since $\partial^m_t \gamma_j(u,0) = \partial^m_t \gamma_j(u,T_j(u))=0$ for any $u \in U$ and $m \ge 1$, 
Lemma \ref{150626_2} shows $\delta_j \in C^\infty(U \times \R, \R^N)$. 
Let us define $\delta_{01}, \widetilde{\Gamma_{01}}\in C^\infty(U \times \R, \R^N)$ by
\[
\delta_{01}(u,t):= \delta_0(u,t) + \delta_1(u, t- T_0(u)), \qquad 
\widetilde{\Gamma_{01}}(u,t):= \gamma_0(u)(0) + t \int_0^1 \delta_{01}(u, ts) \, ds.
\]
Then, it is easy to see that $\widetilde{\Gamma_{01}}(u,t)=\Gamma_{01}(u,t)$ for any $u \in U$ and $0 \le t \le T_0(u)+T_1(u)$. 
This shows that $\Gamma_{01}$ is of class $C^\infty$, hence $\Pi \fbp{e_1}{e_0} \Pi \to \Pi$ is smooth. 

Now, it is easy to see that $\Pi_\reg \fbp{e_1}{e_0} \Pi_\reg \to \Pi_\reg$ is smooth, since any $(\Gamma_0, \Gamma_1) \in \Pi \fbp{e_1}{e_0} \Pi$ satisfies
$e_0(\Gamma_0*\Gamma_1) = e_0(\Gamma_0)$ and $e_1(\Gamma_0*\Gamma_1) = e_1(\Gamma_1)$. 
\end{proof} 

\subsection{Moore loops with marked points} 

For any integer $k \ge 0$, we define
\[
\bL_k:= \{ (\Gamma_0, \ldots, \Gamma_k) \in \Pi^{k+1} \mid e_1(\Gamma_j)=e_0(\Gamma_{j+1})\,(0 \le j \le k-1), \, e_1(\Gamma_k)=e_0(\Gamma_0) \}. 
\]
In particular, $\bL_0 = \{ \Gamma \in \Pi \mid e_1(\Gamma)=e_0(\Gamma)\}$. 
For any $0 \le i \le k$, we define
\[
e_i: \bL_k \to M; \quad  (\Gamma_0, \ldots, \Gamma_k) \mapsto e_0(\Gamma_i).
\]
We also define $i_k: M \to \bL_k$ by 
$i_k(p):= (c_p, \ldots, c_p)$ for any $p \in M$. 
Recall that $c_p$ denotes the constant Moore path at $p$. 

\begin{rem}\label{150701_1}
We can identify $\bL_k$ with 
\begin{align*} 
&\{(\gamma, t_1, \ldots, t_k, T) \mid T \in \R_{\ge 0}, \, \gamma \in C^\infty([0,T], M), \, 0 \le t_1 \le \cdots \le t_k \le T, \\ 
&\qquad \gamma(0)=\gamma(T), \quad \gamma^{(m)}(0)=\gamma^{(m)}(T)= \gamma^{(m)}(t_j)=0 \, (\forall m \ge 1, \, 1 \le \forall j \le k) \}. 
\end{align*} 
Indeed, for any $(\gamma_j, T_j)_{0 \le j \le k} \in \bL_k$, one can assign 
$(\gamma_0* \cdots * \gamma_k, T_1, T_1+T_2, \ldots, T_1+\cdots+ T_k)$. 
\end{rem} 

Recall that we defined two differentiable structures on $\Pi$, and denote the resulting differentiable spaces $\Pi$ and $\Pi_\reg$. 
Since $\bL_k$ is a subset of $\Pi^{k+1}$, we can define two differentiable structures on $\bL_k$ (see Example \ref{150623_1} (iii) and (iv)). 
We denote the resulting differentiable spaces by $\bL_k$ and $\bL_{k, \reg}$. 

Let us define $\len: \bL_k \to \R$ by 
$\len(\Gamma_0, \ldots, \Gamma_k):= \len(\Gamma_0) + \cdots + \len(\Gamma_k)$. 
The function $\len$ is approximately smooth with both differentiable structures ($\bL_k$ and $\bL_{k, \reg}$). 

For any $a \in (0,\infty]$, we define 
$\bL^a_k:= \{(\Gamma_0, \ldots, \Gamma_k) \in \bL_k \mid \len(\Gamma_0, \ldots, \Gamma_k) < a\}$. 
We define differentiable structures on $\bL^a_k$ as a subspace of $\bL_k$ and $\bL_{k, \reg}$, and denote the resulting differentiable spaces as 
$\bL^a_k$ and $\bL^a_{k, \reg}$. 
The chain maps $C^\dR_*(\bL^a_{k,\reg}) \to C^\dR_*(\bL_{k, \reg})$ and $C^\dR_*(\bL^a_k) \to C^\dR_*(\bL_k)$ are injective. 

For any $1 \le i \le k$ and $l \ge 0$, we define a map 
\[
c_{k,i,l}: \bL_{k, \reg} \fbp{e_i}{e_0} \bL_{l, \reg} \to \bL_{k+l-1, \reg} 
\]
by 
\begin{align*}
&c_{k,i,l}(\Gamma_0, \ldots, \Gamma_k, \Gamma'_0, \ldots, \Gamma'_l):=  \\
&\begin{cases} (\Gamma_0, \ldots, \Gamma_{i-2}, \Gamma_{i-1}*\Gamma'_0, \Gamma'_1, \ldots, \Gamma'_{l-1}, \Gamma'_l *\Gamma_i, \Gamma_{i+1}, \ldots, \Gamma_k) &(l \ge 1), \\
(\Gamma_0, \ldots, \Gamma_{i-2}, \Gamma_{i-1}*\Gamma'_0*\Gamma_i, \Gamma_{i+1}, \ldots, \Gamma_k) &(l=0). \end{cases} 
\end{align*} 
Then, $c_{k,i,l}$ is a smooth map by Lemma \ref{150626_1}. 
Also, 
\[
R_k : \bL_{k,\reg} \to \bL_{k, \reg}; \qquad (\Gamma_0, \ldots, \Gamma_k) \mapsto (\Gamma_1, \ldots, \Gamma_k, \Gamma_0)
\]
is a smooth map. 

For any integer $k \ge 0$, let 
$\mca{O}_M(k):= C^\dR_{*+d}(\bL_{k, \reg})$. 
As described below, $\mca{O}_M:= (\mca{O}_M(k))_{k \ge 0}$ has a structure of a nonsymmetric cyclic dg operad with a multiplication and a unit. 

\begin{itemize}
\item 
For any $1 \le i \le k$ and $l \ge 0$, we define 
$\circ_i: \mca{O}_M(k)_* \otimes \mca{O}_M(l)_* \to \mca{O}_M(k+l-1)_*$ by 
\[
u \circ_i v:= (c_{k,i,l})_*(u \fbp{e_i}{e_0} v).
\]
$u \fbp{e_i}{e_0} v$ denotes the fiber product on de Rham chain complexes, which was defined in Section 4.3. 
\item
For any $k \ge 0$, 
we define $\tau_k: \mca{O}_M(k)_* \to \mca{O}_M(k)_*$ by 
$\tau_k:= (R_k)_*$. 
We define $\tau_0$ to be the identity map on $\mca{O}_M(0)_*$. 
\item
Let us take $M' \in \mca{U}$ and an orientation-preserving diffeomorphism $\ph: M' \to M$. 
Then, we define 
\begin{align*} 
\ep&:= [(M', i_0 \circ \ph, 1)] \in  \mca{O}_M(0)_0,  \quad 
1_{\mca{O}_M}:= [(M', i_1 \circ \ph, 1)] \in \mca{O}_M(1)_0, \\ 
\mu&:= [(M', i_2 \circ \ph, 1)] \in \mca{O}_M(2)_0. 
\end{align*} 
It is easy to check that these elements are well-defined, i.e. do not depend on the choices of $M'$ and $\ph$. 
Also, $1_{\mca{O}_M}$ and $\mu$ are cyclically invariant. 
\end{itemize} 

\begin{rem}\label{160626_1} 
For each integer $k \ge 0$, 
let us define a filtration  $(F^a \mca{O}_M(k))_{a \in (0,\infty]}$ on $\mca{O}_M(k)$ by 
$F^a \mca{O}_M(k)_* := C^\dR_{*+d}(\bL^a_{k, \reg})$. 
It is easy to verify 
\begin{align*} 
|x \circ_i y| &\le |x| + |y| \qquad(\forall x \in \mca{O}_M(k), \, \forall y \in \mca{O}_M(l), \, 1 \le \forall  i \le k), \\ 
|\tau_k x| &= |x| \qquad\qquad (\forall x \in \mca{O}_M(k)), \\ 
|\mu|&= |\ep| = 0 
\end{align*} 
as stated in Proposition \ref{160420_2} (i). 
In particular, $(F^a\mca{O}_M(k))_{k \ge 0}$ is a cocyclic chain complex for every $a \in (0,\infty]$.
\end{rem} 

The rest of this section is devoted to proofs of the following lemmas. 

\begin{lem}\label{150624_2}
For any $k \ge 0$, the identity map $\bL^a_{k,\reg} \to \bL^a_k$ induces an isomorphism
$H^\dR_*(\bL^a_{k,\reg}) \cong H^\dR_*(\bL^a_k)$. 
\end{lem}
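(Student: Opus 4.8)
The plan is to follow the argument of Section~3.1 --- Proposition~\ref{150210_4} together with Lemmas~\ref{150210_5} and~\ref{150210_6} --- with one additional device to handle the length functional. Since $\len$ is approximately smooth on both $\bL^a_k$ and $\bL^a_{k,\reg}$, applying Corollary~\ref{150213_1} to the decreasing sequence $(\len - a_j)_{j \ge 1}$, where $(a_j)_{j \ge 1}$ is a strictly increasing sequence in $(0,a)$ with $a_j \to a$, gives isomorphisms $\vlim_j H^\dR_*(\bL^{a_j}_k) \cong H^\dR_*(\bL^a_k)$ and $\vlim_j H^\dR_*(\bL^{a_j}_{k,\reg}) \cong H^\dR_*(\bL^a_{k,\reg})$, compatibly with the maps induced by $\id$. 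Hence it suffices to construct, for each $j$, a chain map $J_j : C^\dR_*(\bL^{a_j}_k) \to C^\dR_*(\bL^{a_{j+1}}_{k,\reg})$ whose precomposition with the inclusion $C^\dR_*(\bL^{a_j}_{k,\reg}) \to C^\dR_*(\bL^{a_j}_k)$ is chain homotopic to the inclusion $C^\dR_*(\bL^{a_j}_{k,\reg}) \to C^\dR_*(\bL^{a_{j+1}}_{k,\reg})$, and whose postcomposition with the inclusion $C^\dR_*(\bL^{a_{j+1}}_{k,\reg}) \to C^\dR_*(\bL^{a_{j+1}}_k)$ is chain homotopic to the inclusion $C^\dR_*(\bL^{a_j}_k) \to C^\dR_*(\bL^{a_{j+1}}_k)$. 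In the colimit, $\vlim_j H_*(J_j)$ is then a two-sided inverse of the map in the statement, exactly as in the proof of Proposition~\ref{150210_4}.

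To build $J_j$, fix $\delta > 0$ with $(1+\delta)a_j \le a_{j+1}$ and, using Lemma~\ref{150210_6} with $N = M$, $K = M$ (compact) together with Remark~\ref{150626_4}, choose an integer $D > 0$ and a $C^\infty$-map $F : M \times \R^D \to M$ such that each $F_z := F(\cdot,z)$ is a diffeomorphism, $F_0 = \id_M$, the map $z \mapsto F(p,z)$ is a submersion for every $p \in M$, and $|dF_z(v)| \le (1+\delta)|v|$ for all $v \in TM$, $z \in \R^D$; also fix $\nu_D \in \mca{A}^D_c(\R^D)$ with $\int_{\R^D}\nu_D = 1$. For a plot $(U,\ph) \in \mca{P}(\bL^{a_j}_k)$, written in the form $\ph(u) = (\gamma(u), t_1(u), \dots, t_k(u), T(u))$ of Remark~\ref{150701_1}, put $\tilde\ph(u,z) := (F_z \circ \gamma(u), t_1(u), \dots, t_k(u), T(u))$ and $J_j([(U,\ph,\omega)]) := [(U \times \R^D, \tilde\ph, \omega \times \nu_D)]$; since $d\nu_D = 0$ this is a chain map, and its well-definedness (compatibility with the relations defining $C^\dR_*$) is routine, using that integration along fibers commutes with external products. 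The point to check is that $\tilde\ph$ really is a plot of $\bL^{a_{j+1}}_{k,\reg}$: composing a Moore loop with $F_z$ preserves $C^\infty$-ness in the sense required for a plot of $\bL_k$ (immediate, since $F$ is globally $C^\infty$ and no piecewise gluing as in Lemma~\ref{150626_1} occurs); it preserves the vanishing of all positive-order derivatives of $\gamma$ at $0, T, t_1, \dots, t_k$, by the chain rule (Fa\`{a} di Bruno), since those derivatives are zero; it enlarges $\len$ by a factor at most $1+\delta$, so that $\len(\tilde\ph(u,z)) \le (1+\delta)\len(\ph(u)) < (1+\delta)a_j \le a_{j+1}$; and $e_i \circ \tilde\ph$ is a submersion, since $e_i(\tilde\ph(u,z)) = F(e_i(\ph(u)),z)$ and $z \mapsto F(p,z)$ is a submersion for every $p \in M$.

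The two chain homotopies are modelled on the operator $K$ in the proof of Lemma~\ref{150210_5}. Pick $\alpha,\beta \in C^\infty(\R,[0,1])$ with $\alpha \equiv 0$ on $\R_{\le 0}$, $\alpha \equiv 1$ on $\R_{\ge 1}$, $\supp\beta$ compact and $\beta \equiv 1$ near $[0,1]$, and for a plot $(U,\ph)$ set $\Phi(u,z,s) := (F_{\alpha(s)z} \circ \gamma(u), t_1(u), \dots, t_k(u), T(u))$. When $(U,\ph) \in \mca{P}(\bL^{a_j}_{k,\reg})$, this $\Phi$ is a plot of $\bL^{a_{j+1}}_{k,\reg}$ --- the submersion condition on the $e_i$ holding now because $e_i \circ \ph$ is already a submersion and stays one after composing with the diffeomorphism $F_{\alpha(s)z}$ --- and $[(U,\ph,\omega)] \mapsto \pm[(U \times \R^D \times \R, \Phi, \omega \times \nu_D \times \beta(s))]$ is a chain homotopy between $J_j$, precomposed with the inclusion of $\bL^{a_j}_{k,\reg}$, and the inclusion $C^\dR_*(\bL^{a_j}_{k,\reg}) \to C^\dR_*(\bL^{a_{j+1}}_{k,\reg})$; here $F_0 = \id_M$ accounts for the $\alpha(s) = 0$ end and $\int_{\R^D}\nu_D = 1$ for the $\R^D$-integration. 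The homotopy comparing $J_j$, postcomposed with $C^\dR_*(\bL^{a_{j+1}}_{k,\reg}) \to C^\dR_*(\bL^{a_{j+1}}_k)$, to the inclusion $C^\dR_*(\bL^{a_j}_k) \to C^\dR_*(\bL^{a_{j+1}}_k)$ is given by the same formula and is easier, the submersion condition playing no role there. The main obstacle is precisely the verification just sketched, that the perturbed maps $\tilde\ph$ and $\Phi$ take values in the \emph{regular}, length-truncated space $\bL^{a_{j+1}}_{k,\reg}$: one has to keep the perturbation length-nonincreasing up to the factor $1+\delta$, retain the endpoint- and marked-point derivative conditions, and simultaneously gain the submersion property on all $k+1$ evaluations. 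The remaining work --- signs, the chain-homotopy identities, and the colimit argument --- is a direct transcription of Section~3.1.
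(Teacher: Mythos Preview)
Your proposal is correct and follows essentially the same argument as the paper's own proof (Section~5.3, Lemma~\ref{150626_3}): the same colimit reduction via Corollary~\ref{150213_1}, the same perturbation map built from Lemma~\ref{150210_6} and Remark~\ref{150626_4} applied with $K=M$, and the same chain homotopies modelled on the proof of Lemma~\ref{150210_5}. The only cosmetic difference is that you write plots of $\bL_k$ in the concatenated form $(\gamma,t_1,\dots,t_k,T)$ of Remark~\ref{150701_1}, whereas the paper works directly with the tuple $(\Gamma_0,\dots,\Gamma_k)$ and applies $F_z$ componentwise.
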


Let us recall the notation 
$\mca{L} = \mca{L}M:= C^\infty(S^1, M)$ and 
$\mca{L}^a:= \{ \gamma \in \mca{L} \mid \len(\gamma)<a\}$. 

\begin{lem}\label{150624_3} 
For any $k \ge 0$, let us define 
\[
\mca{L}^a_k:= \{(\gamma,t_1, \ldots, t_k) \in \L^a \times \Delta^k \mid \gamma^{(m)}(0)= \gamma^{(m)}(t_j)=0 \quad(1 \le \forall j \le k, \, \forall m \ge 1) \}, 
\]
and consider the differentiable structure on $\mca{L}^a_k$ as a subset of $\L^a \times \Delta^k$. 
Then, the inclusion map $\mca{L}^a_k \to \mca{L}^a \times \Delta^k$ induces an isomorphism 
$H^\dR_*(\mca{L}^a_k) \cong H^\dR_*(\mca{L}^a \times \Delta^k)$. 
\end{lem}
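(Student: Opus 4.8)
The plan is to construct a smooth homotopy inverse of the inclusion $\iota\colon\mca{L}^a_k\hookrightarrow\mca{L}^a\times\Delta^k$ and then invoke Proposition \ref{prop:homotopy}. Write $\mb t=(t_1,\ldots,t_k)$ for a point of $\Delta^k$, and fix $h\in C^\infty(S^1,[0,1])$ which is positive on $S^1\setminus\{0\}$ and vanishes to infinite order at $0$. Put $\rho_{\mb t}(\theta):=c(\mb t)\,h(\theta)\prod_{j=1}^{k}h(\theta-t_j)$, where $c(\mb t)>0$ is chosen so that $\int_{S^1}\rho_{\mb t}=1$; the integrand is positive off the finite set $\{0,t_1,\ldots,t_k\}$ and depends smoothly on $\mb t$, so $c$ is a smooth positive function on the compact $\Delta^k$ and $\rho_{\mb t}(\theta)$ is smooth in $(\mb t,\theta)$. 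Let $\psi_{\mb t}\colon S^1\to S^1$ be the degree-one circle map with non-negative derivative determined by $\psi_{\mb t}(0)=0$ and $\psi_{\mb t}'=\rho_{\mb t}$. Since $\rho_{\mb t}$ vanishes to infinite order at $0$ and at each $t_j$, every derivative of $\psi_{\mb t}$ of order $\geq 1$ vanishes at these points, so by the Fa\`a di Bruno formula $\gamma\circ\psi_{\mb t}$ is flat at $0,t_1,\ldots,t_k$ for \emph{every} $\gamma\in\mca{L}$; and since $\len(\gamma\circ\psi)=\len(\gamma)$ whenever $\psi$ is a $C^\infty$ degree-one circle map with $\psi'\geq 0$, precomposition by $\psi_{\mb t}$ preserves $\mca{L}^a$. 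Hence $F(\gamma,\mb t):=(\gamma\circ\psi_{\mb t},\mb t)$ defines a smooth map $F\colon\mca{L}^a\times\Delta^k\to\mca{L}^a_k$.

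The homotopy $\iota\circ F\simeq\mathrm{id}_{\mca{L}^a\times\Delta^k}$ is easy: pick $\beta\in C^\infty(\R,[0,1])$ with $\beta\equiv 0$ on $\R_{\leq 0}$ and $\beta\equiv 1$ on $\R_{\geq 1}$, let $\psi^s_{\mb t}$ be the degree-one circle map fixing $0$ whose derivative is $(1-\beta(s))+\beta(s)\rho_{\mb t}$ (this is non-negative with total integral $1$), and set $(\gamma,\mb t,s)\mapsto(\gamma\circ\psi^s_{\mb t},\mb t)$; this stays in $\mca{L}^a\times\Delta^k$ because each $\psi^s_{\mb t}$ preserves length, and it interpolates between $\iota\circ F$ and the identity. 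The crux is $F\circ\iota\simeq\mathrm{id}_{\mca{L}^a_k}$. On $\mca{L}^a_k$ the loop $\gamma$ is already flat at $0,t_1,\ldots,t_k$, and the chain rule shows that $\gamma\circ\phi$ is again flat at $t_j$ (resp.\ at $0$) as soon as the reparametrization $\phi$ is \emph{either} flat at $t_j$ \emph{or} fixes $t_j$ (resp.\ fixes $0$). It therefore suffices to join $\psi_{\mb t}$ to $\mathrm{id}_{S^1}$, smoothly in $\mb t\in\Delta^k$, through degree-one circle maps with non-negative derivative, fixing $0$, each of which for every $j$ is flat at $t_j$ or fixes $t_j$. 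Away from the collision faces of $\Delta^k$ one does this in two legs, using affine combinations of such maps — first staying among maps flat at all of $0,t_1,\ldots,t_k$, then among maps fixing all of $0,t_1,\ldots,t_k$ — the hinge being a single reparametrization $\psi'_{\mb t}$ that both fixes and is flat at all of these points.

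I expect the construction of this hinge family to be the main obstacle. A naive construction of $\psi'_{\mb t}$ — on each of the intervals cut out by $0,t_1,\ldots,t_k$ take an affinely rescaled copy of a fixed profile that is flat at both endpoints — fails to be smooth in $\mb t$ along the boundary faces of $\Delta^k$ where two marked points collide or a marked point reaches $0$, since the higher $\theta$-derivatives of a rescaled profile blow up as the width of an interval shrinks to zero; in fact a direct computation at the corner $\{t_j=t_{j+1}\}$ shows that no smooth family of reparametrizations that are flat at \emph{and} fix all of $0,t_1,\ldots,t_k$ can exist on all of $\Delta^k$. The remedy is to let the interpolating reparametrizations, near such faces, \emph{collapse} the short intervals between collided points to single points: a map that is constant on an interval is flat at its endpoints, so such maps furnish admissible transitions between the ``fixing'' regime near $\mathrm{id}_{S^1}$ and the ``flat'' regime near $\psi_{\mb t}$, and the collapse can be switched on smoothly as an interval shrinks to zero. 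Organizing this cleanly — presumably by induction on $k$, the case $k=0$ (where $\Delta^0$ is a point and no collision occurs, so the argument of the preceding paragraph applies verbatim) serving as the base — is the technical heart of the proof. Granting it, $F_*$ and $\iota_*$ are mutually inverse up to chain homotopy by Proposition \ref{prop:homotopy}, whence $H^\dR_*(\mca{L}^a_k)\cong H^\dR_*(\mca{L}^a\times\Delta^k)$.
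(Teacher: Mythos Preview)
Your proposal has a genuine gap: the second homotopy $F\circ\iota\simeq\id_{\mca{L}^a_k}$ is not constructed. You correctly identify that no smooth family of circle maps on $\Delta^k$ can be both flat at and fix all of $0,t_1,\ldots,t_k$, and you sketch a workaround via collapsing short intervals, but you do not carry it out. The inductive scheme you allude to would have to produce, for each $\mb t\in\Delta^k$, a path of reparametrizations $\phi^s_{\mb t}$ such that at every $s$ and every $j$ the map $\phi^s_{\mb t}$ is either flat at $t_j$ or fixes $t_j$, with the choice varying smoothly across collision strata; making this globally $C^\infty$ in $(\mb t,\theta,s)$ is exactly the hard part, and nothing in your outline addresses why the higher $\theta$-derivatives of the collapsed pieces match up across strata.

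The paper avoids this obstacle entirely by a different mechanism. Rather than build a single homotopy inverse on $\mca{L}^a\times\Delta^k$, it passes to a filtered colimit: with auxiliary energy bounds $E_j\nearrow\infty$ and lengths $a_j\nearrow a$, one constructs maps $J:\mca{L}^{a_j,E_j}\times\Delta^k\to\mca{L}^{a_{j+1},E_{j+1}}_k$ that commute with the inclusions only up to homotopy (Lemma~\ref{150630_1}). The reparametrization $\mu$ used to define $J$ (Lemma~\ref{150630_2}) is flat at $0,t_1,\ldots,t_k$ and $C^0$-close to the identity, but is \emph{not} required to fix the $t_j$; this is exactly what makes it possible to build $\mu$ smoothly over all of $\Delta^k$. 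The price is that $J$ slightly increases both length and energy, whence the need to step from level $j$ to $j+1$. The homotopies are then built not by reparametrization but by geodesic interpolation between $\gamma$ and $\gamma\circ\mu$ (using Lemma~\ref{150625_2}), which stays in $\mca{L}^{a_{j+1},E_{j+1}}_k$ because $\gamma$ is already flat at the marked points and geodesic interpolation preserves flatness pointwise. This colimit-plus-interpolation strategy is what you are missing; it trades your single hard construction for a sequence of easy ones.
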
 

\begin{lem}\label{150626_5} 
For any $k \ge 0$, let us define the map 
\[
\mca{L}^a_k \to \bL^a_k; \quad (\gamma, t_1, \ldots, t_k) \mapsto (\gamma_j, T_j)_{0 \le j \le k}
\]
by $T_j:= t_{j+1}-t_j$, $\gamma_j(t):= \gamma(t-t_j)$ (we set $t_0=0$, $t_{k+1}=1$). 
Then, the map $\mca{L}^a_k \to \bL^a_k$ is smooth, 
and induces an isomorphism 
$H^\dR_*(\mca{L}^a_k) \cong H^\dR_*(\bL^a_k)$. 
\end{lem}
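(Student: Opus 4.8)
Write $p\colon\mca{L}^a_k\to\bL^a_k$ for the map in the statement. The plan is to verify smoothness of $p$ directly from the plot definitions, then to exhibit a smooth homotopy inverse $q\colon\bL^a_k\to\mca{L}^a_k$, so that the isomorphism on $H^\dR_*$ follows from Proposition \ref{prop:homotopy}. Smoothness of $p$ will be immediate: if $(U,\ph)\in\mca{P}(\mca{L}^a_k)$ with $\ph(u)=(\gamma(u),t_1(u),\ldots,t_k(u))$, then each $t_j\in C^\infty(U)$, so the times $T_j:=t_{j+1}-t_j$ (with $t_0\equiv0$, $t_{k+1}\equiv1$) are smooth, the assignment $(u,t)\mapsto\gamma(u)(t_j(u)+t)$ is of class $C^\infty$ in the sense of Definition \ref{150625_3}, and the defining conditions of $\mca{L}^a_k$ say exactly that $\gamma(u)^{(m)}$ vanishes at $0$ and at each $t_j(u)$ for every $m\ge1$; together with $\sum_j\len(\gamma_j(u))=\len(\gamma(u))<a$ and the cyclic matching of the evaluation maps, this shows $u\mapsto(\gamma_j(u),T_j(u))_{0\le j\le k}$ is a plot of $\bL^a_k$.

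The construction of $q$ is where the one real subtlety enters. The naive guess --- concatenate $\Gamma_0*\cdots*\Gamma_k$ into a single Moore loop and rescale its time parameter by $1/T$ with $T:=T_0+\cdots+T_k$ --- is smooth on $\{T>0\}$ (concatenation is smooth by Lemma \ref{150626_1}, and $1/T$ is smooth and positive there), but it is not even continuous along the locus $\{T=0\}$, which is precisely the set $i_k(M)$ of constant Moore loops of zero time, because the marked points $t_j=(T_0+\cdots+T_{j-1})/T$ have no limit as the $T_i$ tend to $0$ in varying proportions. I would fix this by inserting a fixed constant segment: define $q(\Gamma_0,\ldots,\Gamma_k)$ by replacing $\Gamma_0$ by $(\text{constant Moore path of time }1\text{ at }e_0(\Gamma_0))*\Gamma_0$, concatenating the resulting $k+1$ Moore paths into a single Moore loop of total time $1+T\ge1$, rescaling its time parameter to $1$, and recording the images of the $k$ interior junction points as the marked points. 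Inserting a constant segment and rescaling time both preserve length, and the junction points of a concatenation automatically carry vanishing derivatives of all positive orders, so the output lies in $\mca{L}^a_k$; and now every step --- concatenation via Lemma \ref{150626_1}, rescaling by the smooth positive function $1/(1+T)$, and regarding a time-$1$ Moore loop with vanishing endpoint derivatives as an element of $\mca{L}^a$ --- is smooth in families over plots, so $q$ is smooth.

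It then remains to produce smooth homotopies $q\circ p\sim\id_{\mca{L}^a_k}$ and $p\circ q\sim\id_{\bL^a_k}$. Both composites are of the shape ``insert a unit-time constant segment at the basepoint and rescale the time parameter'', and in each case the homotopy fades the inserted segment from time $1$ down to time $0$ while sliding the rescaling factor back to the one appropriate to the target --- the factor making the total time equal to $1$ in the case of $\mca{L}^a_k$, and the factor $1$ in the case of $\bL^a_k$ --- composed with a cutoff $\nu\in C^\infty(\R,[0,1])$, $\nu\equiv0$ on $\R_{\le0}$ and $\nu\equiv1$ on $\R_{\ge1}$, of the homotopy parameter. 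This gives a smooth map on $\mca{L}^a_k\times\R$ (resp.\ $\bL^a_k\times\R$) agreeing with the composite on $\R_{<0}$ and with the identity on $\R_{>1}$, as Proposition \ref{prop:homotopy} requires; all intermediate maps preserve length and all vanishing-derivative conditions, so the homotopies stay inside the relevant differentiable spaces. Hence $p_*$ and $q_*$ are mutually inverse chain homotopy equivalences and $H^\dR_*(p)$ is an isomorphism.

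The hard part is entirely the behaviour of $q$ (and of the two homotopies) along the constant locus $i_k(M)\subset\bL^a_k$: this is exactly what forces the inserted constant segment into the definition, and the work to be done is to confirm that, once the segment is in place, $q$ and the homotopies are genuinely smooth maps of differentiable spaces there --- in particular that the new marked points depend smoothly on $(\Gamma_0,\ldots,\Gamma_k)$ near that locus. Lemma \ref{150626_1} and Definition \ref{150625_3} are tailored to this, so the verification is routine but should be carried out explicitly.
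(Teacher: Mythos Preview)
Your approach is correct and takes a genuinely different route from the paper's proof. The paper proceeds in three steps: it first factors $p$ through $\bL^a_{k,T>0}:=\{T>0\}$ and shows $\mca{L}^a_k\to\bL^a_{k,T>0}$ is a smooth homotopy equivalence via the naive rescaling $(\gamma,t_1,\ldots,t_k,T)\mapsto(\gamma(T\,\cdot\,),t_1/T,\ldots,t_k/T)$; then it reduces the inclusion $\bL^a_{k,T>0}\hookrightarrow\bL^a_k$ to an energy-bounded version $\bL^{a,E}_{k,T>0}\hookrightarrow\bL^{a,E}_k$ via Corollary \ref{150213_1}; and finally it handles that inclusion by passing through an auxiliary subspace of elements that are constant whenever $T<\delta$, using $\len\le\sqrt{ET}$ to contract short loops. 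You instead build a single global homotopy inverse by inserting a constant segment of time $1$ so that the total time is always at least $1$ and the rescaling is defined everywhere; the constant locus $i_k(M)$ is then no longer singular, and you avoid the energy filtration entirely. Your route is more direct and makes more efficient use of Lemma \ref{150626_1} (smoothness of concatenation, including with a time-zero piece); the paper's route, by contrast, exercises the truncation and filtration techniques used repeatedly elsewhere in Sections 4--5. One point to tighten: in the homotopy $p\circ q\sim\id_{\bL^a_k}$, the rescaling factor you must interpolate from is $1/(1+T)$, which depends on the point, so you should give an explicit smooth positive interpolation (e.g.\ $1-\nu(s)\,T/(1+T)$, or simply split the homotopy into ``undo the rescaling by $1+T$'' followed by ``shrink the inserted segment from time $1$ to $0$'').
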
 

Summarizing these lemmas, 
we have the following zig-zag of quasi-isomorphisms: 
\begin{equation}\label{150628_2}  
\xymatrix{ 
C^\dR_*(\bL^a_{k,\reg}) \ar[r] & C^\dR_*(\bL^a_k) & C^\dR_*(\mca{L}^a_k) \ar[r]\ar[l] & C^\dR_*(\mca{L}^a \times \Delta^k). \\ 
}
\end{equation}
Each of these four sequences has the natural structure of a cocylic chain complex; 
for example, $\tau_k$ on $C^\dR_*(\mca{L}^a \times \Delta^k)$ is induced by the smooth map 
\[
\mca{L}^a \times \Delta^k \to \mca{L}^a \times \Delta^k; \quad 
(\gamma, t_1,\ldots, t_k) \mapsto (\gamma^{t_1}, t_2-t_1, \ldots, t_k-t_1, 1-t_1)
\] 
where $\gamma^{t_1}(\theta):= \gamma(\theta-t_1)$. 
The diagram (\ref{150628_2}) induces quasi-isomorphisms of these cocyclic chain complexes. 

\subsection{Proof of Lemma \ref{150624_2}}

Let us take a strictly increasing sequence of positive real numbers $(a_j)_{j \ge 1}$, such that $\lim_{j \to \infty} a_j=a$. 
Since the length functional is approximately smooth on $\bL_k$ and $\bL_{k, \reg}$,
Corollary \ref{150213_1} (ii) implies 
\[
\vlim_j  H^\dR_*(\bL^{a_j}_{k,\reg}) \cong H^\dR_*(\bL^a_{k,\reg}), \qquad
\vlim_j  H^\dR_*(\bL^{a_j}_k) \cong H^\dR_*(\bL^a_k).
\]
Now, the key technical step is the next lemma. 

\begin{lem}\label{150626_3}
For any integer $j \ge 1$, 
there exists a chain map 
$J: C^\dR_*(\bL^{a_j}_k) \to C^\dR_*(\bL^{a_{j+1}}_{k, \reg})$ such that 
the following diagram commutes up to chain homotopy: 
\[
\xymatrix{
C^\dR_*(\bL^{a_j}_{k,\reg}) \ar[r]^-{(\id_j)_*} \ar[d]_-{(I_\reg)_*}&C^\dR_*(\bL^{a_j}_k )\ar[d]^-{I_*}\ar[ld]^-{J} \\
C^\dR_*(\bL^{a_{j+1}}_{k,\reg}) \ar[r]_-{(\id_{j+1})_*}& C^\dR_*(\bL^{a_{j+1}}_k) .
}
\]
In the above diagram, $\id_j$, $\id_{j+1}$ are identity maps, and $I$, $I_\reg$ are inclusion maps. 
\end{lem}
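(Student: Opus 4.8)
The plan is to adapt the finite-dimensional argument of Lemma \ref{150210_5} to the Moore loop setting, the ``diffusion of the ambient manifold'' there being replaced by a diffusion applied pointwise to Moore loops. Since $M$ is compact, apply Lemma \ref{150210_6} with $K = N = M$, together with Remark \ref{150626_4}, to obtain an integer $D > 0$ and a $C^\infty$-map $F : M \times \R^D \to M$ with: $F_z := F(\,\cdot\,, z)$ a diffeomorphism of $M$ for every $z$; $F_0 = \id_M$; $z \mapsto F(p,z)$ a submersion $\R^D \to M$ for every $p \in M$; and, for a prescribed $\delta > 0$, $|dF_z(v)| \le (1+\delta)|v|$ for all $v \in TM$. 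Fix $\delta$ with $(1+\delta) a_j < a_{j+1}$ and fix $\nu_D \in \mca{A}^D_c(\R^D)$ with $\int_{\R^D} \nu_D = 1$. For $\Gamma = (\Gamma_0, \ldots, \Gamma_k) \in \bL_k$ set $F_z \cdot \Gamma := (F_z \cdot \Gamma_0, \ldots, F_z \cdot \Gamma_k)$, where $F_z \cdot (\gamma, T) := (F_z \circ \gamma, T)$; this lies in $\bL_k$ because $F_z$ is a diffeomorphism (the vanishing of $\gamma^{(m)}$, $m \ge 1$, at the endpoints of each path is preserved by the chain rule, and the matching conditions $e_1(\Gamma_i) = e_0(\Gamma_{i+1})$ are preserved), and $\len(F_z \cdot \Gamma) \le (1+\delta)\len(\Gamma)$.

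Given $(U, \ph) \in \mca{P}(\bL^{a_j}_k)$, define $\ph^F : U \times \R^D \to \bL_k$ by $\ph^F(u,z) := F_z \cdot \ph(u)$. As in the proof of Lemma \ref{150626_1}, post-composition with $F$ preserves the $C^\infty$-condition of Definition \ref{150625_3}, so $\ph^F$ is a plot of $\bL_k$; it is valued in $\bL^{a_{j+1}}_k$ by the length bound; and it is in fact a plot of $\bL^{a_{j+1}}_{k,\reg}$, because each evaluation $(u,z) \mapsto e_i(\ph^F(u,z)) = F_z(e_i(\ph(u)))$ is submersive already in the $z$-direction, by the choice of $F$. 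We therefore define
\[
J : C^\dR_*(\bL^{a_j}_k) \to C^\dR_*(\bL^{a_{j+1}}_{k,\reg}); \qquad [(U,\ph,\omega)] \mapsto [(U \times \R^D, \ph^F, \omega \times \nu_D)].
\]
This descends to the quotient by the relations $Z_*$, because for a submersion $\pi : V \to U$ one has $\ph^F \circ (\pi \times \id_{\R^D}) = (\ph \circ \pi)^F$ and $(\pi \times \id_{\R^D})_!(\omega \times \nu_D) = (\pi_! \omega) \times \nu_D$; and it is a chain map because $d(\omega \times \nu_D) = (d\omega) \times \nu_D$, as $\nu_D$ is closed.

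For the two homotopies, fix $a \in C^\infty(\R, [0,1])$ with $a \equiv 0$ on $\R_{\le 0}$, $a \equiv 1$ on $\R_{\ge 1}$, and $b \in C^\infty_c(\R)$ with $b \equiv 1$ near $[0,1]$, as in the proof of Lemma \ref{150210_5}. For a plot $\ph$ put $\Phi_\ph : U \times \R^D \times \R \to \bL_k$, $\Phi_\ph(u,z,s) := F_{a(s)z} \cdot \ph(u)$; it agrees with the inclusion of $\ph$ for $s \le 0$ and with $\ph^F$ for $s \ge 1$, and $\len(F_{a(s)z} \cdot \ph(u)) \le (1+\delta)\len(\ph(u)) < a_{j+1}$ throughout. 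For the lower-right triangle, $I_* \simeq (\id_{j+1})_* \circ J$, the target is $\bL^{a_{j+1}}_k$, so no regularity is required, and $[(U,\ph,\omega)] \mapsto \pm[(U \times \R^D \times \R, \Phi_\ph, \omega \times \nu_D \times b(s))]$ is the required homotopy operator, by the same computation (integrate out the $\R^D \times \R$ fibers, using $\int_{\R^D}\nu_D = 1$, $b(0) = b(1) = 1$, $b(\pm\infty) = 0$) as in Lemma \ref{150210_5} and Proposition \ref{prop:homotopy}. For the upper-left triangle, $(I_\reg)_* \simeq J \circ (\id_j)_*$, the source is $\bL^{a_j}_{k,\reg}$: for a plot $\ph$ of $\bL^{a_j}_{k,\reg}$ each $e_i \circ \ph$ is already a submersion, so for $s \le 0$ the evaluations of $\Phi_\ph$ are submersive in the $U$-variable, while for $s > 0$ they are submersive because $F_{a(s)z}$ is a diffeomorphism; hence $\Phi_\ph$ is a plot of $\bL^{a_{j+1}}_{k,\reg}$ and the same formula yields a homotopy operator valued in the regular de Rham chains.

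The step requiring the most care is exactly this regularity bookkeeping. One needs a single diffusion family $F$ that spans all tangent directions over the \emph{whole} of the compact manifold $M$ (so that the evaluation maps of $\ph^F$ become submersions no matter how singular the original plot $\ph$ is), and simultaneously is uniformly almost-isometric (so that lengths stay below $a_{j+1}$); this is precisely why Lemma \ref{150210_6} must be invoked with $K = M$ and supplemented by Remark \ref{150626_4}. One must also observe that post-composition with a diffeomorphism preserves both the derivative conditions at the marked points and the submersivity of the evaluations, and --- for the upper-left triangle --- that the $s \le 0$ end of the interpolating homotopy lands in the regular chains only because there it starts from $\bL^{a_j}_{k,\reg}$. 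Once these points are granted, the signs and fiber-integration identities are handled verbatim as in Lemma \ref{150210_5}.
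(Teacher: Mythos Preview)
Your proposal is correct and follows essentially the same route as the paper: apply Lemma~\ref{150210_6} (with Remark~\ref{150626_4}) to get the diffusion $F$, define $J$ by post-composing the loops with $F_z$ and tensoring with a compactly supported form $\nu_D$, and build the two homotopies via $\Phi(u,z,s)=F_{a(s)z}\cdot\ph(u)$. One small remark: in the upper-left triangle your case split $s\le 0$ versus $s>0$ is unnecessary---since $F_{a(s)z}$ is a diffeomorphism for \emph{every} $(z,s)$ and $e_i\circ\ph$ is already a submersion, the composition $e_i\circ\Phi_\ph=F_{a(s)z}\circ(e_i\circ\ph)$ is a submersion in the $U$-variable uniformly in $(z,s)$, which is exactly how the paper phrases it.
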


Lemma \ref{150626_3} implies $\vlim_j H^\dR_*(\bL^{a_j}_{k,\reg}) \cong \vlim_j H^\dR_*(\bL^{a_j}_k)$, 
then we obtain $H^\dR_*(\bL^a_{k,\reg}) \cong H^\dR_*(\bL^a_k)$, that is, Lemma \ref{150624_2}. 

\begin{proof}[\textbf{Proof of Lemma \ref{150626_3}}]
The proof is quite similar to the proof of Lemma \ref{150210_5}, except that we have to pay attention to lengths of loops. 

Let $\delta:= a_{j+1}/a_j-1$. 
By Lemma \ref{150210_6} and Remark \ref{150626_4}, 
there exists an integer $D$ and a $C^\infty$-map
$F: M \times \R^D \to M$ such that 
\begin{itemize}
\item For any $z \in \R^D$, $F_z: M \to M; \, x \mapsto F(x,z)$ is a diffeomorphism. Moreover, there holds $|dF_z(v)| \le (1+\delta)|v|$ for any $v \in TM$.
\item $F_{(0,\ldots,0)} = \id_M$.
\item For any $x \in M$, $\R^D \to M; z \mapsto F(x,z)$ is a submersion.
\end{itemize} 

Let us define $\mca{F}: \bL^{a_j}_k \times \R^D \to \bL^{a_{j+1}}_k$ by 
\[
\mca{F}(\Gamma_0,  \ldots, \Gamma_k, z):= (F_z \circ \Gamma_0, \ldots, F_z \circ \Gamma_k).
\]
Then, 
$(U \times \R^D, \mca{F} \circ (\ph \times \id_{\R^D})) \in \mca{P}(\bL^{a_{j+1}}_{k,\reg})$ for any $(U,\ph) \in \mca{P}(\bL^{a_j}_k)$. 
Let us take $\nu \in \mca{A}^D_c(\R^D)$ such that $\int_{\R^D} \nu=1$. 
It is easy to see that 
\[
J: C^\dR_*(\bL^{a_j}_k) \to C^\dR_*(\bL^{a_{j+1}}_{k, \reg}); \quad 
[(U,\ph, \omega)] \mapsto
[(U \times \R^D, \mca{F} \circ(\ph \times \id_{\R^D}), \omega \times \nu)]
\]
is a well-defined chain map. 
We show that $J$ satisfies the requirement in Lemma \ref{150626_3}. 

To show that $J \circ (\id_j)_*$ and $(I_\reg)_*$ are homotopic, 
we take $a, b \in C^\infty(\R, [0,1])$ so that 
\begin{itemize}
\item $a(s) =0$ for any $s \le 0$, and $a(s) =1$ for any $s \ge 1$. 
\item $\supp \, b$ is compact, and there exists $\ep>0$ such that $b(s)=1$ for any $s \in [-\ep, 1+\ep]$. 
\end{itemize} 

For any $(U,\ph) \in \mca{P}(\bL^{a_j}_{k, \reg})$, 
we define $(U \times \R^D \times \R, \Phi) \in \mca{P}(\bL^{a_{j+1}}_k)$ by 
\[
\Phi(u, z, s):= \mca{F} (\ph(u), a(s) z). 
\] 
Let us show that the pair $(U \times \R^D \times \R, \Phi)$  is a plot of $\bL_{k, \reg}$, 
namely that $e_i \circ \Phi: U \times \R^D \times \R \to M$ is a submersion for every $0 \le i \le k$. 
It is easy to check that $e_i \circ \Phi(u,z,s)= F_{a(s)z} \circ e_i \circ \ph(u)$. 
Since $(U,\ph) \in \mca{P}(\bL_{k, \reg})$, $e_i \circ \ph: U \to M$ is a submersion. 
On the other hand, $F_{a(s)z}$ is a diffeomorphism on $M$. 
Thus, $e_i \circ \Phi$ is a submersion. 
Hence $(U \times \R^D \times \R, \Phi) \in \mca{P}(\bL^{a_{j+1}}_{k, \reg})$. 

A computation similar to the proof of Lemma \ref{150210_5} shows that the linear map 
\[
K: C^\dR_*(\bL^{a_j}_{k, \reg}) \to C^\dR_{*+1}(\bL^{a_{j+1}}_{k, \reg}); \quad
[(U,\ph,\omega)] \mapsto (-1)^{|\omega|+D} [(U \times \R^D  \times \R, \Phi, \omega \times \nu \times b(s))]
\]
is well-defined, and satisfies 
$\partial K + K \partial  = (I_\reg)_* - J \circ (\id_j)_*$. 

Similar arguments show that $(\id_{j+1})_* \circ J$ is homotopic to $I_*$. 
The homotopy operator is given by exactly the same formula as $K$. 
This case is easier, since we do not have to care about the submersion condition. 
\end{proof} 

\subsection{Proof of Lemma \ref{150624_3}}

Let us take strictly increasing sequences $(a_j)_{j \ge 1}$, $(E_j)_{j \ge 1}$ of positive real numbers, 
such that $\lim_{j \to \infty}  a_j = a$, $\lim_{j \to \infty} E_j=\infty$. 
We set 
\[
\mca{L}^{a_j, E_j}:= \{ \gamma \in \mca{L} \mid \len(\gamma)<a_j, \, \E(\gamma)<E_j\}, \qquad
\mca{L}^{a_j, E_j}_k:= \mca{L}^{a_j}_k \cap \mca{L}^{a_j, E_j} \times \Delta^k.
\]

\begin{lem}\label{150630_1} 
For every $j \ge 1$, 
there exists a smooth map $J: \L^{a_j,E_j}  \times \Delta^k \to \L^{a_{j+1},E_{j+1}}_k$  such that
the following diagram commutes up to smooth homotopy:
\[
\xymatrix{
\L^{a_j,E_j}_k \ar[r] \ar[d] & \L^{a_j,E_j} \times \Delta^k \ar[ld]^-J \ar[d] \\
\L^{a_{j+1}, E_{j+1}}_k \ar[r] & \L^{a_{j+1}, E_{j+1}} \times \Delta^k. 
}
\]
All maps other than $J$ are inclusion maps. 
\end{lem}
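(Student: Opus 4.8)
The plan is to take $J$ to be reparametrisation of loops by a family $\psi_{\mb{t}}$ of self-maps of $S^1$ that is locally constant near each marked point and $C^0$-close to the identity, and to exhibit the two triangles of the diagram as smooth homotopies: the lower one by linear interpolation of reparametrisations, the upper one by geodesic interpolation of loops.

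Concretely, I would first fix $\delta>0$ with $(1+\delta)a_j<a_{j+1}$ and $(1+\delta)^3E_j<E_{j+1}$, and $\epsilon_0>0$ (to be further shrunk below); fix $\beta\in C^\infty(\R,[0,1])$ with $\beta\equiv1$ near $0$ and $\supp\beta\subset(-\epsilon_0,\epsilon_0)$, and put $t_0:=0$, $t_{k+1}:=1$. For $\mb{t}=(t_1,\dots,t_k)$ in a neighbourhood of $\Delta^k$ in $\R^k$ set $h_{\mb{t}}(s):=\prod_{l=0}^{k+1}\bigl(1-\beta(s-t_l)\bigr)$, $H(\mb{t}):=\int_0^1 h_{\mb{t}}$ and $\psi_{\mb{t}}(s):=H(\mb{t})^{-1}\int_0^s h_{\mb{t}}$. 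Since $h_{\mb{t}}$ vanishes only on a set of measure $\le 2(k+2)\epsilon_0$, one has $H(\mb{t})\ge 1-2(k+2)\epsilon_0>0$, so $(\mb{t},s)\mapsto\psi_{\mb{t}}(s)$ is $C^\infty$ (in particular over the degenerate faces of $\Delta^k$ where several $t_l$ coincide); $\psi_{\mb{t}}$ descends to a degree-one self-map of $S^1$, is locally constant near each $t_l$, has $0\le\psi_{\mb{t}}'\le H(\mb{t})^{-1}\le 1+\delta$ (for $\epsilon_0$ small), and $|\psi_{\mb{t}}(s)-s|\le H(\mb{t})^{-1}(1-H(\mb{t}))$ for all $s$. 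Define $J(\gamma,\mb{t}):=(\gamma\circ\psi_{\mb{t}},\mb{t})$. Then $\gamma\circ\psi_{\mb{t}}$ is locally constant, hence flat, near $0,t_1,\dots,t_k$; $\len(\gamma\circ\psi_{\mb{t}})=\len(\gamma)<a_{j+1}$; and $\mca{E}(\gamma\circ\psi_{\mb{t}})=\int_{S^1}|\dot\gamma(\psi_{\mb{t}}(s))|^2\psi_{\mb{t}}'(s)^2\,ds\le(1+\delta)\mca{E}(\gamma)<E_{j+1}$. Hence $J$ is a well-defined smooth map $\L^{a_j,E_j}\times\Delta^k\to\L^{a_{j+1},E_{j+1}}_k$.

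For the lower triangle I would take $h^{(2)}_\lambda(\gamma,\mb{t}):=\bigl(\gamma\circ((1-\lambda)\id_{S^1}+\lambda\psi_{\mb{t}}),\mb{t}\bigr)$: the map $(1-\lambda)\id_{S^1}+\lambda\psi_{\mb{t}}$ is again a degree-one self-map of $S^1$ with derivative in $[0,1+\delta]$, so the same length and energy estimates keep $h^{(2)}_\lambda$ in $\L^{a_{j+1},E_{j+1}}\times\Delta^k$, while $h^{(2)}_0$ and $h^{(2)}_1$ are the inclusion and $(\text{inclusion})\circ J$. For the upper triangle, restrict to $(\gamma,\mb{t})\in\L^{a_j,E_j}_k$, so that $\gamma$ is flat at $0,t_1,\dots,t_k$; shrinking $\epsilon_0$ so that $\bigl(H(\mb{t})^{-1}(1-H(\mb{t}))\bigr)^{1/2}E_j^{1/2}<r(\delta)$, the bound $d(\gamma(s),\gamma(\psi_{\mb{t}}(s)))\le|\psi_{\mb{t}}(s)-s|^{1/2}\mca{E}(\gamma)^{1/2}$ (Cauchy--Schwarz) shows the shortest geodesics $\gamma_{\gamma(s),\gamma(\psi_{\mb{t}}(s))}$ are defined, and I set $h^{(1)}_\lambda(\gamma,\mb{t}):=\bigl(s\mapsto\gamma_{\gamma(s),\gamma(\psi_{\mb{t}}(s))}(\lambda),\,\mb{t}\bigr)$, which at $\lambda=0$ is the inclusion $\L^{a_j,E_j}_k\hookrightarrow\L^{a_{j+1},E_{j+1}}_k$ and at $\lambda=1$ equals $J\circ\bigl(\L^{a_j,E_j}_k\hookrightarrow\L^{a_j,E_j}\times\Delta^k\bigr)$. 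Lemma~\ref{150625_2} (with $\gamma_0=\gamma$, $\gamma_1=\gamma\circ\psi_{\mb{t}}$), together with $\len(\gamma\circ\psi_{\mb{t}})=\len(\gamma)$ and $\mca{E}(\gamma\circ\psi_{\mb{t}})\le(1+\delta)\mca{E}(\gamma)$, gives $\len(h^{(1)}_\lambda)\le(1+\delta)\len(\gamma)<a_{j+1}$ and $\mca{E}(h^{(1)}_\lambda)\le(1+\delta)^3\mca{E}(\gamma)<E_{j+1}$. Reparametrising the $\lambda$-variable by a cutoff $\nu\in C^\infty(\R,[0,1])$ with $\nu\equiv0$ on $\R_{\le0}$ and $\nu\equiv1$ on $\R_{\ge1}$ then turns $h^{(1)}$ and $h^{(2)}$ into smooth homotopies, and (smoothness being checked plot-by-plot from the smoothness of $\psi_{\mb{t}}$ and of the shortest-geodesic map) the diagram commutes up to smooth homotopy.

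The step I expect to require the most care is verifying that $h^{(1)}_\lambda(\gamma,\mb{t})$ genuinely stays in the \emph{subspace} $\L_k$, i.e.\ remains flat at each $\tau\in\{0,t_1,\dots,t_k\}$. This is precisely where the local constancy of $\psi_{\mb{t}}$ near $\tau$ is essential: on that neighbourhood $h^{(1)}_\lambda(\gamma,\mb{t})$ is the loop $s\mapsto\Phi_\lambda(\gamma(s))$, where $\Phi_\lambda$ is the fixed $C^\infty$ map $p\mapsto\gamma_{p,\gamma(\psi_{\mb{t}}(\tau))}(\lambda)$, so flatness of $\gamma$ at $\tau$ propagates to $\Phi_\lambda\circ\gamma$ because every derivative of order $\ge1$ of a composite $\Phi\circ\gamma$ at $\tau$ contains a positive-order derivative of $\gamma$ at $\tau$. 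Granting this point, everything else — the $C^\infty$-dependence of $\psi_{\mb{t}}$ on $\mb{t}$ over all of $\Delta^k$ and the length/energy bookkeeping — goes through exactly as in the proof of Lemma~\ref{150625_1}.
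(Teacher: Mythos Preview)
Your proof is correct and follows essentially the same approach as the paper: both construct $J$ by reparametrising loops via the normalised integral of a product of bump functions centered at the marked points (the paper isolates this construction as a separate sublemma, Lemma~\ref{150630_2}), and both handle the upper triangle via geodesic interpolation using Lemma~\ref{150625_2}. The only difference is that you use linear interpolation of reparametrisations for the lower triangle, whereas the paper uses the \emph{same} geodesic homotopy $H$ for both triangles and simply observes that its restriction to $\L^{a_j,E_j}_k$ lands in $\L^{a_{j+1},E_{j+1}}_k$ (since both $\gamma$ and $\gamma\circ\mu$ are flat at the marked points, so is their geodesic interpolant).
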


Assuming Lemma \ref{150630_1}, we can prove Lemma \ref{150624_3} by 
\[
H^\dR_*(\mca{L}^a_k) \cong \vlim_j H^\dR_*(\mca{L}^{a_j, E_j}_k) \cong \vlim_j H^\dR_*(\mca{L}^{a_j, E_j} \times \Delta^k) \cong H^\dR_*(\mca{L}^a \times \Delta^k).
\]
To prove Lemma \ref{150630_1}, we need the following sublemma. 

\begin{lem}\label{150630_2} 
For any $\delta>0$, 
there exists a $C^\infty$-map 
$\mu: \Delta^k \times [0,1] \to [0,1]$ such that the following properties hold for any $(t_1,\ldots, t_k)  \in \Delta^k$. 
\begin{itemize}
\item[(i): ] $\mu(t_1,\ldots, t_k,0)=0$, $\mu(t_1,\ldots,t_k, 1)=1$.
\item[(ii): ] $\partial_\theta \mu(t_1,\ldots,t_k, \theta) \in [0,1+\delta]$ for any $\theta \in [0,1]$. 
\item[(iii): ] $|\mu(t_1,\ldots, t_k, \theta) - \theta| \le \delta$ for any $\theta \in [0,1]$. 
\item[(iv): ] $\partial_\theta^m  \mu(t_1,\ldots, t_k, \theta)=0$ for any integer $m \ge 1$ and $\theta \in \{0,t_1, \ldots, t_k, 1\}$. 
\end{itemize} 
\end{lem}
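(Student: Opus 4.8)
The plan is to construct $\mu$ by prescribing its $\theta$-derivative as a normalized density which equals $1$ away from the marked points and vanishes identically near them.

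I would fix once and for all a cutoff $\rho \in C^\infty(\R,[0,1])$ with $\rho\equiv 0$ on $[-1/2,1/2]$ and $\rho(y)=1$ for $|y|\ge 1$. Setting $t_0:=0$ and $t_{k+1}:=1$, and letting $\ell>0$ be a small parameter to be fixed at the end, define
\[
\Psi(t_1,\ldots,t_k,\theta):=\prod_{j=0}^{k+1}\rho\!\left(\frac{\theta-t_j}{\ell}\right),\qquad c(t_1,\ldots,t_k):=\int_0^1\Psi(t_1,\ldots,t_k,\theta)\,d\theta,
\]
and then
\[
\mu(t_1,\ldots,t_k,\theta):=\frac{1}{c(t_1,\ldots,t_k)}\int_0^\theta\Psi(t_1,\ldots,t_k,s)\,ds.
\]
Here $\Psi$ is the restriction of a genuine $C^\infty$ function on $\R^{k+1}$ (a finite product of smooth functions), so there is no difficulty at $\partial\Delta^k$ or at parameters where several $t_j$, or a $t_j$ and an endpoint, coincide. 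Since $0\le\Psi\le 1$ and $\Psi\equiv 1$ outside the $\ell$-neighborhood of $\{0,t_1,\ldots,t_k,1\}$ (which has measure $\le 2(k+2)\ell$), we get $c\in[1-2(k+2)\ell,\,1]$, hence $c>0$ once $\ell$ is small; then $c$ is $C^\infty$ by differentiation under the integral sign, and $\mu$ is $C^\infty$ on $\Delta^k\times[0,1]$ in the sense of Definition \ref{150625_3}.

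Next I would verify the four properties. Property (i) holds because $\mu(t,0)=0$ and $\mu(t,1)=c(t)^{-1}\int_0^1\Psi=1$, and $\partial_\theta\mu=c(t)^{-1}\Psi\ge 0$ shows $\mu(t,\cdot)$ is nondecreasing, so it indeed takes values in $[0,1]$. For (iv): if $|\theta-t_j|<\ell/2$ for some $0\le j\le k+1$, the $j$-th factor of $\Psi$ vanishes, so $\Psi(t,\cdot)\equiv 0$ on a $\theta$-neighborhood of each point of $\{0,t_1,\ldots,t_k,1\}$; hence all $\theta$-derivatives of $\partial_\theta\mu=c(t)^{-1}\Psi$ vanish there. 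For (ii), $\partial_\theta\mu=c(t)^{-1}\Psi\in[0,\,(1-2(k+2)\ell)^{-1}]$. For (iii), write $m:=2(k+2)\ell$; using $0\le\Psi\le 1$, $\int_0^1(1-\Psi)=1-c(t)\le m$, and $c(t)\ge 1-m$, one obtains for every $\theta$
\[
|\mu(t,\theta)-\theta|=\left|\int_0^\theta\big(c(t)^{-1}\Psi(t,s)-1\big)\,ds\right|\le\frac{1}{1-m}\left(\int_0^1|\Psi(t,s)-1|\,ds+m\right)\le\frac{2m}{1-m}.
\]
Finally I would fix $\ell$ small enough that $m=2(k+2)\ell\le\delta/(2+\delta)$; this simultaneously forces $(1-m)^{-1}\le 1+\delta$, giving (ii), and $2m/(1-m)\le\delta$, giving (iii).

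There is no serious obstacle here: the construction is elementary. The only points requiring care are the joint $C^\infty$-smoothness at the boundary of $\Delta^k$ and at parameter values where marked points collide — dispatched by the observation that $\Psi$ is literally a restriction of a smooth function on Euclidean space — and the fact that one and the same choice of $\ell$ can be made to enforce the derivative bound (ii) and the $C^0$-closeness bound (iii) at once, which is why the two inequalities on $m$ are recorded explicitly above.
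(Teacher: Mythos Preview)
Your proof is correct and follows essentially the same approach as the paper: both build the $\theta$-derivative of $\mu$ as a normalized product of cutoffs centered at the marked points $0,t_1,\ldots,t_k,1$, then integrate and divide by the total mass. The only cosmetic difference is that the paper's cutoff vanishes to infinite order at a single point rather than identically on a neighborhood, and your estimates for (ii) and (iii) are a bit more explicit than the paper's.
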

\begin{proof}
We may assume $\delta<1$. 
Let us take $c \in (0, \delta/4(k+2))$. 
We also take 
$\kappa \in C^\infty(\R, [0,1])$ such that 
$\kappa^{(m)}(0)=0$ for any integer $m \ge 0$, 
and $\kappa(\theta)=1$ if $|\theta| \ge c$. 
For any $t \in [0,1]$, we set $\kappa_t(\theta):= \kappa(\theta-t)$, and 
we define $\nu, \tilde{\nu}, \mu \in C^\infty(\Delta^k \times [0,1])$ by 
\begin{align*}
\nu(t_1,\ldots, t_k, \theta)&: = \kappa_0(\theta) \cdot  \kappa_1(\theta) \cdot \prod_{1 \le j \le k} \kappa_{t_j}(\theta) ,  \\
\tilde{\nu}(t_1,\ldots,t_k, \theta)&:= \int_0^\theta \nu(t_1,\ldots, t_k, \theta') \, d \theta',  \\
\mu(t_1,\ldots, t_k, \theta)&:= \tilde{\nu}(t_1,\ldots,t_k, \theta)/ \tilde{\nu}(t_1,\ldots, t_k, 1). 
\end{align*}
It is clear that $\mu$ satisfies (i) and (iv). 
It is also easy to check 
$\max\{ 0, \theta -\delta/2\} \le \tilde{\nu}(t_1, \ldots, t_k, \theta) \le \theta$, 
then (ii) and (iii) are verified as 
\begin{align*}
\partial_\theta \mu(t_1, \ldots, t_k, \theta) &\le \nu(t_1, \ldots, t_k, \theta)/(1-\delta/2) \le 1+\delta, \\ 
\mu(t_1,\ldots,t_k,\theta)& \ge \tilde{\nu}(t_1,\ldots, t_k, \theta) \ge \theta -\delta/2,  \\
\mu(t_1,\ldots,t_k,\theta)& \le \theta/(1-\delta/2) \le \theta(1+\delta). 
\end{align*} 
\end{proof}

\begin{proof}[\textbf{Proof of Lemma \ref{150630_1}}]

Let us fix $E \in (E_j, E_{j+1})$,
and take $\delta>0$ so that 
\[
1+\delta < (E/E_j)^{1/2}, \qquad
(E_j \delta)^{1/2} < r \big( \min \{a_{j+1}/a_j, (E_{j+1}/E)^{1/2}\} -1 \big).
\]
For the definition of the function $r(\cdots )$ in the second inequality, 
see Lemma \ref{150625_2}. 

We take 
$\mu: \Delta^k \times [0,1] \to [0,1]$ as in Lemma \ref{150630_2}. 
For any $\gamma \in \L^{a_j,E_j}$ and 
$(t_1,\ldots, t_k) \in \Delta^k$, we define 
$\gamma_{t_1,\ldots,t_k} \in \L$ by 
$\gamma_{t_1,\ldots,t_k}(\theta):= \gamma (\mu(t_1,\ldots,t_k,\theta))$ (this is well-defined by Lemma \ref{150630_2} (i)). 
Then $\len(\gamma_{t_1,\ldots,t_k}) = \len(\gamma)$, and 
$\E(\gamma_{t_1,\ldots,t_k}) < E$ by Lemma \ref{150630_2} (ii). 
By Lemma \ref{150630_2} (iv), for any $m \ge 1$ and $\theta \in \{0,t_1, \ldots, t_k\}$, there holds 
$\gamma^{(m)}_{t_1,\ldots, t_k}(\theta)=0$. 
Therefore, 
$(\gamma_{t_1,\ldots,t_k}, t_1,\ldots,t_k)  \in \L^{a_j,E}_k$. 

Since $\L^{a_j,E}_k \subset \L^{a_{j+1}, E_{j+1}}_k$, 
one can define $J$ by 
\[
J: \L^{a_j, E_j} \times \Delta^k \to \L^{a_{j+1}, E_{j+1}}_k; \quad (\gamma, t_1,\ldots,t_k) \mapsto (\gamma_{t_1,\ldots,t_k}, t_1,\ldots, t_k).
\]
For any $\gamma \in \L^{a_j, E_j}$, we can prove 
\[
\max_{\theta \in S^1} d(\gamma(\theta), \gamma_{t_1,\ldots,t_k}(\theta)) \le (E_j \delta)^{1/2}. 
\]
To prove this inequality, for any $\theta \in [0,1]$ let $I_\theta$ denote an interval in $[0,1]$ 
such that $\partial I_\theta = \{\theta, \mu(t_1, \ldots, t_k, \theta)\}$. 
Then, using Cauchy-Schwarz inequality we obtain 
\begin{align*} 
d(\gamma(\theta), \gamma_{t_1,\ldots,t_k}(\theta)) 
&\le \int_{I_\theta} |\dot{\gamma}(t)| \, dt 
\le \biggl( \int_{I_\theta} 1 \, dt \biggr)^{1/2} \biggl( \int_{I_\theta} |\dot{\gamma}(t)|^2 \, dt \biggr)^{1/2} \\
&\le |I_\theta|^{1/2} \cdot |\mca{E}(\gamma)|^{1/2} \le (E_j \delta)^{1/2}.
\end{align*} 
Note that $|I_\theta| \le \delta$ follows from Lemma \ref{150630_2}(iii), 
and $\mca{E}(\gamma) < E_j$ since $\gamma \in \L^{a_j, E_j}$. 

For any $s \in [0,1]$, let 
$\gamma_{s,t_1,\ldots,t_k}(\theta):= F_s(\gamma(\theta), \gamma_{t_1,\ldots,t_k}(\theta))$. 
The map $F_s$ is defined right before Lemma \ref{150625_2}. 
Applying Lemma \ref{150625_2} for $\min\{ a_{j+1}/a_j, (E_{j+1}/E)^{1/2}\} -1$, 
we obtain  
$\gamma_{s,t_1,\ldots,t_k} \in \L^{a_{j+1}, E_{j+1}}$ for any $s \in [0,1]$. 
If $(\gamma, t_1,\ldots, t_k) \in \L^{a_j, E_j}_k$, there holds 
$\gamma_{s,t_1,\ldots,t_k}^{(m)}(\theta)=0$ for any 
$m \ge 1$, 
$0 \le s \le 1$, and 
$\theta \in \{0, t_1, \ldots, t_k\}$. 
Thus, 
$(\gamma_{s,t_1,\ldots,t_k},t_1,\ldots,t_k) \in \L^{a_{j+1}, E_{j+1}}_k$
for any $0 \le s \le 1$. 

Let us take $\alpha \in C^\infty(\R, [0,1])$ so that 
$\alpha(s)=0$ for $s \le 0$ and $\alpha(s)=1$ for $s \ge 1$. 
We define $H: \L^{a_j,E_j} \times \Delta^k \times \R  \to \L^{a_{j+1},E_{j+1}} \times \Delta^k$ by 
\[
H(\gamma, t_1,\ldots, t_k, s):= (\gamma_{\alpha(s),t_1,\ldots, t_k} , t_1,\ldots, t_k).
\]
Obviously, this is a smooth homotopy between $J$ and the 
inclusion map $\L^{a_j,E_j} \times \Delta^k \to \L^{a_{j+1}, E_{j+1}} \times \Delta^k$. 
Finally, the restriction of $H$ to $\L^{a_j,E_j}_k \times \R$ is a smooth homotopy between 
$J|_{\L^{a_j,E_j}_k}$ and the inclusion map $\L^{a_j,E_j}_k \to \L^{a_{j+1},E_{j+1}}_k$. 
\end{proof}

\subsection{Proof of Lemma \ref{150626_5}}
It is easy to see that the map $\mca{L}^a_k \to \bL^a_k$ is smooth, 
thus it is enough to show that the map induces an isomorphism on $H^\dR_*$. 
The proof consists of three steps. 

\textbf{Step 1.} 
We identify $\bL_k$ with the set consisting of tuples $(\gamma, t_1, \ldots, t_k, T)$ (see Remark \ref{150701_1}). 
Setting $p: [0,1] \to \R/\Z$ by $p(\theta):=[\theta]$, 
the  map $\mca{L}^a_k \to \bL^a_k$ is given by 
$(\gamma, t_1, \ldots, t_k) \mapsto (\gamma \circ p, t_1, \ldots, t_k, 1)$. 
The image of this map is contained in 
\[
\bL^a_{k, T>0}:=\{(\gamma, t_1, \ldots, t_k, T) \in \bL^a_k \mid T>0\}.
\]
Now, $\mca{L}^a_k \to \bL^a_{k, T>0}$ induces an isomorphism on $H^\dR_*$, since 
\[
\bL^a_{k, T>0} \to \mca{L}^a_k; \quad  (\gamma, t_1, \ldots, t_k, T) \mapsto (\gamma_T, t_1/T, \ldots, t_k/T)
\]
is its smooth homotopy inverse, where
$\gamma_T(\theta):= \gamma(T\theta)$. 
Therefore, it is enough to show that the inclusion map 
$\bL^a_{k, T>0} \to \bL^a_k$ gives an isomorphism on $H^\dR_*$. 

\textbf{Step 2.} 
Let $\E(\gamma,T):= \int_0^T |\dot{\gamma}|^2$. For any $E>0$, let us set
\[
\bL^{a,E}_k:= \{(\gamma,t_1,\ldots, t_k,T) \in \bL^a_k  \mid \E(\gamma,T) <E\}, \qquad
\bL^{a,E}_{k, T>0}:= \bL^a_{k, T>0} \cap \bL^{a,E}_k. 
\]
Since $\E$ is smooth as a function on $\bL^a_k$,
we obtain isomorphisms
$H^\dR_*(\bL^a_k) \cong \vlim_{E \to \infty} H^\dR_*(\bL^{a,E}_k)$ and 
$H^\dR_*(\bL^a_{k, T>0}) \cong \vlim_{E \to \infty} H^\dR_*(\bL^{a,E}_{k, T>0})$. 
Thus, 
it is enough to show that the inclusion map 
$\bL^{a,E}_{k, T>0} \to \bL^{a,E}_k$ induces an isomorphism on $H^\dR_*$ for every $E>0$. 

\textbf{Step 3.} 
For any $E, \delta>0$, let us set 
\begin{align*}
\bL^{a,E}_{k, \delta}&:= \{(\gamma,t_1,\ldots, t_k, T) \in \bL^{a,E}_k \mid \text{$\gamma$ is a constant loop if $T<\delta$} \},  \\
\bL^{a,E}_{k, T>0, \delta}&:= \bL^{a,E}_{k, T>0} \cap \bL^{a,E}_{k, \delta}. 
\end{align*}
Consider the following commutative diagram, where all maps are inclusion maps:
\[
\xymatrix{
\bL^{a,E}_{k, T>0,\delta} \ar[r]^{j_3} \ar[d]_{j_1}& \bL^{a,E}_{k, \delta} \ar[d]^{j_2} \\
\bL^{a,E}_{k, T>0} \ar[r]_{j_4} & \bL^{a,E}_k. 
}
\]
We want to show that $j_4$ induces an isomorphism on $H^\dR_*$. 
This follows from the following assertions: 
\begin{itemize}
\item 
There exists $\delta(E)>0$, depending only on $E$, such that 
if $\delta < \delta(E)$ then 
$j_1$ and $j_2$ induce isomorphisms on $H^\dR_*$. 
This is because one can define homotopy inverses of $j_1$ and $j_2$ 
in the following way. 

For any $(\gamma, T) \in \bar{\L}_0$ with $\len(\gamma)$ smaller than the injectivity radius of $M$, 
and $s \in [0,1]$, we define $(\gamma_s, T) \in \bar{\L}_0$ by 
$\gamma_s(\theta):= F_s(\gamma(0), \gamma(\theta))$, 
where the map $F_s$ was defined right before Lemma \ref{150625_2}. 
If $\len (\gamma)$ is smaller than a positive constant which is determined by the curvature of $M$, 
$\len(\gamma_s) \le \len(\gamma)$ and 
$\E(\gamma_s) \le \E(\gamma)$ for any $s \in [0,1]$. 
Then, for sufficiently small $T$, one obtains 
$(\gamma, T) \in \bar{\L}^{a,E}_0 \implies (\gamma_s, T) \in \bar{\L}^{a, E}_0$ for any $s \in [0,1]$, 
since $\len(\gamma) \le (ET)^{1/2}$.

Now let $\delta$ be a sufficiently small positive number, and take 
$\rho \in C^\infty(\R_{\ge 0}, [0,1])$ such that $\rho(T)=0$ if $T \in [0,\delta]$ and $\rho(T)=1$ if $T \ge 2\delta$. 
Let us define $j_2^\vee: \bar{\L}^{a,E}_k \to \bar{\L}^{a,E}_{k,\delta}$ by 
\[ 
j_2^\vee (\gamma, t_1, \ldots, t_k, T) = 
\begin{cases} 
(\gamma, t_1, \ldots, t_k, T) &(T \ge 2\delta) \\
(\gamma_{\rho(T)}, t_1, \ldots, t_k, T) &(T < 2\delta). 
\end{cases} 
\]
Then $j_2^\vee$ is a homotopy inverse of $j_2$. 
A homotopy inverse of $j_1$ is defined as the restriction of $j_2^\vee$ to 
$\bar{\L}^{a,E}_{k, T>0}$. 

\item For any $\delta>0$, $j_3$ induces 
an isomorphism on $H^\dR_*$, since its homotopy inverse is given by 
$(\gamma, t_1, \ldots, t_k, T) \mapsto (\gamma_T, t_1, \ldots, t_k, \nu(T))$, such that 
\begin{itemize}
\item $\nu(T) \ge T$ for any $T \ge 0$, $\nu(T)=T$ for any $T \ge \delta/2$, and $\nu(0)>0$. 
\item $\gamma_T$ is defined as $\gamma_T:= \begin{cases} \gamma &(T \ge \delta) \\ \text{constant loop at $\gamma(0)$} &(T < \delta). \end{cases}$
\end{itemize} 
\end{itemize} 
This completes the proof of Lemma \ref{150626_5}.  \qed

\section{Proof of Theorem \ref{160408_1}}

In this section we complete the proof of Theorem \ref{160408_1}. 
Along the way we also confirm Proposition \ref{160420_2}. 
Section 8.1 is devoted to some algebraic preliminaries. 
In Section 8.2, we define an isomorphism $\H_*(\L^a M) \cong H_*(F^a \widetilde{\mca{O}_M})$ for every $a \in (0, \infty]$
as we stated in Proposition \ref{160420_2}. 
When $a=\infty$, the isomorphism $\H_*(\L M) \cong \H_*(\widetilde{\mca{O}_M})$ is the isomorphism $\Phi$ in Theorem \ref{160408_1} (ii). 

In Section 8.3, we prove Theorem \ref{160408_1} (iii). 
Namely, we define a morphism $\mca{O}_M \to \endo(\mca{A}_M)$ of dg operads which preserves multiplications, 
and check that the induced map on homology $H_*(\widetilde{\mca{O}_M}) \to H^*(\mca{A}_M, \mca{A}_M)$ 
coincides with the map (\ref{141218_01}): $\H_*(\mca{L}M) \to H^*(\mca{A}_M, \mca{A}_M)$ 
which is defined by iterated integrals. 

In Section 8.4, we prove Theorem \ref{160408_1} (iv). 
Namely, we define a chain map $\iota_M: (\mca{A}_M)_* \to C^{\mca{L}M}_*$ 
and check the equation (\ref{150801_2}) and the commutativity of the diagram (\ref{150801_3}).

Sections 8.5--8.8 are devoted to the proof that 
the isomorphism $\Phi: \H_*(\L M) \cong H_*(\widetilde{\mca{O}_M})$ preserves the BV algebra structures. 
In Section 8.5 we prove that $\Phi$ preserves the rotation operator $\Delta$. 
In Sections 8.6--8.8, we prove that $\Phi$ preserves the product $\bullet$. 

\subsection{Algebraic preliminaries} 

Let $C=(C(k))_{k \ge 0}$ be a double complex with anti-chain maps $\delta_k: C(k-1)_* \to C(k)_* \,(\forall k \ge 1)$. 
In Section 2.5.1, we defined the total complex $(\tilde{C}, \tilde{\partial})$ by 
\[
\tilde{C}_*:= \prod_{k=0}^\infty C(k)_{*+k}, \qquad 
(\tilde{\partial} x)_k := \begin{cases} \partial x_0 &(k=0) \\ \partial x_k + \delta_k (x_{k-1}) &(k \ge 1). \end{cases} 
\]

\begin{lem}\label{150629_1} 
Let $C=(C(k))_{k \ge 0}$ be a double complex. If the sequence 
\[ 
\xymatrix{
0 \ar[r] & H_q(C(0)) \ar[r]_-{H_q(\delta_1)} & H_q(C(1)) \ar[r]_-{H_q(\delta_2)} & H_q(C(2)) \ar[r]_-{H_q(\delta_3)} & \cdots
}
\]
is exact for every $q \in \Z$, then the total complex $\tilde{C}$ is acyclic.  
\end{lem}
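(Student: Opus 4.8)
The plan is to show directly that every cycle $x=(x_k)_{k\ge 0}\in\tilde C_n$ is a boundary, by building a primitive $y=(y_k)_{k\ge 0}\in\tilde C_{n+1}$ one coordinate at a time. Writing out $\tilde\partial x=0$ gives $\partial x_0=0$ and $\partial x_k+\delta_k(x_{k-1})=0$ for $k\ge 1$, while a primitive must satisfy $\partial y_0=x_0$ and $\partial y_k+\delta_k(y_{k-1})=x_k$ for $k\ge 1$. The construction will be an inductive zig-zag in which, while producing $y_{k+1}$, one first adjusts $y_k$ by a cycle and then defines $y_{k+1}$. Since $\tilde C$ is a \emph{product} and not a direct sum, the essential point to monitor is that these adjustments stabilize in each coordinate, so that the resulting sequence genuinely lies in $\prod_k C(k)_{*+k}$.

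For the base case, $\partial x_0=0$, and the degree-$1$ equation $\delta_1(x_0)=-\partial x_1$ shows $H_n(\delta_1)[x_0]=0$; since the hypothesis makes $H_n(\delta_1)$ injective, $[x_0]=0$, so we may choose $y_0\in C(0)_{n+1}$ with $\partial y_0=x_0$. For the inductive step, assume $y_0,\dots,y_k$ have been found with $\partial y_0=x_0$ and $\partial y_j+\delta_j(y_{j-1})=x_j$ for $1\le j\le k$, and set $z:=x_{k+1}-\delta_{k+1}(y_k)\in C(k+1)_n$. Using $\partial y_k=x_k-\delta_k(y_{k-1})$, the identity $\delta_{k+1}\delta_k=0$, and $\partial x_{k+1}+\delta_{k+1}(x_k)=0$ (the $\delta$'s being anti-chain maps, so $\partial\delta_{k+1}=-\delta_{k+1}\partial$), one computes $\partial z=0$, so $z$ is a cycle; moreover $\delta_{k+2}(z)=\delta_{k+2}(x_{k+1})=-\partial x_{k+2}$, hence $H_n(\delta_{k+2})[z]=0$. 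By exactness of the hypothesized sequence at $H_n(C(k+1))$, there is a cycle $c\in C(k)_n$ with $[z]=H_n(\delta_{k+1})[c]$, i.e.\ $z-\delta_{k+1}(c)=\partial w$ for some $w\in C(k+1)_{n+1}$. Replacing $y_k$ by $y_k+c$ leaves the level-$k$ equation and all lower ones intact because $c$ is a cycle, and then $x_{k+1}-\delta_{k+1}(y_k+c)=\partial w$, so $y_{k+1}:=w$ satisfies $\partial y_{k+1}+\delta_{k+1}(y_k+c)=x_{k+1}$, completing the step.

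Finally, each coordinate $y_k$ is created at stage $k-1$ (or, for $k=0$, in the base case) and is modified exactly once more, at stage $k$, when $y_{k+1}$ is built; thereafter it is untouched. Hence $y=(y_k)_{k\ge 0}$ is a well-defined element of $\tilde C_{n+1}$ with $\tilde\partial y=x$, proving $H_n(\tilde C)=0$ for all $n$. The main obstacle is precisely this bookkeeping: a more automatic argument via the filtration of $\tilde C$ by $k$ runs into the usual convergence subtleties for products of chain complexes, which the explicit zig-zag sidesteps by making termination in each coordinate manifest.
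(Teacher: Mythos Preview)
Your proof is correct and takes a genuinely different route from the paper. The paper proceeds via the spectral sequence of the complete decreasing filtration $F_l\tilde C_*:=\prod_{k\ge l}C(k)_{*+k}$: the hypothesis forces the $E^2$-page to vanish, and Weibel's convergence theorem for complete filtrations (Theorem~5.5.10(2) in \cite{Weibel_94}) then gives $H_*(\tilde C)=0$. Your argument is an elementary zig-zag construction, explicitly building the primitive $(y_k)$ and checking by hand that each coordinate is modified only finitely often. The paper's approach is short and conceptual but invokes nontrivial convergence machinery---precisely the ``subtleties for products'' you mention in your final sentence; your approach is self-contained and makes that convergence issue transparent, at the cost of more bookkeeping. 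One cosmetic slip: since $x_k\in C(k)_{n+k}$, your element $z$ actually lives in $C(k+1)_{n+k+1}$ rather than $C(k+1)_n$, and the exactness you invoke is at $H_{n+k+1}(C(k+1))$; this is harmless because the hypothesis is stated for every $q$.
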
 
\begin{proof} 
For any $l \ge 0$, let $F_l\tilde{C}_*:= \prod_{k \ge l} C(k)_{*+k}$. 
Then, $(F_l\tilde{C})_{l \ge 0}$ is a decreasing filtration on $\tilde{C}$ which is complete, i.e. 
$\tilde{C} \cong \plim_{l \to \infty} \tilde{C}/F_l\tilde{C}$. 
Let us consider the spectral sequence of this filtered complex. 
Then, the assumption implies that all $E^2$-terms vanish. 
Now, the convergence theorem 5.5.10 (2) in \cite{Weibel_94} pp.139 shows $H_*(\tilde{C})=0$. 
\end{proof} 

\begin{lem}\label{150627_5} 
Let $\ph: C \to D$ be a morphism of double complexes. 
Suppose that $\ph(k): C(k)_* \to D(k)_*$ is a quasi-isomorphism for every $k \ge 0$. 
Then, the chain map $\tilde{\ph}: \tilde{C} \to \tilde{D}$ is a quasi-isomorphism.  
\end{lem}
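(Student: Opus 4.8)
The plan is to deduce this from Lemma \ref{150629_1} by applying it to the mapping cone of $\ph$, interpreted as a double complex. First I would form, for each $k \ge 0$, the mapping cone $\mathrm{Cone}(\ph(k))_*$ of the chain map $\ph(k): C(k)_* \to D(k)_*$, i.e. $\mathrm{Cone}(\ph(k))_* = C(k)_{*-1} \oplus D(k)_*$ with the usual cone differential. The vertical maps $\delta^C_k$ and $\delta^D_k$, together with the compatibility $\delta^D_k \circ \ph(k-1) = \ph(k) \circ \delta^C_k$, assemble into anti-chain maps $\mathrm{Cone}(\ph(k-1))_* \to \mathrm{Cone}(\ph(k))_*$ that square to zero, so we obtain a double complex $E$ with $E(k)_* = \mathrm{Cone}(\ph(k))_*$. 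One checks directly that the total complex $\tilde E$ is (up to a degree shift) the mapping cone of the chain map $\tilde\ph: \tilde C \to \tilde D$; here one must be careful with the sign conventions in the definitions of $\tilde\partial$ and of the cone differential, but this is routine.

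Next I would observe that since $\ph(k)$ is a quasi-isomorphism for every $k$, the chain complex $\mathrm{Cone}(\ph(k))_*$ is acyclic for every $k$. Hence the complexes $H_q(E(k))$ all vanish, and in particular the sequence
\[
\xymatrix{
0 \ar[r] & H_q(E(0)) \ar[r] & H_q(E(1)) \ar[r] & H_q(E(2)) \ar[r] & \cdots
}
\]
is exact for every $q \in \Z$ (it is the zero sequence). By Lemma \ref{150629_1}, the total complex $\tilde E$ is acyclic. Since $\tilde E$ is the mapping cone of $\tilde\ph$ (up to a shift), acyclicity of $\tilde E$ is equivalent to $\tilde\ph: \tilde C \to \tilde D$ being a quasi-isomorphism, which is the desired conclusion.

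The main obstacle I anticipate is purely bookkeeping rather than conceptual: verifying that the vertical maps induced on the cones genuinely give a double complex with the sign conventions used here (the $\delta_k$ are required to be \emph{anti}-chain maps in this paper's definition), and that the total complex of this double complex of cones really is the cone of $\tilde\ph$ with compatible signs. Once the identification $\tilde E \cong \mathrm{Cone}(\tilde\ph)[\,\cdot\,]$ is pinned down, the rest is immediate from Lemma \ref{150629_1}. An alternative that avoids cones entirely would be to run the same filtration-and-spectral-sequence argument as in Lemma \ref{150629_1} directly on the map $\tilde\ph$, comparing the two filtered complexes $\tilde C$ and $\tilde D$; since $\ph(k)$ induces isomorphisms on every $E^1$-page (these are just $H_*(C(k)) \to H_*(D(k))$), the comparison theorem for spectral sequences of complete filtrations (again \cite{Weibel_94}) gives that $\tilde\ph$ is a quasi-isomorphism. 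I would present whichever of these is shorter given the conventions already fixed; the cone approach is cleanest since it reduces literally to the statement of Lemma \ref{150629_1}.
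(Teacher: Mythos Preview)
Your proposal is correct. The mapping-cone approach works: with the sign choice $\delta_k^E(c,d) = (-\delta_k^C c, \delta_k^D d)$ one checks directly that $E$ is a double complex in the paper's sense and that $\tilde E$ is literally $\mathrm{Cone}(\tilde\ph)$, so Lemma \ref{150629_1} applies. Your alternative is exactly the route the paper takes: it filters $\tilde C$ and $\tilde D$ by $F_l = \prod_{k \ge l}(\cdots)$ as in the proof of Lemma \ref{150629_1}, observes that $\ph$ induces isomorphisms on $E^1$-pages, and invokes the comparison theorem 5.5.11 in \cite{Weibel_94}.

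The two approaches differ in what external input they use. The cone argument reduces entirely to the already-proved Lemma \ref{150629_1} (whose hypothesis is trivially satisfied since all $H_q(E(k))$ vanish), so it is self-contained modulo the sign verification you flagged. The paper's direct comparison argument is shorter to write but imports a second spectral-sequence theorem from \cite{Weibel_94}. Given that the paper has just set up the filtration machinery in the proof of Lemma \ref{150629_1}, reusing it via the comparison theorem is the more economical choice in context; your cone route would be preferable in a setting where one wanted to avoid citing the comparison theorem separately.
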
 
\begin{proof}
Let us consider the filtrations $(F_l\tilde{C})_{l \ge 0}$ and $(F_l\tilde{D})_{l \ge 0}$ as in the proof of the previous lemma. 
Then, $\tilde{\ph}: \tilde{C} \to \tilde{D}$ induces a morphism of the spectral sequences, and 
the assumption implies that it induces isomorphisms on $E^1$-terms. 
Then, the comparison theorem 5.5.11 in \cite{Weibel_94} pp. 141 shows that $H_*(\tilde{\ph}): H_*(\tilde{C}) \to H_*(\tilde{D})$ is an isomorphism. 
\end{proof} 

As a consequence of Lemma \ref{150629_1}, we obtain the next lemma. 

\begin{lem}\label{150627_4} 
Let $C=(C(k)_*)_{k \ge 0}$ be a cosimplicial chain complex, and suppose that the chain map $C(k)_* \to C(0)_*$, 
which is induced by the cosimplicial structure, 
is a quasi-isomorphism for every $k \ge 0$. 
Then, the projection map $\pr_0: \tilde{C}_* \to C(0)_*; (x_k)_{k \ge 0} \mapsto x_0$ is a quasi-isomorphism. 
\end{lem}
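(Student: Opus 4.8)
The plan is to deduce Lemma \ref{150627_4} from Lemma \ref{150629_1} by analyzing the ``normalized'' version of the double complex $C$ and comparing it with the trivial double complex concentrated in degree $0$. The point is that, once we know each chain map $C(k)_* \to C(0)_*$ is a quasi-isomorphism, the cohomology of the horizontal complex $(H_q(C(k)))_{k\ge 0}$ with differential $H_q(\delta_k)$ computes the cohomology of the cosimplicial abelian group $q \mapsto H_q(C(\bullet))$, which is constant; and the cohomology of a constant cosimplicial object vanishes in positive degrees and equals the value of the object in degree $0$.

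First I would set up an auxiliary double complex $C'$ defined by $C'(0)_* := C(0)_*$ and $C'(k)_* := 0$ for $k \ge 1$, with all $\delta'_k = 0$. Its total complex $\tilde{C}'$ is just $C(0)_*$. There is an obvious morphism of double complexes $\pi\colon C \to C'$ given by $\pi(0) = \id_{C(0)}$ and $\pi(k) = 0$ for $k \ge 1$; the induced map $\tilde{\pi}\colon \tilde{C} \to \tilde{C}' = C(0)_*$ is precisely the projection $\pr_0$. So it suffices to show $\tilde{\pi}$ is a quasi-isomorphism. Next I would form the kernel: let $K = (K(k))_{k \ge 0}$ be the double complex with $K(0)_* = 0$ and $K(k)_* = C(k)_*$ for $k \ge 1$, with differentials $\delta^K_k = \delta^C_k$ for $k \ge 2$ (and $\delta^K_1 = 0$, since $K(0)=0$). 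Then there is a short exact sequence of double complexes $0 \to K \to C \to C' \to 0$, hence a short exact sequence of total complexes $0 \to \tilde{K} \to \tilde{C} \to C(0)_* \to 0$. Thus $\tilde{\pi}$ is a quasi-isomorphism if and only if $\tilde{K}$ is acyclic.

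To show $\tilde{K}$ is acyclic I would invoke Lemma \ref{150629_1}, which requires that for every $q$ the sequence
\[
0 \to H_q(K(0)) \to H_q(K(1)) \to H_q(K(2)) \to \cdots
\]
be exact. Since $K(0) = 0$, $K(k) = C(k)$ for $k \ge 1$, and $H_q(\delta^K_k) = H_q(\delta^C_k)$ for $k \ge 2$, this amounts to exactness of
\[
0 \to H_q(C(1)) \xrightarrow{H_q(\delta_2)} H_q(C(2)) \xrightarrow{H_q(\delta_3)} H_q(C(3)) \to \cdots .
\]
Here the hypothesis enters: each $C(k)_* \to C(0)_*$ induced by the cosimplicial structure is a quasi-isomorphism, so $q \mapsto H_q(C(\bullet))$ is a cosimplicial vector space all of whose coface maps become isomorphisms after applying the cosimplicial-to-cochain differential pattern; equivalently, it is isomorphic (as a cosimplicial object) to the constant one with value $H_q(C(0))$. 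The normalized cochain complex of a constant cosimplicial vector space is exact in all positive degrees, which (after recalling that the total-complex differential $\delta_k$ is, up to sign, the alternating sum $\sum_i (-1)^i \delta_{k,i}$ of the cofaces) gives exactly the required exactness of the truncated sequence starting at $C(1)$. Applying Lemma \ref{150629_1} to $K$ then yields $H_*(\tilde{K}) = 0$, and the long exact sequence in homology of $0 \to \tilde{K} \to \tilde{C} \to C(0)_* \to 0$ shows $\tilde{\pi} = \pr_0$ is a quasi-isomorphism.

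The main obstacle is the bookkeeping in the third paragraph: making precise the claim that ``$q \mapsto H_q(C(\bullet))$ is a constant cosimplicial object'' and that its associated cochain complex (with the sign conventions of the double complex $C$) is exact in positive degrees. One has to check that the cosimplicial identities pass to homology so that $(H_q(C(k)))_k$ is genuinely cosimplicial, that the extra degeneracy argument (or the standard contracting homotopy for a constant cosimplicial object, using the codegeneracy $\sigma^0$) produces a contraction of the normalized complex in degrees $\ge 1$, and that this matches the differential $\delta_k$ appearing in $\tilde{C}$ up to the overall sign $(-1)^{|x|+k-1}$, which does not affect exactness. Once this is arranged, everything else is a formal consequence of Lemma \ref{150629_1} and the long exact homology sequence.
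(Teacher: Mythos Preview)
Your proposal is correct and follows essentially the same approach as the paper: both reduce to showing $\ker(\pr_0) = \tilde{K}$ is acyclic by applying Lemma \ref{150629_1}, and both deduce the required exactness of $0 \to H_q(C(1)) \to H_q(C(2)) \to \cdots$ from the hypothesis that $H_q(C(\bullet))$ is a constant cosimplicial object. The paper states this last step in one line, while you spell out the contracting-homotopy argument; your auxiliary double complex $C'$ is just a formal repackaging of the paper's observation that $\pr_0$ is surjective.
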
 
\begin{proof} 
Since $\pr_0$ is surjective, it is enough to show that 
$\ker(\pr_0) = \prod_{k=1}^\infty C(k)_{*+k}$ is acyclic. 
The assumption shows that, for any $q \in \Z$ the sequence 
\[
\xymatrix{
0 \ar[r] & H_q(C(1)) \ar[r]_-{H_q(\delta_2)} & H_q(C(2)) \ar[r]_-{H_q(\delta_3)} & H_q(C(3)) \ar[r]_-{H_q(\delta_4)} & \cdots
}
\]
is exact, since we can identify this sequence with 
\[
\xymatrix{
0 \ar[r] & H_q(C(0)) \ar[r]_-{\times 1} & H_q(C(0)) \ar[r]_-{0} & H_q(C(0)) \ar[r]_-{\times 1} & \cdots. 
}
\]
Thus Lemma \ref{150629_1} shows that $\ker(\pr_0)$ is acyclic. 
\end{proof} 

\subsection{The isomorphism $\Phi: H_*(\widetilde{\mca{O}_M}) \cong \H_*(\mca{L}M)$}

Let $M$ be a closed, oriented Riemannian manifold of dimension $d$. 
The aim of this subsection is to define an isomorphism 
$H_*(F^a \widetilde{\mca{O}_M}) \cong \H_*(\mca{L}^aM)$ for every $a \in (0, \infty]$, 
and check that these isomorphisms are compatible with the length filtration 
(i.e. the diagram in Proposition \ref{160420_2} (iii) commutes). 
When $a=\infty$ we obtain an isomorphism 
$H_*(\widetilde{\mca{O}_M}) \cong \H_*(\mca{L}M)$, 
and this is the isomorphism $\Phi$ in Theorem \ref{160408_1}. 
The definition of the isomorphism $H_*(F^a \widetilde{\mca{O}_M}) \cong \H_*(\L^a M)$ consists of three steps. 

\textbf{Step 1.} 
Let us abbreviate $\mca{L}^aM$ by $\mca{L}^a$, 
and recall the zig-zag (\ref{150628_2}) of quasi-isomorphisms from Section 7.2: 
\[
\xymatrix{
C^\dR_{*+d}(\bL^a_{k,\reg}) \ar[r] & C^\dR_{*+d}(\bL^a_k)  & \ar[l] C^\dR_{*+d}(\mca{L}^a_k)  \ar[r] & C^\dR_{*+d}(\mca{L}^a \times \Delta^k). 
}
\]
It induces a zig-zag of quasi-isomorphisms of cocylic chain complexes. 
Let $F^aC^{\mca{L}\Delta}_*$ denote the total complex of the cosimplicial chain complex $(C^\dR_{*+d}(\mca{L}^a \times \Delta^k))_{k \ge 0}$. 
Then, Lemma \ref{150627_5} implies the isomorphism 
$H_*(F^a \widetilde{\mca{O}_M}) \cong H_*(F^aC^{\mca{L}\Delta})$. 

\textbf{Step 2.} 
For every $k \ge 0$ the projection 
$\mca{L}^a \times \Delta^k \to \mca{L}^a$ induces a quasi-isomorphism on $C^\dR_*$. 
Then, Lemma \ref{150627_4} shows that $\pr_0: F^a C^{\mca{L}\Delta}_* \to C^\dR_{*+d}(\mca{L}^a)$ is a quasi-isomorphism. 
Hence we obtain an isomorphism 
$H_*(F^aC^{\mca{L}\Delta}) \cong H^\dR_{*+d}(\mca{L}^a)$. 
We need Lemma \ref{150629_3} below for later use. 

\begin{lem}\label{150629_3} 
Let $u=(u_k)_{k \ge 0}$ be as in Lemma \ref{150624_10} (i). Then, 
\[
E_u: C^\dR_{*+d}(\mca{L}^a) \to F^a C^{\mca{L}\Delta}_*; \quad x \mapsto ((-1)^{k(d+1)} x \times u_k)_{k \ge 0} 
\]
is a chain map such that $\pr_0 \circ E_u = \id_{C^\dR(\mca{L}^a)}$. 
\end{lem}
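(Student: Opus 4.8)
The plan is to verify the two assertions separately; the identity $\pr_0\circ E_u=\id$ is immediate, and the real content is the chain-map property, which comes down to a sign computation.

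First, $E_u$ is a well-defined degree-preserving linear map: if $x\in C^\dR_{n+d}(\mca{L}^a M)$ then $x\times u_k\in C^\dR_{n+d+k}(\mca{L}^a M\times\Delta^k)$, which is exactly the degree-$n$ part of the $k$-th factor of $F^aC^{\mca{L}\Delta}_*=\prod_{k\ge 0}C^\dR_{*+d+k}(\mca{L}^a M\times\Delta^k)$. For the $k=0$ component, $E_u(x)_0=x\times u_0$, and under the canonical identification $\mca{L}^a M\times\Delta^0=\mca{L}^a M$ this equals $x$: indeed $u_0\in C^\dR_0(\Delta^0)$ is the class with $\ep(u_0)=1$, and the twisting sign in the definition of the de Rham cross product is $(-1)^{0\cdot\dim U}=1$. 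Hence $\pr_0\circ E_u=\id_{C^\dR(\mca{L}^a M)}$.

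For the chain-map property I would compute $(\tilde{\partial}E_u(x))_k=\partial(E_u(x)_k)+\delta_k(E_u(x)_{k-1})$ and compare it with $E_u(\partial x)_k=(-1)^{c_k}(\partial x)\times u_k$, writing $c_k:=k(2d+k+1)/2$. Since the de Rham cross product is a chain map (Section 2.3), the Leibniz rule gives $\partial(x\times u_k)=(\partial x)\times u_k+(-1)^{|x|}x\times\partial u_k$ with $|x|$ the de Rham degree of $x$; so $\partial(E_u(x)_k)$ contributes the term $(-1)^{c_k}(\partial x)\times u_k$, which already matches $E_u(\partial x)_k$, plus an error term $(-1)^{c_k+|x|}\,x\times\partial u_k$. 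Now Lemma \ref{150624_10} (i) gives $\partial u_k=\sum_{j=0}^{k}(-1)^j(d_{k,j})_*u_{k-1}$, and functoriality of the cross product rewrites $x\times(d_{k,j})_*u_{k-1}$ as $(\id_{\mca{L}^a M}\times d_{k,j})_*(x\times u_{k-1})$, which is the $j$-th coface of the cosimplicial chain complex $(C^\dR_{*+d}(\mca{L}^a M\times\Delta^k))_{k\ge 0}$ applied to $x\times u_{k-1}$. On the other side, $\delta_k(E_u(x)_{k-1})=(-1)^{c_{k-1}}\delta_k(x\times u_{k-1})$, and by the definition of the total-complex operator $\delta_k$ (Section 6.1) this is a scalar times the same alternating sum $\sum_{i=0}^{k}(-1)^i(\id_{\mca{L}^a M}\times d_{k,i})_*(x\times u_{k-1})$. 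It then remains to check that the two scalars in front of this sum are negatives of one another, so that the error terms cancel; for $k=0$ the $u_{k-1}$-term is absent and the required identity reduces to $\partial(x\times u_0)=(\partial x)\times u_0$, valid because $\partial u_0=0$.

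The only real obstacle is this last sign computation, which is exactly the identity the normalization $c_k=k(2d+k+1)/2$ is designed to produce (using $c_k-c_{k-1}=d+k$ together with the degree shift built into the $k$-th slot of $C^{\mca{L}\Delta}_*$): one must keep straight, at once, the de Rham degree of $x$ against its degree in the shifted complex $C^\dR_{*+d}$, the twisting sign in the definition of the cross product, the anti-chain-map sign $(-1)^{|x|+k-1}$ that turns the cosimplicial structure into a double complex, and the normalizing sign $(-1)^{c_k}$. Once the conventions of Sections 2.3, 2.7 and 6.1 are unwound this is the short computation the statement refers to, and no new geometric input is needed beyond Lemma \ref{150624_10} (i) and the functoriality and Leibniz properties of the cross product.
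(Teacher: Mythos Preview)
Your proposal is correct and is exactly what the paper intends: the paper offers no proof beyond the sentence ``The next lemma is confirmed by short computations,'' and your outline---$\pr_0\circ E_u=\id$ from $c_0=0$ and $\ep(u_0)=1$, then the chain-map identity from the Leibniz rule for the cross product, the relation $\partial u_k=\sum_j(-1)^j(d_{k,j})_*u_{k-1}$ of Lemma~\ref{150624_10}(i), and the functoriality $x\times(d_{k,j})_*u_{k-1}=(\id\times d_{k,j})_*(x\times u_{k-1})$---is precisely that short computation. You have also correctly identified that the only nontrivial point is matching the sign $(-1)^{c_k+|x|_{\dR}}$ coming from the Leibniz rule against the sign $(-1)^{c_{k-1}}\cdot(-1)^{|x\times u_{k-1}|+k-1}$ built into the double-complex operator $\delta_k$ of Section~6.1, using $c_k-c_{k-1}=d+k$.
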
 
\begin{proof}
The claim that $E_u$ is a chain map follows from direct computations, however one should notice the following signs, which might be confusing: 
\[
\partial (x \times u_k )= \partial x \times u_k + (-1)^{|x|+d} x \times \partial u_k, \qquad 
\delta_k(x \times u_{k-1}) = (-1)^{|x|} x \times \biggl( \sum_{i=0}^k (-1)^i (d_{k,i})_* (u_{k-1}) \biggr). 
\] 
The latter claim $\pr_0 \circ E_u = \id_{C^\dR(\mca{L}^a)}$ is clear from the definition of $E_u$. 
\end{proof} 

\textbf{Step 3.} 
Finally, we define an isomorphism $H_*(F^a \widetilde{\mca{O}_M}) \cong \H_*(\L^a)$ by 
\[ 
H_*(F^a \widetilde{\mca{O}_M}) \cong H_*(F^a C^{\L \Delta}) \cong H^\dR_{*+d}(\L^a) \cong \H_*(\L^a).
\]
The first isomorphism is defined in Step 1, 
the second isomorphism is defined in Step 2, 
and the last isomorphism is defined in Theorem \ref{150219_1}. 
These three isomorphisms are compatible with the length filtrations, 
hence the diagram in Proposition \ref{160420_2} (iii) commutes. 

\subsection{A morphism $\mca{O}_M \to \endo(\mca{A}_M)$}

We define a morphism $\mca{O}_M \to \endo(\mca{A}_M)$ of dg operads which preserves multiplications, 
and check that the induced map on homology $H_*(\widetilde{\mca{O}_M}) \to H^*(\mca{A}_M, \mca{A}_M)$ 
coincides with the map (\ref{141218_01}) $: \H_*(\L M) \to H^*(\mca{A}_M, \mca{A}_M)$ via the isomorphism $\Phi: H_*(\widetilde{\mca{O}_M}) \cong \H_*(\L M)$. 
Thus we confirm Theorem \ref{160408_1} (iii). 

For any $k \ge 0$, $(U,\ph) \in \mca{P}(\bL_{k,\reg})$ and $j=0, \ldots, k$, 
let $\ph_j:= e_j \circ \ph$. 
By definition, the maps $\ph_j$ are submersions for all $j$. 
We define a chain map $J_k: C^\dR_{*+d}(\bL_{k, \reg}) \to \Hom_*(\mca{A}_M^{\otimes k}, \mca{A}_M)$ by 
\[
J_k([(U,\ph,\omega)])(\eta_1 \otimes \cdots \otimes \eta_k):= (-1)^{(\dim U-d)(|\eta_1| + \cdots + |\eta_k|)} (\ph_0)_!(\omega \wedge \ph_1^*\eta_1 \wedge \cdots \wedge \ph_k^*\eta_k).
\]
Recall that $\mca{O}_M(k)_* = C^\dR_{*+d}(\bL_{k,\reg})$ and
$\endo(\mca{A}_M)(k)_* = \Hom_*(\mca{A}_M^{\otimes k}, \mca{A}_M)$ for every $k \ge 0$. 

\begin{lem}\label{160425_1} 
$(J_k)_{k \ge 0}: \mca{O}_M\to \endo(\mca{A}_M)$ is a morphism of
nonsymmetric dg operads preserving multiplications and units.
\end{lem}
\begin{proof} 
The only nontrivial part is to verify (with signs) that $(J_k)_{k \ge 0}$ preserves operad compositions.  
To verify this, let us recall that we defined (in Section 7.2) 
the map $i_0: M \to \bar{\mca{L}}_0; \, p \mapsto c_p$. 
Let us take $M' \in \mca{U}$ and 
an orientation-preserving diffeomorphism $\ph: M' \to M$, 
and define a chain map $\iota: \mca{A}_M \to \mca{O}_M(0)$ by 
$\iota(\omega):= [(M', i_0 \circ \ph, \ph^*\omega)]$. 
We also define a chain map 
$\ep: \mca{O}_M(0) \to \mca{A}_M$ by 
$\ep([(U, \ph, \omega)]):= (\ph_0)_! \omega$.
Then, it is easy to check that 
\[ 
J_k(x) (\eta_1 \otimes \cdots \otimes \eta_k) := \ep( (\cdots ((x \circ_1 \iota(\eta_1)) \circ_1 \iota(\eta_2)) \cdots ) \circ_1 \iota(\eta_k)). 
\] 
Now the fact that $(J_k)_{k \ge 0}$ preserves operad compositions is verified as follows: 
let $x \in \mca{O}_M(k)$, $y \in \mca{O}_M(l)$, $\eta_1, \ldots, \eta_{k+l-1} \in \mca{A}_M$, then 
(some parentheses are omitted): 
\begin{align*} 
&J_{k+l-1} (x \circ_i y) (\eta_1 \otimes \cdots \otimes \eta_{k+l-1}) \\
&\quad = \ep ( (x \circ_i y) \circ_1 \iota(\eta_1) \cdots \circ_1 \iota(\eta_{k+l-1})) \\
&\quad = (-1)^{|y|(|\eta_1| + \cdots + |\eta_{i-1}|)} \ep (x \circ_1 \iota(\eta_1) \cdots \circ_1 \iota(\eta_{i-1}) \circ_1 (y \circ_1 \iota (\eta_i) \cdots \circ_1 \iota(\eta_{i+l-1})) \circ_1 \iota(\eta_{i+l}) \cdots \circ_1 \iota(\eta_{k+l-1})) \\
&\quad = (-1)^{|y|(|\eta_1| + \cdots + |\eta_{i-1}|)} \ep (x \circ_1 \iota(\eta_1) \cdots \circ_1 \iota(\eta_{i-1}) \circ_1 \iota \circ \ep (y \circ_1 \iota (\eta_i) \cdots \circ_1 \iota(\eta_{i+l-1})) \circ_1 \iota(\eta_{i+l}) \cdots \circ_1 \iota(\eta_{k+l-1})) \\
&\quad = (J_k(x) \circ_i J_l(y))(\eta_1 \otimes \cdots \otimes \eta_{k+l-1}). 
\end{align*} 
The third equality follows from the following observation: 
for any $a \in \mca{O}_M(1)$ and $b \in \mca{O}_M(0)$, 
there holds $\ep(a \circ_1 (\iota \circ \ep(b))) = \ep(a \circ_1 b)$ 
although $a \circ_1 (\iota \circ \ep(b)) \ne a \circ_1 b$ in general. 
This is because, 
although $a \circ_1 (\iota \circ \ep(b))$ and $a \circ_1 b$ are (in general) represented by different (sets of) loops, their origins coincide. 
Now we obtain the third equality by applying this observation for 
\[ 
a:= (x \circ_1 \iota(\eta_1)  \cdots \circ_1 \iota(\eta_{i-1})) \circ_2 \iota(\eta_{i+l}) \cdots \circ_2 \iota(\eta_{k+l-1}), \qquad
b:= y \circ_1 \iota(\eta_i)  \cdots \circ_1 \iota(\eta_{i+l-1}). 
\] 
\end{proof} 

By Lemma \ref{160425_1}, 
the sequence of chain maps $(J_k)_{k \ge 0}$ induces a chain map 
$J: \widetilde{\mca{O}_M}_* \to C^*(\mca{A}_M, \mca{A}_M)$. 
Now let us prove Lemma \ref{150725_1} below. 

\begin{lem}\label{150725_1} 
The map 
$H_*(J): H_*(\widetilde{\mca{O}_M}) \to H^*(\mca{A}_M, \mca{A}_M)$ 
corresponds to the map (\ref{141218_01}) $: \H_*(\L M) \to H^*(\mca{A}_M, \mca{A}_M)$ via the isomorphism $\Phi: H_*(\widetilde{\mca{O}_M}) \cong \H_*(\L M)$. 
\end{lem} 
\begin{proof} 
For any $j=0, \ldots, k$, we define $e_j: \mca{L} \times \Delta^k \to M$ by 
\[
e_j(\gamma, t_1, \ldots, t_k):= \begin{cases} \gamma(0) &(j=0) \\ \gamma(t_j) &(1 \le j \le k). \end{cases}
\]
For any $(U, \ph) \in \mca{P}(\mca{L}\times \Delta^k)$, we set $\ph_j:= e_j \circ \ph$. 
Let us define a chain map 
$J'_k: C^\dR_*(\mca{L} \times \Delta^k) \to \Hom(\mca{A}_M^{\otimes k}, \mca{A}_M^\vee[d])$ by 
\[
J'_k [(U,\ph,\omega)](\eta_1 \otimes \cdots \otimes \eta_k)(\eta_0) := 
(-1)^{(\dim U-d)(|\eta_0|+|\eta_1| + \cdots + |\eta_k|)} \int_U \omega \wedge \ph_1^*\eta_1 \wedge \cdots \wedge \ph_k^* \eta_k \wedge \ph_0^*\eta_0.
\]
Then, one can define a chain map $J'=(J'_k)_{k \ge 0}: C^{\mca{L}\Delta}_* \to C^*(\mca{A}_M, \mca{A}_M^\vee[d])$. 
As is obvious from the construction, the following diagram commutes
(the left vertical map is defined in Section 8.2 Step 1): 
\[ 
\xymatrix{ 
H_*(\widetilde{\mca{O}_M}) \ar[r]^-{H_*(J)} \ar[d]_-{\cong} & H^*(\mca{A}_M, \mca{A}_M) \ar[d]^-{\cong} \\
H_*(C^{\mca{L}\Delta}) \ar[r]_-{H_*(J')} & H^*(\mca{A}_M, \mca{A}^\vee_M[d]). 
}
\]
Let us recall the chain map 
$I: C^\sm_{*+d}(\L) \to C^*(\mca{A}_M, \mca{A}_M^\vee[d])$ in (\ref{141222_4}). 
We need to show that 
$H_*(J')$ corresponds to $H_*(I)$ via the isomorphism $H^\sm_{*+d}(\mca{L}) \cong H_*(C^{\mca{L}\Delta})$ 
which is a composition of the isomorphisms $H^\sm_{*+d}(\L) \cong H^\dR_{*+d}(\L)$ (Theorem \ref{150219_1})
and $H^\dR_{*+d}(\L) \cong H_*(C^{\L \Delta})$ (Section 8.2 Step 2). 

Let us take $u=(u_k)_{k \ge 0}$ as in Lemma \ref{150624_10} (i), 
and consider chain maps
\[
\iota^u(\mca{L})_*: C^\sm_*(\mca{L}) \to C^\dR_*(\mca{L}), \qquad 
E_u: C^\dR_{*+d}(\mca{L}) \to C^{\mca{L}\Delta}_*.
\]
The first map is defined right after Lemma \ref{150624_10}, 
and 
the second map is defined in Lemma \ref{150629_3}. 
Then, it is enough to show that the following diagram commutes: 
\[
\xymatrix{
H^\sm_{*+d}(\mca{L}) \ar[r]^-{H_*(\iota^u)}_-{\cong} \ar[rd]_-{H_*(I)} & H^\dR_{*+d}(\mca{L}) \ar[r]^-{H_*(E_u)}_-{\cong} & H_*(C^{\mca{L}\Delta}) \ar[ld]^-{H_*(J')} \\
   & H^*(\mca{A}_M, \mca{A}_M^\vee[d]). &
}
\] 
Let us check the commutativity of this diagram at the chain level. 
Let $\sigma: \Delta^l \to \mca{L}$ be a smooth map, 
and define $\sigma_{k, i}: \Delta^l \times \Delta^k \to M$ as in Section 2.4. 
Then 
$(E_u \circ \iota^u)(\sigma) = (-1)^{k(d+1)} \sigma_*(u_l) \times u_k$. 
On the other hand, 
\[ 
J'_k(\sigma_*(u_l) \times u_k) (\eta_1 \otimes \cdots \otimes \eta_k)(\eta_0) = 
(-1)^{(k+l)(k+l-d)} 
\langle \sigma_{k,1}^* \eta_1 \wedge \cdots \wedge \sigma_{k,k}^* \eta_k \wedge \sigma_{k,0}^* \eta_0, \, u_l \times u_k \rangle
\]
where $\langle \, , \, \rangle$ in the right hand side is defined in Section 4.8. 
Hence we obtain 
\begin{align*} 
&(J'_k \circ E_u \circ \iota^u)(\sigma) (\eta_1 \otimes \cdots \otimes \eta_k) (\eta_0) \\ 
&\quad = (-1)^{k(d+1) + (k+l)(k+l-d) + (k+l)(k+l-1)/2} \int_{\Delta^l \times \Delta^k} \sigma_{k,1}^*\eta_1 \wedge \cdots \wedge \sigma_{k,k}^* \eta_k \wedge \sigma_{k,0}^* \eta_0 \\ 
&\quad = I_k(\sigma) (\eta_1 \otimes \cdots \otimes \eta_k)(\eta_0), 
\end{align*} 
where the first equality follows from Lemma \ref{150629_2}, 
and the second equality follows since the exponent on the second line is equal to 
$l(d+1) + (k+l)(k+l-1)/2$ modulo $2$. 
\end{proof} 

\subsection{Chain map $\iota_M$}

We define a chain map $\iota_M: (\mca{A}_M)_* \to C^{\mca{L}M}_*$ (see Theorem \ref{160408_1} (iv)) 
and check the equation (\ref{150801_2}) and the commutativity of the diagram (\ref{150801_3}). 

We take $M' \in \mca{U}$ and an orientation-preserving diffeomorphism $\ph: M' \to M$. 
Let us recall that we defined the map $i_0: M \to \bL_0; \, p \mapsto c_p$, where $c_p$ is the constant loop at $p$. 
Then let us define $\iota_M$ by 
\[
(\iota_M(\omega))_k := \begin{cases} [(M', i_0 \circ \ph, \ph^*\omega)] &(k=0), \\  0 &(k \ge 1). \end{cases}
\]
It is easy to check that this is well-defined, 
i.e. it does not depend on the choices of $M'$ and $\ph$. 

The equation (\ref{150801_2}) follows from the explicit formulas for $\bullet$ and $\{ \, , \,\}$ 
in Theorem \ref{160420_1}. 
Commutativity of the diagram (\ref{150801_3}) 
follows from the commutativity of the diagram 
\[ 
\xymatrix{ 
H^{-*}_\dR(M)  \ar[r]^-{\cong}  \ar[d]^-{H_*(\iota_M)} & H^\dR_{*+d}(M_\reg)  \ar[r]^-{\cong}\ar[d]^{H_{*+d}(i_0)} & H^\dR_{*+d}(M)  \ar[d]^{H_{*+d}(i_0)} \\
H_*(\widetilde{\mca{O}_M}) \ar[r]^-{\cong}_-{H_*(\pr_0)} & H^\dR_{*+d}(\bar{\L}_{0, \reg})  \ar[r]^-{\cong} & H^\dR_{*+d}(\bar{\L}_0)
}
\]
and the fact that 
isomorphisms $H^\dR_{*+d}(M) \cong \H_*(M)$ and $H^\dR_{*+d}(\bar{\L}_0) \cong \H_*(\mca{L}M)$ intertwines 
$H_{*+d}(i_0)$ (the right vertical map in the above diagram) 
with $\H_*(i_M): \H_*(M) \to \H_*(\mca{L}_M)$. 

\subsection{The rotation operator $\Delta$}

The aim of this subsection is to show that the isomorphism 
$\Phi: H_*(\widetilde{\mca{O}_M}) \cong \H_*(\L M)$ preserves the rotation operator $\Delta$. 

For any cocyclic chain complex 
$C=(C(k)_*)_{k \ge 0}$ and its total complex $\tilde{C}$, 
let us define an  anti-chain map $\Delta: \tilde{C}_* \to \tilde{C}_{*+1}$ by
\[
(\Delta x)_k: =\sum_{i=1}^{k+1} (-1)^{|x| + k(i-1)+1}  \sigma_{k+1-i} \tau^i_{k+1} x_{k+1}.
\]
This is a generalization of the definition of $\Delta$ in Theorem \ref{160420_3} (i). 
In particular, we can define $\Delta$ on $C^{\mca{L}\Delta}_* = \prod_{k \ge 0} C^\dR_{*+d+k}(\mca{L} \times \Delta^k)$
so that the isomorphism $H_*(\widetilde{\mca{O}_M}) \cong H_*(C^{\mca{L}\Delta})$ preserves $\Delta$. 
Since we already proved that the isomorphism  $H^\dR_*(\mca{L}) \cong H_*(\mca{L})$ preserves $\Delta$ (Corollary \ref{150627_2}), 
it is enough to show that the isomorphism $H^\dR_{*+d}(\mca{L}) \cong H_*(C^{\mca{L}\Delta})$ preserves $\Delta$. 

The isomorphism $H^\dR_{*+d}(\mca{L}) \cong H_*(C^{\mca{L}\Delta})$ is induced by 
$\pr_0: C^{\mca{L}\Delta}_* \to C^\dR_{*+d}(\mca{L})$, 
and its inverse is induced by $E_u$ (Lemma \ref{150629_3}), where $u=(u_k)_{k \ge 0}$ is as in Lemma \ref{150624_10} (i). 
Therefore, it is enough to prove
$H_*(\pr_0) \circ \Delta \circ H_*(E_u)=\Delta$ on $H^\dR_{*+d}(\mca{L})$. 
It is easy to check that 
$\pr_0 \circ \Delta \circ E_u(x)= (-1)^{|x|+d} \sigma_0 \tau_1(x \times u_1)$ for any $x \in C^\dR_{*+d}(\mca{L})$. 
The chain maps $\tau_1$ and $\sigma_0$ are induced by 
$\L \times \Delta^1 \to \L \times \Delta^1: (\gamma, t) \mapsto (\gamma^t, 1-t)$ 
and $\L \times \Delta^1 \to \L: (\gamma, t) \mapsto \gamma$, 
respectively
($\gamma^t \in \mca{L}$ is defined by $\gamma^t(\theta):= \gamma(\theta-t)$). 

Let us consider a sequence of maps 
\[
\xymatrix{
\mca{L} \times \Delta^1 \ar[r]^-{\id_{\mca{L}} \times p} & \mca{L} \times S^1 \ar[r] & S^1 \times \mca{L} \ar[r]^-{r}& \mca{L}, 
}
\]
where $p: \Delta^1 \to S^1$ is defined by $p(\theta):=[\theta]$, 
the second map is inversion, and the last map $r$ is rotation; $r(t,\gamma)(\theta):= \gamma(\theta-t)$. 
Therefore $\sigma_0 \tau_1 (x \times u_1) = (-1)^{|x|+d}  r_*(p_*(u_1) \times x)$ for any $x \in C^\dR_{*+d}(\mca{L})$. 
Since $p_*(u_1) \in C^\dR_1(S^1)$ is a cycle which represents $[S^1]$, 
for any cycle $x$ we obtain 
$H_*(\pr_0) \circ \Delta \circ H_*(E_u)([x]) = H_*(r)([S^1] \times [x])$. 
This completes the proof. 

\subsection{The loop product $\bullet$}

We are going to show that the isomorphism $\Phi$ 
preserves the operator $\bullet$. 
In this subsection, we reduce the proof to Lemma \ref{150629_4}, 
which is proved in the next two subsections. 

Let us consider the concatenation map 
\[
c: \bL_0 \fbp{e_0}{e_0} \bL_0 \to \bL_0; \qquad  ((\gamma_0, T_0), (\gamma_1, T_1)) \mapsto (\gamma_0 * \gamma_1, T_0+T_1), 
\]
and define a chain map  
$\bullet_0: C^\dR_{*+d}(\bL_{0,\reg})^{\otimes 2} \to C^\dR_{*+d}(\bL_{0,\reg})$ by 
$a \bullet_0 b:= c_*(a \fbp{e_0}{e_0} b)$. 
Then, $\pr_0: \widetilde{\mca{O}_M}_* \to C^\dR_{*+d}(\bL_{0,\reg}); (x_k)_{k \ge 0} \mapsto x_0$ 
intertwines the operators $\bullet$ and $\bullet_0$: 
\[
(x \bullet y)_0 = (\mu \circ_1 x_0) \circ_1 y_0 = c_*( x_0 \fbp{e_0}{e_0} y_0) = x_0 \bullet_0 y_0.
\]
Therefore, it is enough to prove the next proposition. 

\begin{prop}\label{150627_3}
Let us consider the isomorphism $H^\dR_{*+d}(\bL_{0,\reg}) \cong \H_*(\L)$, 
which is the composition of $H^\dR_{*+d}(\bL_{0,\reg}) \cong H^\dR_{*+d}(\L)$ (this follows from the zig-zag (\ref{150628_2}))
and $H^\dR_{*+d}(\L) \cong \H_*(\L)$ (Theorem \ref{150219_1}). 
This isomorphism intertwines the operator $\bullet_0$ on $H^\dR_{*+d}(\bL_{0,\reg})$ and the loop product $\bullet$ on $\H_*(\mca{L})$. 
\end{prop} 

Let us define $e:\mca{L} \to M$ by $e(\gamma):=\gamma(0)$. 
A key step in the definition of the loop product on $\H_*(\mca{L})$ is to define the fiber product 
\[
H_*(\mca{L})^{\otimes 2} \to H_{*-d}(\mca{L} \fbp{e}{e} \mca{L})
\]
via the Thom isomorphism (see Section 2.3). 
On the other hand, 
let us consider the following differentiable structure on $\mca{L}$, and denote the resulting differentiable space by $\mca{L}_\reg$: 
\[
\mca{P}(\mca{L}_\reg):= \{(U,\ph) \in  \mca{P}(\mca{L}) \mid \text{$e \circ \ph: U \to M$ is a submersion}\}. 
\]
Then, as in Section 4.3, one can define the fiber product 
\[
H^\dR_*(\mca{L}_\reg)^{\otimes 2} \to H^\dR_{*-d}(\mca{L}_\reg \fbp{e}{e} \mca{L}_\reg).
\]

We have isomorphisms $H_*(\mca{L}) \cong H^\dR_*(\mca{L}) \cong H^\dR_*(\mca{L}_\reg)$; 
the first isomorphism is by Theorem \ref{150219_1}, 
and the second isomorphism is obtained by similar arguments as the proof of Lemma \ref{150624_2}. 
We also have isomorphisms 
\[
H_*(\mca{L} \fbp{e}{e} \mca{L}) \cong 
H^\dR_*(\mca{L} \fbp{e}{e} \mca{L})  \cong 
H^\dR_*(\mca{L}_\reg \fbp{e}{e} \mca{L}_\reg)
\]
by similar arguments. Let us state a key technical result:  

\begin{lem}\label{150629_4} 
The following diagram commutes: 
\[
\xymatrix{ 
H^\dR_*(\L_\reg)^{\otimes 2} \ar[r] \ar[d]_{\cong} & H^\dR_{*-d}(\L_\reg \fbp{e}{e} \L_\reg) \ar[d]^{\cong} \\
H_*(\L)^{\otimes 2} \ar[r] & H_{*-d}(\L \fbp{e}{e} \L), 
}
\]
where horizontal maps are fiber products. 
\end{lem}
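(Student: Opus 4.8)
The plan is to reduce the statement to finite dimensions by the approximations of Section 4, and then to identify both fiber products, via Poincar\'{e} duality, with the restriction to $P\times_M Q$ of an exterior product of closed forms. Throughout, ``$X_\reg$'' (for $X=\L$, $\mca{F}_N$, etc., equipped with an evaluation map $e$ to $M$) denotes the differentiable structure whose plots are the maps $\ph$ with $e\circ\ph$ a submersion, as in the definition of $\L_\reg$ in Section 6.6.

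\textbf{Step 1 (finite-dimensional reduction).} The maps $f_N\colon\L\to\mca{F}_N$, the homotopy inverses $g\colon\mca{F}^{a,E}_N\to\L$ of Lemma \ref{150625_1}, and the connecting maps $i_m$ all commute with the evaluation $e=e_0$: indeed $e_0(f_N(\gamma))=\gamma(0)$, geodesic interpolation preserves the value at the marked point, and the homotopy constructed in the proof of Lemma \ref{150625_1} fixes $\gamma(0)$ since $F_s(p,p)=p$. Hence each of these maps is smooth for the $e$-regular differentiable structures, they restrict to smooth maps between the fiber products over $M$, and the homotopies restrict likewise. Combining this with $H^\dR_*(\mca{F}_{N,\reg})\cong H^\dR_*(\mca{F}_N)\cong H_*(\mca{F}_N)$ (argue as for Lemma \ref{150624_2} and Theorem \ref{150623_2}) and the analogous statements for the fiber products, both horizontal maps and both vertical isomorphisms in the diagram are compatible with passing to the limit over $N$. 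It therefore suffices to prove the analogous commutativity with $\L$ replaced by an oriented manifold $P$ carrying a submersion $e\colon P\to M$ (and $Q$ similarly), $\L\fbp{e}{e}\L$ by the oriented manifold $P\times_M Q$, and $H_*(-)$ by singular homology; moreover, using the exhaustion of Section 3.2 we may assume all de Rham cohomologies in sight are finite-dimensional.

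\textbf{Steps 2--3 (the two operations and their matching).} For such $P$, a de Rham homology class of $P_\reg$ is represented by $[(U,\ph,\omega)]$ with $e\circ\ph\colon U\to M$ a submersion, and for two such representatives the de Rham fiber product is $[(U,\ph,\omega)]\times_M[(V,\psi,\eta)]=(-1)^{l(\dim U-d)}[(U\times_M V,\ph\times\psi,\omega\times\eta)]$, where $U\times_M V=(e_U\times e_V)^{-1}(\Delta_M)$ carries the orientation fixed in Section 2.3. On the other hand the Chas--Sullivan fiber product $H_*(P)\otimes H_*(Q)\to H_{*-d}(P\times_M Q)$ is $i_!\circ(\times)$, where $i\colon P\times_M Q=(e\times e)^{-1}(\Delta_M)\hookrightarrow P\times Q$ is the inclusion of a cooriented codimension-$d$ submanifold and $i_!$ is the Thom--Gysin umkehr map. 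Now Poincar\'{e} duality identifies $H^\dR_k$ of each of $P,Q,P\times Q,P\times_M Q$ with de Rham cohomology in complementary degree; by Section 3.2 these are the identifications through which the vertical isomorphisms of the diagram factor, and under them the cross product corresponds to the exterior product of forms while $i_!$ corresponds to the restriction $i^*\colon H^*_\dR(P\times Q)\to H^*_\dR(P\times_M Q)$. Hence the Chas--Sullivan product of the classes Poincar\'{e}-dual to closed forms $\gamma_P$ on $P$, $\gamma_Q$ on $Q$ is Poincar\'{e}-dual to $i^*(\gamma_P\times\gamma_Q)$. The class of $[(U,\ph,\omega)]$ is Poincar\'{e}-dual to the closed form $\gamma_P$ characterized by $\int_U\ph^*\alpha\wedge\omega=\int_P\alpha\wedge\gamma_P$ for all closed $\alpha$; this is the pairing of Section 2.8, and its compatibility with the vertical isomorphisms is Lemma \ref{150629_2}. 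It remains to compute the Poincar\'{e} dual of $[(U\times_M V,\ph\times\psi,\omega\times\eta)]$: inserting a Thom form $\Theta$ of $\Delta_M\subset M\times M$ to localise the integral over $U\times_M V$ to $U\times V$, applying Fubini together with the defining property of $\gamma_P$ and $\gamma_Q$, and then localising back to $P\times_M Q$, one finds $\int_{U\times_M V}(\ph\times\psi)^*\alpha'\wedge(\omega\times\eta)=\pm\int_{P\times_M Q}\alpha'\wedge i^*(\gamma_P\times\gamma_Q)$ for every closed $\alpha'$; thus the de Rham fiber product is also Poincar\'{e}-dual to $i^*(\gamma_P\times\gamma_Q)$. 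This gives commutativity up to sign, and the sign is pinned down by comparing the orientation convention on $U\times_M V$ and the factor $(-1)^{l(\dim U-d)}$ of Section 2.3 with the coorientation of $P\times_M Q\subset P\times Q$ defining $i_!$.

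\textbf{Main obstacle.} The delicate point is the sign bookkeeping in the last step: one must verify that the orientation conventions hard-wired into the de Rham fiber product in Section 2.3 (the $\det$-bundle identifications involving $\Ker\,de_U$ and $\Ker\,de_V$, and the prefactor $(-1)^{l(\dim U-d)}$) reproduce exactly the Thom-class coorientation underlying the Chas--Sullivan Gysin map, and that this survives Poincar\'{e} duality with the correct signs. A secondary issue --- having, on both sides, representatives with the required submersion/transversality property --- is handled exactly as in Section 6.6 by the regularisation of Lemma \ref{150210_6}, which is the reason the ``$\reg$'' differentiable structures were introduced in the first place.
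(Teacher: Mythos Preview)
Your proposal is correct and follows essentially the same route as the paper: reduce to finite dimensions via the approximations of Section~4 (using that $f_N$, $g$, and the homotopies commute with the evaluation at $0$), and then in the finite-dimensional setting identify both fiber products, through Poincar\'e duality, with the restriction $i^*(\gamma_P\times\gamma_Q)$ of an exterior product. The only notable difference is in how the finite-dimensional case is argued: the paper (Lemma~\ref{150726_1}) uses the explicit chain-level isomorphism $C^\dR_*(X_{\reg/M})\cong \mca{A}^{\dim X-*}_c(X)$ via pushforward, which makes the identification of both horizontal maps with $[\omega]\otimes[\eta]\mapsto[\omega\times\eta|_{X\fbp{e}{e}X}]$ immediate, whereas you reach the same conclusion through a Thom-form localisation computation; both arguments are valid, and the sign bookkeeping you flag as the ``main obstacle'' is indeed glossed over in the paper as well.
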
 

It is easy to deduce Proposition \ref{150627_3} from Lemma \ref{150629_4}, 
and details are left to the reader. 
The rest of this section is devoted to the proof of Lemma \ref{150629_4}. 
In the next subsection, 
we prove Lemma \ref{150726_1}, 
which is a preliminary result in the finite-dimensional setting. 

\subsection{A preliminary result in the finite-dimensional setting}

Let $X$ be an oriented $C^\infty$-manifold, and $e: X \to M$ be a submersion. 
Then, one can define the fiber product 
$H_*(X)^{\otimes 2} \to H_{*-d}(X \fbp{e}{e} X)$ via the Thom isomorphism 
for the tubular neighborhood of 
$X \fbp{e}{e} X \subset X \times X$. 

On the other hand, let us consider the following differentiable structure on $X$, 
and denote the resulting differentiable space by $X_{\reg/M}$: 
\[
\mca{P}(X_{\reg/M}) := \{(U,\ph) \mid \ph \in C^\infty(U, X) \, \text{and $e \circ \ph: U \to M$ is a submersion} \}. 
\]
Then, one can define the fiber product 
$H^\dR_*(X_{\reg/M})^{\otimes 2} \to H^\dR_{*-d}(X_{\reg/M} \fbp{e}{e} X_{\reg/M})$. 

It is obvious that the identity maps on $X$ and $X \fbp{e}{e} X$ induce smooth maps 
\[
X_{\reg/M} \to X, \qquad 
X_{\reg/M} \fbp{e}{e} X_{\reg/M} \to X \fbp{e}{e} X.
\] 
These maps induce isomorphisms 
\[
H^\dR_*(X_{\reg/M}) \cong H^\dR_*(X), \qquad 
H^\dR_*(X_{\reg/M} \fbp{e}{e} X_{\reg/M}) \cong H^\dR_*(X \fbp{e}{e} X).
\]
This fact is proved by similar arguments as 
Proposition \ref{150210_4}, and details are omitted. 
Then, we obtain isomorphisms 
\begin{align*} 
&H^\dR_*(X_{\reg/M}) \cong H^\dR_*(X) \cong H_*(X),  \\ 
&H^\dR_*(X_{\reg/M} \fbp{e}{e} X_{\reg/M}) \cong 
H^\dR_*(X \fbp{e}{e} X) \cong 
H_*(X \fbp{e}{e} X).
\end{align*} 

\begin{lem}\label{150726_1}
The following diagram commutes: 
\begin{equation}\label{eq:fbp_finite}
\xymatrix{
H^\dR_*(X_{\reg/M})^{\otimes 2} \ar[r] \ar[d]^{\cong} & H^\dR_{*-d}(X_{\reg/M} \fbp{e}{e} X_{\reg/M}) \ar[d]^{\cong} \\
H_*(X)^{\otimes 2} \ar[r] & H_{*-d}(X \fbp{e}{e} X). 
}
\end{equation}
\end{lem}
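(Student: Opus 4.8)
The plan is to transport both horizontal maps of (\ref{eq:fbp_finite}) to compactly supported de Rham cohomology, where each factors as the exterior product of forms followed by restriction to the submanifold $Z := X \fbp{e}{e} X \subset X \times X$. We assume $X$ oriented (as in all applications); then the normal bundle of $Z$ in $X\times X$ is pulled back via $e\times e$ from the normal bundle of the diagonal $\Delta_M\subset M\times M$, hence oriented since $M$ is. Set $n := \dim X$. Recall from (\ref{150210_3}) that for an oriented manifold $Y$ there is a chain isomorphism $C^\dR_*(Y_\reg) \cong \mca{A}^{\dim Y - *}_c(Y)$, under which a class is represented by $[(V,\iota,\iota^*\mu)]$ for an orientation-preserving open embedding $\iota: V\hookrightarrow Y$ and $\mu$ with support in $\iota(V)$. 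Because $e$ is a submersion, any orientation-preserving open embedding into $X$, into $X\times X$, or into $Z$ is a plot of $X_{\reg/M}$, of $(X\times X)_\reg$, or of both $(X\fbp{e}{e}X)_\reg$ and $X_{\reg/M}\fbp{e}{e}X_{\reg/M}$ respectively; so such chains lie in all the de Rham complexes in sight, and under the comparison isomorphisms $H^\dR_*(X_{\reg/M}) \cong H^\dR_*(X_\reg)$ and $H^\dR_*(X_{\reg/M}\fbp{e}{e}X_{\reg/M}) \cong H^\dR_*((X\fbp{e}{e}X)_\reg)$ (proved as in Proposition \ref{150210_4}) they represent the same classes. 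Hence we may compute throughout via $H^\dR_*(X_{\reg/M}) \cong H^{n-*}_{c,\dR}(X)$, $H^\dR_*(X\times X) \cong H^\dR_*((X\times X)_\reg) \cong H^{2n-*}_{c,\dR}(X\times X)$, and $H^\dR_*(X_{\reg/M}\fbp{e}{e}X_{\reg/M}) \cong H^{2n-*}_{c,\dR}(Z)$.

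Next I would compute the de Rham fiber product through these identifications. Given $a = [(U_1,\iota_1,\iota_1^*\mu_1)]$ and $b = [(U_2,\iota_2,\iota_2^*\mu_2)]$ with the $\iota_i$ orientation-preserving open embeddings into $X$, the cross product $a\times b$ is represented by $[(U_1\times U_2,\iota_1\times\iota_2,\pi_1^*\mu_1\wedge\pi_2^*\mu_2)]$ — a chain on $(X\times X)_\reg$ — which corresponds to $\pm\,\pi_1^*\mu_1\wedge\pi_2^*\mu_2$, with $\pi_i$ the projections of $X\times X$; so the de Rham cross product is the exterior product of forms. Moreover $U_1\fbp{e}{e}U_2 = (\iota_1\times\iota_2)^{-1}(Z)$, the embedding $\iota_1\times\iota_2$ is transverse to $Z$ (as $e$ is a submersion), and its restriction to $U_1\fbp{e}{e}U_2$ is an orientation-preserving open embedding into $Z$; unwinding the definition of $\times_M$ in Section 2.3, with its sign, shows that $a\times_M b$ corresponds to $\pm\, i_Z^*(\pi_1^*\mu_1\wedge\pi_2^*\mu_2) \in \mca{A}^{2n-*}_c(Z)$, where $i_Z: Z\hookrightarrow X\times X$ is the inclusion. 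In other words, through the identifications above, the top map of (\ref{eq:fbp_finite}) is ``restriction along $i_Z$ of the exterior product of forms''.

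For the bottom map I would invoke two standard facts. Under Poincar\'e duality $H_*(\,\cdot\,) \cong H^{\dim - *}_{c,\dR}(\,\cdot\,)$: (i) the singular cross product corresponds to the exterior product of forms; and (ii) the Gysin map $H_*(X\times X) \to H_{*-d}(Z)$ built from the Thom isomorphism (Section 1.3) corresponds to the restriction $i_Z^*: H^{2n-*}_{c,\dR}(X\times X) \to H^{2n-*}_{c,\dR}(Z)$ — this is the standard description of the homological umkehr map of an oriented closed submanifold, checked by unwinding the tubular-neighborhood and Thom-class definitions. In addition, the isomorphism $H^\dR_*(\,\cdot\,) \cong H_*(\,\cdot\,)$ appearing in the statement is, by Theorem \ref{150623_2} — precisely the identity $I_0 = \pm I_1\circ I_2$ proved there — the composite of $H^\dR_*(\,\cdot\,) \cong H^{\dim-*}_{c,\dR}(\,\cdot\,)$ with Poincar\'e duality $H^{\dim-*}_{c,\dR}(\,\cdot\,) \cong H_*(\,\cdot\,)$, for each of $X$, $X\times X$, and $Z$. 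Feeding all of this into (\ref{eq:fbp_finite}), both composites around the square equal ``restriction along $i_Z$ of the exterior product of forms'' on compactly supported de Rham cohomology, so the square commutes.

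The hard part is bookkeeping rather than ideas. One must verify that the particular isomorphisms named in the statement — the ``$\reg/M$ versus all plots'' comparison isomorphisms modelled on Proposition \ref{150210_4}, and the identification $H^\dR_*\cong H_*$ of Theorem \ref{150623_2} — are genuinely compatible with the reduction to $\mca{A}_c$ and with Poincar\'e duality, and one must track the signs introduced by the definition of $\times_M$ in Section 2.3, by the singular cross product, and by the two dualities. The geometric content, that both operations are intersection with $Z$, is immediate; these compatibility and sign checks are where the real work lies.
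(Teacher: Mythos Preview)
Your approach is correct and is essentially the same as the paper's own proof: both horizontal maps are transported, via the identifications $H^\dR_*(X_{\reg/M}) \cong H^{n-*}_{c,\dR}(X)$ and $H^\dR_{*-d}(X_{\reg/M}\fbp{e}{e}X_{\reg/M}) \cong H^{2n-*}_{c,\dR}(Z)$ together with Poincar\'e duality, to the single map $[\omega]\otimes[\eta]\mapsto[\,i_Z^*(\omega\times\eta)\,]$. The paper states this in two sentences and leaves the compatibility and sign checks implicit; your write-up simply spells out more of that bookkeeping.
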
 
\begin{proof}
Via the isomorphisms $H^\dR_*(X_{\reg/M}) \cong H^{\dim X-*}_{c,\dR}(X) \cong H_*(X)$ and 
\[
H^\dR_{*-d}(X_{\reg/M} \fbp{e}{e} X_{\reg/M}) \cong H^{2\dim X-*}_{c,\dR}(X \fbp{e}{e} X) \cong H_{*-d}(X \fbp{e}{e} X),
\]
both horizontal maps in (\ref{eq:fbp_finite}) are identified with the map 
\[
H^*_{c,\dR}(X)^{\otimes 2} \to H^*_{c,\dR}(X \fbp{e}{e} X) ; \quad [\omega] \otimes [\eta] \mapsto [\omega \times \eta|_{X \,_e \times_e X}], 
\]
hence (\ref{eq:fbp_finite}) is commutative. 
\end{proof} 

\subsection{Proof of Lemma \ref{150629_4}}
We use the notation of Section 6.1.
We abbreviate 
$\mca{L}^{\infty, E}$ by $\mca{L}^E$, and 
$\mca{F}^{\infty, E}_N$ by $\mca{F}^E_N$. 
Let $(E_j)_{j \ge 1}$ be a strictly increasing sequence of positive real numbers, such that $\lim_{j \to \infty} E_j = \infty$. 
Let us take a sequence $(N_j)_{j \ge 1}$ of positive integers, so that 
$N_j | N_{j+1}$ and $N_j \ge N(E_j, E_{j+1})$ for every $j \ge 1$
(see Remark \ref{rem:a=infty}).
By Lemma \ref{150625_1}, 
there exists a continuous map $g_j: \mca{F}^{E_j}_{N_j} \to \L^{E_{j+1}}$ such that 
\begin{equation}\label{eq:ejgj}
\xymatrix{
\L^{E_j} \ar[r]\ar[d]_-{f_{N_j}}&\L^{E_{j+1}} \ar[d]^-{f_{N_{j+1}}}\\
\mca{F}^{E_j}_{N_j} \ar[r] \ar[ru]_-{g_j}&\mca{F}^{E_{j+1}}_{N_{j+1}}
}
\end{equation}
commutes up to homotopy. 
Then, $H_*(\mca{L}) \cong \vlim_j H_*(\mca{L}^{E_j}) \cong \vlim_j  H_*(\mca{F}^{E_j}_{N_j})$. 
In the following arguments, we abbreviate 
$\mca{F}^{E_j}_{N_j}$ by $\mca{F}^j$, 
and $f_{N_j}$ by $f_j$. 

Let $e_j: \mca{F}^j \to M; \, (x_k)_{0 \le k \le  N_j} \mapsto x_0$. 
As is clear from the proof of Lemma \ref{150625_1}, 
one may take $g_j$ so that 
$e \circ g_j=e_j$
(thus $g_j \times g_j: \mca{F}^j \fbp{e_j}{e_j} \mca{F}^j \to \mca{L}^{E_j} \fbp{e}{e} \mca{L}^{E_j}$ is well-defined), 
and the following diagram commutes up to homotopy: 
\begin{equation}\label{eq:ejgj_2}
\xymatrix{
\L^{E_j} \fbp{e}{e} \L^{E_j} \ar[r]\ar[d]_-{f_j \times f_j}&\L^{E_{j+1}} \fbp{e}{e}  \L^{E_{j+1}} \ar[d]^-{f_{j+1} \times f_{j+1} }\\
\mca{F}^j \fbp{e_j}{e_j}  \mca{F}^j  \ar[r] \ar[ru]_-{g_j \times g_j }&  \mca{F}^{j+1} \fbp{e_{j+1}}{e_{j+1}}  \mca{F}^{j+1}. 
}
\end{equation}
Then, 
$H_*(\mca{L} \fbp{e}{e} \mca{L}) \cong \vlim_j H_*(\mca{L}^{E_j} \fbp{e}{e} \mca{L}^{E_j}) \cong  \vlim_j H_*(\mca{F}^j \fbp{e_j}{e_j} \mca{F}^j)$. 

Since $e_j: \mca{F}^j \to M$ is a submersion, one can define the differentiable space $\mca{F}^j_{\reg/M}$ as in the previous subsection. 
Then, $f_j$ maps plots of $\mca{L}^{E_j}_\reg$ to plots of $\mca{F}^j_{\reg/M}$. 
Also, one may take $g_j$ so that it maps plots of 
$\mca{F}^j_{\reg/M}$ to plots of $\mca{L}^{E_{j+1}}_\reg$, and 
the diagrams  (\ref{eq:ejgj}) and (\ref{eq:ejgj_2}) commute up to smooth homotopy with these differentiable structures. 
Hence we obtain 
\begin{align*}
H^\dR_*(\L_\reg)&\cong \vlim_j H^\dR_*(\L^{E_j}_\reg) \cong  \vlim_j H^\dR_*(\mca{F}^j_{\reg/M}),   \\
H^\dR_*(\L_\reg \fbp{e}{e} \L_\reg)&\cong  \vlim_j H^\dR_*(\L^{E_j}_\reg \fbp{e}{e}  \L^{E_j}_\reg) \cong
 \vlim_j H^\dR_*(\mca{F}^j_{\reg/M} \fbp{e_j}{e_j} \mca{F}^j_{\reg/M}). 
\end{align*}

For every $j \ge 1$, let us consider the following diagram: 
\begin{equation}\label{eq:bigsquare}
\xymatrix{
H^\dR_*(\L^{E_j}_\reg)^{\otimes 2} \ar[rrr]\ar[ddd]\ar[rd]^{\star}&&& H^\dR_{*-d} (\L^{E_j}_\reg \fbp{e}{e} \L^{E_j}_\reg)\ar[ld]_{\star}\ar[ddd] \\
&H^\dR_*\big(\mca{F}^j_{\reg/M}  \big)^{\otimes 2}\ar[r]\ar[d]&H^\dR_{*-d} \big( \mca{F}^j_{\reg/M}  \fbp{e_j}{e_j}  \mca{F}^j_{\reg/M} \big)\ar[d]& \\
&H_*\big(\mca{F}^j  \big)^{\otimes 2}\ar[r]&H_{*-d} \big(\mca{F}^j \fbp{e_j}{e_j} \mca{F}^j \big)& \\
H_*(\L^{E_j})^{\otimes 2}\ar[ru]^{\star}\ar[rrr]&&&H_{*-d} (\L^{E_j} \fbp{e}{e}  \L^{E_j}).\ar[lu]_{\star}
}
\end{equation}
The center square in (\ref{eq:bigsquare})
 is commutative by Lemma \ref{150726_1}. 
The commutativity of the other four squares in  (\ref{eq:bigsquare}) is easy to check from the definitions. 

Taking direct limits as $j \to \infty$, all maps in (\ref{eq:bigsquare}) pass to the limit. 
Moreover, the limits of the maps with $\star$ are isomorphisms. 
Therefore, 
the limit of the big square in (\ref{eq:bigsquare}) is 
commutative, and this completes the proof of Lemma \ref{150629_4}.

\textbf{Funding.} 
This work is supported by the Japan Society for the Promotion of Science (JSPS) 
Grant-in-Aid for Young Scientists (B)
[grant number 25800041]
and JSPS Postdoctoral Fellowship for Research Abroad. 

\textbf{Acknowledgements.} 
The author appreciates 
Kenji Fukaya and Kaoru Ono for their encouragement and comments on this project, 
Benjamin Ward for helpful communication,
and the referees for carefully reading the manuscript and giving various critical and helpful comments. 
The author also acknowledges 
the Simons Center for Geometry and Physics at Stony Brook University, where the revision of this paper took place, 
for the great working environment.


\begin{thebibliography}{ZZZ}

\bibitem{Abouzaid_13}
M. Abouzaid, \textit{Symplectic cohomology and Viterbo's theorem}, 
Free loop spaces in geometry and topology, 
271--485, 
IRMA Lect. Math. Theor. Phys., 24, 
Eur. Math. Soc., 
Z\"{u}rich, 2015. 





\bibitem{ChSu_99} M. Chas, D. Sullivan, \textit{String topology}, arXiv:math/9911159. 


\bibitem{Chat_05} D. Chataur, \textit{A bordism approach to string topology}, 
Int. Math. Res. Not. 2005, no. 46, 2829--2875.

\bibitem{KTChen_77} K. T. Chen, \textit{Iterated path integrals}, 
Bull. Amer. Math. Soc. 83 (1977), no. 5, 831--879. 

\bibitem{KTChen_86} K. T. Chen, \textit{On differentiable spaces}, 
Categories in continuum physics, 38--42, 
Lecture Notes in Math., 1174, 
Springer, Berlin, 1986. 

\bibitem{XChen_11} X. Chen, \textit{An algebraic chain model of string topology}, 
Trans. Amer. Math. Soc. 364 (2012), no. 5, 2749--2781. 

\bibitem{CiLa_09} K. Cieliebak, J. Latschev, \textit{The role of string topology in symplectic field theory}, 
New perspectives and challenges in symplectic field theory, 113--146, 
CRM Proc. Lecture Notes, 49, 
Amer. Math. Soc., Providence, RI, 2009. 

\bibitem{Cohen_76} 
F. R. Cohen, \textit{The homology of $C_{n+1}\,$-spaces, $n \ge 0$}, 
The homology of iterated loop spaces, 207--351, 
Lecture Notes in Math. vol. 533, 
Springer-Verlag, Berlin-New York, 1976. 

\bibitem{CohGod_04}
R. L. Cohen, V. Godin, 
\textit{A polarized view of string topology}, 
Topology, geometry and quantum field theory, 
127--154, 
London Math. Soc. Lecture Note Ser., vol. 308, Cambridge Univ. Press, Cambridge, 2004. 

\bibitem{CJ_02} R. L. Cohen, J. D. S. Jones, \textit{A homotopy theoretic realization of string topology}, 
Math. Ann. 324 (2002), no. 4,  773--798. 

\bibitem{DCPR_15} 
G. C. Drummond-Cole, K. Poirier, N. Rounds, 
\textit{Chain-Level String Topology Operations}, 
arXiv:1506.02596v1.

\bibitem{Fuk_06} K. Fukaya, \textit{Application of Floer homology of Lagrangian submanifolds to symplectic topology}, 
Morse theoretic methods in nonlinear analysis and in symplectic topology, 
231--276, 
NATO Sci. Ser. II Math. Phys. Chem., 217, 
Springer, Dordrecht, 2006. 

\bibitem{Gersten_63}
M. Gerstenhaber, \textit{The cohomology structure of an associative ring}, 
Ann. of Math. (2) 78 (1963), 267--288.

\bibitem{GerstVoronov_95} 
M. Gerstenhaber, A. A. Voronov, 
\textit{Homotopy G-algebras and moduli space operad}, 
Int. Math. Res. Not. 1995,  no. 3, 141--153. 

\bibitem{Getzler_94}
E. Getzler, \textit{Batalin-Vilkovisky algebras and two-dimensional topological field theories}, 
Comm. Math. Phys. 159 (1994), no. 2, 265--285. 

\bibitem{GoHi_09} M. Goresky, N. Hingston, \textit{Loop products and closed geodesics}, 
Duke Math. J. 150 (2009), 
no. 1, 117--209. 

\bibitem{IgZe_13} P. Iglesias-Zemmour, \textit{Diffeology}, 
Mathematical Surveys and Monographs, 185. 
Amer. Math. Soc., 
Providence, RI, 2013. 


\bibitem{Kaufmann_07} R. M. Kaufmann, 
\textit{On spineless cacti, Deligne's conjecture and Connes-Kreimer's Hopf algebra}, 
Topology 46 (2007), no. 1,  39--88. 

\bibitem{Kaufmann_08} R. M. Kaufmann, 
\textit{A proof of a cyclic version of Deligne's conjecture via cacti}, 
Math. Res. Lett. 15 (2008), no. 5, 901--921.

\bibitem{Lee_13} J. M. Lee, 
\textit{Introduction to smooth manifolds}, 
Second ed. 
Graduate Texts in Mathematics, 218.
Springer, New York, 2013. 

\bibitem{Loday_98} 
J.-L. Loday, 
\textit{Cyclic homology}, 
Second ed. 
Grundlehren der Mathematischen Wissenschaften, 301. 
Springer-Verlag, Berlin, 1998. 

\bibitem{LodVal_12} 
J.-L. Loday, 
B. Vallette, 
\textit{Algebraic Operads}, 
Grundlehren der Mathematischen Wissenschaften, 346. 
Springer, Heidelberg, 2012. 

\bibitem{MSS_02} 
M. Markl, S. Shnider, J. Stasheff, 
\textit{Operads in Algebra, Topology and Physics}, 
Mathematical Surveys and Monographs, 96.
Amer. Math. Soc., Providence, RI, 2002. 

\bibitem{Mer_04} S. A. Merkulov, 
\textit{De Rham model for string topology}, 
Int. Math. Res. Not. 2004, no. 55, 2955--2981. 

\bibitem{Sou_80} J.-M. Souriau, \textit{Groupes diff\'{e}rentiels}, 
Differential geometrical methods in mathematical physics, 
91--128, 
Lecture Notes in Math., 836, 
Springer, Berlin-New York, 1980. 

\bibitem{Sul_07} D. Sullivan, \textit{String topology background and present state}, 
Current developments in mathematics 2005, 41--88, 
Int. Press, Somerville, MA, 2007. 


\bibitem{Ward_12} B. C. Ward, \textit{Cyclic $A_\infty$ structures and Deligne's conjecture},
Algebr. Geom. Topol. 12 (2012), no. 3, 
1487--1551. 

\bibitem{Ward_14} B. C. Ward, \textit{Maurer-Cartan elements and cyclic operads}, 
J. Noncommut. Geom. 10 (2016), 1403--1464. 

\bibitem{Weibel_94} 
C. A. Weibel, \textit{An introduction to homological algebra}, 
Cambridge Studies in Advanced Mathematics, 38. Cambridge University Press, Cambridge, 1994. 

\end{thebibliography}
\end{document}